\title[Real syntomic cohomology]{Real syntomic cohomology} 
\author[G. Angelini-Knoll]{Gabriel Angelini-Knoll}
\address{Department of Mathematics, Applied Mathematics, and Statistics, Case
Western Reserve University, Cleveland, OH, USA}
\email{gabriel.angelini-knoll@case.edu}
\author[H.J. Kong]{Hana Jia Kong}
\address{School of Mathematical Sciences, Zhejiang University, Hangzhou, Zhejiang, China}
\email{hana.jia.kong@gmail.com}
\author[J.D. Quigley]{J.D. Quigley}
\address{Department of Mathematics\\
University of Virginia\\  Charlottesville, Virginia, U.S.A.}\email{mbp6pj@virginia.edu}
\begin{document}  
 
\begin{abstract}  
We introduce a theory of syntomic cohomology for ring spectra with involution, which we call Real syntomic cohomology. 
We show that our construction extends the theory of syntomic cohomology for rings with involution due to Park.
Our construction also refines syntomic cohomology as developed by Bhatt--Morrow--Scholze, Morin, Bhatt--Lurie, and 
Hahn--Raksit--Wilson. We compute the Real syntomic cohomology of Real topological K-theory 
and topological modular forms with level structure. 
\end{abstract}

\maketitle  

\setcounter{tocdepth}{1}
\tableofcontents

\section{Introduction}\label{sec:intro}
Real algebraic K-theory is an invariant of rings with involution which recovers Grothendieck--Witt theory and L-theory. Grothendieck--Witt theory has been studied extensively with applications to quadratic and symmetric bilinear forms, for example~\cite{KV71} and~\cite{KSW16}. L-theory has has also been thoroughly investigated in the study of surgery on manifolds, for example~\cite{Wal73,Ran78}. Real algebraic K-theory remains a subject of active interest, for instance~\cite{CBDHHLMNNS23,CBDHHLMNNSII,CBDHHLMNNSIII}, but there has been very little progress in computing it for ring spectra with involution that are not Eilenberg--MacLane. 

Nonequivariantly, since the late 1980s a strategy known as trace methods has been the primary tool used to compute algebraic K-theory. In the trace methods approach to algebraic K-theory, one first computes topological Hochschild homology, which is accessible using techniques from homological algebra. One can then use extra structure on topological Hochschild homology known as cyclotomic structure to produce topological cyclic homology, which closely approximates algebraic K-theory by the Dundas--Goodwillie--McCarthy theorem~\cite{DGM13}*{Theorem~7.0.0.2}. This approach has been the only way to access algebraic K-theory of ring spectra that are not Eilenberg--MacLane, for example~\cite{AR02,AR12,AKACHR25}. 

In 2019, work of Bhatt--Morrow--Scholze~\cite{BMS19} made this approach more effective by producing a motivic filtration on topological cyclic homology. The associated graded of this filtration is computable and interesting in its own right because it recovers syntomic cohomology, which was first introduced by Fontaine--Messing~\cite{FM87} and 
Kato~\cite{Kat87}. This was extended to commutative ring spectra, also known as $\mathbb{E}_\infty$-rings, by Hahn--Raksit--Wilson~\cite{HRW22}. This has led to new progress on computations of algebraic K-theory of ring spectra that were not possible before, for example~\cite{AKAR23,AKHW24}. 

There has been significant work towards extending trace methods to the setting of Real algebraic K-theory beginning in the 2010s. There is a notion of Real topological Hochschild homology~\cite{DMPR21} and a notion of Real cyclotomic structure~\cite{Hog16, DMP24, QS21b} leading to a notion of Real topological cyclic homology. Harpaz--Nikolaus--Shah~\cite{HNS} also provides the analogue of the Dundas--Goodwille--McCarthy theorem for Real algebraic K-theory. Nevertheless, there remain very few computations of Real algebraic K-theory in the literature suggesting that new computational tools are needed. 

A motivic filtration on Real topological cyclic homology was produced in the setting of discrete commutative rings with involution in work of~\cite{Par23}. The main goal of the present paper is to produce a new computational tool that allows us to gain access to Real algebraic K-theory of ring spectra with involution. We accomplish this by producing a motivic filtration on Real topological cyclic homology that refines the motivic filtration on topological cyclic homology. This filtration is built on the notion of strong evenness in $C_2$-equivariant homotopy theory. Strong evenness controls the regular slice tower and RO$(C_2)$-graded behavior, and is purely equivariant with no nonequivariant counterpart. 

To illustrate the power of these new techniques, we compute the Real syntomic cohomology in the case of Real truncated Brown--Peterson spectra, for example Real topological K-theory and topological modular forms with level structure. Unlike the nonequivariant case, the RO$(C_2)$-graded homotopy groups have nontrivial gap pattern, and thus the computations require more careful and finer analysis, as well as the full Real motivic filtration machinery developed in this paper.
As a consequence, we prove a version of the Lichtenbaum--Quillen conjecture for certain ring spectra with involution, the first result of its kind. This gives some evidence for an analogue of the redshift conjecture of Ausoni--Rognes~\cite{AR08} in the setting of ring spectra with involution. 

\subsection{Results}

We now state the main results of the paper in context. 

\subsubsection{The strongly even filtration and descent}

We begin by producing a motivic filtration on Real topological Hochschild homology and related invariants. 
Each of these motivic filtrations are variants of what we call the \emph{strongly even filtration}. 
First, let $\rho$ denote the regular representation of $C_2$, the cyclic group of order two. 
Let $\pi_{V}^{C_2}X:=\pi_{0}\Map(S^V,X)^{C_2}$ be $\pi_0$ of the space of $C_2$-equivariant maps from the one-point 
compactification of a $C_2$-representation $V$ into a spectrum $X$. We say a $C_2$-spectrum $B$ is \emph{strongly even}
if $\pi_{\rho k-i}^{C_2}B=0$ for $i=1,2,3$; see Lemma~\ref{lem:criteria}. When $A$ is a $C_2$-commutative ring spectrum,
\footnote{For the sake of this introduction, we use the term ``$C_2$-commutative'' as a place-holder for the 
term ``$C_2$-$\bE_{\infty}$'' in the sense of~\cite{NS22}. See Section~\ref{sec:equivariant-multiplicative} for details.} 
we define the \emph{strongly even filtration} as the homotopy limit
\[ 
\filsev^{\bullet}A:=\lim_{A\to B} P_{2\bullet}B
\]
over all maps of $C_2$-commutative ring spectra from $A\to B$ such that $B$ is strongly even. 
We then write $\grsev^{*}A:=\filsev^{*}A/\filsev^{*+1}A $ for the associated graded of this filtered $C_2$-spectrum. 

Just as in topology where one can compute singular homology of a nice topological space using a \v{C}ech cover, 
we can compute the strongly even filtration using certain nice covers, which we call strongly evenly faithfully flat (or seff) 
maps; see Definition~\ref{def:seff}. The following result therefore gives us a foothold to compute the strongly even filtration. 

\begin{thmx}\label{thm:A}
If $A\to B$ is a seff map of $C_2$-commutative ring spectra and $B$ is strongly even, then there is an equivalence
\[ 
\filsev^{\bullet}A\simeq \lim_{q \in \Delta}P_{2\bullet}(B^{ \otimes_{A}q+1})\,.
\]
of filtered $C_2$-spectra. 
\end{thmx} 

As a consequence of Theorem~\ref{thm:A}, we prove that the strongly even filtration on 
Real topological Hochschild homology recovers the HKR filtration on Real topological Hochschild homology of 
Hornbostel--Park~\cite{HP23} and Yang~\cite{Yan25} under certain conditions. 
This is also used to prove that the constructions of Real syntomic cohomology and Real prismatic cohomology 
in the present paper recover the theory of syntomic cohomology and prismatic cohomology of rings with involution from 
Park~\cite{Par23}. See Section~\ref{sec:comparison} for these results and further discussion.

\subsubsection{Artin--Tate Real motivic cohomology}
Morel--Voevodsky~\cite{MV99} developed the stable motivic homotopy category in order to study cohomology theories for varieties.\footnote{In this introduction, we use the term category as a placeholder for the term $\infty$-category.} 
For example, we can consider stable motivic homotopy theory over the real numbers $\bR$. 
The Artin--Tate category over $\bR$ is then the subcategory generated by finite \'etale motives along with the 
projective spaces $\bP_{\bR}^{1}$ and $(\bP_{\bR}^{1})^{-1}$~\cite{BHS22}. 
As a consequence of our work, we identify modules over the strongly even filtration of the sphere with the 
Artin--Tate category over $\mathbb{R}$.  
 
\begin{thmx}\label{thm:B}
There is an equivalence of categories between the modules over the $2$-complete sphere in the 
Artin--Tate category over $\mathbb{R}$ and modules over the $2$-complete  strongly even filtration of the sphere inside of 
filtered $C_2$-spectra. 

Consequently, we can produce modules over the associated graded of the strongly even filtration 
associated to the constant Mackey functor of $M$ for any even $\MU_*\MU_2$-comodule $M$.

Moreover, when $R$ is a $C_2$-$\bE_{\infty}$-ring such that $\MUR\otimes R$ is 
strongly even and $\MU\otimes R^e$ is bounded below, then we can identify 
\[ 
(\mathrm{fil}_{\sev}^*R)_2^{\wedge}\simeq \nu_{\mathbb{R}}(R)_2^{\wedge} 
\]
where $\nu_{\mathbb{R}}$ is the synthetric analogue functor from Burklund--Hahn--Senger~\cite{BHS22} 
that associates an Artin--Tate $\mathbb{R}$-motivic spectrum to a $C_2$-spectrum.
\end{thmx}

In the previous theorem, we use the term ``$C_2$-$\bE_\infty$-ring" for the first time. To properly discuss equivariant multiplicative structure in higher categories, we require the use of \emph{parametrized} $\infty$-categories. As there are not many explicit applications of this framework in the literature, we hope our work will be of independent interest. 

\subsubsection{The motivic filtration}
We define the motivic filtration 
\[
\filmot^\bullet\TCR(A)
\] 
on the Real topological cyclic homology $\TCR(A)$ of a $C_2$-commutative ring spectrum $A$ as a variant of the even filtration. 
We let $\grmot^*\TCR(A)$ denote its associated graded. 
We refer to
\[
\pi_{V}^{C_2}\grmot^w\TCR(A)\]
as the \emph{Real syntomic cohomology} of $A$ for a real $C_2$-representation $V$ and an integer $w$.  
Associated to the motivic filtration, there is a motivic spectral sequence 
\[ 
E_2^{\star,\ast}=\pi_{\star}^{C_2}\grmot^*\TCR(A) \implies \pi_{\star}^{C_2}\TCR(A)
\]
which can be used to compute the Real topological cyclic homology of $A$.

Our main examples of interest are the Real truncated Brown--Peterson spectra $\BPR\langle n\rangle$, 
which play a fundamental role in chromatic equivariant homotopy theory. 
Specifically, when $n=-1$ we have $\BPR\langle -1\rangle=\bF_2$ with trivial involution, 
when $n=0$ we have $\BPR\langle 0\rangle=\bZ_{(2)}$ with trivial involution, when $n=1$ we have $\BPR\langle 1\rangle =\kr$, 
and when $n=2$ we have $\BPR\langle 2\rangle =\tmf_1(3)$. Here $\kr$ denotes connective Real topological K-theory, 
which classifies complex vector bundles equipped with their complex conjugation action and $\tmf_1(3)$ denotes connective
topological modular forms with level structure $\Gamma_1(3)$. 

\begin{thmx}\label{thm:C}
There is a strongly even $C_2$-commutative ring spectrum $\MWR$ and a seff map of $C_2$-commutative algebras in 
Real $2$-cyclotomic spectra 
\[ 
\THR(\BPR\langle n\rangle;\bZ_2)\to \THR(\BPR\langle n\rangle/\MWR;\bZ_2)
\]
for $-1\le n\le 2$. 
\end{thmx}

In order to compute the motivic filtration on Real topological cyclic homology $\TCR(A)$ it is necessary to have a seff map 
of $C_2$-commutative algebras in Real $p$-cyclotomic spectra. 
Here Real $p$-cyclotomic structure is the necessary structure to build Real topological cyclic homology from 
Real topological Hochschild homology, see Section~\ref{sec:trace-background} for details. 
This theorem is therefore essential for computing Real syntomic cohomology.

\subsubsection{Real syntomic cohomology of Real truncated Brown--Peterson spectra}
By Theorem~\ref{thm:B}, we can construct Smith--Toda complexes in the category of modules over the strongly even 
filtration of the sphere spectrum. We write $\gr_{\sev}^*\bS/(\ol{v}_0,\cdots,\ol{v}_n)$ for the $(\gr_{\sev}^*\bS)_2$-module 
corresponding to the object $\MU_*/(2,v_1,\cdots ,v_n)\otimes \mZ$. 
We refer to Real syntomic cohomology with coefficients in $\gr_{\sev}^*\bS/(\ol{v}_0,\cdots,\ol{v}_n)$ as 
\emph{mod $(\ol{v}_0,\ol{v}_1,\cdots ,\ol{v}_n)$ Real syntomic cohomology}. 
Our main application is a computation of the Real syntomic cohomology of $H\mF_2$, $H\mZ_2$, 
Real topological K-theory $\kr$ and topological modular forms with level structure $\Gamma_1(3)$. 
Note that these are $C_2$-commutative forms of Real truncated Brown--Peterson spectra 
$\BPR\langle n\rangle$ for $n=-1$, $0$, $1$, and $2$ respectively. 

\begin{thmx}\label{thm:D}
Let $M_2:=\pi_{\star}^{C_2}H\mF_2$ and let $-1\le n\le 2$.  
The mod $(\ol{v}_0,\cdots,\ol{v}_{n})$ Real syntomic cohomology of any $C_2$-commutative ring spectrum 
form of $\BPR\langle n\rangle$ is 
\[ 
M_2[\ov_{n+1}]\langle\delta ,\ol{\lambda}_{1},\cdots ,\ol{\lambda}_{n+1}\rangle\oplus 
\bigoplus_{s=1}^{n+1}  M_2[\ov_{n+1}]\langle \lambda_s :s\in \{1,\cdots ,n\}-s\rangle \{\ol{\Xi}_{d,s} : 0 < d<p\} \,.
\]
Here $|\olambda_s|=(2^s\rho-1,1)$, $|\ol{\Xi}_{d,i}|=((2^s-2^{s-1}d)\rho-1,1)$, and $|\partial|=(-1,1)$,
 and $M_2 \langle x \rangle$ denotes a exterior algebra over $M_2$ on a class $x$ and $M_2[y]$ denotes a polynomial algebra over $M_2$ on a class $y$. 
\end{thmx}

We also recover the computations of $\pi_{\star}^{C_2}\TCR(\bF_2)$ from~\cite{QS21a,DMP24} 
and $\pi_{\star}^{C_2}\TCR(\bZ_2)/2$, which could be deduced from \cite{HLN21} or~\cite{DMP24}. 

\subsubsection{Lichtenbaum--Quillen for Real syntomic cohomology}
In the spirit of the Lichtenbaum--Quillen conjectures~\cite{Lic73,Qui75}, and their generalization due to 
Ausoni--Rognes~\cite{AR08}, we ask the following question. 
For the statement, let $\mathrm{Thick}(\mF_2)$ denote the thick subcategory of $C_2$-spectra generated by the 
constant Mackey functor $\mF_2$. We will say that a connective $C_2$-commutative ring spectrum is regular if  
\begin{enumerate}
\item $\upi_0R$ is coherent (every finitely generated  ideal in the sense of~\cite{Nak12} is finitely presented), 
\item $\upi_iR$ is finitely presented as a $\upi_0R$-modules, and 
\item the $H\pi_0R$-module $H\pi_0R\otimes_{R}H\pi_0R$ is in the thick subcategory generated by $H\pi_0R$. 
\end{enumerate}
(cf.~\cite{BL14}). 

\begin{quest}[Lichtenbaum--Quillen]
Suppose that $R$ is a regular $C_2$-commutative ring spectrum and assume that $\grsev^*R/(\ol{v}_0,\ol{v}_1,\cdots,\ol{v}_n)$ 
is in $\mathrm{Thick}(\mF_2)$, then is 
\[
\grmot^*\TCR(R)/(\ol{v}_0,\ol{v}_1,\cdots,\ol{v}_{n+1})
\] 
in $\mathrm{Thick}(\mF_2)$? 
\end{quest}

Our Theorem~\ref{thm:D} answers the Lichtenbaum--Quillen question above in the affirmative in the case of 
$\BPR\langle n\rangle$ for $-1\le n\le 2$. The only obstruction to extending this result to $n>2$ is the fact 
that $\BPR\langle n\rangle$ is not even known to be an $\bE_{\sigma}$-algebra for $n>2$ and this structure is necessary for 
Real topological Hochschild of  $\BPR\langle n\rangle$ to be defined.

\begin{remark}
The Real analogues of the Lichtenbaum--Quillen conjecture have previously been studied by~\cite{BKSO15,KRO20} 
for rings with involution, but our work is the first work to study higher chromatic analogues of Lichtenbaum--Quillen 
for Real algebraic K-theory of $C_2$-commutative ring spectra along the lines of the Ausoni--Rognes conjectures~\cite{Rog00,AR08}. 
For related work of a different flavor, see \cite{Lan22} and~\cite{Lev25}. 
\end{remark}

\subsection{Outline} 
In Section~\ref{sec:filtration}, we introduce the strongly even filtration and prove 
Theorem~\ref{thm:A}. In Section~\ref{sec:motivic-filtrations}, we introduce the theory of 
Real trace methods and extend the strongly even filtration to this setting in order to define 
Real motivic filtrations. We also introduce the Real Hochschild--May spectral sequence and 
discuss equivariant suspension maps, both of which are used for our computations in 
Section~\ref{sec:THR}. In  Section~\ref{sec:motivic}, we prove a parametrized refinement of 
Theorem~\ref{thm:B}. The remainder of Section~\ref{sec:realmotivic} is included for potential future applications, 
but we do not make essential use of them in the present work. The goal of Section~\ref{sec:base} is to prove 
Theorem~\ref{thm:C}.  Section~\ref{sec:comparison} is independent of the rest of the paper, 
but it is included to prove that our construction recovers the version of syntomic cohomology introduced by~\cite{Par23} and the HKR theorem of \cite{HP23} and~\cite{Yan25}.
In Sections~\ref{sec:THR}, \ref{sec:tool}, and~\ref{sec:syntomic}, we prove Theorem~\ref{thm:D} and also compute 
Real topological cyclic homology of $\bF_2$ and mod $2$ Real topological cyclic homology of $\bZ_2$. 
We discuss the foundations of equivariant homotopy theory that we use in this paper in Appendix~\ref{sec:equivariant}.

\subsection{Conventions} 
We freely use the theory of $\infty$-categories, or quasicategories~\cite{Joy02}, and refer the reader to \cite{Lur09}, 
\cite{Lur17}, and~\cite{Lur18} for these foundations. 
For example, all limits and colimits should be interpreted as homotopy limits and homotopy colimits. 
\begin{itemize}
    \item   We write $\Top$ for the $\infty$-category of Kan complexes, which we simply refer to as spaces from now on. 
            We write $\Top_*$ for the $\infty$-category of pointed spaces, and $\Sp$ for the $\infty$-category of spectra. 
            Given a symmetric monoidal $\infty$-category $C$, we write $\CAlg(C)$ for the $\infty$-category of $\bE_\infty$-algebras 
            in $C$.
    \item   We freely use the theory of $G$-$\infty$-categories and refer the reader to Appendix~\ref{sec:equivariant} 
            for further notation in that context and a concise overview. We write $\cC_{p}$ for the $\infty$-category of 
            $p$-complete objects in an $\infty$-category $\cC$. We use the same notation in the context of $G$-$\infty$-categories. 
    \item   We let $\bZ_{\le}$ denote the the integers regarded as a partially ordered set with respect to $\le$ and let 
            $\mathrm{Dis}(\bZ)$ denote the discrete category associated to the set of integers. Given an $\infty$-category $\cC$, 
            we write $\Fil(\cC):=\Fun(\bZ_{\le}^{\op},\cC)$ and $\Gr(\cC):=\Fun(\mathrm{Dis}(\bZ),\cC)$. Given $M\in \Gr(\cC)$, 
            we write $M^t:=M(t)$ and given $F\in \Fil(\cC)$, we write $F^s:=F(s)$.
    \item   When we say a spectral sequence \emph{conditionally converges} or \emph{strongly converges}, 
            we will always mean this in the sense of~\cite{Boa99}. For example, if a spectral sequence arises from a filtered object 
            in a stable $\infty$-category $I$ and $\lim_s I^s=0$ then it conditionally converges (to $\colim_sI^s$). 
    \item   Let $R$ be an ($\RO(C_p)$-)graded ring $R$. We write $R[y]$ for a polynomial algebra over $R$ on a class $y$ 
            in some ($\RO(C_p)$-)grading degree, we write $R\langle x\rangle$ for an exterior algebra on a class $x$ in 
            some  ($\RO(C_p)$-)grading degree, and we write $R[y]\{y\}$ for the $R$-module $R\{y,y^2,\cdots \}$ where $y$ is 
            in some ($\RO(C_p)$-)grading degree. 
\end{itemize}

\subsection{Acknowledgments}
The authors would like to thank Bastiaan Cnossen, Tobias Lenz, and Lennart Meier for useful conversations during the course of 
this project. Special thanks go to Michael A. Hill and Jeremy Hahn for many helpful observations that improved this work. 

The idea for this project grew from a conference on the topic ``Equivariant Stable Homotopy Theory and p-adic Hodge Theory'' 
at the Banff International Research Station (BIRS). The authors would like to thank BIRS for providing ideal working 
conditions for the workshop as well as the organizers Andrew Blumberg, Teena Gerhardt, and Michael A. Hill. 

The authors would like to thank the Isaac Newton Institute for Mathematical Sciences, Cambridge, for support and 
hospitality during the programme Equivariant homotopy theory in context where work on this paper was undertaken. 
This work was supported by EPSRC grant no EP/K032208/1.

Angelini--Knoll and Quigley are grateful to Max Planck Institute for Mathematics Bonn for its hospitality and financial support. 
This project has received funding from the European Union's Horizon 2020 research and innovation programme under the 
Marie Sk\l{}odowska-Curie grant agreement No 1010342555.\thinspace\includegraphics[scale=.15]{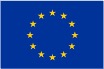} 
Quigley was partially supported by an AMS--Simons Travel Grant
and NSF grants DMS-2039316, DMS-2135884, and DMS-2414922 (formerly DMS-2203785 and DMS-2314082).

\section{The strongly even filtration}\label{sec:filtration}
We introduce the regular slice filtration in Section~\ref{sec:slice} and strongly even $C_2$-spectra in Section~\ref{sec:even} in order to define the strongly even filtration in Section~\ref{sec:strongly-even-filtration} and its $p$-complete variant in Section~\ref{sec:p-complete-strongly-even-filtration}. We then introduce the notion of strongly even faithfully flat maps and prove descent for such maps in Section~\ref{sec:descent}. In Section~\ref{sec:convergence}, we consider convergence and $p$-completion of the strongly even filtration. We refer the reader to Appendix~\ref{sec:equivariant} for background and our notation in equivariant stable homotopy theory. 

\subsection{The regular slice filtration}\label{sec:slice}
Following \cite[\S~2--\S~3]{Ull-thesis}, we define the \emph{regular slice cells} of dimension $k$ to be the 
$C_2$-spectra~${C_2}_+\otimes_{H}\bS^{n\rho_H}$  for $n|H|=k$ and $H\subset C_2$ a subgroup. 
Let $\tau_{k}$ denote the localizing subcategory of $C_2$-spectra generated by the regular slice cells of dimension $\ge k$. 
Let $\tau_{k}^{\perp}$ denote the full subcategory of $\Sp^{C_2}$ on those $C_2$-spectra $X$ such that $[Y,X]=0$ for 
all $Y\in \tau_{k}$.

\begin{construction}
 We define the functor $P^k$ to be the left adjoint to the inclusion $\tau_{k}^{\perp}\subset \Sp^{C_2}$ and 
 we define $P_k:=\fib(\id \to P^{k-1})$. These functors assemble to a functor
\[ 
P_{\bullet} : \Sp^{C_2}\longrightarrow \Fil(\Sp^{C_2})
\]
from the category of $C_2$-spectra to the category of filtered $C_2$-spectra called the \emph{regular slice filtration}
 with associated graded functor
\[ 
P_{\ast}^{\ast}:=\gr^{\ast}\circ P_{\bullet}: \Sp^{C_2} \longrightarrow \Gr(\Sp^{C_2})
\]
from the category of $C_2$-spectra to the category of graded $C_2$-spectra. 
\end{construction}

\begin{remark}
Note that at the underlying level, $P_n$ is simply the $n$-th Postnikov cover. 
\end{remark}

By~\cite[Proposition~3.1.20, Corollary~3.1.13]{Yan25}, the $C_2$-$\infty$-category of filtered $C_2$-spectra and 
the $C_2$-$\infty$-category of graded $C_2$-spectra admit canonical $C_2$-monoidal structures given by Day convolution, 
and the associated graded functor is $C_2$-symmetric monoidal. We can therefore make sense of the following definition. 

\begin{defin}
Let $\CAlg^{C_2}_{\fil}:=\CAlg^{C_2}(\Fil(\Sp)^{C_2})$ and let $\CAlg^{C_2}_{\gr}:=\CAlg^{C_2}(\Gr(\Sp)^{C_2})$ . 
\end{defin}

\begin{lem}
The regular slice filtration functors $P_\bullet$ and $P_\ast^\ast$ restrict to functors
\[ 
P_{\bullet} : \CAlg^{C_2}\longrightarrow \CAlg^{C_2}_{\fil}\,,\]
\[
P_{*}^{*} : \CAlg^{C_2}\longrightarrow \CAlg^{C_2}_{\gr}\,,
\]
respectively. Moreover, the regular slice filtration satisfies 
\[ 
\lim P_s\simeq 0  \,,
\text{ and } 
\colim P_s\simeq \id \,.
\]
\end{lem}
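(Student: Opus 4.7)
The plan is to prove the lemma in two stages: first, that $P_\bullet$ and $P_\ast^\ast$ lift from functors between $C_2$-spectra to functors on $C_2$-$\bE_\infty$-algebras; second, the convergence statements. I expect the first stage to be the main obstacle, since it requires establishing compatibility of the regular slice filtration with the $C_2$-symmetric monoidal Day convolution structure recalled from \cite{Yan25}.

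For the multiplicativity, I would first verify the key inclusion
\[
\tau_k \otimes \tau_l \subset \tau_{k+l}
\]
for all integers $k, l$. Because $\tau_k$ is the localizing subcategory generated by the regular slice cells of dimension at least $k$, and $\otimes$ preserves colimits in each variable, this reduces to a check on generators. For subgroups $H, K \subset C_2$ and integers $n, m$ with $n|H| \geq k$ and $m|K| \geq l$, it suffices to verify that the smash product
\[
({C_2}_+ \otimes_H S^{n\rho_H}) \otimes ({C_2}_+ \otimes_K S^{m\rho_K})
\]
decomposes as a coproduct of regular slice cells of total dimension at least $k+l$. Using the projection formula and the identity $S^{n\rho} \otimes S^{m\rho} \simeq S^{(n+m)\rho}$, this reduces to a direct check in each of the four cases $(H, K) \in \{(e,e), (e, C_2), (C_2, e), (C_2, C_2)\}$.

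With multiplicativity in hand, standard theory of smashing Bousfield localizations compatible with monoidal structures, applied in the parametrized setting of \cite{Yan25}, equips $P^{k-1}$, and hence $P_k = \fib(\id \to P^{k-1})$, with a lax $C_2$-symmetric monoidal structure. These refinements assemble as $k$ varies into a lax $C_2$-symmetric monoidal functor $P_\bullet$ valued in $\Fil(\Sp)^{C_2}$, and postcomposing with the $C_2$-symmetric monoidal associated graded functor (again from \cite{Yan25}) gives $P_\ast^\ast$. Since lax symmetric monoidal functors preserve $\bE_\infty$-algebras, the desired factorizations through $\CAlg^{C_2}_{\fil}$ and $\CAlg^{C_2}_{\gr}$ follow.

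For convergence, the vanishing $\lim_s P_s X \simeq 0$ follows from the fact that $P_s X \in \tau_s$ for every $s$ combined with triviality of $\bigcap_s \tau_s$: any object of the intersection receives the zero map from every regular slice cell, and such cells generate $\Sp^{C_2}$ under colimits and shifts. For $\colim_s P_s X \simeq X$, I would use the cofiber sequences $P_s X \to X \to P^{s-1} X$ to reduce to showing $\colim_{s \to -\infty} P^{s-1} X \simeq 0$. Any compact $C_2$-spectrum $Y$ lies in $\tau_c$ for some $c$ and so satisfies $P^{s-1} Y = 0$ for all $s \leq c$; since $\Sp^{C_2}$ is compactly generated and $P^{s-1}$ is a left adjoint, the general case follows by writing $X$ as a filtered colimit of compact objects and passing $P^{s-1}$ through the colimit.
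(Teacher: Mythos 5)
Your argument for the monoidal refinement follows the same route as the paper's cited sources (Ullman, Yang): verify $\tau_k \otimes \tau_l \subset \tau_{k+l}$ on regular slice cells using the projection formula, then invoke the general machinery of monoidal localizations and Day convolution. This part is fine.

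The convergence argument has a genuine gap on the $\lim$ side, and it is not the route the paper takes. You implicitly deduce $\lim_s P_s X \simeq 0$ from triviality of $\bigcap_s \tau_s$, but this requires knowing that $\lim_s P_s X$ lies in that intersection, and $\tau_s$ is a \emph{localizing} subcategory---closed under colimits, not under cofiltered limits---so $P_s X \in \tau_s$ for all $s$ does not place the inverse limit in $\bigcap_s \tau_s$. Moreover, the reason you give for $\bigcap_s \tau_s = 0$ is reversed: an object of $\tau_s$ is \emph{built from} slice cells of dimension $\ge s$ under colimits; it is objects of $\tau_s^\perp$, not $\tau_s$, that receive zero maps from those cells. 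The correct argument, and the one the paper uses, is to translate slice-connectivity into ordinary Postnikov connectivity after passing to (geometric) fixed points---a slice cell of dimension $\ge s$ has underlying spectrum and geometric fixed points whose connectivity grows without bound in $s$, hence so does any object of $\tau_s$---and then to invoke left-separatedness of the usual Postnikov $t$-structure. On the $\colim$ side your reduction to $\colim_{s\to-\infty} P^{s-1} X \simeq 0$ is sound, but the crucial input that every compact $C_2$-spectrum lies in $\tau_c$ for some $c$ is asserted without proof. It is true (e.g.\ $S^{-1}$ sits in the cofiber sequence ${C_2}_+ \otimes S^{-2} \to S^{-\rho} \to S^{-1}$ of objects of $\tau_{-2}$, and similarly one checks all integer shifts of the orbits are slice-bounded-below, hence so is any finite $C_2$-CW spectrum and any retract of one), but this is precisely the substantive content of exhaustiveness and should not be left implicit; you should also justify that $P^{s-1}$, viewed as an endofunctor, preserves filtered colimits, which holds because $\tau_{s-1}$ is generated by compact objects so that $\tau_{s-1}^\perp$ is closed under filtered colimits.
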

\begin{proof}
The first statement follows from~\cite[pp.~5]{Ull13}, or in our setting see~\cite[Proposition~3.2.19]{Yan25}; 
see also~\cite[Construction~4.2]{BHS22}. 
For the second statement this follows by passing to geometric fixed points and using the fact that the usual 
Postnikov t-structure on spectra is right and left separated; see~\cite[Proposition~3.2.22]{Yan25}.  
\end{proof}

The following lemma is immediate from the construction. 

\begin{lem}\label{lem:property-of-slices}
The functor 
\[ 
P_*^* : \Sp^{C_2}\longrightarrow  \Gr(\Sp^{C_2})
\]
commutes with filtered colimits and 
finite products. 
\end{lem}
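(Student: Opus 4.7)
The plan is to reduce the claim to a statement about the functors $P^{k-1}$, since $P_k$ is defined as $\fib(\id\to P^{k-1})$ and the associated graded $\gr^*\colon \Fil(\Sp^{C_2})\to \Gr(\Sp^{C_2})$ is computed levelwise by the cofiber $P_k/P_{k+1}$. In the stable $\infty$-category $\Sp^{C_2}$ (and degreewise in $\Gr(\Sp^{C_2})$), fibers and cofibers commute with filtered colimits and with finite products, which are also finite coproducts. So once I show each functor $P^{k-1}\colon \Sp^{C_2}\to \Sp^{C_2}$ preserves filtered colimits and finite products, the claim for $P_k$ and therefore for $P_*^*$ follows formally.

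First I would handle finite products. By construction, $P^{k-1}$ is a left adjoint to the fully faithful inclusion $\tau_{k}^{\perp}\hookrightarrow \Sp^{C_2}$, so it preserves all small colimits. Since $\Sp^{C_2}$ is stable, finite products agree with finite coproducts, and hence $P^{k-1}$ preserves finite products. Consequently $P_k$ does as well.

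For filtered colimits, the key point is that the generators ${C_2}_+\otimes_H \bS^{n\rho_H}$ with $n|H|=k$ of the localizing subcategory $\tau_k$ are compact objects of $\Sp^{C_2}$, being finite $C_2$-CW spectra. Hence $\tau_k$ is compactly generated inside $\Sp^{C_2}$, and the right orthogonal $\tau_k^{\perp}$ is closed under filtered colimits (testing against compact generators turns filtered colimits in the target into filtered colimits of mapping spectra). Therefore the inclusion $\tau_k^{\perp}\hookrightarrow \Sp^{C_2}$ preserves filtered colimits, and composing with the colimit-preserving left adjoint $P^{k-1}$ shows that $P^{k-1}\colon \Sp^{C_2}\to \Sp^{C_2}$ preserves filtered colimits. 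As before, this propagates to $P_k$ via the fiber sequence and then to $P_*^*$ via the associated graded.

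The only step requiring care is the compactness of the regular slice cells and the resulting closure of $\tau_k^{\perp}$ under filtered colimits; everything else is a formal consequence of stability and the existence of the localization adjunction. I therefore expect the main (mild) obstacle to be clearly articulating the compact generation of $\tau_k$, after which the rest of the argument is automatic and can reasonably be labelled ``immediate from the construction''.
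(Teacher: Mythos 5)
Your proof is correct and is exactly the spelled-out version of what the paper takes for granted: the paper simply asserts the lemma is ``immediate from the construction'' with no further argument, and your identification of the two key points — that $P^{k-1}$ is a left adjoint (hence preserves colimits, hence preserves finite products by stability), and that the regular slice cells are compact so $\tau_k^\perp$ is closed under filtered colimits — is the right content behind that assertion.
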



\subsection{Strongly even $C_2$-spectra}\label{sec:even}
Let $\m{M}$ be a $C_2$-Mackey functor. 
We write 
\[ 
    \cF_1\m{M} \subset \m{M}
\]
for the sub-Mackey functor generated by $M(C_2/e)$ together with its Weyl group action following~\cite{Ull13}. 

We can then describe the associated graded of the regular slice filtration.  
\begin{prop}\label{prop:slices}
        For any $C_2$-spectrum $E$, the regular slice filtration satisfies
        \[
            P_{n}^{n}(E)=\begin{cases}
                \Sigma^{k\rho} \m{\pi}_{k\rho}(E) \quad & n=2k,\\
                \Sigma^{k\rho-1}\mathcal{F}_1\upi_{k\rho-1}(E) \quad & n=2k-1.
            \end{cases}
        \]
\end{prop}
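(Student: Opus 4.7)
The plan is to invoke Ullman's identification of the hearts of the regular slice $t$-structure with categories of Mackey functors, and then pin down the Mackey functor attached to $P_n^n(E)$ by a homotopy group calculation.

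\textbf{First}, by \cite[\S 3]{Ull-thesis} (or the $\infty$-categorical account in \cite{Yan25}), the $\infty$-category of $n$-slices in $\Sp^{C_2}$ admits the following identification. When $n = 2k$ it is equivalent, via $\upi_{k\rho}$, to the category of $C_2$-Mackey functors with inverse $\m{M} \mapsto \Sigma^{k\rho} H\m{M}$. When $n = 2k-1$ it is equivalent, via $\upi_{k\rho-1}$, to the full subcategory of Mackey functors $\m{N}$ satisfying $\m{N} = \cF_1\m{N}$, with inverse $\m{N} \mapsto \Sigma^{k\rho-1}H\m{N}$. Since $P_n^n(E)$ lies in the slice heart by construction, it necessarily has the claimed form, so only the associated Mackey functor remains to be identified.

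\textbf{Second}, for $n = 2k$ I would apply $\upi_{k\rho}$ to the two fiber sequences
\[
P_{2k+1}(E) \longrightarrow P_{2k}(E) \longrightarrow P_{2k}^{2k}(E), \qquad P_{2k}(E) \longrightarrow E \longrightarrow P^{2k-1}(E),
\]
and show that $\upi_{k\rho}$ annihilates both outer terms. Using Lemma~\ref{lem:property-of-slices} and the classification above, this reduces to checking that $\upi_{k\rho}(\Sigma^{j\rho}H\m{M}) = 0$ for $j \ne k$ and $\upi_{k\rho}(\Sigma^{j\rho - 1}H\m{N}) = 0$ for all $j$; both follow from direct computations with $\RO(C_2)$-graded homotopy of Eilenberg--MacLane spectra, tracked via underlying homotopy and geometric fixed points. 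Chasing through the two fiber sequences then produces an isomorphism $\upi_{k\rho}(P_{2k}^{2k}E) \cong \upi_{k\rho}(E)$, which under Step~1 is the desired identification.

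\textbf{Third}, for $n = 2k - 1$ the same strategy applied to $\upi_{k\rho - 1}$ produces a map $\upi_{k\rho - 1}(E) \to \m{N}$, where $P_{2k-1}^{2k-1}(E) \simeq \Sigma^{k\rho - 1}H\m{N}$. Because $\m{N} = \cF_1 \m{N}$, this map factors uniquely through $\cF_1\upi_{k\rho - 1}(E) \to \m{N}$. To see that this factored map is an isomorphism, I would use that the regular slice filtration restricts to the ordinary Postnikov filtration on underlying spectra, so $(P_{2k-1}^{2k-1}E)^e \simeq \Sigma^{2k-1}H\pi_{2k-1}(E^e)$, giving an isomorphism at the $C_2/e$-level; the generation condition $\m{N} = \cF_1\m{N}$ then promotes this to an isomorphism of Mackey functors. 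The main technical obstacle is the vanishing of $\upi_{k\rho}$ and $\upi_{k\rho - 1}$ on slices indexed by the ``other'' parity or by $j \ne k$: these are $\RO(C_2)$-graded rather than integer-graded homotopy groups, so care is needed to rule out genuine equivariant contributions at the fixed-point level.
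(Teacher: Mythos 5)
Your argument is correct in outline but takes a genuinely different route from the paper. The paper's proof is a two-step reduction: it invokes the $\rho$-periodicity of the regular slice filtration, $P_{2k+n}^{2k+n}(\Sigma^{k\rho}X)=\Sigma^{k\rho}P_n^n(X)$, with $X=\Sigma^{-k\rho}E$ to reduce everything to the computation of $P_0^0$ and $P_{-1}^{-1}$, and then quotes Ullman's Corollary~8.9, which identifies these directly as $H\upi_0$ and $\Sigma^{-1}H\cF_1\upi_{-1}$. You instead start from the classification of the slice hearts and re-derive Ullman's identification of the associated Mackey functors by a homotopy-group chase through the fiber sequences. What the paper's approach buys is brevity (the periodicity formula does all the work of relocating the computation to a single slice degree); what yours buys is a self-contained identification of the Mackey functors rather than a citation.

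Two soft spots in your execution. First, the vanishing of $\upi_{k\rho}$ and $\upi_{k\rho\pm 1}$ on $P_{2k+1}E$ and on $P^{2k-1}E$ is better obtained directly from slice (co)connectivity --- $P_{2k+1}E\ge 2k+1$ implies $\Sigma^{-k\rho}P_{2k+1}E\ge 1$, and $P^{2k-1}E\in\tau_{2k}^{\perp}$ kills maps out of $\Sigma^{j}S^{k\rho}$ for $j\ge 0$ --- rather than by summing vanishing statements over the individual slices, which requires a convergence/$\lim^1$ argument for the infinite towers. Second, in the odd case the natural comparison map points the other way: the slice $P_{2k-1}^{2k-1}E$ receives a map from $P_{2k-1}E$, not from $E$, and the fiber-sequence chase produces an \emph{injection} $\m{N}\hookrightarrow\upi_{k\rho-1}(E)$ (using $\upi_{k\rho}(P^{2k-2}E)=0$), not a map $\upi_{k\rho-1}(E)\to\m{N}$; note that $S^{k\rho-1}\notin\tau_{2k-1}$, so $\upi_{k\rho-1}(P^{2k-2}E)$ need not vanish and the injection is not surjective in general. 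The crux is then to identify the image of this injection with $\cF_1\upi_{k\rho-1}(E)$, and here your closing move is the right one: the inclusion is an isomorphism at the $C_2/e$ level because regular slices restrict to Postnikov sections on underlying spectra, and a sub-Mackey functor that is generated by its underlying level and exhausts the underlying level of the ambient Mackey functor is exactly $\cF_1$ of it. With the direction of the map corrected and the vanishing arguments run through (co)connectivity, your proof goes through.
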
 

\begin{proof}
As in the proof of~\cite[Theorem~2.18]{Hil12}, the regular slice filtration satisfies 
\[ 
P_{2k+n}^{2k+n}(\Sigma^{k \rho }X)= \Sigma^{k \rho}P_{n}^{n}(X)
\]
by~\cite[pp.~1745]{Ull13}. 
Applying this in the case $X=\Sigma^{-k\rho}E$ and $n=-1$
, we determine that 
\[ 
P_{2k-1}^{2k-1}(E)=\Sigma^{k \rho}P_{-1}^{-1}(\Sigma^{-k\rho}E)\,.
\]
Applying this in the case $X=\Sigma^{-k\rho}E$ and $n=0$, we determine that 
\[ 
P_{2k}^{2k}(E)= \Sigma^{k \rho}P_{0}^{0}(\Sigma^{-k\rho}E)
\]
so it suffices to compute $P_{-1}^{-1}(\Sigma^{-k\rho}E)$ and $P_0^0(\Sigma^{-k\rho}E)$. 
By~\cite[Corollary~8.9]{Ull-thesis}, we have
\[
P_0^0(\Sigma^{-k\rho}E)=\upi_0(\Sigma^{-k\rho}E)=\upi_{k\rho}E \,,
\]
\[
P_{-1}^{-1}(\Sigma^{-k\rho}E)\simeq \Sigma^{-1}H\mathcal{F}_1\upi_{-1}\Sigma^{-k\rho}E
=\Sigma^{-1}H\mathcal{F}_1 \upi_{k\rho-1}E \,.
\]
\end{proof}

The following is the most important definition for our purposes. 

\begin{defin}[{\cite[Definition~3.1]{HM17}}]\label{def:strongly-even}
We say that $E$ is \emph{even} if 
\[\upi_{k \rho -1}E=0\] 
for all integers~$k$. 
We say that $E$ is \emph{strongly even} if
it is even and the restriction map 
\[
\res_e^{C_2}:\pi_{\rho k}^{C_2}E\to \pi_{2k}^eE
\] 
is an isomorphism, in other words $\upi_{k \rho }E$ is the constant Mackey functor $\m{\pi_{2k}^eE}$, 
for all integers~$k$. We say a Mackey functor is even if $H\mM$ is an even $C_2$-spectrum. 
\end{defin}

\begin{remark}\label{rem:slices}
If $E$ is strongly even in the sense of Definition~\eqref{def:strongly-even}, then 
\[ 
P_{\ast}^{\ast}E=\bigoplus \Sigma^{k\rho}  \m{\pi_{2k}^e E}
\]
by Proposition~\ref{prop:slices}. 
\end{remark}

\begin{remark}
There is an unfortunate clash of terminology here. 
We use the term strongly even in the sense of~\cite[Definition~3.1]{HM17} and not in the sense of~\cite[Definition~3.2.2]{HRW22}. In~\cite[Definition~3.9]{HP23}, they instead use the term \emph{very even} for what we call strongly even.
\end{remark}

The following lemma provides an alternative characterization of strongly even $C_2$-spectra. 

\begin{lem}[{\cite[Lemma~1.2~(ii)]{Gre18}}]\label{lem:criteria}
A $C_2$-spectrum $E$ is strongly even if and only if $\pi_{\rho k-i}^{C_2}E=0$ for $i=1,2,3$.
\end{lem}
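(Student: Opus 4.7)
The plan is to extract both implications from the long exact sequence coming from the Euler cofiber sequence $(C_2)_+ \to S^0 \to S^\sigma$. Smashing with a representation sphere $S^V$ and applying $[-,E]^{C_2}$, and using the identification $(C_2)_+ \wedge S^V \simeq (C_2)_+ \wedge S^{|V|}$, we obtain
\[
\cdots \to \pi_{|V|+1}^{e} E \to \pi_{V+\sigma}^{C_2} E \to \pi_V^{C_2} E \xrightarrow{\res} \pi_{|V|}^{e} E \to \pi_{V+\sigma - 1}^{C_2} E \to \pi_{V - 1}^{C_2} E \to \cdots,
\]
in which the marked map is the restriction. The whole argument will consist of instantiating this sequence at a handful of carefully chosen values of $V$.

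For the forward implication, the case $i = 1$ is immediate from $\upi_{k\rho - 1}E = 0$. For $i = 2$, I would take $V = (k-1)\rho - 1$ so that $V + \sigma = k\rho - 2$ and $|V| = 2k - 3$; evenness gives $\pi_{(k-1)\rho - 1}^{C_2} E = 0$, and the restriction $\pi_{(k-1)\rho}^{C_2} E \to \pi_{2k-2}^e E$ is an isomorphism by the strongly even hypothesis, so exactness exhibits $\pi_{k\rho - 2}^{C_2}E$ as the cokernel of this isomorphism and hence zero. For $i = 3$, setting $V = (k-1)\rho - 2$, the $i = 2$ vanishing at $k-1$ just established together with the underlying half of evenness sandwich $\pi_{k\rho - 3}^{C_2} E$ between zeros.

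For the backward implication, assume the three vanishings for all $k$. Taking $V = k\rho - 1$, the three consecutive groups $\pi_{(k+1)\rho - 2}^{C_2} E$, $\pi_{k\rho - 1}^{C_2} E$, and $\pi_{(k+1)\rho - 3}^{C_2} E$ are each zero by the hypothesis at $i = 2, 1, 3$ respectively, so exactness forces $\pi_{2k-1}^e E = 0$ (completing the evenness part of strong evenness) and exhibits the restriction $\pi_{k\rho}^{C_2}E \to \pi_{2k}^e E$ as surjective. Extending the LES one step to the left, the $i = 1$ vanishing at $k+1$ makes the restriction injective as well. Finally, since the restriction always factors through the $C_2$-fixed points of $\pi_{2k}^e E$, its surjectivity forces the residual $C_2$-action on $\pi_{2k}^e E$ to be trivial, and the double coset identity $\res \circ \mathrm{tr} = 1 + g$ then pins down the transfer, identifying $\upi_{k\rho} E$ with the constant Mackey functor $\m{\pi_{2k}^e E}$. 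The argument is really just careful bookkeeping; the only obstacle is to line up the three instances of the LES so that the hypotheses propagate cleanly in both directions.
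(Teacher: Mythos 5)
Your proof is correct, and it is the standard argument via the Euler cofiber sequence $(C_2)_+ \to S^0 \to S^\sigma$; the paper does not prove this lemma itself but cites Greenlees, whose argument is the same, so you have simply supplied the omitted details. One cosmetic slip in the backward direction: the surjectivity of $\mathrm{res}\colon \pi^{C_2}_{k\rho}E \to \pi^e_{2k}E$ comes from the instance $V=k\rho$ of your long exact sequence (via $\pi^e_{2k}E \to \pi^{C_2}_{(k+1)\rho-2}E=0$, i.e.\ the $i=2$ vanishing at $k+1$), not from the instance $V=k\rho-1$ that you use for the vanishing of $\pi^e_{2k-1}E$.
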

\begin{warning}
Note that in~\cite[Lemma~1.2(i)]{Gre18},  Greenlees states that a $C_2$-spectrum $X$ is strongly even 
if $\pi_{\rho k-1}^{C_2}X=0$ and the restriction map  is an isomorphism. 
This definition is distinct from the definition of strongly even from~\cite[Definition~3.1]{HM17}, 
which we prefer to adhere to in this paper. We therefore specifically cite~\cite[Lemma~1.2(ii)]{Gre18} 
for Lemma~\ref{lem:criteria}. 
\end{warning}

\begin{exm}
The simplest example of a strongly even $C_2$-spectrum is the Eilenberg--MacLane spectrum $H\mF_2$, 
whose $\RO(C_2)$-graded homotopy groups are depicted in Figure~\ref{fig1}.
\end{exm}

\begin{figure}[ht!]
\resizebox{.5\textwidth}{!}{
\begin{tikzpicture}[radius=.08,scale=1]
\foreach \n in {-5,-4,-3,-2,-1,0,...,5} \node [below] at (\n+.1,-.1) {$\n$};
\foreach \s in {-5,-4,-3,-2,-1,1,2,...,5} \node [left] at (-.1,\s) {$\s$};
\draw [thin,color=lightgray] (-5,-5) grid (5,5);
\node [right] at (0,0) {1};
\node [right] at (0,-1) {$a_{\sigma}$};
\node [right] at (1,-1) {$u_{\sigma}$};
\node [right] at (-2,2) {$\theta$};


\foreach \n in {-5,-4,...,0} \draw [fill] (0,\n) circle;
\foreach \n in {-5,-4,...,-1} \draw [fill] (1,\n) circle;
\foreach \n in {-5,-4,...,-2} \draw [fill] (2,\n) circle;
\foreach \n in {-5,-4,-3} \draw [fill] (3,\n) circle;
\foreach \n in {-5,-4} \draw [fill] (4,\n) circle;
\foreach \n in {-5} \draw [fill] (5,\n) circle;

\draw (0,0) -- (0,-5);
\draw (0,0) -- (5,-5);
\draw (1,-1) -- (1,-5);
\draw (0,-1) -- (4,-5);
\draw (2,-2) -- (2,-5);
\draw (0,-2) -- (3,-5);
\draw (3,-3) -- (3,-5);
\draw (0,-3) -- (2,-5);
\draw (4,-4) -- (4,-5);


\foreach \n in {2,3,4,5} \draw [fill] (-2,\n) circle;
\foreach \n in {3,4,5} \draw [fill] (-3,\n) circle;
\foreach \n in {4,5} \draw [fill] (-4,\n) circle;
\foreach \n in {5} \draw [fill] (-5,\n) circle;

\draw (-2,2) -- (-5,5);
\draw (-2,3) -- (-4,5);
\draw (-2,4) -- (-3,5);
\draw (-2,5) -- (-2,5);
\draw (-2,2) -- (-2,5);
\draw (-3,3) -- (-3,5);
\draw (-4,4) -- (-4,5);


\draw [color=blue, dashed] (-5,-4) -- (4,5);
\draw [color=blue, dashed] (-5,-3) -- (3,5);
\draw [color=blue, dashed] (-5,-2) -- (2,5);

\end{tikzpicture}
}
\caption{
The $\RO(C_2)$-graded homotopy groups of $H\mF_2$. The group $\pi^{C_2}_{m+n\sigma}(H\mF_2)$ is displayed in bidegree $(m,n)$. 
Each bullet $\bullet$ represents a copy of $\bF_2$. The blue dashed lines indicate the gap, which implies that the 
$C_2$-spectrum $H\mF_2$ is strongly even.  
\label{fig1}
}
\end{figure}

\subsection{The strongly even filtration}\label{sec:strongly-even-filtration}
We first introduce notation. 
\begin{notation}
Let $\CAlg^{C_2}_{\sev}$ denote the full sub-$\infty$-category of $\CAlg^{C_2}$ spanned by the objects whose 
underlying object in $\Sp^{C_2}$ is strongly even. 
\end{notation}
We will need the following smallness result for our main construction. 
\begin{prop}\label{prop:acc1}
The sub-$\infty$-category $\CAlg^{C_2}_{\sev}$  of $\CAlg^{C_2}$ is accessible. 
\end{prop}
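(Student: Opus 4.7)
The plan is to exhibit $\CAlg^{C_2}_{\sev}$ as a countable intersection of accessible full subcategories of the presentable (hence accessible) $\infty$-category $\CAlg^{C_2}$, and then invoke the general fact that a small intersection of accessible subcategories of an accessible $\infty$-category is accessible (\cite[Proposition~5.4.7.3]{Lur09}).

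The first step is to rewrite strong evenness as a family of vanishing conditions. By Lemma~\ref{lem:criteria},
\[
\CAlg^{C_2}_{\sev} \;=\; \bigcap_{k \in \bZ,\, i \in \{1,2,3\}} \cC_{k,i},
\]
where $\cC_{k,i} \subset \CAlg^{C_2}$ is the full subcategory spanned by those $B$ with $\pi_{\rho k - i}^{C_2} B = 0$. Since the indexing set $\bZ \times \{1,2,3\}$ is countable, once I know each $\cC_{k,i}$ is an accessible subcategory, the intersection is accessible.

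Next I would verify the accessibility of each $\cC_{k,i}$. By construction, $\cC_{k,i}$ is the preimage of the full subcategory $\{0\} \subset \mathrm{Ab}$ under the composite functor
\[
\CAlg^{C_2} \xrightarrow{\,U\,} \Sp^{C_2} \xrightarrow{\,\pi_{\rho k - i}^{C_2}\,} \mathrm{Ab}.
\]
The forgetful functor $U$ preserves sifted (and in particular filtered) colimits, since $\CAlg^{C_2}$ is the $\infty$-category of $\bE_\infty$-algebras in a presentable symmetric monoidal $\infty$-category. The homotopy group functor $\pi_V^{C_2}$ is corepresented in $\Sp^{C_2}$ by $S^V$ (a compact object), hence preserves filtered colimits. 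Therefore the composite is an accessible functor between accessible $\infty$-categories. Since the full subcategory $\{0\} \subset \mathrm{Ab}$ is tautologically accessible, its preimage under an accessible functor is accessible by the standard stability properties of accessibility, cf.~\cite[\S 5.4.6]{Lur09}.

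The argument is entirely formal; the only point that warrants care is confirming that strong evenness really is preserved under sufficiently filtered colimits in $\CAlg^{C_2}$, which reduces to the two observations above about $U$ and $\pi_V^{C_2}$. I do not foresee a serious obstacle; the proof is essentially bookkeeping with the accessibility toolkit of \cite{Lur09}.
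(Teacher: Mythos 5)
Your proof is correct and conceptually identical to the paper's, differing only in packaging. The paper forms a single pullback square over the functor $\pi_\star^{C_2}:\CAlg^{C_2}\to \Ab^{\RO(C_2)}$ landing in the subcategory of $\RO(C_2)$-graded abelian groups that vanish in degrees $\rho k - i$ for $i\in\{1,2,3\}$, then applies \cite[Proposition~5.4.6.6]{Lur09} once; you instead slice the same vanishing condition into a $\bZ\times\{1,2,3\}$-indexed family of individual pullbacks along the $\Ab$-valued functors $\pi^{C_2}_{\rho k - i}$ and then take a small intersection. Both routes hinge on the same two observations — that the forgetful functor $\CAlg^{C_2}\to\Sp^{C_2}$ preserves filtered colimits and that $\pi^{C_2}_V$ is corepresented by a compact object — and both appeal to the closure of accessibility under small limits in $\widehat{\Cat}_\infty$. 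The paper's version is marginally more economical (one pullback instead of countably many plus a closure-under-intersection step), while yours is more explicit about where the accessibility of each constituent functor comes from; neither approach has a gap.
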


\begin{proof}
Let $\Ab^{\RO(C_2)}$ denote the $\infty$-category of $\RO(C_2)$-graded abelian groups and let $\Ab^{\RO(C_2)}_{\sev}$ 
denote the $\infty$-category of $\RO(C_2)$-graded abelian groups which vanish in gradings $\rho k-i$ for $i=1,2,3$ and $k$ 
any integer. 
We can identify $\CAlg^{C_2}_{\sev}$ with the pullback 
\[
\begin{tikzcd}
	 \CAlg^{C_2}_{\sev} \arrow{r}{\inc} \ar[d] & \CAlg^{C_2} \ar[d,"\pi_{\star}^{C_2}"] \\ 
     \Ab^{\RO(C_2)}_{\sev}\ar[r,hookrightarrow,"i"] & \Ab^{\RO(C_2)}
\end{tikzcd}
\]   
where the right vertical map is given by the associated graded of the regular slice filtration and the bottom map is the 
canonical inclusion. The functors $\pi_{\star}^{C_2}$ and $i$ preserve filtered colimits and consequently they are accessible.  
Therefore, the $\infty$-category~$\CAlg^{C_2}_{\sev}$ is accessible by~\cite[Proposition~5.4.6.6]{Lur09}. 
\end{proof}

\begin{defin}\label{def:sev-filt}
Let $A$ be a $C_2$-$\bE_\infty$-ring. We define the \emph{strongly even filtration} as the right Kan extension 
\[
\begin{tikzcd}
\CAlg^{C_2}_{\sev} \arrow[d,hookrightarrow] \arrow{r}{P_{2\bullet}} & \CAlg^{C_2}_{\fil} \\
\CAlg^{C_2} \arrow{ur}[swap]{\filsev^{\bullet}} & 
\end{tikzcd}
\]
and we write $\grsev^* := \gr^* \circ \filsev^{\bullet}$ for its associated graded. 
\end{defin}

\begin{remark}
Since the regular slice filtration is complete, the strongly even filtration is complete, 
i.e. for a $C_2$-$\bE_\infty$-ring $A$ we have $\lim_s\filsev^{s}A=0$. 
\end{remark}

\begin{remark}
If $R$ is a strongly even $C_2$-$\bE_{\infty}$-ring, then $\filsev^{\bullet}R=P_{2\bullet}R$. 
More generally, there is a natural map 
$P_{2\bullet}R\to \filsev^{\bullet}R$
inducing a map of $C_2$-$\bE_\infty$-rings
\[
R\longrightarrow \colim_{s}\filsev^{s}R\,.
\]
We provide a sufficient condition for this map to be an equivalence in Proposition~\ref{prop:convergence}. 
\end{remark}

\subsection{A $p$-complete variant}\label{sec:p-complete-strongly-even-filtration}
First, we discuss the notion of bounded $p$-power torsion in Mackey functors, which we need for our $p$-completed variant of 
the strongly even filtration.  

\begin{defin}
Given a Mackey functor $\m{M}$ we write $\m{M}[p^n]$ for the kernel of the map 
\[
\m{M} \cong \m{M} \square \m{A} \xrightarrow{\m{M}\square p^n} \m{M}\square \mA \cong \m{M}
\]
in the abelian category of Mackey functors. 
We say a Mackey functor has \emph{bounded $p$-power torsion} if the sequence 
\[ 
\m{M}[p]\to \m{M}[p^2]\to \m{M}[p^3] \to \dots 
\]
is eventually constant. 
\end{defin}

\begin{lem}\label{lem:acessible-p-power}
The full subcategory of Mackey functors with bounded $p$-power torsion is an $\aleph_1$-accessible sub-category of 
$p$-complete Mackey functors. 
\end{lem}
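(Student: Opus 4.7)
The plan is to realize the subcategory $\cD$ of $p$-complete Mackey functors with bounded $p$-power torsion as an ascending countable union of accessible subcategories $\cD_0 \subseteq \cD_1 \subseteq \cdots$ and deduce $\aleph_1$-accessibility by a standard closure principle. For each $n \geq 0$, set
\[
\cD_n := \bigl\{\m{M} : \m{M}[p^n]\to \m{M}[p^{n+1}] \text{ is an isomorphism}\bigr\};
\]
a short induction (if $p^{n+2}x=0$ then $p^{n+1}(px)=0$, so $p^n(px)=0$) gives $\cD_n \subseteq \cD_{n+1}$, and by definition $\cD = \bigcup_n \cD_n$. Because $\m{M}\mapsto \m{M}[p^k]$ is a finite-limit construction (the fiber of multiplication by $p^k$), it preserves all limits and $\aleph_1$-filtered colimits, so $\cD_n$ is cut out of the ambient presentable $\infty$-category of $p$-complete Mackey functors by an equivalence between two accessible functors and is therefore an accessible full subcategory by standard closure results such as \cite[Proposition~5.4.6.6]{Lur09}.

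For closure of $\cD$ under $\aleph_1$-filtered colimits in the ambient category I would argue by element chasing. Given an $\aleph_1$-filtered diagram $\{\m{M}_i\}_{i\in I}$ in $\cD$ with colimit $\m{M}$, suppose $\m{M}$ had unbounded $p$-power torsion. Choose, for each $k$, an element $x_k$ (at some Mackey-functor value) with $p^{k+1}x_k=0$ but $p^k x_k\neq 0$. Lift each $x_k$ to $y_k\in \m{M}_{i_k}$ and the vanishing relation $p^{k+1}y_k=0$ to some $\m{M}_{j_k}$ with $j_k\geq i_k$; the countable set $\{j_k\}$ admits an upper bound $j_\ast$ by $\aleph_1$-filteredness. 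The images of the $y_k$ in $\m{M}_{j_\ast}$ then exhibit unbounded $p$-power torsion there, contradicting $\m{M}_{j_\ast}\in \cD$. For the existence of $\aleph_1$-compact generators, every $\m{M}\in \cD$ is the $\aleph_1$-filtered union of its countably generated sub-Mackey functors, each of which inherits the torsion bound of $\m{M}$ and is $\aleph_1$-compact in the ambient presentable category; this, together with closure under $\aleph_1$-filtered colimits, yields $\aleph_1$-accessibility of $\cD$.

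The main subtlety will be ensuring that the element-chasing argument is valid in the $p$-complete setting, since $\aleph_1$-filtered colimits in $p$-complete Mackey functors are obtained by $p$-completing the colimit computed in all Mackey functors. One must verify that $p$-completion of a Mackey functor with bounded $p$-power torsion retains bounded $p$-power torsion, and conversely that a Mackey functor whose $p$-completion has bounded $p$-power torsion also has bounded $p$-power torsion. Both directions reduce, via the short exact sequence splitting off the $p^\infty$-torsion, to the fact that the $p$-completion of a $p$-torsion-free Mackey functor is again $p$-torsion-free, which can be checked at the level of underlying abelian groups by standard derived $p$-completion calculations; this is the reason the statement requires $\aleph_1$-accessibility rather than ordinary accessibility, since a countable filtered colimit of bounded $p$-power torsion objects (such as the system $\mathbb{Z}/p \to \mathbb{Z}/p^2 \to \cdots$) can easily acquire unbounded $p$-power torsion.
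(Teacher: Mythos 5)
Your core argument — the countable element-chase establishing closure of the subcategory under $\aleph_1$-filtered colimits — is exactly the paper's proof (which itself follows \cite[Lemma~2.1.8]{HRW22}): lift witnesses of unbounded $p$-power torsion to the diagram, use $\aleph_1$-filteredness to find a common index, and derive a contradiction. That part is correct and matches.

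A few of the surrounding claims are shaky, though. The first paragraph's ``standard closure principle'' for ascending countable unions of accessible subcategories does not exist in the generality you invoke it: being a countable union of $\aleph_1$-accessible subcategories, each closed under $\aleph_1$-filtered colimits, does not by itself deliver $\aleph_1$-accessibility of the union, so this decomposition is a detour rather than a proof step (harmless, since your later two paragraphs supply the actual ingredients directly). In the generators paragraph, a countably generated sub-Mackey functor of a $p$-complete Mackey functor need not itself be $p$-complete (e.g.\ $\m{\mathbb Z}\subset\m{\mathbb Z}_p$), so such subobjects do not a priori live in the ambient category; one should instead take $p$-completions of countably generated subobjects, or countably presented $p$-complete objects. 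Finally, in the last paragraph you correctly flag the question of where the colimit is computed, but the proposed resolution (analyzing whether $p$-completion preserves bounded $p^\infty$-torsion) is not what is needed; the relevant point — implicit in the citation of \cite{HRW22} — is that an $\aleph_1$-filtered colimit, computed in all Mackey functors, of $p$-complete Mackey functors with bounded $p$-power torsion is already $p$-complete once one knows it has bounded $p$-power torsion, so the element chase performed in the uncompleted colimit is decisive. Your heuristic ``$\mathbb Z/p\to\mathbb Z/p^2\to\cdots$'' example for why $\aleph_1$ rather than $\aleph_0$ is required is apt, but it is a statement about finite versus countable cofinality in the lifting argument, not about $p$-completion.
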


\begin{proof}
Let $\{\m{M}_{i}\}_{i\in I}$ be a $\aleph_{1}$-filtered diagram of Mackey functors such that each $M_{i}$ has 
bounded $p$-power torsion. 
For the sake of contradiction, suppose the colimit~$\m{M}$ of this diagram does not have bounded $p$-power torsion. 
Then for each positive integer~$n$, we can find a pair~$(i_{n},x)$ where $i_{n}\in I$ and $x : \mA \to \m{M}_{i_{n}}[p^{n}]$ 
such that there exists an extension
\[
\begin{tikzcd}
 \mA \ar[r,"x"] \ar[rrd] & \m{M}_{i_{n}}[p^{n}] \ar[r]  & \m{M}[p^{n}] \ar[d] \\ 
&&  \m{M}[p^{n}]/\m{M}[p^{n-1}]  \,.
\end{tikzcd}
\]
Since $I$ is $\aleph_{1}$-filtered, there exists an integer $j$ and maps $i_{n}\to j$ for each positive integer~$n$ 
so that the map $\mA\to \mM_{i_{n}}\to \mM_{j}[p^{n}]$ extends to a map 
\[
\mA\to \mM_{i_{n}}\to \mM_{j}[p^{n}]\to \mM_{j}[p^{n}]/\mM_{j}[p^{n-1}]\,.
\] 
This contradicts the assumption that $\m{M}_{j}$ has bounded $p$-power torsion. 
\end{proof}

\begin{defin} Fix a prime $p$. 
\begin{itemize}
\item Let $\cD(\m{A})_p :=\Mod(\Sp^{C_2}_p,\m{A})$ denote the $\infty$-category of $\m{A}$-modules, where $\m{A}$ is the 
Burnside Mackey functor. 
\item Let $\cD(\m{A})_{\bdd-p}$ denote the $\aleph_1$-accessible full sub-$\infty$-category of $\cD(\m{A})_p$ spanned 
by chain complexes of Mackey functors that have bounded $p$-power torsion level-wise. 
\item Let $\Sp_{\bdd-p}^{C_2}$ denote the pullback 
\[
\begin{tikzcd}
\Sp_{\bdd-p}^{C_2} \ar[r] \ar[d] & \Sp_{p}^{C_2} \ar[d,"P_{*}^{*}"] \\
\cD(\m{A})_{\bdd-p} \ar[r] & \cD(\m{A})_p 
\end{tikzcd}
\]
of $\infty$-categories. We say a $p$-complete $C_2$-spectrum has bounded $p$-power torsion if it is in $\Sp_{\bdd-p}^{C_2}$

\item Let  $\CAlg_{p}^{C_2}\subset \CAlg^{C_2}$
be the full sub-$\infty$-category of $C_2$-$\bE_{\infty}$-algebras whose underlying object in $\Sp^{C_2}$  is $p$-complete. 

\item Let  $\CAlg_{\fil,\,p}^{C_2}\subset \CAlg^{C_2}(\Fil(\Sp^{C_2}))$
be the full sub-$\infty$-category of $C_2$-$\bE_{\infty}$-algebras in $\Fil(\Sp^{C_2})$ whose underlying object in 
$\Fil(\Sp^{C_2})$ is $p$-complete. 

\item Let  $\CAlg_{\gr,\,p}^{C_2}\subset \CAlg^{C_2}(\Gr(\Sp^{C_2}))$
be the full sub-$\infty$-category of $C_2$-$\bE_{\infty}$-algebras in $\Gr(\Sp^{C_2})$ whose 
underlying object in $\Gr(\Sp^{C_2})$ is $p$-complete.

\item Let $\CAlg_{\sev,\,p}^{C_2}\subset \CAlg^{C_2}_{\sev}$
be the full sub-$\infty$-category spanned by the $C_2$-$\bE_{\infty}$-algebras 
whose underlying $C_2$-spectrum is $p$-complete and has bounded $p$-power torsion and there is a canonical 
inclusion~$\CAlg_{\sev,\,p}^{C_2}\subset \CAlg_{p}^{C_2}$
\end{itemize}
\end{defin}

\begin{prop}\label{prop:accessibile-Borel}
The sub-$\infty$-category 
$\CAlg_{\sev,\,p}^{C_2}$ of $ \CAlg_{p}^{C_2}$
is accessible.
\end{prop}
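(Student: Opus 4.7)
The plan is to mirror the pullback argument of Proposition~\ref{prop:acc1}, with the extra ingredient that bounded $p$-power torsion is handled via Lemma~\ref{lem:acessible-p-power}. The goal is to display $\CAlg_{\sev,\,p}^{C_2}$ as a pullback of accessible $\infty$-categories along accessible functors, and then invoke~\cite[Proposition~5.4.6.6]{Lur09}.

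Explicitly, I would form the pullback square
\[
\begin{tikzcd}
\CAlg_{\sev,\,p}^{C_2} \arrow[r] \arrow[d] & \CAlg_{\sev}^{C_2} \arrow[d, "U"] \\
\Sp_{\bdd-p}^{C_2} \arrow[r, hookrightarrow] & \Sp^{C_2}
\end{tikzcd}
\]
where $U$ is the forgetful functor and the lower horizontal arrow is the canonical inclusion. Since $\Sp_{\bdd-p}^{C_2} \subset \Sp_p^{C_2}$, the pullback simultaneously enforces $p$-completeness and bounded $p$-power torsion on the underlying $C_2$-spectrum, and so agrees with $\CAlg_{\sev,\,p}^{C_2}$ by construction.

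Next I would verify accessibility of the three non-trivial corners and the two arrows. The $\infty$-category $\CAlg_{\sev}^{C_2}$ is accessible by Proposition~\ref{prop:acc1}, while $\Sp^{C_2}$ is presentable. The remaining corner $\Sp_{\bdd-p}^{C_2}$ is accessible as a pullback: it is defined from $\cD(\m{A})_{\bdd-p}$, which is $\aleph_1$-accessible by Lemma~\ref{lem:acessible-p-power}, together with the associated-graded functor $P_*^*$, which preserves filtered colimits by Lemma~\ref{lem:property-of-slices}, and the inclusion $\cD(\m{A})_{\bdd-p} \hookrightarrow \cD(\m{A})_p$. The forgetful functor $U$ is accessible because it preserves sifted (hence filtered) colimits, and the lower inclusion $\Sp_{\bdd-p}^{C_2} \hookrightarrow \Sp^{C_2}$ is accessible by its defining pullback. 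A single application of~\cite[Proposition~5.4.6.6]{Lur09} then yields accessibility of $\CAlg_{\sev,\,p}^{C_2}$.

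The only genuine subtlety is the accessibility of $\Sp_{\bdd-p}^{C_2}$, where one must be attentive to the presentation cardinal and ensure that Lemma~\ref{lem:acessible-p-power} really keeps the pullback within the accessible world. Everything else is a routine unpacking of definitions combined with a formal application of the pullback-of-accessibles machinery, in exact parallel with the structure of Proposition~\ref{prop:acc1}.
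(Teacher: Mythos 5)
Your proposal is correct and is essentially the paper's argument: the paper's proof simply says to repeat the pullback argument of Proposition~\ref{prop:acc1} using Lemma~\ref{lem:acessible-p-power}, which is exactly what you do — exhibit $\CAlg_{\sev,\,p}^{C_2}$ as a pullback of accessible $\infty$-categories along accessible functors and apply \cite[Proposition~5.4.6.6]{Lur09}, with the lemma supplying accessibility of the bounded-$p$-power-torsion condition. The only (immaterial) difference is that you pull back over $\Sp^{C_2}$ via the forgetful functor and the already-defined $\Sp_{\bdd-p}^{C_2}$, whereas the template in Proposition~\ref{prop:acc1} pulls back over $\RO(C_2)$-graded abelian groups via $\pi_{\star}^{C_2}$.
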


\begin{proof}
This follows from the same argument as the proof of Proposition~\ref{prop:acc1} using Lemma~\ref{lem:acessible-p-power}. 
\end{proof}

The proposition above allows us to make sense of the following definition. 

\begin{defin}
Let $R \in \CAlg_{p}^{C_2}$.
We define the \emph{$p$-complete strongly even filtration} to be the right Kan extension of $P_{2\bullet}$ along the 
canonical inclusion, 
\[
\begin{tikzcd}
\CAlg_{\sev,\,p}^{C_2} \arrow{r}{P_{2\bullet}} \arrow{d}[hookrightarrow]{\inc} & \CAlg^{C_2}_{\fil,\,p} \\ 
\CAlg_{p}^{C_2} \arrow{ur}[swap]{\fil_{\sev,p}^{\bullet}} & 
\end{tikzcd}
\]
and we write $\gr^*_{\sev,p} := \gr^* \circ \fil_{\sev,p}^\bullet$ for its associated graded. 
\end{defin}

\begin{remark}
If $R$ is a $p$-complete and strongly even $C_2$-$\bE_\infty$-algebra whose underlying $C_2$-spectrum lies in 
$\Sp_{\bdd-p}^{C_2}$, then $\fil_{\sev,p}^{\bullet}R=P_{2\bullet}R$.  
\end{remark}

\begin{notation}\label{not:deg-Aw-suspension}
Given a $\gr_{\sev,p}^*\bS_p$-module $M^*$ and an element  $x\in \pi_{V}^{C_2}M^{t}$, we define the \emph{degree} of 
$x$ to be $V$ and the \emph{Adams weight} of $x$ to be $t\rho-V$. We write $\| x \| = (V ,t \rho -V)$. 
Given a $\gr_{\sev,p}^*\bS_p$-module $M$, we write $\Sigma^{V,t}M$
for the shift by stem $V$ and Adams weight $t$ in other words 
\[ 
\pi_{a}(\Sigma^{V,t}M)^b=\pi_{W}M^s
\]
where $W=a-V$ and $2s-W=(2b-a)-t$. 
\end{notation}

\begin{exm}
Let $\kr$ denote connective Real K-theory (cf. Example~\ref{Ex:BPRn}). Consider the example
\[M^*:=\gr_{\sev,2}^*(\kr)_2=P_{2*}^{2*}(\kr)_2= \Sigma^{\rho *}H\m{\bZ}_2\,.\] 
Then $\overline{v}_1\in \pi_{\rho}^{C_2}M^1=\pi_{\rho}^{C_2}\Sigma^{\rho}H\mZ_2$ 
and $1\rho-\rho=0$, so $\|\ol{v}_1\|=(\rho,0)$.  
\end{exm}

\subsection{Descent}\label{sec:descent}
We start by introducing terminology following~\cite[\S 1.6(7)]{HRW22}.  

\begin{defin}
By a \emph{commutative graded ring}, we mean a commutative algebra in the $\infty$-category of graded abelian groups equipped 
with Day convolution symmetric monoidal structure. 
\end{defin}

\begin{remark}
This should be distinguished from the usual notion of a graded commutative ring, which is equipped with the Koszul sign 
convention. As a consequence, a commutative graded ring in this sense has an underlying commutative ring. 
\end{remark}

\begin{defin}
Let $A$ be a commutative graded ring and let $M$ be a graded $A$-module. We say that $M$ is faithfully flat over $A$ if the 
functor 
\[\bigoplus_{n\in \bZ}M^n\otimes_{\bigoplus_{n\in \bZ}A^n}\--\] 
is exact and faithful. 
\end{defin}

\begin{defin}
We call a map of $C_2$-$\bE_\infty$-rings $R\to S$ \emph{faithfully flat} if 
the map of commutative graded rings $\bigoplus_{n\in \bZ}\pi^e_{2n}R\to \bigoplus_{n\in \bZ}\pi^e_{2n}S$ is faithfully flat, 
i.e., the $\bigoplus_{n\in \bZ}\pi_{2n}^eR$-module $\bigoplus_{n\in \bZ}\pi_{2n}^eS$ is faithfully flat. 
We call a map of $p$-complete $C_2$-$\bE_{\infty}$-rings $R\to S$ \emph{$p$-completely faithfully flat} if the map of 
commutative rings $\bigoplus_{n\in \bZ}\pi_{2n}^eR\to \bigoplus_{n\in \bZ}\pi_{2n}^eS$ is $p$-completely faithfully flat 
in the sense of~\cite[Def. 2.2.4]{HRW22}. 
\end{defin}

\begin{defin}\label{def:seff}
A map of $C_2$-$\bE_\infty$-rings $A\to B$  is 
\begin{itemize}
    \item \emph{strongly evenly faithfully flat (seff)} if for any strongly even $C_2$-$\bE_\infty$-ring $C$ equipped with a 
    map of $C_2$-$\bE_\infty$-rings $A\to C$, the $C_2$-spectrum $C\otimes_{A}B$ is strongly even and faithfully flat over $C$. 
    \item \emph{$p$-completely seff} if for any $p$-complete strongly even $C_2$-$\mathbb{E}_{\infty}$-ring $C$ 
    with bounded $p$-power torsion equipped with a map of $C_2$-$\mathbb{E}_{\infty}$-rings $A\to C$, then $(C\otimes_{A}B)_p$ 
    is strongly even and $p$-completely faithfully flat over $C$. 
    \item \emph{pure} if for any non-zero strongly even $C_2$-$\bE_\infty$-ring $C$ equipped with a map of 
    $C_2$-$\bE_\infty$-rings $A\to C$, then $B\otimes_{A}C$ is equivalent to a wedge of $\rho$-divisible suspensions of $C$. 
    \item \emph{$p$-completely pure} if for any non-zero $p$-complete strongly even $C_2$-$\bE_\infty$-ring $C$ 
    equipped with a map of $C_2$-$\bE_\infty$-rings $A\to C$, then $(B\otimes_{A}C)_p$ is equivalent to the $p$-completion 
    of a wedge of $\rho$-divisible suspensions of $C$. 
\end{itemize}
\end{defin}
 
\begin{prop}\label{prop:seff-map-MUR}
The unit map $\bS \longrightarrow \MUR$ is pure.  
\end{prop}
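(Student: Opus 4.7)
The plan is to prove purity by building $\MUR$ via a cell structure whose cells all have dimension a multiple of the regular representation $\rho$, and then arguing that strong evenness of $C$ forces all attaching maps to become null after smashing with $C$. Since $\bS$ is initial in $C_2$-$\bE_\infty$-rings, we have $\MUR\otimes_{\bS}C\simeq \MUR\otimes C$, so we must show that for any non-zero strongly even $C_2$-$\bE_\infty$-ring $C$, $\MUR\otimes C$ is equivalent to a wedge of $\rho$-divisible suspensions of $C$.

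The key input, due to Araki and Hu--Kriz, is that $\MUR$ admits a cell structure
\[ \ast = F_{-1} \to F_0 \to F_1 \to \cdots \to \MUR \simeq \colim_n F_n \]
with each cofiber of the form $F_n/F_{n-1}\simeq \bigvee_i \Sigma^{k_{n,i}\rho}\bS$ for integers $k_{n,i}\ge 0$. Equivalently, the underlying $\MU_*$ is a polynomial ring on classes concentrated in degrees $k\rho$ with $\pi_{k\rho}^{C_2}\MUR$ the constant Mackey functor on $\MU_{2k}$, so that $\MUR$ is in particular strongly even and can be built freely from regular-representation spheres.

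I will argue by induction on $n$ that $F_n \otimes C$ is equivalent to a wedge of $\rho$-divisible suspensions of $C$. The base case $n = -1$ is trivial. For the inductive step, consider the cofiber sequence obtained by smashing the attaching map with $C$:
\[ \bigvee_i \Sigma^{k_{n,i}\rho - 1} C \longrightarrow F_{n-1}\otimes C \longrightarrow F_n\otimes C \longrightarrow \bigvee_i \Sigma^{k_{n,i}\rho}C. \]
By the induction hypothesis, $F_{n-1}\otimes C \simeq \bigvee_j \Sigma^{m_j \rho}C$, so each component of the attaching map lies in a group of the form $\pi_{k\rho-1}^{C_2}\Sigma^{m\rho}C \cong \pi_{(k-m)\rho-1}^{C_2}C$, which vanishes by Lemma~\ref{lem:criteria} since $C$ is strongly even. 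Hence the attaching map is null, the cofiber sequence splits, and $F_n\otimes C$ is again a wedge of $\rho$-suspensions of $C$. Passing to the colimit (which preserves wedges of $\rho$-suspensions up to the usual accumulation), we conclude $\MUR\otimes C$ is a wedge of $\rho$-divisible suspensions of $C$, which is exactly purity.

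The main obstacle is locating and stating the cell structure on $\MUR$ cleanly within the $C_2$-$\bE_\infty$ framework the paper uses, since the classical Araki/Hu--Kriz construction is stated in slightly different language. If a direct cell structure is inconvenient to invoke, an alternative route is to observe that $\MUR$ is strongly even with $\pi_{k\rho}^{C_2}\MUR$ torsion-free, build a CW approximation from these homotopy generators using the regular slice t-structure, and then run the same vanishing-of-attaching-maps argument. Either way, the entire force of the argument is the gap $\pi_{k\rho-i}^{C_2}C = 0$ for $i = 1,2,3$ provided by strong evenness (Lemma~\ref{lem:criteria}), which kills every potential extension class.
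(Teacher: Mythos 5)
Your proof is correct, but it takes a different route from the paper's. The paper's argument is: every even $C_2$-$\bE_\infty$-ring $C$ is Real oriented by \cite[Lemma~2.3]{HM17}, so \cite[Theorem~2.25]{HK01} gives $\upi_\star(C\otimes\MUR)\cong\upi_\star C[\bar b_i]$ with $|\bar b_i|=i\rho$; one then chooses a $\upi_\star C$-basis and produces a $\upi_\star$-isomorphism from a wedge of $\rho$-divisible suspensions of $C$. You instead filter $\MUR$ by a $\rho$-cell structure and kill the attaching maps inductively using the gap $\pi^{C_2}_{k\rho-1}C=0$. Both arguments ultimately rest on Hu--Kriz-type input about $\MUR$ (the paper on the homology computation for Real-oriented theories, you on the existence of a filtration of $\MUR$ with associated graded a wedge of $S^{k\rho}$'s, e.g.\ via the Hu--Kriz/Hill--Hopkins--Ravenel refinement of homotopy), so neither is more elementary; your version has the mild advantages that it only uses the vanishing $\pi^{C_2}_{k\rho-1}C=0$ rather than Real orientability, and that it applies verbatim to any $C_2$-spectrum admitting such a cell filtration, while the paper's version more directly exhibits the polynomial generators. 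Two small points to tighten: the cell structure you need is only one of the underlying $C_2$-spectrum (purity concerns $\MUR\otimes C$ as a $C$-module), so your worry about realizing it ``within the $C_2$-$\bE_\infty$ framework'' is unnecessary; and in the colimit step you should note explicitly that once each smashed attaching map is null, the map $F_{n-1}\otimes C\to F_n\otimes C$ is the inclusion of a wedge summand, so the colimit is the evident infinite wedge.
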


\begin{proof}
Every even $C_2$-$\bE_{\infty}$-ring $C$ is Real oriented by~\cite[Lemma~2.3]{HM17} and consequently 
$\upi_\star (C\otimes_{\bS}\MUR) \cong \upi_{\star} C[\bar{b}_i : i\ge 1]$ by \cite[Theorem~2.25]{HK01} 
where $|b_i|=i\rho$. Choosing a basis as a $\upi_{\star}C$-module, we can produce a map from a wedge of $\rho$-divisible 
suspensions of $C$ that induces a $\upi_{\star}$-isomorphism and consequently is an equivalence as desired. 
\end{proof}

\begin{remark}
If $A\to B$ is ($p$-completely) seff in the sense of Definition~\ref{def:seff}, then $A^{e}\to B^{e}$ is ($p$-completely) eff 
and if $A\to B$ is ($p$-completely) pure, then $A^e\to B^e$ is ($p$-completely) evenly free in the 
sense of~\cite[Definition~2.2.13]{HRW22}. 
\end{remark}

\begin{remark}
If $A\to B$ is a seff map of $C_2$-$\bE_\infty$-rings, then $A\to B$ is $p$-completely seff. 
If $A\to B$ is strongly evenly pure, then $A\to B$ is $p$-completely strongly evenly pure. 
\end{remark}

\begin{defin}
We say that a sieve on $A$ in $(\CAlg^{C_2}_{\sev})^{\op}$ is a \emph{seff covering sieve} 
if it contains a finite collection of maps $\{ A\to B_i \}_{1\le i\le n}$ in $\CAlg^{C_2}_{\sev}$ 
such that $A \to \prod_i B_i$ is seff. 
We say that a sieve on $A$ in $(\CAlg_{\sev,\,p}^{C_2})^{\op}$ is a \emph{$p$-completely seff covering sieve} 
if it contains a finite collection of maps $\{ A\to B_i \}_{1\le i\le n}$ in $\CAlg^{C_2}_{\sev}$ such that 
$A \to \prod_i B_i$ is $p$-completely seff. 
\end{defin}

\begin{prop}\label{Prop:Topologies}
The following hold. 
\begin{enumerate}
\item The seff covering sieves form a Grothendieck topology on $(\CAlg^{C_2}_{\sev})^{\op}$. 
For a category $\cC$ admitting small limits, a functor 
\[ 
F : \CAlg^{C_2}_{\sev}\to \cC
\]
is a sheaf for this topology if it preserves finite products and for any seff map $A\to B$ in $\CAlg^{C_2}_{\sev}$ 
the canonical map 
\[ 
F(A)\to \lim_{q\in \Delta} F(B^{\otimes_{A}q+1})
\]
is an equivalence. 

\item The $p$-completely seff covering sieves form a Grothendieck topology on $(\CAlg^{C_2}_{\sev,\,p} )^{\op}$. 
For a category $\cC$ admitting small limits, a functor 
\[ 
F : \CAlg^{C_2}_{\sev,\,p}\to \cC
\]
is a sheaf for this topology if it preserves finite products and for any $p$-completely seff map $A\to B$ 
in $\CAlg_{C_2,\,p}^{\sev}$ the canonical map 
\[ 
F(A)\to \lim_{q\in\Delta} F(B^{\otimes_{A}q+1})
\]
is an equivalence. 
\end{enumerate}
\end{prop}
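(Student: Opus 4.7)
The plan is to handle parts (1) and (2) in parallel, as the $p$-complete variant differs only in replacing $\otimes$ by $p$-completed $\otimes$ and invoking the $p$-complete analogues of the standard faithful flatness facts. I will describe the argument for part (1).

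For the Grothendieck topology axioms, I would check each in turn. The maximal sieve on $A$ is covering because $\mathrm{id}_A$ is tautologically seff: for any $A\to C$ with $C$ strongly even, $C\otimes_A A\simeq C$ is strongly even and faithfully flat over itself. Pullback stability reduces to showing that if $A\to B$ is seff and $A\to A'$ is a map in $\CAlg^{C_2}_{\sev}$, then $A'\to A'\otimes_A B$ is seff; this uses the identity $C\otimes_{A'}(A'\otimes_A B)\simeq C\otimes_A B$ for any $A'\to C$ with $C$ strongly even, reducing the assertion to seffness of $A\to B$. For local character, I would show that a composition $A\to B\to D$ of two seff maps is seff: given $A\to C$ strongly even, seffness of $A\to B$ makes $C':=C\otimes_A B$ strongly even and faithfully flat over $C$, and then seffness of $B\to D$ applied to the induced map $B\to C'$ makes $C'\otimes_B D\simeq C\otimes_A D$ strongly even and faithfully flat over $C'$. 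Transitivity of faithful flatness, which carries over from classical commutative algebra to the $\bZ$-graded setting (and to the $p$-complete setting via the argument of \cite{HRW22}), then gives that $C\otimes_A D$ is faithfully flat over $C$.

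For the sheaf characterization, I would invoke the standard description of sheaves on a site generated by a pretopology: $F$ is a sheaf iff it sends the Čech nerve of each covering family to a limit diagram. Preservation of finite products corresponds to descent for the trivial covers arising from product decompositions, and once assumed it identifies $\prod_{i_0,\ldots,i_q} F(B_{i_0}\otimes_A\cdots\otimes_A B_{i_q})$ with $F\bigl((\prod_i B_i)^{\otimes_A q+1}\bigr)$ via the orthogonal idempotent decomposition of finite products of $C_2$-$\bE_\infty$-algebras under tensor products. The sheaf condition for the family $\{A\to B_i\}$ therefore collapses to Čech descent along the single seff map $A\to\prod_i B_i$, yielding exactly the stated criterion.

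The main obstacle will be the interaction of the strongly even condition with the product-tensor decomposition: one must verify that finite products of strongly even $C_2$-$\bE_\infty$-rings are again strongly even (immediate from the gap criterion of Lemma~\ref{lem:criteria}) and that the Čech nerve of a product of covers matches the product of Čech nerves at the level of $F$. In the $p$-complete case, the additional delicate point is tracking bounded $p$-power torsion through base change, which fits into the $\aleph_1$-accessibility framework of Lemma~\ref{lem:acessible-p-power} and proceeds along exactly the same pattern.
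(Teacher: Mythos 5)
Your argument is correct and spells out in detail exactly the verification that the paper's one-line proof delegates to \cite[Proposition~2.2.8]{HRW22} and \cite[A.3.2.1, A.3.3.1]{Lur18}: checking that identities are seff, that seff maps are stable under base change and composition (via transitivity of faithful flatness on $\pi_{2*}^e$), that pushouts along seff maps exist in $\CAlg^{C_2}_{\sev}$, and then reducing the sheaf condition to \v{C}ech descent along the single map $A\to\prod_i B_i$ via the idempotent decomposition of finite products. You have found the same route as the paper, just written out explicitly.
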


\begin{proof}
    It is elementary to check the conditions in the proofs of~\cite[A.3.2.1,~A.3.3.1]{Lur18} since pushouts in $\CAlg^{C_2}_{\sev}$ and $\CAlg_{C_2,\,p}^{\sev}$ along seff maps 
    exist and the only pushouts we need to check the conditions for are such pushouts. 
\end{proof}

\begin{defin}
    We call the Grothendieck topologies on $(\CAlg^{C_2}_{\sev})^{\op}$ and $(\CAlg_{\sev,\,p}^{C_2})^{\op}$ from 
    Proposition~\ref{Prop:Topologies} the \emph{seff topology} and the \emph{$p$-completely seff topology}, respectively. 
\end{defin}

\begin{prop}\label{prop:babysheaf}
The functor
  \[
\CAlg^{C_2}_{\sev}\overset{P_{\ast}^{\ast}}{\longrightarrow} \CAlg^{C_2}_{\gr}
  \]
    is a sheaf for the seff topology and the functor
  \[
    \CAlg_{\sev,\,p}^{C_2}\overset{P_{\ast}^{\ast}}{\longrightarrow} \CAlg_{\gr,\,p}^{C_2} 
  \]
    is a sheaf for the $p$-completely seff topology. 
\end{prop}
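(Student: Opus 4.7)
The plan is to verify the two conditions from Proposition~\ref{Prop:Topologies}. Preservation of finite products is immediate from Lemma~\ref{lem:property-of-slices}, so the real content lies in verifying descent along a (p-completely) seff cover. Fix a seff map $A \to B$ in $\CAlg^{C_2}_{\sev}$. Applying the defining property of seff with $C := B^{\otimes_A q}$ inductively in $q$, every tensor power $B^{\otimes_A q+1}$ is strongly even. Combining this with Remark~\ref{rem:slices} yields the explicit description
\[
P_*^*(B^{\otimes_A q+1}) = \bigoplus_k \Sigma^{k\rho}\m{\pi_{2k}^e B^{\otimes_A q+1}}.
\]

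Since limits in $\CAlg^{C_2}_{\gr}$ are computed at the underlying level of $\Gr(\Sp^{C_2})$ and therefore grading-wise, the descent assertion decouples into one cosimplicial-limit problem for each integer $k$: namely that
\[
\m{\pi_{2k}^e A} \longrightarrow \lim_{q \in \Delta} \m{\pi_{2k}^e B^{\otimes_A q+1}}
\]
is an equivalence of $C_2$-spectra. The seff hypothesis provides that $\pi_{2*}^e B$ is faithfully flat, and in particular flat, as a $\pi_{2*}^e A$-module. Combined with the vanishing of odd underlying homotopy for each $B^{\otimes_A q+1}$, the Künneth spectral sequence collapses to give
\[
\pi_{2*}^e(B^{\otimes_A q+1}) \cong (\pi_{2*}^e B)^{\otimes_{\pi_{2*}^e A} q+1}.
\]
Grothendieck's faithfully flat descent then says that the Amitsur cochain complex
\[
0 \to \pi_{2k}^e A \to \pi_{2k}^e B \to \pi_{2k}^e(B \otimes_A B) \to \cdots
\]
is exact, and since exactness of Mackey functors is checked levelwise, the same holds for the corresponding complex of constant Mackey functors. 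The $\mathrm{Tot}$ of the resulting cosimplicial discrete $C_2$-spectrum therefore collapses to $\m{\pi_{2k}^e A}$, supplying the required equivalence.

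The $p$-complete variant proceeds in parallel, replacing classical faithfully flat descent with its $p$-complete analog from~\cite[Definition~2.2.4]{HRW22}. The bounded $p$-power torsion restriction built into $\CAlg_{\sev,p}^{C_2}$ is precisely what guarantees that $p$-completion interacts well with the cosimplicial limit and that the $p$-completed Amitsur complex remains exact. The main subtlety throughout, especially in the $p$-complete setting, is the Künneth identification of $\pi_{2*}^e(B^{\otimes_A q+1})$, which crucially depends on strong evenness being inherited by every tensor power; once that is in hand, the proof reduces to standard faithfully flat descent for (graded) commutative rings, transported along the exact constant-Mackey-functor functor.
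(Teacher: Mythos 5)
Your proof is correct and follows essentially the same strategy as the paper's: use strong evenness of $A$, $B$, and all tensor powers to reduce to a degree-wise statement about constant Mackey functors, then invoke faithfully flat descent on underlying homotopy. You unpack the K\"unneth collapse and Amitsur exactness explicitly where the paper simply cites the flat-descent result of Hahn--Raksit--Wilson~\cite[Lemma~2.2.11]{HRW22}, but these are the same facts. One small point worth making more explicit in the passage from ``the Amitsur cochain complex is exact'' to ``the $\mathrm{Tot}$ collapses'': since each term of the cosimplicial diagram is an Eilenberg--MacLane $C_2$-spectrum (after the $\rho$-suspension), the Bousfield--Kan spectral sequence has $\mathrm{E}_1$-page concentrated in a single row, so there is no $\lim^1$ issue; the paper flags this by noting $R^1\lim = 0$ before commuting $\upi_*$ past the limit.
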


\begin{proof}
We will prove the $p$-completely seff case; the seff case follows \emph{mutatis mutandis}. 
By Lemma~\ref{lem:property-of-slices}, we know $P_*^*$ commutes with finite products since the 
inclusion $\CAlg^{C_2}_{\sev,\,p}\subset \CAlg_{p}^{C_2}$ commutes with finite products. 
Let $A\to B$ be $p$-completely seff. 
Consider the canonical map
\begin{equation}\label{can map for sheaf 1} 
P_{\ast}^{\ast}A\to \lim_{q\in \Delta}  P_{\ast}^{\ast} \left ( B^{\otimes_{A} q+1} \right )_p \,.
\end{equation}
Since $A\to B$ is $p$-completely seff, we know that $(B^{\otimes_{A} \bullet+1})_p$ is strongly even. 
Since $A$ is also strongly even, it suffices to check that the map 
\[ P_{*}^{*}A=\Sigma^{\rho *}H\underline{\pi_{2*}^eA}\to 
\lim_{q\in \Delta} \Sigma^{\rho *}H\underline{\pi_{2*}^e(B^{\otimes_{A} q +1}})_p 
=\lim_{q\in \Delta}P_{*}^{*}(B^{\otimes_{A} q+1})_p
\]
is an equivalence. It further suffices to check that 
\[ 
H\underline{\pi_{2k}^eA}\to \lim_{q\in \Delta} H\underline{\pi_{2k}^e(B^{\otimes_{A} q +1}})_p
\]
is an equivalence for each integer $k$. Since $A^e\to B^e$ is $p$-completely faithfully flat and 
$\pi_*$ is a sheaf for the flat topology in the sense of \cite[Proposition~2.2.8]{HRW22} by~\cite[Lemma~2.2.11]{HRW22}, 
we know that 
\[ 
\pi_{2k}^eA\cong\lim_{\Delta}\pi_{2k}^eB^{\otimes_A \bullet+1}
\]
so $R^1\lim_{q\in \Delta}
\pi_{2k}^eB^{\otimes_A q+1}= 0$ by the Milnor sequence. 
Since limits of Mackey functors are computed pointwise, the integer-graded homotopy Mackey functors satisfy
\begin{align}\label{eq:derived-vanishing}
R^1\lim_{q\in \Delta}
\upi_{*}H\underline{\pi_{2k}^e(B^{\otimes_A q+1}})= R^1\lim_{q\in \Delta}
\underline{\pi_{2k}^e(B^{\otimes_A q+1}})= 0\,. 
\end{align}
Together, this implies that the integer-graded homotopy Mackey functors satisfy
\begin{align*}
\upi_*\lim_{q\in \Delta} H\underline{\pi_{2k}^eB^{\otimes_A q+1}} 
&\cong \lim_{q\in \Delta} \upi_*H\underline{\pi_{2k}^e (B^{\otimes_A q+1}} )\\
& \cong \lim_{q\in \Delta} \underline{\pi_{2k}^e\left (B^{\otimes_A q+1} \right )}\\
&\cong\underline{\pi}_{2k}^eA \\
&=\upi_*H\underline{\pi_{2k}^eA}\,.
\end{align*}
Since integer-graded homotopy Mackey functors detect equivalences of $C_2$-spectra, this proves the claim. 
\end{proof}

\begin{thm}\label{thm:sheaf}
The following hold:
\begin{enumerate}
\item The functor
 \[
 \CAlg^{C_2}_{\sev}\overset{P_{2*}}{\longrightarrow} \CAlg^{C_2}_{\fil}
 \]
is a sheaf for the seff topology. 
\item The functor
 \[
 \CAlg^{C_2}_{\sev,\,p}\overset{P_{2*}}{\longrightarrow} \CAlg^{C_2}_{\fil,\,p}
 \]
is a sheaf for the $p$-completely seff topology. 
\end{enumerate}
\end{thm}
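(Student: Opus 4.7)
The plan is to bootstrap from Proposition~\ref{prop:babysheaf}, which already establishes the analogous sheaf property for the associated graded $P_{*}^{*}$, and transfer the conclusion to the filtered setting using completeness of the regular slice filtration together with the fact that the associated graded functor commutes with cosimplicial limits in a stable $\infty$-category. I will focus on case (1); case (2) will follow \emph{mutatis mutandis}, the only additional input being Proposition~\ref{prop:accessibile-Borel} and the stability of bounded $p$-power torsion under finite products and cosimplicial totalizations.

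First I would verify preservation of finite products. Since limits in $\Fil(\Sp^{C_2})$ are computed pointwise at each filtration degree and each $P_{2n}$ is right adjoint to an inclusion (hence preserves products), the functor $P_{2\bullet}$ preserves finite products; combined with the observation that a finite product of strongly even $C_2$-$\bE_\infty$-algebras remains strongly even, this handles the product half of the sheaf condition.

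For the cosimplicial descent condition, let $A\to B$ be a seff map in $\CAlg^{C_2}_{\sev}$. By the seff hypothesis applied inductively (taking $C=B^{\otimes_A q}$ at each stage), every tensor power $B^{\otimes_A q+1}$ is strongly even, so the cosimplicial diagram $P_{2\bullet}(B^{\otimes_A \bullet + 1})$ lives in $\CAlg^{C_2}_{\fil}$, and we obtain a canonical comparison map
\[
\varphi\colon P_{2\bullet}A \longrightarrow \lim_{q\in \Delta} P_{2\bullet}(B^{\otimes_A q+1}).
\]
I claim both sides are complete filtered $C_2$-spectra: the source because the regular slice filtration is complete, and the target as a cosimplicial limit of complete filtered objects, since completeness is detected by the vanishing of $\lim_s$ of the filtration and this commutes with $\lim_{q\in \Delta}$. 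It therefore suffices to show that $\gr^{*}\varphi$ is an equivalence in $\CAlg^{C_2}_{\gr}$.

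For the final step, the associated graded functor $\gr^{*}$ commutes with cosimplicial limits in the stable $\infty$-category $\Fil(\Sp^{C_2})$: each $\gr^{n}$ is built from fibers between filtration stages, and fibers commute with arbitrary limits in a stable setting. Under this identification, $\gr^{*}\varphi$ is precisely the map
\[
P_{2*}^{2*}A \longrightarrow \lim_{q\in \Delta} P_{2*}^{2*}(B^{\otimes_A q+1}),
\]
which is an equivalence by Proposition~\ref{prop:babysheaf}. The main obstacle I anticipate is the careful bookkeeping around completeness of the target filtration and, in case (2), confirming that $p$-completion interacts properly with the cosimplicial totalization (for which one uses that bounded $p$-power torsion is preserved, cf.\ the proof of Proposition~\ref{prop:babysheaf} and the vanishing~\eqref{eq:derived-vanishing}); once these are in hand, the reduction to the graded case via Proposition~\ref{prop:babysheaf} is essentially formal.
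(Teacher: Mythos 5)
Your proof is correct and follows the same route as the paper's: the paper compresses the entire argument into one sentence, observing that completeness of the regular slice filtration reduces the sheaf condition to the associated graded case, where it follows from Proposition~\ref{prop:babysheaf}. You have simply unpacked that sentence, correctly supplying the supporting facts (the tensor powers $B^{\otimes_A q+1}$ are strongly even by induction on the seff hypothesis, $\gr^*$ commutes with cosimplicial totalizations in a stable setting, and completeness passes to the totalization because $\lim_s$ commutes with $\lim_{q\in\Delta}$).
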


\begin{proof}
Since the filtration is complete, it suffices to check the statement on the associated graded, which follows from Proposition~\ref{prop:babysheaf}.
\end{proof}

\begin{thm}[Strongly evenly faithfully flat descent]\label{thm:descent}
The following hold:
\begin{enumerate}
\item \label{it1:thm:descent} If $A\to B$ is a seff map of $C_2$-$\bE_\infty$-rings, then the map 
\begin{align}\label{it1-display}
\filsev^{\bullet}A\to \lim_{q\in \Delta}\filsev^{\bullet}(B^{\otimes_A q+1})
\end{align}
is an equivalence. 
\item \label{it2:thm:descent} If $A\to B$ be a $p$-completely seff map of $C_2$-$\bE_\infty$-rings, then the map 
\[ 
\fil_{\sev,p}^{\bullet}A\to \lim_{q\in \Delta}\fil_{\sev,p}^{\bullet}(B^{\otimes_A q+1})_p
\]
is an equivalence.
\end{enumerate}
\end{thm}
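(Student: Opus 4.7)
The plan is to deduce both parts of the theorem from the sheaf property of $P_{2\bullet}$ established in Theorem~\ref{thm:sheaf}, combined with a coinitiality argument that identifies the right Kan extension on the Čech nerve. Both parts proceed by the same strategy, so I would concentrate on (1) and indicate the modifications for (2) at the end.

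First I would unfold the right Kan extension of Definition~\ref{def:sev-filt} as a limit over the under-category,
\[
\filsev^{\bullet}A\;=\;\lim_{C\in(\CAlg^{C_2}_{\sev})_{A/}} P_{2\bullet}(C).
\]
The essential input is that seff maps are stable under arbitrary base change: given $A\to B$ seff and $A\to C$ with $C$ strongly even, for any strongly even $C'$ under $C$ we have $C'\otimes_C(C\otimes_A B)\simeq C'\otimes_A B$, which is strongly even and faithfully flat over $C'$ by the seff hypothesis on $A\to B$. Hence $C\to C\otimes_A B$ is a seff map between strongly even rings, and Theorem~\ref{thm:sheaf} gives
\[
P_{2\bullet}(C)\;\simeq\;\lim_{q\in\Delta}P_{2\bullet}\bigl((C\otimes_A B)^{\otimes_C q+1}\bigr)\;\simeq\;\lim_{q\in\Delta}P_{2\bullet}\bigl(C\otimes_A B^{\otimes_A q+1}\bigr).
\]
Taking the limit over $C$ and interchanging limits yields
\[
\filsev^{\bullet}A\;\simeq\;\lim_{q\in\Delta}\,\lim_{C\in(\CAlg^{C_2}_{\sev})_{A/}}P_{2\bullet}\bigl(C\otimes_A B^{\otimes_A q+1}\bigr).
\]

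The crux of the argument is to identify each inner limit with $\filsev^{\bullet}(B^{\otimes_A q+1})$. I would introduce the base-change functor
\[
\Phi_q:(\CAlg^{C_2}_{\sev})_{A/}\longrightarrow(\CAlg^{C_2}_{\sev})_{B^{\otimes_A q+1}/},\qquad \Phi_q(C)=C\otimes_A B^{\otimes_A q+1},
\]
and check that it is left adjoint to the forgetful functor $U_q$ that regards a strongly even algebra under $B^{\otimes_A q+1}$ as one under $A$ via the composite structure map; the adjunction follows from the universal property of the pushout, and $\Phi_q$ lands in the correct subcategory thanks to the base-change stability used above. Since left adjoints between $\infty$-categories are coinitial---for any $D$, the comma category $\Phi_q\downarrow D$ is equivalent via the unit to the over-category of $U_q(D)$ in $(\CAlg^{C_2}_{\sev})_{A/}$, which has a terminal object and hence is weakly contractible---we conclude
\[
\lim_{C\in(\CAlg^{C_2}_{\sev})_{A/}}P_{2\bullet}\bigl(C\otimes_A B^{\otimes_A q+1}\bigr)\;\simeq\;\lim_{D\in(\CAlg^{C_2}_{\sev})_{B^{\otimes_A q+1}/}}P_{2\bullet}(D)\;=\;\filsev^{\bullet}(B^{\otimes_A q+1}),
\]
which combined with the previous display proves \eqref{it1-display} and hence part~\ref{it1:thm:descent}.

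For part~\ref{it2:thm:descent}, I would run exactly the same argument with $\CAlg^{C_2}_{\sev}$ replaced by $\CAlg^{C_2}_{\sev,\,p}$ and Theorem~\ref{thm:sheaf}(1) replaced by Theorem~\ref{thm:sheaf}(2); the only additional check is that $p$-completely seff maps are stable under base change among $p$-complete strongly even rings with bounded $p$-power torsion, which should follow from the same formal manipulations. The main obstacle I anticipate is the coinitiality step: one must set up the adjunction $\Phi_q\dashv U_q$ within the strongly even subcategory, rather than the ambient $\infty$-category $\CAlg^{C_2}$, and verify that all the structure-map compatibilities interact correctly. Once this is in place, the rest is essentially formal given Theorem~\ref{thm:sheaf}.
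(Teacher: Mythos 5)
Your proof is correct and follows essentially the same strategy as the paper's: unfold the right Kan extension as a limit over $(\CAlg^{C_2}_{\sev})_{A/}$, apply the sheaf property of $P_{2\bullet}$ (Proposition~\ref{prop:babysheaf} / Theorem~\ref{thm:sheaf}) to the seff cover $C\to C\otimes_A B$, commute limits, and identify the inner limit via a coinitiality argument. The only cosmetic difference is that the paper verifies coinitiality by exhibiting the terminal object in $\mathscr{C}\times_{\mathscr{D}}\mathscr{D}_{/X}$ and invoking Quillen's Theorem~A, whereas you package the same computation as the general fact that the left adjoint $\Phi_q$ (base change along $A\to B^{\otimes_A q+1}$, restricted to the strongly even subcategories) is coinitial.
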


\begin{proof}
We will prove \eqref{it1:thm:descent}; \eqref{it2:thm:descent} is proven in the same way. 
The left-hand side is
\[
\filsev^{\bullet}A = \lim_{\substack{A\to C \\ C\in \CAlg^{C_2}_{\sev}}}P_{2*} C
\]
by construction. Since $A\to B$ is seff, for any strongly even $C_2$-$\mathbb{E}_{\infty}$-algebra $C$ equipped with a 
map $A\to C$, we know $C\to C\otimes_A B$ is faithfully flat. By Proposition~\ref{prop:babysheaf}, we have
\[
P_{2*}C = \lim_{q\in \Delta} P_{2*} ((B\otimes_A C)^{\otimes_{C}q +1})= \lim_{q\in \Delta}P_{2*}(  B^{\otimes_{A}q +1}\otimes_A C)\,.
\]
Therefore, the left-hand side of \eqref{it1-display}, after commuting limits, is 
\[
\filsev^{\bullet}A = \lim_{q\in \Delta}\lim_{\substack{A\to C \\ C\in \CAlg^{C_2}_{\sev}}}P_{2*}( B^{\otimes_{A}q +1}\otimes_A C).
\]
The right-hand side of \eqref{it1-display}, by definition, is computed by 
\[
\lim_{q\in \Delta} \lim_{\substack{B^{\otimes_{A}q +1}\to D\\
D\in \CAlg^{C_2}_{\sev}}}P_{2*}(B^{\otimes_{A}q+1}\otimes_{B^{\otimes_{A}q+1}}D)
\simeq \lim_{q\in \Delta} \lim_{\substack{B^{\otimes_{A}q +1}\to D\\
D\in \CAlg^{C_2}_{\sev}}}P_{2*}D \,.
\]
We will show that 
\[
\lim_{
\substack{
A\to C \\ 
C\in \CAlg^{C_2}_{\sev}
}
} 
P_{2*} (B^{\otimes_{A}q+1}\otimes_A C)
=
\lim_{
\substack{
B^{\otimes_{A}q+1}\to D \\
D\in \CAlg^{C_2}_{\sev}}
} P_{2*}( D) \,.
\]
For each $A\to C\in \CAlg^{C_2}_{\sev}$, we obtain a map 
$B^{\otimes_A q+1} \simeq B^{\otimes_A q+1} \otimes_A A \to B^{\otimes_A q+1}\otimes_A C$ in $\CAlg^{C_2}_{\sev}$. 
Thus, it suffices to show that the functor 
\begin{equation}
\begin{aligned}
\mathscr C :=\{A\to C, C\in \CAlg^{C_2}_{\sev}\}&\to \{B^{\otimes_A q+1}\to D, D\in \CAlg^{C_2}_{\sev}
\}=:\mathscr D  \\
(A\to C)&\mapsto (B^{\otimes_A q+1}\to B^{\otimes_A q+1}\otimes_A C)
\end{aligned}
\end{equation}
is final. By Quillen's theorem A, it suffices to show that 
\[ 
\mathscr{C}\times_{\mathscr{D}}\mathscr{D}_{{/} X }
\]
is weakly contractible where $X=(B^{\otimes_A q+1}\to D)$ for some $D\in \CAlg^{C_2}_{\sev}$. 
This holds because this $\infty$-category has a terminal object 
$
Y=\{
A\to D, 
B^{\otimes_{A} q+1}\to B^{\otimes_A q+1}\otimes_AD \to D 
\}
$. 
\end{proof}

\begin{cor}[Real Novikov descent]
The following statements hold:
\begin{enumerate}
\item Let $A$ be a $C_2$-$\bE_\infty$-ring, then 
\[ \filsev^{\bullet}A\simeq \lim_{\Delta}\filsev^{\bullet}(A\otimes \MUR^{\otimes  \bullet+1})
\]
\item Let $A$ be a $p$-complete $C_2$-$\bE_\infty$-ring, then 
\[ \fil_{\sev,\,p}^{\bullet}A\simeq \lim_{\Delta}\fil_{\sev,\,p}^{\bullet}(A\otimes \MUR^{\otimes  \bullet+1})_p
\]
\end{enumerate}
\end{cor}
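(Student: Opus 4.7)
The plan is to apply the strongly evenly faithfully flat descent theorem (Theorem~\ref{thm:descent}) directly to the unit map $A \to A \otimes \MUR$. The first task is to check that this map is seff. By Proposition~\ref{prop:seff-map-MUR}, the unit $\bS \to \MUR$ is pure, and I would begin by observing that pure implies seff. Indeed, for any nonzero strongly even $C_2$-$\bE_\infty$-ring $C$, purity gives that $C \otimes \MUR$ is a wedge of $\rho$-divisible suspensions of $C$. Such a wedge is strongly even (its integer-indexed homotopy Mackey functors are a direct sum of even-degree $\rho$-shifts of those of $C$, which are constant by strong evenness of $C$) and faithfully flat over $C$ (since $\bigoplus_n \pi_{2n}^e(C \otimes \MUR)$ is a free $\bigoplus_n \pi_{2n}^e C$-module with a nonempty basis concentrated in even degrees). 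The case $C = 0$ in Definition~\ref{def:seff} is vacuous.

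For an arbitrary $C_2$-$\bE_\infty$-ring $A$ and a strongly even $A$-algebra $A \to C$, the base-change identity
\[
C \otimes_A (A \otimes \MUR) \simeq C \otimes \MUR
\]
then reduces the seff condition for $A \to A \otimes \MUR$ to the case handled above. Applying Theorem~\ref{thm:descent}(1) and using $(A \otimes \MUR)^{\otimes_A q+1} \simeq A \otimes \MUR^{\otimes q+1}$ yields part~(1) directly.

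Part~(2) follows by the same strategy with ``seff'' replaced by ``$p$-completely seff'' throughout. The remark following Definition~\ref{def:seff} states that pure maps are $p$-completely pure (and that $p$-completely pure maps are $p$-completely seff by the analogous $p$-complete version of the freeness argument), so Proposition~\ref{prop:seff-map-MUR} still supplies the input. Invoking Theorem~\ref{thm:descent}(2) and again simplifying the iterated $A$-tensor powers gives the $p$-complete statement, with the $p$-completions on both sides matching because the cosimplicial diagram produced by descent is already stated in $\CAlg^{C_2}_{\fil,\,p}$.

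The main — and modest — obstacle is the ``pure implies (p-completely) seff'' lemma, i.e.~that a wedge of $\rho$-divisible suspensions of a strongly even $C_2$-$\bE_\infty$-ring is again strongly even and faithfully flat over it. The strong evenness part requires verifying that the constant Mackey functor condition of Definition~\ref{def:strongly-even} is preserved under taking arbitrary wedges of $\rho$-graded shifts, which follows because the restriction map commutes with direct sums and each summand contributes a constant Mackey functor. Once this is in hand, the corollary is a bookkeeping application of Theorem~\ref{thm:descent}.
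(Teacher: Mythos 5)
Your proposal is correct and follows essentially the same route as the paper, which simply cites Theorem~\ref{thm:descent} together with Proposition~\ref{prop:seff-map-MUR}; you have helpfully made explicit the two steps the paper leaves implicit, namely that pure implies seff and that the seff property base-changes from $\bS\to\MUR$ to $A\to A\otimes\MUR$. One small point worth noting is that purity alone only gives a wedge of $\rho$-divisible suspensions (possibly empty), so the nonemptiness of the basis you invoke for faithfulness is really supplied by the explicit Thom-isomorphism computation $\upi_\star(C\otimes\MUR)\cong\upi_\star C[\bar b_i:i\ge 1]$ in the proof of Proposition~\ref{prop:seff-map-MUR} rather than by the abstract definition of pure.
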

\begin{proof}
This directly follows from Theorem~\ref{thm:descent} and Proposition~\ref{prop:seff-map-MUR}. 
\end{proof}
\begin{exm}
We can identify
\[ \fil_{\sev,\,2}^{s}\bS_2\simeq \lim_{\Delta} P_{2s}((\MUR^{\otimes  \bullet+1})_2).
\]
This identifies  $\fil_{\sev,2}^{\bullet}\bS_2$ on objects; the structure maps 
$\fil^{s+1}_{\sev,\,2} \bS_2 \to \fil^s_{\sev,\,2} \bS_2$ are induced by the natural transformations 
$P_{2s+2} \to P_{2s}$. 
\end{exm}

\subsection{Convergence and $p$-completion}\label{sec:convergence}
We do not determine all cases where the strongly even filtration conditionally converges, but merely point out a special case. 

\begin{prop}\label{prop:convergence}
The following statements hold:
\begin{enumerate}
\item \label{it1:convergence} If there exists a seff map $A\to B$ of $C_2$-$\bE_\infty$-rings such that $P^0\fib(A\to B)=0$ and $B$ is strongly even, 
then the canonical maps
\[
A\to \lim_{\Delta} B^{\otimes_A\bullet +1} \quad \text{ and } \quad A\longrightarrow \colim_s\filsev^{s}A 
\]
are equivalences. 
\item \label{it2:convergence} If there exists a $p$-completely seff map $A\to B$ of  $C_2$-$\bE_\infty$-rings such that 
$P^0\fib(A\to B)=0$ and $B$ is strongly even, then the canonical maps
\[
A_p\to \lim_{\Delta} (B^{\otimes_A\bullet +1})_p \quad \text{ and } \quad  A_p\longrightarrow \colim_s\fil_{\sev,p}^{s}A_p
\] 
are equivalences. 
\end{enumerate}
\end{prop}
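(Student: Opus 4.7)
Let $F := \fib(A \to B)$, so the hypothesis $P^0 F = 0$ places $F$ in $\tau_1$. The plan proceeds in two steps, with the $p$-complete statement in part 2 of the proposition following by the same scheme with $p$-completions appended throughout.

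\emph{Step one: convergence of the cobar.} First I will show $A \simeq \lim_\Delta B^{\otimes_A \bullet+1}$. Consider the tower of partial totalizations $\mathrm{Tot}^n(B^{\otimes_A \bullet+1})$; its successive fibers $\fib(\mathrm{Tot}^n \to \mathrm{Tot}^{n-1})$ are, up to loop, built out of the iterated relative smash products $F^{\otimes_A n}$. Using multiplicativity of the regular slice filtration, $F \in \tau_1$ forces $F^{\otimes_A n} \in \tau_n$, so the Tot-tower fibers live in arbitrarily high slice filtration. The total fiber of $A \to \lim_n \mathrm{Tot}^n(B^{\otimes_A \bullet+1})$ therefore lies in $\bigcap_n \tau_n$, which is zero by the completeness statement $\lim_s P_s \simeq 0$ recorded just after the construction of the regular slice filtration.

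\emph{Step two: identification $\filsev^s A \simeq P_{2s} A$.} The functor $P_{2s}$ is right adjoint to the inclusion $\tau_{2s} \hookrightarrow \Sp^{C_2}$ — for $Y \in \tau_{2s}$ and any $X$, the composite $Y \to X \to P^{2s-1} X$ is null, so maps $Y \to X$ factor uniquely through $P_{2s} X = \fib(X \to P^{2s-1} X)$ — and in particular $P_{2s}$ preserves limits. Applying Theorem~\ref{thm:A} to the seff map $A \to B$ (in the range where $B$, and hence each iterate $B^{\otimes_A n+1}$, is strongly even) gives
$$\filsev^s A \simeq \lim_{q \in \Delta} P_{2s}(B^{\otimes_A q+1}) \simeq P_{2s}\Bigl(\lim_{q \in \Delta} B^{\otimes_A q+1}\Bigr) \simeq P_{2s} A,$$
where the second equivalence uses that $P_{2s}$ preserves limits and the third uses Step one. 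Taking the colimit in $s$ and invoking $\colim_s P_{2s} \simeq \id$ on $\Sp^{C_2}$ yields $A \simeq \colim_s \filsev^s A$. Part 2 of the proposition then follows by running both steps $p$-completely, using the $p$-completely seff descent recorded in Theorem~\ref{thm:descent} and the $p$-complete counterpart of Theorem~\ref{thm:A}, and noting that $p$-completion commutes with fibers and with the slice covers on objects of bounded $p$-power torsion.

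The main obstacle is the slice-filtration multiplicativity invoked in Step one, namely the deduction $F^{\otimes_A n} \in \tau_n$ from $F \in \tau_1$. Multiplicativity of the regular slice filtration for the external smash product $\otimes_\bS$ is standard, but the relative version $\otimes_A$ over an arbitrary $C_2$-$\bE_\infty$-ring $A$ requires more care. One route is to pass to $A$-modules equipped with their own slice filtration (generated by $A$-module slice cells) and verify compatibility with the forgetful functor in the relevant range; another is to model $\otimes_A$ via a two-sided bar construction over $\bS$ and track slice filtration term by term.
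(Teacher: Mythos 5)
Your Step one essentially matches the paper's first claim: both decompose the cobar via the MNN-style fiber sequence $I^{\otimes_A n+1} \to A \to \lim_{\Delta_{\le n}} B^{\otimes_A \bullet+1}$ (with $I = \fib(A\to B)$) and use the relative slice multiplicativity to push $I^{\otimes_A n+1}$ into $\tau_{n+1}$, concluding $\lim_n I^{\otimes_A n+1} = 0$. The relative multiplicativity you flag as "the main obstacle" is not a gap; the paper cites it as \cite[Corollary~4.2]{Ull-thesis}, and the statement $P^0 I = 0 \Rightarrow P^n I^{\otimes_A n+1} = 0$ is precisely the form used.

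The real problem is in Step two: the assertion that $P_{2s}$ preserves limits is false. The functor $P_{2s}\colon \Sp^{C_2} \to \tau_{2s}$ is indeed right adjoint to the inclusion $\tau_{2s} \hookrightarrow \Sp^{C_2}$, so it preserves limits \emph{as computed in $\tau_{2s}$}; but $\tau_{2s}$ is a coreflective subcategory and is not closed under limits in $\Sp^{C_2}$, so the composite $\Sp^{C_2} \to \tau_{2s} \hookrightarrow \Sp^{C_2}$ does not preserve limits. (Compare the nonequivariant connective cover $\tau_{\ge 0}$ on $\Sp$: it fails to commute with the product $\prod_k S^{-k}$, and also fails to commute with cofiltered limits because of $\lim^1$ contributions.) So the middle equivalence in
\[
\lim_{q\in\Delta} P_{2s}(B^{\otimes_A q+1}) \simeq P_{2s}\Bigl(\lim_{q\in\Delta} B^{\otimes_A q+1}\Bigr)
\]
does not follow from the adjunction. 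What \emph{is} true is that $P_{2s}$, being a fiber of the identity and a left localization, is exact and hence commutes with \emph{finite} limits — in particular with the partial totalizations $\lim_{\Delta_{\le n}}$. The paper exploits exactly this: apply $P_\bullet$ to the finite fiber sequence $I^{\otimes_A n+1} \to A \to \lim_{\Delta_{\le n}}B^{\otimes_A \bullet+1}$, pass to $\lim_n$, and show the error term $\lim_n P_\bullet(I^{\otimes_A n+1})$ vanishes for the same slice-connectivity reason as in Step one. This yields $P_\bullet(A) \simeq \lim_\Delta P_\bullet(B^{\otimes_A\bullet+1}) = \filsev^\bullet A$, after which $\colim_s P_{2s} = \id$ closes the argument. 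Your conclusion $\filsev^s A \simeq P_{2s}A$ is correct, but it has to be earned via the vanishing error term rather than an unavailable limit-preservation property; as stated, Step two is a genuine gap.
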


\begin{proof}
We first prove \eqref{it1:convergence}.
Set $I:=\fib (A\to B)$, and observe that since $P^0I=0$, then $P^{n}I^{\otimes_A n+1}=0$ by~\cite[Corollary~4.2]{Ull-thesis}. 
Consequently, we have $P_{n+1}(I^{\otimes_A n+1})\simeq I^{\otimes_A n+1}$. 
By~\cite[Proposition~2.14]{MNN17}, there is a fiber sequence
\[ 
I^{\otimes_A n+1}\to A\to \lim_{\Delta_{\le n}} B^{\otimes_{A}\bullet+1}
\]
and this allows us to conclude the first statement since $\lim_n I^{\otimes_A n+1}=0$. 
We further deduce that there is a fiber sequence
\[
\begin{tikzcd}
\lim_n P_{\bullet}(I^{\otimes_A n+1}) \longrightarrow  \lim_n  P_{\bullet}(A) \longrightarrow \lim_{\Delta_{\le n}}P_{\bullet}(B^{\otimes_{A}\bullet+1}) 
\end{tikzcd}
\]
of filtered spectra since 
$\lim_{\Delta_{\le n}}P_{\bullet}(B^{\otimes_{A}\bullet+1})\simeq P_{\bullet}(\lim_{\Delta_{\le n}}B^{\otimes_{A}\bullet+1})$. 
Since 
\[ 
\lim_{n} P_{s}(I^{\otimes_A n+1}) =\lim_{n} P_{\text{max}(s,n)}(I^{\otimes_A n+1}) = 0 
\]
the second claim follows. 
The $p$-complete statement ~\eqref{it2:convergence} follows from the same argument since $p$-completion commutes with limits. 
\end{proof}

\begin{remark}
Inspecting the previous proof, we see that analogous statements hold whenever $A\to B$ is seff and 
$\lim_n \fib(A\to B)^{\otimes_{A}n+1}\simeq 0$.
\end{remark}

We also note how the strongly even filtration interacts with $p$-completion.  

\begin{prop}
If $A$ is a  $C_2$-$\bE_\infty$-ring and there exists a seff map $A\to B$ where $B$ is strongly even and 
$B_p$ is strongly even and has bounded $p$-power torsion, then the canonical map 
\[ 
(\filsev^{\bullet}A)_p\simeq \fil_{\sev,\,p}^{\bullet}A_p
\]
is an equivalence. 
\end{prop}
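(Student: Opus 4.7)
The plan is to apply the descent results (Theorem~\ref{thm:descent}) to both sides, reducing the claim to a termwise comparison that exploits the very explicit form of the slice filtration on strongly even $C_2$-spectra (Remark~\ref{rem:slices}).

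First, by Theorem~\ref{thm:descent}~\eqref{it1:thm:descent} applied to the seff cover $A\to B$, I identify
\[
\filsev^{\bullet}A \simeq \lim_{q\in\Delta} P_{2\bullet}(B^{\otimes_A q+1}).
\]
Since $p$-completion commutes with arbitrary limits in filtered $C_2$-spectra, applying $(-)_p$ yields
\[
(\filsev^{\bullet}A)_p \simeq \lim_{q\in\Delta} (P_{2\bullet}(B^{\otimes_A q+1}))_p.
\]

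Second, I verify that $A_p \to B_p$ is $p$-completely seff. For a $p$-complete strongly even $C$ with bounded $p$-power torsion over $A_p$, the composite $A\to A_p\to C$ lies in $\CAlg^{C_2}$, and seffness of $A\to B$ together with the hypothesis on $B_p$ gives that $(C\otimes_A B)_p \simeq (C\otimes_{A_p} B_p)_p$ is strongly even and $p$-completely faithfully flat over $C$; bounded $p$-power torsion of the base change is inherited from that of $B_p$ by the pullback description of $\Sp^{C_2}_{\bdd\text{-}p}$. Then Theorem~\ref{thm:descent}~\eqref{it2:thm:descent} gives
\[
\fil_{\sev,\,p}^{\bullet}A_p \simeq \lim_{q\in\Delta} P_{2\bullet}((B^{\otimes_A q+1})_p).
\]

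Third, I reduce to a termwise comparison: for each $q$, I must produce a natural equivalence
\[
(P_{2\bullet}(B^{\otimes_A q+1}))_p \simeq P_{2\bullet}((B^{\otimes_A q+1})_p).
\]
Since $A\to B$ is seff, an induction on $q$ shows that each $B^{\otimes_A q+1}$ is strongly even, and the hypothesis on $B_p$ propagates through iterated tensor powers to show $(B^{\otimes_A q+1})_p$ is strongly even with bounded $p$-power torsion. For any strongly even $X$, Proposition~\ref{prop:slices} and Remark~\ref{rem:slices} give $P_{2k}^{2k}X = \Sigma^{k\rho}H\underline{\pi_{2k}^e X}$ with all odd slices vanishing, so $P_{2s}X$ is built from constant Eilenberg--Mac\thinspace Lane slices on abelian groups whose $p$-completion is well-behaved under the bounded $p$-power torsion hypothesis (no $\lim^1$-obstructions). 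One then checks inductively along the slice tower that $(P_{2s}X)_p \simeq P_{2s}(X_p)$, using that both sides have the same slice associated graded and both are complete.

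The main obstacle will be this third step, specifically controlling the interaction between $p$-completion and the slice filtration itself (as opposed to the associated graded). The bounded $p$-power torsion hypothesis is designed precisely to prevent derived-completion pathologies, so it is crucial to track this condition through every iterated tensor power $B^{\otimes_A q+1}$ and every slice $P_{2k}^{2k}$. Once that bookkeeping is complete, the result follows by assembling the termwise equivalences under the totalization.
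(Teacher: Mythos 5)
Your proof follows the same route as the paper: apply Theorem~\ref{thm:descent} to both sides of the claimed equivalence to reduce to a termwise comparison, and then identify $(P_{2*}(B^{\otimes_A q+1}))_p$ with $P_{2*}((B^{\otimes_A q+1})_p)$ using strong evenness and the bounded $p$-power torsion hypothesis. The paper states this termwise equivalence without elaboration, so your third step simply fills in the detail that the paper treats as evident; the underlying argument is the same.
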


\begin{proof}
This follows from Theorem~\ref{thm:descent} in light of the equivalence 
\[
\left ( P_{2*}(B^{\otimes_{A}q+1} ) \right)_p\simeq 
P_{2*}\left ( (B^{\otimes_{A}q+1})_p\right )
\]
which holds under our hypotheses. 
\end{proof}

\section{Real trace methods and Real motivic filtrations}\label{sec:motivic-filtrations}
In Section~\ref{sec:trace-background}, we introduce the theory of Real trace methods. 
In Section~\ref{sec:trace-motivic} we define motivic filtrations and in Section~\ref{sec:trace-Tate} 
we define the Nygaard filtration. In Section~\ref{sec:motivic-filtrationsB} we discuss descent for the motivic filtration. 
In Section~\ref{sec:RHMSS}, we define a computational tool for computing Real topological Hochschild homology and in 
Section~\ref{sec:suspension} we discuss the theory of equivariant suspension maps.

\subsection{Real trace methods}\label{sec:trace-background}

We now recall the theory of trace methods following~\cite{QS21a}. 
First we fix a choice of splitting of the extension 
\[ 
1\to \SO(2)\to \OO(2)\to C_2\to 1
\]
by fixing an isomorphism $\OO(2)\cong \SO(2)\rtimes C_2$ where
\[ 
	C_2=\left \{ \left ( \begin{matrix} 1 & 0 \\ 0 & 1\end{matrix} \right ) , \, 
    \left ( \begin{matrix}0 & 1 \\ 1 & 0\end{matrix}  \right ) \right \} \subset \OO(2)\,. 
\]

\begin{notation}
Let $\CAlg_{p}^{h_{C_2}S^1}$ denote the full sub-$\infty$-category of $C_2$-$\bE_\infty$-rings with 
$C_2$-twisted $S^1$-action (cf.~Definition~\ref{Def:Twisted}) whose  underlying $C_2$-spectra are $p$-complete. 
\end{notation}

\begin{defin}[{\cite[Definition~1.20]{QS21a}}]
A \emph{Real $p$-cyclotomic spectrum} is an object 
$X\in \Sp_p^{h_{C_2}S^1}$ together with a map
\[ 
\varphi_p : X\to X^{t_{C_2}\mu_p} 
\]
in $\Sp_p^{h_{C_2}S^1}$.
\end{defin}

\begin{notation}
 We let $\RCyc_p$ denote the $C_2$-symmetric monoidal $C_2$-$\infty$-category of 
 Real $p$-cyclotomic spectra and let $\CAlg^{\rcyc}_{p}:= \CAlg^{C_2}(\RCyc_{p})$. 
\end{notation}

\begin{remark}
To account for the difference in phrasing between~\cite[Definition~2.18]{QS21a} and the definition above, 
note that 
\[ 
\Sp_p^{h_{C_2}S^1}\simeq  \Sp_p^{h_{C_2}\mu_{p^{\infty}}}
\]
since we are working in the $p$-complete setting.
 Here we use notation from Section~\ref{sec:equivariant-Tate}. 
\end{remark}

\begin{defin}\label{def:tcr}
Given a Real $p$-cyclotomic spectrum $X$, we define the Real topological cyclic homology of $X$ to be the equalizer
\[
\TCR(X,\bZ_p):=\mathrm{eq} \left ( \can,\varphi_p^{h_{C_2}S^1}: \left ( X^{h_{C_2}S^1} \right )_p^\wedge \longrightarrow \left (
(X^{t_{C_2}\mu_p})^{h_{C_2}S^1}
\right )_p^\wedge \right)  \,. 
\]
Let $(\--)^{\triv}$ denote the right adjoint to $\mathrm{TCR} : \RCyc_p \longrightarrow \Sp^{C_2}$. 
\end{defin}

The main example of a Real $p$-cyclotomic spectrum, namely Real topological Hochschild homology, 
uses the notion of tensoring of a $C_2$-$\bE_\infty$-algebra with a $C_2$-space. 
See~\cite[Footnote~35 in Definition~5.2]{QS21a} for further details.  

\begin{notation}
We write
\[ 
\-- \odot \-- : \Top^{C_2}\times \CAlg^{C_2}\to \CAlg^{C_2}
\]
for the tensoring of a $C_2$-$\bE_{\infty}$-algebra with a $C_2$-space. 
\end{notation}

\begin{defin}[{\cite[Definition~5.2]{QS21a}}]
Let $B$ be a $C_2$-$\bE_\infty$-ring. The \emph{Real topological Hochschild homology of $B$} is 
\[
\THR(B) := S^{\sigma}\odot B \,.
\]
Let $\THR(B;\bZ_p):=\THR(B)_p^\wedge$ as an object in $\CAlg_{p}^{\rcyc}$.
\end{defin}

\begin{remark}
By~\cite[Remark~5.4]{QS21a}, this definition coincides as associative algebras in $\Sp^{C_2}$ with the definition of Real topological Hochschild homology via the dihedral bar construction as in~\cite{DMPR21}; see also ~\cite[Proposition~4.9]{AKGH25}.
\end{remark}

We will need the following relative version of Real topological Hochschild homology:

\begin{defin}
Given a $C_2$-$\bE_{\infty}$-ring $A$ and a $C_2$-$\bE_\infty$-$A$-algebra $B$, we define 
\[ 
\THR(B/A):=  \THR(B)\otimes_{\THR(A)}A 
\] 
and write $\THR(B/A;\bZ_p):= \THR(B/A)_p^\wedge$. 
\end{defin}

\begin{defin}
We define 
\begin{align*}
 \TCR^{+}(A/B;\bZ_p) & := \left ( \THR(A/B;\bZ_p)_{h_{C_2}S^1} \right )_p^{\wedge} \,, \\    
 \TCR^{-}(A/B;\bZ_p) & := \left ( \THR(A/B;\bZ_p)^{h_{C_2}S^1}\right )_p^{\wedge} \,, \\
 \TPR(A/B;\bZ_p) & := \left ( \THR(A/B;\bZ_p)^{t_{C_2}S^1} \right )_p^{\wedge} \,. 
\end{align*}
\end{defin}

\begin{remark}
When $B$ is a $C_2$-$\bE_{\infty}$-ring then there is a canonical map 
\[
\THR(B)\longrightarrow B^{\triv}
\] of $C_2$-$\bE_{\infty}$-algebras in Real $p$-cyclotomic spectra induced by the collapse map $S^\sigma\longrightarrow C_2/C_2$. 
\end{remark}

\begin{defin}\label{def:cyc-base}
A $C_2$-$\bE_\infty$-ring $B$ is a \emph{Real $p$-cyclotomic base} if the canonical map $\THR(B)\to B^{\triv}$ of $C_2$-$\bE_\infty$-rings is a map of Real $p$-cyclotomic spectra, where $B$ has trivial Real $p$-cyclotomic structure. Equivalently, $B$ is a Real $p$-cyclotomic base if there exists an extension
\[ 
    \begin{tikzcd}
   \THR(B)\ar[d] \ar[r,"\varphi"] & \THR(B)^{t_{C_2}\mu_p} \ar[r] &  B^{t_{C_2}\mu_p}\\
    B \ar[urr, dashed]& &
    \end{tikzcd}
\]
in the $\infty$-category $\CAlg^{h_{C_2}S^1}$. 
\end{defin}

\begin{remark}
If $B$ is a Real $p$-cyclotomic base, then $\THR(A/B)$ is Real $p$-cyclotomic. 
\end{remark}

\begin{construction}
When $\THR(A/B)^e$
is bounded below, then there is an equivalence 
\[ 
G : (\THR(A/B)^{t_{C_2}\mu_p})^{h_{C_2}S^1}\simeq \TPR(A/B;\mZ_p)
\]
by~\cite[Proposition~4.4]{QS21a}. We can then define 
\[ \varphi = G\circ (\varphi_p)^{h_{C_2}S^1} : \TCR^{-}(A/B;\mZ_p)\to \TPR(A/B;\mZ_p) 
\]
and define Real topological cyclic homology as in~\cite[Proposition~2.23]{QS21a}:
\[ 
\TCR(A/B;\bZ_p):= \eq \left (\can,\varphi:  \TCR^{-}(A/B;\bZ_p)\longrightarrow \TPR (A/B;\bZ_p) \right )
\,.
\]
We observe that $\TCR(A/B;\bZ_p)\simeq \TCR(\THR(A/B);\bZ_p)$ in the sense of Definition~\ref{def:tcr}. 
\end{construction}

\subsection{Real motivic filtrations}\label{sec:trace-motivic}
We begin by introducing some shorthand for our $\infty$-categories of interest. 

\begin{defin}
We define the following $\infty$-categories: 
\begin{itemize}
\item Let  $\CAlg_{\sev,\,p}^{h_{C_2}S^1} \subset \CAlg_p^{h_{C_2}S^1}$
denote the full sub-$\infty$-category spanned by the objects whose underlying $C_2$-spectrum is strongly even and has 
bounded $p$-power torsion. 
\item Let $\CAlg^{\rcyc}_{\sev,\,p} \subset \CAlg_{p}^{\rcyc}$
denote the full sub-$\infty$-category spanned by the objects whose underlying $C_2$-spectrum is strongly even and has 
bounded $p$-power torsion.  
\end{itemize}

\end{defin}

\begin{prop}
Each of the inclusions
\begin{align*}
\CAlg_{\sev,\,p}^{h_{C_2}S^1} & \subset \CAlg_p^{h_{C_2}S^1} \,,  \\
\CAlg^{\rcyc}_{\sev,\,p} & \subset \CAlg_{C_2,p}^{\rcyc}
\end{align*}
are inclusions of accessible sub-$\infty$-categories. 
\end{prop}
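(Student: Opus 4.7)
The plan is to mimic the proofs of Proposition~\ref{prop:acc1} and Proposition~\ref{prop:accessibile-Borel}, using that being strongly even with bounded $p$-power torsion is a property of the underlying $C_2$-spectrum, so the forgetful functors that discard the twisted $S^1$-action or the Real $p$-cyclotomic structure allow us to recognize these subcategories as pullbacks of already-accessible subcategories.

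More precisely, first I would consider the forgetful functors
\[
U_1 : \CAlg_p^{h_{C_2}S^1} \longrightarrow \CAlg_p^{C_2} \quad\text{and}\quad U_2 : \CAlg_p^{\rcyc} \longrightarrow \CAlg_p^{C_2}
\]
obtained by forgetting the twisted $S^1$-action and the Real $p$-cyclotomic structure, respectively. By construction both subcategories in question are the preimages under $U_1$ and $U_2$ of $\CAlg_{\sev,\,p}^{C_2}\subset \CAlg_p^{C_2}$. That is, we have pullback squares
\[
\begin{tikzcd}
\CAlg_{\sev,\,p}^{h_{C_2}S^1} \ar[r,hookrightarrow] \ar[d] & \CAlg_p^{h_{C_2}S^1} \ar[d,"U_1"] \\
\CAlg_{\sev,\,p}^{C_2} \ar[r,hookrightarrow] & \CAlg_p^{C_2}
\end{tikzcd}
\qquad
\begin{tikzcd}
\CAlg_{\sev,\,p}^{\rcyc} \ar[r,hookrightarrow] \ar[d] & \CAlg_p^{\rcyc} \ar[d,"U_2"] \\
\CAlg_{\sev,\,p}^{C_2} \ar[r,hookrightarrow] & \CAlg_p^{C_2}\,.
\end{tikzcd}
\]

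Next, I would verify that each of the four corners of both diagrams other than the upper left is an accessible $\infty$-category and that every functor involved is accessible. The inclusion $\CAlg_{\sev,\,p}^{C_2}\hookrightarrow \CAlg_p^{C_2}$ is an accessible functor between accessible $\infty$-categories by Proposition~\ref{prop:accessibile-Borel}. The forgetful functor $U_1$ is the restriction along the basepoint of the free twisted $S^1$-action and therefore preserves small limits and small colimits, hence in particular is accessible; similarly $U_2$ preserves limits and filtered colimits since Real $p$-cyclotomic structure is a fiber-sequence--defined enhancement (an equalizer datum between two accessible functors). With these inputs, both pullback squares are pullbacks of accessible $\infty$-categories along accessible functors, so the upper-left corners are accessible by~\cite[Proposition~5.4.6.6]{Lur09}, and the top horizontal inclusions are fully faithful by construction.

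The main subtlety I expect is verifying accessibility of the forgetful functors, especially $U_2$: one needs to know that $\CAlg_p^{\rcyc}$ is itself presentable (or at least accessible) and that forgetting the cyclotomic Frobenius structure is accessible. This follows from the description of $\RCyc_p$ as an equalizer (or lax equalizer) of accessible endofunctors of $\Sp_p^{h_{C_2}S^1}$, passed to $\CAlg^{C_2}$ via the symmetric monoidal structure, together with the fact that limits of accessible $\infty$-categories along accessible functors are accessible by~\cite[Proposition~5.4.6.6]{Lur09}. Once this foundational input is in place, the conclusion is formal from the pullback description above.
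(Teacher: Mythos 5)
Your proof is correct and follows the same basic strategy as the paper: exhibit the subcategory as a pullback of accessible $\infty$-categories along accessible functors and invoke \cite[Proposition~5.4.6.6]{Lur09}. The paper's (very terse) proof says to repeat the argument of Proposition~\ref{prop:acc1} together with Lemma~\ref{lem:acessible-p-power}, i.e.\ to form the pullback against $\pi_\star^{C_2}$ and the relevant subcategory of graded abelian groups; your version is a slightly more modular packaging of the same idea, pulling back the already-established accessible subcategory $\CAlg_{\sev,\,p}^{C_2}$ along the forgetful functors $U_1$ and $U_2$, which lets you cite Proposition~\ref{prop:accessibile-Borel} as a black box instead of re-running its argument. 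You are right that the only real content is accessibility of $U_1$ and $U_2$ (and presentability of their sources), and your justifications for these are the standard ones.
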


\begin{proof}
This follows from the same argument as Proposition~\ref{prop:acc1} using Lemma~\ref{lem:acessible-p-power}. 
\end{proof}

This allows us to make sense of the following definitions.

\begin{defin}
We define 
\[ 
\fil_{\sev,\,h_{C_2}S^1,p}^{\bullet}R \coloneqq 
\lim_{R\to B,B\in \CAlg_{\sev,\,p}^{h_{C_2}S^1}} P_{2\bullet}\left (B^{h_{C_{2}}S^{1}} \right )_p^\wedge
\]
where the limit is taken in $\CAlg_p^{h_{C_2}S^1}$. 

We define 
\[ 
\fil_{\sev,\,t_{C_2}S^1,p}^{\bullet}R\coloneqq 
\lim_{R\to B,B\in \CAlg_{\sev,\,p}^{h_{C_2}S^1}} P_{2\bullet}\left ( B^{t_{C_{2}}S^{1}} \right )_p^\wedge
\]
where the limit is taken in $\CAlg_p^{h_{C_2}S^1}$. 

We define 
\[ 
\fil_{\sev,\,\TCR}^{\bullet}R\coloneqq \lim_{R\to B,B\in\CAlg^{\rcyc}_{\sev,\,p}} 
\mathrm{eq} \left ( \can,\varphi :
\xymatrix{ 
P_{2\bullet}
\left ( 
B^{h_{C_{2}}S^{1}} \right )_{p}^\wedge
\longrightarrow P_{2\bullet} 
 \left ( B^{t_{C_{2}}S^{1}}\right )_{p}^\wedge
}  
\right ) 
\]
where the limit is taken in $\CAlg^{\rcyc}_{p}$. 
\end{defin}

\begin{defin}\label{def:bifiltered}
Let $R \in \CAlg_p^{h_{C_2}S^1}$. 

\begin{itemize}
\item We define a filtration on $\fil_{\sev,\,h_{C_2}S^1,\,p}^{\bullet}R$ on objects by 
\[ 
\fil^s_+\fil_{\sev,\,h_{C_2}S^1,\,p}^{\bullet}R \coloneqq \lim_{R\to B,B\in
 \CAlg_{\sev,\,p}^{h_{C_2}S^1}} P_{2\bullet}\left ( \left ((P_{2s}B)^{h_{C_{2}}S^{1}} \right )_p\right ) 
\]
and on morphisms in the obvious way.

\item We define a filtration on $\fil_{\sev,\,t_{C_2}S^1,\,p}^{\bullet}R$ on objects by 
\[ 
\fil^s_+\fil_{\sev,\,t_{C_2}S^1,\,p}^{\bullet}R \coloneqq \lim_{R\to B,B\in
 \CAlg_{\sev,\,p}^{h_{C_2}S^1}} P_{2\bullet}\left ( \left ((P_{2s}B)^{t_{C_{2}}S^{1}} \right )_p\right ) 
\]
and on morphisms in the obvious way.
\end{itemize}
This produces functors 
\begin{align*}
\fil_+^{\smallblacksquare}\fil_{\sev,\,h_{C_2}S^1,\,p}^{\bullet} &:\CAlg^{h_{C_2}S^1}_{p}
\longrightarrow \CAlg^{C_2}(\Fil(\Fil(\Sp^{C_2}))) \,, \\ 
\fil_+^{\smallblacksquare}\fil_{\sev,\,t_{C_2}S^1,\,p}^{\bullet} &:\CAlg^{h_{C_2}S^1}_{p}
\longrightarrow \CAlg^{C_2}(\Fil(\Fil(\Sp^{C_2})))  \,.
\end{align*}

\end{defin}

\begin{construction}
Let $R$ be $C_2$-$\bE_\infty$-ring. 
Define the \emph{Real motivic filtrations} on $\THR(R;\bZ_p)$, $\TCR^{-}(R;\bZ_p)$, $\TPR(R;\bZ_p)$, and $\TCR(R\bZ_p)$ 
as follows:
\begin{align*}
 	\filmot^{\bullet}\THR(R;\bZ_p)&\coloneqq \fil_{\sev,p}^{\bullet}\THR(R;\bZ_p) \,, \\
	\filmot^{\bullet}\TCR^{-}(R;\bZ_p)&\coloneqq  \fil_{\sev,\,h_{C_2}S^1,\,p}^{\bullet}\THR(R;\bZ_p) \,, \\
	 \filmot^{\bullet}\TPR(R;\bZ_p) &\coloneqq \fil_{\sev,\,t_{C_2}S^1,\,p}^{\bullet}\THR(R;\bZ_p) \,, \\
  \filmot^{\bullet}\TCR(R;\bZ_p) &\coloneqq \fil_{\sev,\TCR}^{\bullet}\THR(R;\bZ_p)\,.
 \end{align*}
Moreover, for  
$\cF\in \{\THR(\--;\bZ_p)\,,
\TCR^{-}(\--;\bZ_p)\,,\TPR(\--;\bZ_p)\,,\TCR(\--;\bZ_p)\}$, 
we write 
\[ 
\grmot^{\ast}\cF(R):= \filmot^{\ast}\mathcal{F}(R)/\filmot^{\ast+1}\mathcal{F}(R)\,.
\]
\end{construction}

\begin{remark}
    We define
    \[
\filmot^{\bullet}\THR(R;\bZ_p)^{t_{C_2}\mu_p} 
    \]
    to be the pushout of the diagram 
\[
\begin{tikzcd}
\filmot^{\bullet}\TCR^{-}(R;\bZ_p) \ar[r] \ar[d] & \filmot^{\bullet}\TPR(R;\bZ_p)   \\
\filmot^{\bullet}\THH(R;\bZ_p)  &    
\end{tikzcd}
\]
    for filtered $C_2$-$\mathbb{E}_\infty$-rings, 
    and we set
    \[
    \gr^*_{\mot} \THR(R;\bZ_p)^{t_{C_2}\mu_p} := 
    \fil^*_{\mot} \THR(R;\bZ_p)^{t_{C_2}\mu_p} / \fil^{*+1}_{\mot} \THR(R;\bZ_p)^{t_{C_2}\mu_p}\,.
    \]
\end{remark}

The following construction will be the key input for Section~\ref{sec:trace-Tate}, which is needed for our computations. 

\begin{construction}\label{const:Nygaard-filtration}
We further define filtrations
\begin{align*}
\fil^{\smallblacksquare}_{\Nyg}\grmot^*\TPR(R;\bZ_p):=
\fil_+^{\smallblacksquare}\fil_{\sev,\,t_{C_2}S^1,\,p}^{*}\THR(R;\bZ_p)/\fil_+^{\smallblacksquare}\fil_{\sev,\,t_{C_2}S^1,\,p}^{*+1}\THR(R;\bZ_p) \,, \\
\fil^{\smallblacksquare}_{\Nyg}\grmot^*\TCR^{-}(R;\bZ_p):=
\fil_+^{\smallblacksquare}\fil_{\sev,\,h_{C_2}S^1,\,p}^{*}\THR(R;\bZ_p)/\fil_+^{\smallblacksquare}\fil_{\sev,\,h_{C_2}S^1,\,p}^{*+1}\THR(R;\bZ_p) \,, \\
\fil_{\loc}^{\smallblacksquare}\grmot^*\TPR(R;\bZ_p):=
\fil_+^{\smallblacksquare}\fil_{\sev,\,h_{C_2}S^1,\,p}^{*}\THR(R;\bZ_p)^{t_{C_2}\mu_p}/\fil^{\smallblacksquare}_+\fil_{\sev,\,h_{C_2}S^1,\,p}^{*+1}\THR(R;\bZ_p)^{t_{C_2}\mu_p}
\end{align*}
on 
\[ 
\grmot^*\TPR(R;\bZ_p)\,, \quad  \grmot^*\TCR^{-}(R;\bZ_p)\, \quad \text{ and } \quad \grmot^*\TPR(R;\bZ_p) \,,
\]
respectively, where for the filtration $\fil_{\loc}^{\smallblacksquare}\grmot^*\TPR(R;\bZ_p)$, 
we additionally assume that $\THH(R^e;\bZ_p)$ is bounded below. 
We also write 
\begin{align*}
\gr^s_{\Nyg}\grmot^*\TPR(R;\bZ_p):=\fil_{\Nyg}^{s}\grmot^*\TPR(R;\bZ_p)/\fil_{\Nyg}^{s+1}\grmot^*\TPR(R;\bZ_p) \,, \\
\gr_{\Nyg}^{s}\grmot^*\TCR^{-}(R;\bZ_p):= \fil^{s}_{\Nyg}\grmot^*\TCR^{-}(R;\bZ_p)/\fil_{\Nyg}^{s+1}\grmot^*\TCR^{-}(R;\bZ_p) \,, \\
\gr_{\loc}^{s}\grmot^*\TPR(R):= \fil_{\loc}^{s}\grmot^*\TPR(R)/\fil_{\loc}^{s+1}\grmot^*\TPR(R)
\end{align*}
for the associated graded of these filtrations at an integer $s$. 
\end{construction}

\begin{defin}
For 
\[
\cF\in \{\THR(\--;\bZ_p),\THR(\--;\bZ_p)^{t_{C_2}\mu_2},\TCR^{-}(\--;\bZ_p),\TPR(\--;\bZ_p),\TCR(\--;\bZ_p)\}\,,
\] 
the \emph{motivic spectral sequence} is the spectral sequence associated to the filtered $C_2$-spectrum 
$\filmot^{\bullet}\cF(R)$. This spectral sequence has signature 
\[ 
\EE_2^{V,t\rho-V}=\pi_{V}^{C_2}\grmot^t\cF(R) \implies \pi_{V}^{C_2}\colim_{s}\filmot^{s}\cF(R)\
\]
with differential convention 
\[ 
d_r :\EE_r^{V,t\rho-V}\to \EE_r^{V-1,t\rho-V+r} \,.
\]
Since the slice filtration is exhaustive, this spectral sequence conditionally converges. 
\end{defin}

\begin{remark}
When there exists a seff map $\THR(A)\to B$ in $\CAlg_{p}^{h_{C_2}S^1}$  where 
\[P^0\fib(\THR(A)\to B)=0\,,\] 
then similar considerations to Proposition~\ref{prop:convergence} and the proof of~\cite[Theorem~3.18]{Kee25} 
imply that the motivic spectral sequence conditionally converges to 
\[ 
\pi_{\star}^{C_2}\cF(R)
\]
as desired for 
$\cF\in \{\THR(\--;\bZ_p),\THR(\--;\bZ_p)^{t_{C_2}\mu_2},\TCR^{-}(\--;\bZ_p),\TPR(\--;\bZ_p),\TCR(\--;\bZ_p)\}$.
\end{remark}

\begin{remark}
The motivic spectral sequence strongly converges whenever the $\mathrm{E}_2$-page is a finite type $\pi_{\star}^{C_2}\bF_2$-module which is finitely generated as a $\pi_{\star}^{C_2}\bF_2$-algebra. This will be the case in all of our examples of interest.
\end{remark}

\subsection{Amitsur--Dress--Tate cohomology}\label{sec:trace-Tate}
Recall from~\cite{Ara79} and~\cite[Theorem~2.10]{HK01}, that the $C_2$-equivariant Bredon cohomology 
with coefficients in a Mackey functor $\mM$ of $\mathbb{C}P_{\bR}^{\infty}$ is given by
\[
\mrH^\star_{C_2} (\mathbb{C}P_{\bR}^{\infty}; \mM) \cong \upi_\star \mM[\ol{t}] 
\]
where $|t| = -\rho$. Here we write $\mathbb{C}P_{\bR}^{\infty}$ for 
$\mathbb{C}P^\infty$ with complex conjugation action, see Example~\ref{exm:parametrized-CPinfty}. 

Amitsur--Dress and Amitsur--Dress--Tate cohomology are the parametrized analogues of group cohomology and Tate cohomology, 
respectively; see~\cite[\S~21]{GM95} and~\cite[\S~3.2]{MNN17} for definitions. 
The Bredon cohomology computation above yields the Amistur--Dress(--Tate) cohomology computations
\begin{align*}
\widehat{\mrH}^{*}_{\AD}(\mathbb{C}P_{\bR}^{\infty}; \pi_{\star}^{C_2}\mM)&=\upi_{\star}^{C_2}\mM[\ol{t},\ol{t}^{-1}] \,,  \\
\mrH^{*}_{\AD}(\mathbb{C}P_{\bR}^{\infty}; \pi_{\star}^{C_2}\mM)&= \upi_{\star}^{C_2}\mM[\ol{t}] \,, 
\end{align*}
where $|t|=-\rho$. 
For $X\in \CAlg^{h_{C_2}S^1}$, we obtain multiplicative spectral sequences
\begin{align}
\label{prismaticSS}
\widehat{\EE}_2^{\star,*,*} 
:=\widehat{\mrH}^{*}_{\AD}(\mathbb{C}P_{\bR}^{\infty},\upi_{\star}X)& \implies  \upi_{\star}X^{t_{C_2}S^1} \,, \\
\label{TC-SS}
\EE_2^{\star,*,*}  :=\mrH_{\AD}^*(\mathbb{C}P_{\bR}^{\infty},\upi_{\star}X) & \implies  \upi_{\star}X^{h_{C_2}S^1} 
\end{align}
that we call the \emph{parametrized Tate spectral sequence} and the \emph{parametrized homotopy fixed point spectral sequence}, 
respectively; see~\cite[Remark~4.32]{QS21a} for the identification of these spectral sequences with certain 
spectral sequences studied by Greenlees and May~\cite{GM95}. 

For a $C_2$-$\bE_{\infty}$-ring $R$ such that $\THH(R^e)$ is bounded below and $\grmot^w\THR(R)$ is 
even for each $w$ after an appropriate suspension, the filtrations from~Construction~\ref{const:Nygaard-filtration} 
give rise to multiplicative spectral sequences
\begin{align}
\label{prismaticSS}
\widehat{\EE}_2^{\star,*,*} :=\pi_{\star}^{C_2}\grmot^{*}\THR(R)[t,t^{-1}]& \implies  \pi_{\star}^{C_2}\grmot^{*}\TPR(R) \,, \\
\label{TC-SS}
\EE_2^{\star,*,*}  :=\pi_{\star}^{C_2}\grmot^{*}\THR(R)[t] & \implies  \pi_{\star}^{C_2}\grmot^{*}\TCR^{-}(R) \,, \\
\label{hS1-Tate-SS}
_{\textup{loc}}\EE_2^{\star,*,*}:=\pi_{\star}^{C_2}\grmot^{*}\THR(R)^{t_{C_2}\mu_2}[\bar{t}] & \implies   \pi_{\star}^{C_2}\grmot^{*}\TPR(R) \,,
\end{align}
which we refer to as the \emph{periodic $\overline{t}$-Bockstein}, \emph{$\overline{t}$-Bockstein}, 
and \emph{localized $\overline{t}$-Bockstein}, respectively.\footnote{It is perhaps more appropriate to call 
this the twisted (periodic/localized) $a_{\lambda}$-Bockstein spectral sequence where $a_{\lambda}$ is the 
Euler class of the standard $S^1$-equivariant representation, but we nevertheless adhere to the terminolology 
from~\cite{HRW22}.}
These spectral sequences fit into a commutative diagram of spectral sequences
\begin{equation}\label{eq:diagram-of-ss}
\xymatrix{
\widehat{E}_2^{\star,*,*}  
\ar@{=>}[d]  & \ar[l] E_2^{\star,*,*} \ar[r] \ar@{=>}[d] & _{\textup{loc}}E_2^{\star,*,*} \ar@{=>}[d] \\
\pi_{\star}^{C_2}\grmot^{*}\TPR(R) & \ar[l]_{\textup{can}}
\pi_{\star}^{C_2}\grmot^{*}\TCR^{-}(R) \ar[r]^{\varphi} & 
\pi_{\star}^{C_2}\grmot^{*}\TPR(R) 
}
\end{equation}
with differential convention
\[ 
d_r :\mathrm{E}_r^{V,w,f}\to  \mathrm{E}_r^{V-1,w+1,f+r}  
\]
where $V$ is the \emph{degree}, $w$ is the \emph{Adams weight}, and $f$ is the \emph{Nygaard filtration}. 
For example, the trigrading of $\ol{t}$ is given by 
\begin{align*} 
\|\ol{t}\|&=(-\rho,0,1)\,,
\end{align*}
and if $x\in \pi_V\grmot^t\THR(R)$, then 
\begin{align*} 
\|x\|&=(V,t\rho -V,0)\,.
\end{align*}

Observe that there is a short exact sequence 
\[ 
0 \to \mathrm{Nyg}_{\ge 1}^h\to \pi_{\star}^{C_2}\grmot^{*}\TCR^{-}(A) \to  \mathrm{Nyg}_{=0}^h \to 0 
\]
where $\mathrm{Nyg}_{\ge 1}^h$ is the subgroup of elements in Nygaard filtration $\ge 1$ and $\mathrm{Nyg}_{=0}^h$ is
defined as the quotient by this subgroup. We also define a compatible exact sequence 
\[ 
0 \to \mathrm{Nyg}_{\ge 1}^t\to \pi_{\star}^{C_2}\grmot^{*}\TPR(A) \to  \mathrm{Nyg}_{\le 0}^t \to 0 
\]
where $\mathrm{Nyg}_{\ge 1}^t=\mathrm{can}(\mathrm{Nyg}_{\ge 1})$ consists of elements in Nygaard filtration $\ge 1$ 
and $\mathrm{Nyg}_{\le 0}^t$ denotes the quotient. We further consider an exact sequence 
\[ 
0 \to \mathrm{Nyg}_{=0}^t \to \mathrm{Nyg}_{\le 0}^t\to \mathrm{Nyg}_{\le 1}^t \to 0 
\]
where $\mathrm{Nyg}_{=0}^t$ consists of elements in $\pi_{\star}^{C_2}\gr_{\mot}^*\TCR^{-}(R)$ in 
Nygaard filtration exactly zero.

\subsection{Descent for motivic filtrations}\label{sec:motivic-filtrationsB}
In this section, we prove descent for the motivic filtration.  

\begin{defin}
We say a map $A\to B$ in  
$\CAlg^{h_{C_2}S^1}_{\sev,\,p}$ and  
 $\CAlg^{\rcyc}_{\sev,\,p}$ is \emph{$p$-completely seff} if the underlying map in $\CAlg^{C_2}_{\sev,\,p}$ is seff. 
\end{defin}
\begin{defin}
 We say that a sieve on $A$ in 
$(\CAlg^{h_{C_s}S^1}_{\sev,\,p})^{\op}$ and  
 $(\CAlg^{\rcyc}_{\sev,\,p})^{\op}$ is a \emph{$p$-completely seff covering sieve} if it contains a finite 
 collection of maps $\{A\to B_i\}_{1\le i\le n}$ in  
$(\CAlg^{h_{C_2}S^1}_{\sev,\,p})$ and  
 $(\CAlg^{\rcyc}_{\sev,\,p})$ such that $A\longrightarrow \prod_{i}B_i$ is seff (resp. $p$-completely seff). 
\end{defin}

\begin{proposition}
The $p$-completely seff covering sieves generate a Grothendieck topology on the $\infty$-categories $(\CAlg^{h_{C_2}S^1}_{\sev,\,p})^{\op}$ 
and $(\CAlg^{\rcyc}_{\sev,\,p})^{\op}$.
\end{proposition}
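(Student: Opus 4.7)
The plan is to apply the criteria of~\cite[A.3.2.1,~A.3.3.1]{Lur18} just as in the proof of Proposition~\ref{Prop:Topologies}(2). Concretely, I need to check three things: (i) the maximal sieve at every object is a covering sieve; (ii) pullbacks of covering sieves along arbitrary morphisms are covering; and (iii) a sieve that is locally covering is covering (the local character axiom). Item (i) is immediate since the identity map is trivially $p$-completely seff.

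For (ii) and (iii), the crucial observation is that by definition a map in $\CAlg^{h_{C_2}S^1}_{\sev,\,p}$ or $\CAlg^{\rcyc}_{\sev,\,p}$ is $p$-completely seff precisely when the underlying map in $\CAlg^{C_2}_{\sev,\,p}$ is $p$-completely seff. Therefore both the stability under pullback and the transitivity reduce to the analogous statements for the $p$-completely seff topology on $(\CAlg^{C_2}_{\sev,\,p})^{\op}$, which was established in Proposition~\ref{Prop:Topologies}(2), provided that the relevant pushouts exist and are computed at the level of underlying $p$-complete $C_2$-$\bE_\infty$-rings.

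The main point to verify is therefore that pushouts along $p$-completely seff maps exist in $\CAlg^{h_{C_2}S^1}_{\sev,\,p}$ and $\CAlg^{\rcyc}_{\sev,\,p}$ and that their underlying $C_2$-spectra agree with the pushouts computed in $\CAlg^{C_2}_{\sev,\,p}$. For the $C_2$-twisted $S^1$-equivariant case this follows because $\CAlg^{h_{C_2}S^1}_p$ is a category of algebras for a $C_2$-$\mathbb{E}_\infty$-monad on $\Sp^{C_2}_p$ with $C_2$-twisted $S^1$-action, so relative tensor products inherit the $C_2$-twisted $S^1$-action from the base, and the strongly even and bounded $p$-power torsion conditions are preserved by $p$-completely seff base change. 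The Real $p$-cyclotomic case follows in the same way once one notes that the cyclotomic Frobenius is compatible with relative tensor products along maps of Real $p$-cyclotomic bases (cf.\ Definition~\ref{def:cyc-base}), so that the pushout in $\CAlg^{\rcyc}_{\sev,\,p}$ inherits a canonical Real $p$-cyclotomic structure.

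The hard part, if there is one, is checking that the Real cyclotomic structure descends to the pushout in the second case; but this is essentially formal given the symmetric monoidal construction of $\RCyc_p$, and it suffices to invoke the compatibility of the cyclotomic Frobenius with relative tensor products. With these reductions in place, the three axioms of a Grothendieck topology follow directly from Proposition~\ref{Prop:Topologies}(2) applied to the underlying maps.
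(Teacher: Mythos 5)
Your proposal is correct and takes essentially the same approach as the paper: both reduce to Proposition~\ref{Prop:Topologies} via the observation that $p$-completely seff maps are defined through the underlying map in $\CAlg^{C_2}_{\sev,\,p}$ and that pushouts along such maps exist and are computed at the underlying level. You supply more detail than the paper does in justifying why the pushouts in $\CAlg^{h_{C_2}S^1}_{\sev,\,p}$ and $\CAlg^{\rcyc}_{\sev,\,p}$ are computed in $\CAlg^{C_2}_{\sev,\,p}$, but the structure of the argument is the same.
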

\begin{proof}
Since pushouts in 
$\CAlg^{h_{C_2}S^1}_{\sev,\,p}$ and  
 $\CAlg^{\rcyc}_{\sev,\,p}$ are computed in $\CAlg^{\sev}_p$, the same argument as Proposition~\ref{Prop:Topologies} 
 produces the the $p$-completely seff topology. 
\end{proof}

\begin{thm}\label{thm:sheaf}
The following statements hold. 
\begin{enumerate}
\item \label{it1:sheaf} The functors $P_{2\ast}\left ((\--)^{h_{C_2}S^1}\right )$ and 
$P_{2\ast}\left ((\--)^{t_{C_2}S^1}\right)$ are sheaves for the $p$-completely seff topology 
on $\CAlg^{h_{C_2}S^1}_{\sev,\,p}$ and the $p$-completely seff topology on  $\CAlg^{\rcyc,\,\sev}_{p}$. 
\item The functors $P_{2\ast}\left ((P_{2\ast}\--)^{h_{C_2}S^1}\right )$ and 
$P_{2\ast}\left ((P_{2\ast}\--)^{t_{C_2}S^1}\right)$ are sheaves for the $p$-completely seff topology 
on $\CAlg^{h_{C_2}S^1}_{\sev,\,p}$. 
\item The functor 
\[ 
\mathrm{eq} \left ( \varphi,\can : P_{2\bullet}\left ( (\--)^{h_{C_2}S^1} \right ) \longrightarrow P_{2\bullet}\left ( (\--)^{t_{C_2}S^1} \right ) \right )
\]
is a sheaf for the $p$-completely seff topology on $\CAlg^{\rcyc,\,\sev}_{p}$.
\end{enumerate}
\end{thm}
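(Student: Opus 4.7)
The strategy is to leverage the sheaf property of $P_{2*}$ from Proposition~\ref{prop:babysheaf}, combined with the fact that several key functors preserve limits: each slice truncation $P_{2s}$ is the right adjoint to the inclusion $\tau_{2s}\hookrightarrow \Sp^{C_2}$, the homotopy fixed points functor $(-)^{h_{C_2}S^1}$ is a right adjoint, and equalizers are themselves limits. Moreover, the slice filtration is exhaustive ($\colim P_s\simeq \mathrm{id}$), so a $P_*$-equivalence of $C_2$-spectra is already an equivalence.

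For the $h_{C_2}S^1$ case in part~(1), let $A\to B$ be a $p$-completely seff cover in $\CAlg^{h_{C_2}S^1}_{\sev,\,p}$. The $p$-complete analogue of Proposition~\ref{prop:babysheaf} gives $P_{2s}A\simeq \lim_\Delta P_{2s}(B^{\otimes_A \bullet+1})$ for every integer $s$. Each $B^{\otimes_A n+1}$ is strongly even, so its odd slices vanish by Proposition~\ref{prop:slices}; since $P_{2s-1}$ preserves limits, the odd slices of $\lim_\Delta B^{\otimes_A \bullet+1}$ also vanish, and trivially so do those of the strongly even $A$. The $P_{2*}$-equivalence thus upgrades to a $P_*$-equivalence, and by exhaustiveness $A\simeq \lim_\Delta B^{\otimes_A \bullet+1}$ in $\Sp^{h_{C_2}S^1}_p$. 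Applying the limit-preserving $(-)^{h_{C_2}S^1}$ followed by $P_{2*}$ then produces the sheaf condition
\[
P_{2*}(A^{h_{C_2}S^1}) \simeq \lim_\Delta P_{2*}\bigl((B^{\otimes_A \bullet+1})^{h_{C_2}S^1}\bigr).
\]
For the Tate case, $(-)^{t_{C_2}S^1}$ is not a right adjoint, so I would apply the exact functor $P_{2s}$ to the norm cofiber sequence $(-)_{h_{C_2}S^1}\to (-)^{h_{C_2}S^1}\to (-)^{t_{C_2}S^1}$ and argue that both outer terms totalize correctly: the homotopy fixed points term by the previous case, and the homotopy orbits term via uniform convergence of the parametrized Tate spectral sequence~\eqref{prismaticSS} in the strongly even setting.

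Part~(2) is proved identically to part~(1), since the internally applied $P_{2*}$ is itself limit-preserving. For part~(3), pulling the sheaves $P_{2*}((-)^{h_{C_2}S^1})$ and $P_{2*}((-)^{t_{C_2}S^1})$ back to $\CAlg^{\rcyc,\,\sev}_{p}$ along the forgetful functor to $\CAlg^{h_{C_2}S^1}_{\sev,\,p}$ preserves the sheaf property, and the equalizer of the natural transformations $\varphi$ and $\can$ between them is itself a sheaf because equalizers commute with all limits. The main obstacle is the Tate case of part~(1): the failure of $(-)^{t_{C_2}S^1}$ to preserve limits precludes a purely formal argument, forcing a careful use of the norm cofiber sequence or the parametrized Tate spectral sequence under the strongly even and bounded $p$-power torsion hypotheses encoded in the sites.
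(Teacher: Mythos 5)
There is a genuine gap in your argument for part (1), and it sits exactly where the real content of the theorem lies. The sheaf condition to be proven is
\[
P_{2\ast}\left(A^{h_{C_2}S^1}\right)\simeq \lim_{\Delta}P_{2\ast}\left((B^{\otimes_A\bullet+1})^{h_{C_2}S^1}\right),
\]
with the slice truncation applied \emph{termwise, before} the totalization. Your route is to first upgrade the termwise statement $P_{2s}A\simeq\lim_{\Delta}P_{2s}(B^{\otimes_A\bullet+1})$ to an unfiltered equivalence $A\simeq\lim_{\Delta}B^{\otimes_A\bullet+1}$ and then apply $P_{2\ast}\circ(-)^{h_{C_2}S^1}$. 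Even granting unfiltered descent (which is itself not automatic: it is the separate convergence statement of Proposition~\ref{prop:convergence}, requires extra hypotheses such as $P^0\fib(A\to B)=0$, and involves the non-formal interchange of $\colim_s$ with $\lim_{\Delta}$), this only identifies $P_{2\ast}\bigl(\lim_{\Delta}(B^{\otimes_A\bullet+1})^{h_{C_2}S^1}\bigr)$, not $\lim_{\Delta}P_{2\ast}\bigl((B^{\otimes_A\bullet+1})^{h_{C_2}S^1}\bigr)$. The slice covers $P_n$ do not commute with cosimplicial totalizations: they are colocalizations onto subcategories that are not closed under limits, just as Postnikov truncations fail to commute with totalizations, so your assertion that ``$P_{2s-1}$ preserves limits'' is false. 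Commuting the truncation past $\lim_{\Delta}$ is precisely the nontrivial step that the ($p$-complete) faithful flatness hypothesis is there to supply; it cannot be obtained by formal adjunction bookkeeping.

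The paper's proof sidesteps all of this. Since the filtrations are complete, one passes to associated graded, where strong evenness (of $A$, of the terms $B^{\otimes_A q+1}$, and hence of their homotopy fixed points and Tate constructions, via Real orientability) identifies every slice with a $\rho$-fold suspension of the Eilenberg--MacLane spectrum of a \emph{constant} Mackey functor. Because limits of Mackey functors are computed pointwise, the claim then reduces to the underlying nonequivariant flat-descent statement \cite[Lemma~2.2.13(b)]{HRW22}, which handles the homotopy fixed point and Tate cases uniformly -- so the Tate case, which you correctly flag as resistant to a formal argument but address only with an unexecuted sketch via the norm cofiber sequence, requires no separate treatment. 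Your part (3), that an equalizer of sheaves is a sheaf, is correct and agrees with the paper.
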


\begin{proof}\
\begin{enumerate}
\item By passing to associated graded and using the fact that these filtrations are complete, 
it suffices to prove that the functors 
\[
P_*^*((\--)^{h_{C_2}S^1}) \quad \text{ and } \quad P_*^*((\--)^{t_{C_2}S^1})
\]
are sheaves for the $p$-completely seff topology. Given a $p$-completely seff map $A\to B$ in $\CAlg_{p}^{h,\,\sev}$, 
it suffices to prove that the canonical maps
\begin{align*}
P_*^*(A^{h_{C_2}S^1})&\to \lim_{\Delta} P_*^*((B^{\otimes_{A}\bullet+1})_p^{h_{C_2}S^1}) \,, \\ 
P_*^*(A^{t_{C_2}S^1})&\to \lim_{\Delta} P_*^*((B^{\otimes_{A}\bullet+1})_p^{t_{C_2}S^1})
\end{align*}
are equivalences. 
Since $A$ and $(B^{\otimes_{A}q+1})_p$ are strongly even, it suffices to show that the canonical maps
\begin{align*}
\Sigma^{\rho n}H\m{\pi_{2n}(A^{hS^1})}&\to \lim_{\Delta} \Sigma^{\rho n}\m{\pi_{2n}((B^{\otimes_{A}\bullet+1})_p^{hS^1})} \,, \\ 
\Sigma^{\rho n}\m{\pi_{2n}(A^{tS^1})}&\to \lim_{\Delta} \Sigma^{n\rho}\m{\pi_{2n} ((B^{\otimes_{A}\bullet+1})_p^{tS^1})}
\end{align*}
are equivalences, but this follows from~\cite[Lemma~2.2.13(b)]{HRW22} since limits of Mackey functors are computed point-wise. 
The same argument applies in the category of Real $p$-cyclotomic spectra. 
\item The argument is essentially the same as in the previous case. 
\item This follows from \eqref{it1:sheaf}. 
\end{enumerate}
\end{proof}

\begin{cor}\label{thm:motivic-descent}
The following statements hold.
\begin{enumerate}
\item Let $\THR(A;\bZ_p) \longrightarrow B$ be a $p$-completely seff map in $\CAlg_{p}^{h_{C_2}S^1}$. 
Then 
\begin{align*}
    \fil_{\mot}^*\TCR^{-}(A;\bZ_p)& \simeq \lim_{\Delta} \fil_{\sev,h_{C_2}S^1,p}^*((B^{\otimes_{\THR(A;\bZ_p)}\bullet+1})^{h_{C_2}S^1})_p \,, \\ 
    \fil_{\mot}^*\TPR(A;\bZ_p)& \simeq \lim_{\Delta}\fil_{\sev,t_{C_2}S^1,p}^*((B^{\otimes_{\THR(A;\bZ_p)}\bullet+1})^{t_{C_2}S^1})_p\,.
\end{align*}
\item Let $\THR(A;\bZ_p)\to B$ be a $p$-completely seff map in $\CAlg_{p}^{\rcyc}$. 
Then 
\[
\filmot^*\TCR(A;\bZ_p) \simeq 
\mathrm{eq} \left (\can,\varphi : \filmot^*\TCR^{-}(A;\bZ_p) \longrightarrow \filmot^*\TPR(A;\bZ_p) \right ) \,.
\]
\end{enumerate}
\end{cor}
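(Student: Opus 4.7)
The plan is to reduce both statements to the sheaf property already established in Theorem~\ref{thm:sheaf}, following the same template as Theorem~\ref{thm:descent}. Since the left-hand sides are, by construction, right Kan extension limits over $\CAlg^{h_{C_2}S^1}_{\sev,\,p}$ and $\CAlg^{\rcyc}_{\sev,\,p}$ of $P_{2*}$ applied to the relevant fixed point/Tate/equalizer construction, the whole task is to (i) apply the sheaf property inside the Kan extension limit, (ii) swap the two limits, and (iii) identify the inner indexing categories by a cofinality argument.

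For part (1), I would first unwind
\[
\fil_{\mot}^*\TCR^{-}(A;\bZ_p) \;=\; \lim_{\substack{\THR(A;\bZ_p)\to C \\ C\in \CAlg^{h_{C_2}S^1}_{\sev,\,p}}} P_{2*}\bigl(C^{h_{C_2}S^1}\bigr)_p.
\]
Given the $p$-completely seff map $\THR(A;\bZ_p)\to B$, the base-changed map $C\to (B\otimes_{\THR(A;\bZ_p)} C)_p$ is $p$-completely seff in $\CAlg^{h_{C_2}S^1}_{p}$ for each such $C$. Applying Theorem~\ref{thm:sheaf}(1), each $P_{2*}(C^{h_{C_2}S^1})_p$ is the totalization of $P_{2*}$ applied to the cobar along this map. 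Swapping the limits, I rewrite the left-hand side as
\[
\lim_{q\in\Delta}\; \lim_{\substack{\THR(A;\bZ_p)\to C}} P_{2*}\bigl((B^{\otimes_{\THR(A;\bZ_p)} q+1}\otimes_{\THR(A;\bZ_p)} C)^{h_{C_2}S^1}\bigr)_p,
\]
and then invoke the same Quillen's Theorem A cofinality argument used at the end of the proof of Theorem~\ref{thm:descent}: the functor $(\THR(A;\bZ_p)\to C)\mapsto (B^{\otimes_{\THR(A;\bZ_p)} q+1}\to B^{\otimes_{\THR(A;\bZ_p)} q+1}\otimes_{\THR(A;\bZ_p)} C)$ is final, because each comma category has a terminal object. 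This identifies the inner limit with $\fil_{\sev,h_{C_2}S^1,p}^*\bigl((B^{\otimes_{\THR(A;\bZ_p)} q+1})^{h_{C_2}S^1}\bigr)_p$. The $\TPR$ case is identical, using Theorem~\ref{thm:sheaf}(1) for the twisted Tate construction.

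For part (2), I would combine (1) with the fact that equalizers commute with limits. The construction of $\filmot^*\TCR(A;\bZ_p)=\fil_{\sev,\TCR}^\bullet\THR(A;\bZ_p)$ takes the equalizer of $\can$ and $\varphi$ \emph{inside} the Kan extension limit over $\CAlg^{\rcyc}_{\sev,\,p}$. Since the map $\THR(A;\bZ_p)\to B$ now lies in $\CAlg^{\rcyc}_p$, the analogous cofinality argument is available in this ambient $\infty$-category, and Theorem~\ref{thm:sheaf}(3) ensures the equalizer-valued functor is itself a sheaf for the $p$-completely seff topology. Pulling the equalizer outside the descent limit and applying part (1) to each of $\filmot^*\TCR^{-}$ and $\filmot^*\TPR$ produces the stated formula.

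The main obstacle is essentially bookkeeping: one must verify that the cofinality argument of Theorem~\ref{thm:descent} transports to the twisted-$S^1$ and Real cyclotomic settings without issue, and that the Frobenius and canonical maps, together with the simplicial structure maps of the cobar, are respected throughout. Since Theorem~\ref{thm:sheaf} is stated at the level of the relevant $C_2$-$\bE_\infty$ categories, this compatibility is built into the sheaf condition; no new computational input is required beyond what is already in Theorem~\ref{thm:sheaf} and the cofinality step of Theorem~\ref{thm:descent}.
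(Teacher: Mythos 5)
Your proposal is correct and follows exactly the route the paper intends: the paper's proof of this corollary is literally the one-line statement that it is "entirely analogous to the proof of Theorem~\ref{thm:descent}," and your write-up simply makes that analogy explicit — applying the sheaf property of Theorem~\ref{thm:sheaf} inside the Kan extension limit, swapping limits, and reusing the Quillen's Theorem A cofinality step, with part (2) handled by commuting the (finite) equalizer past the limits as in Corollary~\ref{thm:can-varphi}. No gaps.
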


\begin{proof}
The proof is entirely analogous to the proof of Theorem~\ref{thm:descent}. 
\end{proof}

\begin{cor}\label{thm:can-varphi}
If $A$ is a $C_2$-$\bE_\infty$-algebra in Real $p$-cyclotomic spectra that admits a $p$-completely seff cover $A\to B$ with 
$B\in \CAlg^{\rcyc}_{\sev,p}$, then there are filtered maps 
\[
\can,\varphi : \fil_{\sev,h_{C_2}S^1,p}^*A\longrightarrow \fil_{\sev,t_{C_2}S^1,p}^*A
\]
that converge to $\can$ and $\varphi$ as in~\cite{QS21a} and we can identify 
\[ 
\fil_{\sev,\TCR}^*A\simeq \mathrm{eq}\left (\can,\varphi:\fil_{\sev,h_{C_2}S^1,p}^*A
\longrightarrow \fil_{\sev,t_{C_2}S^1,p}^*A \right )
\]
\end{cor}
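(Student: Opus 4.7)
The plan is to apply descent to reduce the statement to the Čech nerve of a strongly even Real cyclotomic cover, where canonical and Frobenius maps exist levelwise, and then commute the equalizer with the cosimplicial limit.

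First, from the $p$-completely seff cover $A\to B$ with $B\in\CAlg^{\rcyc}_{\sev,p}$, I would observe that the Čech nerve $B^{\otimes_A\bullet+1}$ takes values in $\CAlg^{\rcyc}_{\sev,p}$, since relative tensor products preserve Real cyclotomic structure and the seff hypothesis preserves strong evenness levelwise. Forgetting along $\CAlg^{\rcyc}_{\sev,p}\to\CAlg^{h_{C_2}S^1}_{\sev,p}$, I would invoke Theorem~\ref{thm:sheaf} to obtain the descent identifications
\begin{align*}
\fil_{\sev,h_{C_2}S^1,p}^{\ast}A &\simeq \lim_{\Delta} P_{2\ast}\bigl((B^{\otimes_A\bullet+1})^{h_{C_2}S^1}\bigr)_p^{\wedge},\\
\fil_{\sev,t_{C_2}S^1,p}^{\ast}A &\simeq \lim_{\Delta} P_{2\ast}\bigl((B^{\otimes_A\bullet+1})^{t_{C_2}S^1}\bigr)_p^{\wedge},\\
\fil_{\sev,\TCR}^{\ast}A &\simeq \lim_{\Delta} \mathrm{eq}(\can_{\bullet},\varphi_{\bullet}),
\end{align*}
where $\can_{\bullet},\varphi_{\bullet}$ denote the canonical and Frobenius maps between the slice-filtered homotopy fixed points and Tate constructions at each simplicial level.

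Next, naturality of the canonical and Frobenius maps in the Real $p$-cyclotomic structure, together with the functoriality of $P_{2\ast}$ and the fixed point and Tate constructions, ensures that $\can_{\bullet}$ and $\varphi_{\bullet}$ assemble into cosimplicial morphisms of filtered $C_2$-spectra. Taking the $\Delta$-limit then produces the desired filtered maps $\can,\varphi$. Since the slice filtration is exhaustive (i.e., $\colim_{s} P_{2s}\simeq \mathrm{id}$) and the descent identifications are compatible with this colimit, these filtered maps refine the $\can$ and $\varphi$ of~\cite{QS21a}.

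Finally, because the equalizer is a finite limit, it commutes with the cosimplicial limit, yielding
\[
\mathrm{eq}\bigl(\can,\varphi : \fil_{\sev,h_{C_2}S^1,p}^{\ast}A \to \fil_{\sev,t_{C_2}S^1,p}^{\ast}A\bigr) \simeq \lim_{\Delta} \mathrm{eq}(\can_{\bullet},\varphi_{\bullet}) \simeq \fil_{\sev,\TCR}^{\ast}A.
\]
The main technical point will be verifying that $\can_{\bullet}$ and $\varphi_{\bullet}$ genuinely assemble into cosimplicial maps of filtered spectra; this requires tracking carefully how the Real $p$-cyclotomic structure propagates through the relative tensor powers $B^{\otimes_A q+1}$ and how it interacts with the slice filtration and the Tate-valued Frobenius. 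Everything else should follow formally from descent (Theorem~\ref{thm:sheaf} and Corollary~\ref{thm:motivic-descent}) and the universal property of the equalizer.
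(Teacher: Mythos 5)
Your proposal is correct and follows essentially the same strategy as the paper: reduce to the \v{C}ech nerve of the seff cover via Theorem~\ref{thm:sheaf} and Corollary~\ref{thm:motivic-descent}, where $\can$ and $\varphi$ exist levelwise, then commute the equalizer with the cosimplicial limit. The one place where the paper is cleaner than your write-up is precisely the step you flag as the ``main technical point'': rather than verifying by hand that $\can_\bullet$ and $\varphi_\bullet$ assemble into cosimplicial maps of filtered $C_2$-spectra, the paper constructs the filtered $\can,\varphi$ once and for all as right Kan extensions of the natural transformations $P_{2*}\can, P_{2*}\varphi$ along $\CAlg^{\rcyc}_{\sev,p}\hookrightarrow\CAlg^{\rcyc}_p$. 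This makes $\fil_{\sev,\TCR}^*A = \mathrm{eq}(\can,\varphi : G\to H)$ true \emph{by definition}, with $G$ and $H$ the right Kan extensions of $P_{2*}((\--)^{h_{C_2}S^1})$ and $P_{2*}((\--)^{t_{C_2}S^1})$, and the only remaining work is to use the sheaf property to identify $G(A)\simeq\fil_{\sev,h_{C_2}S^1,p}^*A$ and $H(A)\simeq\fil_{\sev,t_{C_2}S^1,p}^*A$. The Kan-extension packaging automatically handles the functoriality/naturality bookkeeping you were worried about, so you could sidestep that verification entirely by adopting it.
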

\begin{proof}
Let $G$ and $H$ be the right Kan extensions of the functors 
\[ P_{2*}((\--)^{h_{C_2}S^1}) \,, P_{2*}((\--)^{t_{C_2}S^1}) :  \CAlg^{\rcyc}_{\sev,p}\longrightarrow \CAlg^{C_2}_{\fil,p} \, , \]
respectively. Then consider the natural transformations $P_{2*}\can$ and $P_{2*}\varphi$, 
which right Kan extend to functors $\can,\varphi : G\to H$. Observe that by definition 
\[ 
\fil_{\sev,\TCR}^{*}A=\eq (\can,\varphi : G\to H)\,.
\]
By construction there are natural transformations 
\[ 
G(A)\to \fil_{\sev,h_{C_2}S^1,p}^*A \,,H(A)\to \fil_{\sev,t_{C_2}S^1,p}^*A
\]
and by Theorem~\ref{thm:sheaf}, there are equivalences 
\[ 
G(A) \simeq \lim_{\Delta} G(B^{\otimes_{A}\bullet+1})
\simeq \lim_{\Delta} \fil_{\sev,h_{C_2}S^1,p}^*(B^{\otimes_{A}\bullet+1})\simeq \fil_{\sev,h_{C_2}S^1,p}^*A
\,, \]
\[ H(A) \simeq \lim_{\Delta} H(B^{\otimes_{A}\bullet+1})\simeq 
\lim_{\Delta} \fil_{\sev,t_{C_2}S^1,p}^*(B^{\otimes_{A}\bullet+1})\simeq \fil_{\sev,t_{C_2}S^1,p}^*A \,.
\]
\end{proof}

\subsection{A Real Hochshild--May spectral sequence}\label{sec:RHMSS}

By \cite[Thm. 1.2.6]{Yan25}, the $C_2$-$\infty$-category $\Fil(\Sp^{C_2})$ admits a canonical $C_2$-symmetric monoidal 
structure given by parametrized Day convolution. Let 
\[
I\in \CAlg^{{C_2}}(\Fil(\Sp^{C_2}))
\] 
be a $C_2$-$\bE_{\infty}$-algebra in filtered $C_2$-spectra.

If $X$ is a $C_2$-space, we can form the tensoring $X\odot  I $ and it is a $C_2$-$\bE_{\infty}$-algebra.

\begin{lem}
Given $I$ and $X$ as above, there is an equivalence
\[ \gr^{*}(X\odot I) \simeq X\odot   \gr^{*}I\]
of graded $C_2$-$\bE_{\infty}$-algebras. 
\end{lem}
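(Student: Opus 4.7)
The plan is to reduce the claim to two facts: the associated graded functor is $C_2$-symmetric monoidal and colimit-preserving, and the tensoring of a $C_2$-$\bE_\infty$-algebra with a $C_2$-space is computed as a colimit in $\CAlg^{C_2}$. By \cite[Theorem~1.2.6]{Yan25} (cited just above), the associated graded functor
\[
\gr^{*} \colon \Fil(\Sp^{C_2}) \longrightarrow \Gr(\Sp^{C_2})
\]
is $C_2$-symmetric monoidal with respect to parametrized Day convolution, and therefore induces a functor on $C_2$-$\bE_\infty$-algebras.

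Next, I would verify that $\gr^{*}$ preserves small colimits at the underlying level. Since colimits in both $\Fil(\Sp^{C_2})$ and $\Gr(\Sp^{C_2})$ are computed pointwise, it suffices to observe that each $\gr^n(F) = \mathrm{cofib}(F^{n+1} \to F^n)$ is a finite colimit of evaluation functors, each of which preserves colimits. Combining this with the $C_2$-symmetric monoidal structure, the parametrized analogue of \cite[Proposition~3.2.3.2]{Lur17} implies that the induced functor
\[
\CAlg^{C_2}(\gr^{*}) \colon \CAlg^{C_2}(\Fil(\Sp^{C_2})) \longrightarrow \CAlg^{C_2}(\Gr(\Sp^{C_2}))
\]
preserves small colimits.

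Finally, for any $C_2$-space $X$, the tensoring $X \odot I$ is characterized as the $C_2$-colimit $\colim_X I$ in $\CAlg^{C_2}(\Fil(\Sp^{C_2}))$ of the constant $X$-indexed diagram with value $I$. Applying the colimit-preservation of $\CAlg^{C_2}(\gr^{*})$ then yields
\[
\gr^{*}(X \odot I) \simeq \gr^{*}(\colim_X I) \simeq \colim_X \gr^{*}(I) \simeq X \odot \gr^{*}(I)
\]
as $C_2$-$\bE_\infty$-algebras in $\Gr(\Sp^{C_2})$.

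The main subtlety is ensuring that colimit-preservation passes correctly to $\CAlg^{C_2}$ in the parametrized setting; this should be handled using the parametrized higher algebra foundations referenced in Appendix~\ref{sec:equivariant}. Alternatively, one can write $X$ as a sifted colimit of copies of the terminal $C_2$-space via its \v{C}ech nerve and reduce to the fact that sifted colimits of $C_2$-$\bE_\infty$-algebras are computed at the level of underlying $C_2$-spectra.
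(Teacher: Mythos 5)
Your approach is close in spirit to the paper's (both reduce to monoidality of $\gr^*$ and a colimit argument), but there is a genuine gap in the colimit step, and the fallback strategy you offer at the end does not work.

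The tensoring $X \odot I$ is a \emph{parametrized} $C_2$-colimit, not an ordinary colimit of a constant diagram. Your verification that $\CAlg^{C_2}(\gr^*)$ preserves small colimits only addresses ordinary (fiberwise) colimits. Preserving $C_2$-colimits is a strictly stronger condition: in addition to ordinary colimits, one must commute with the parametrized coproducts over orbits, which at the level of $\CAlg^{C_2}$ are the Hill--Hopkins--Ravenel norms $N_H^{C_2}$. The crux of the lemma is therefore the orbit $X = C_2/e$, where $X \odot I \simeq N_e^{C_2} i_e^* I$, and what must be shown is
\[
\gr^* N_e^{C_2} i_e^* I \simeq N_e^{C_2} i_e^* \gr^* I \,.
\]
This is precisely the step the paper isolates, and it is here (not in the ordinary-colimit reduction) that $C_2$-monoidality of $\gr^*$ is actually used. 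You cite the $C_2$-symmetric monoidality of $\gr^*$ at the start, but you never connect it to the norm, and your colimit argument quietly substitutes ordinary colimits for $C_2$-colimits.

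Your last paragraph's alternative is incorrect: a general $C_2$-space such as $S^\sigma$ is \emph{not} a sifted colimit of copies of the terminal $C_2$-space $C_2/C_2$. Any $C_2$-space is built from the two orbits $C_2/C_2$ and $C_2/e$, and the free orbit $C_2/e$ is unavoidable. If you only allow the terminal orbit you never see the norm, and the argument collapses. The correct reduction, as in the paper, is: since $\gr^*$ commutes with sifted colimits and direct sums, reduce to $X = C_2/H$ for each subgroup $H$, then handle $H = C_2$ (trivial) and $H = e$ (the norm, via $C_2$-monoidality of $\gr^*$). Fixing your proof amounts to replacing the final paragraph with exactly this orbit-by-orbit reduction and making the norm compatibility explicit.
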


\begin{proof}
The associated graded $\gr^*(X \odot I)$ is a graded $C_2$-$\bE_\infty$-algebra by \cite[Thm. 1.2.6]{Yan25}. 
After forgetting the $C_2$-action, this is proven in \cite[Theorem~3.3.10]{AS18} in the $\bE_{\infty}$-context. 
Since the functor $\gr^*$ commutes with sifted colimits and direct sums, it suffices to prove the result for $X=C_2/H$, 
or in other words it suffices to prove that 
\[ 
\gr^*N_e^{C_2}i_e^*I\simeq N_e^{C_2}i_e^*\gr^*I\,,
\]
but this follows from the fact that $\gr^*$ is $C_2$-monoidal~\cite[Proposition~3.1.20]{Yan25}. 
\end{proof}

\begin{construction}
When $I\in \CAlg^{C_2}(\Fil(\Sp^{C_2}))$ and  $\lim_{k}(X\odot  I)(k)=\lim_{k}I(k)=0$ and $\colim_{k}I(k)=I_{\infty}$, 
then we produce a conditionally convergent spectral sequence 
\[ \pi_{\star} (X\odot \gr^{*}I) \implies \pi_{\star}(X\odot I_{\infty}) \]
associated to filtration $X\otimes I$.  
\end{construction}

\begin{remark}
When $G=e$ and $X=S^{1}$, then this appeared in earlier work of \cite{Bru01} in the setting of functors with smash product.  
When $G=e$, and $X$ is a more general simplicial set, the spectral sequence was first constructed by~\cite{AS18}. 
Similar constructions also appear in~\cite{Kee25,AMN22,LL23}. 
\end{remark}

\begin{defin}
Let $G=C_{2}$, let $X=S^{\sigma}$, and let $I\in \CAlg^{C_2}(\Fil(\Sp^{C_{2}}))$. 
We call the resulting spectral sequence 
\[ 
\upi_{\star}\THR(\gr^{*}I)\implies \upi_{\star}\THR(I_{0}) 
\]
the \emph{Real Hochschild--May spectral sequence}.
\end{defin}

\begin{defin}
By a \emph{tower functor}, we mean a functor 
\[ 
T : \Sp^{C_2}\to \Fil(\Sp^{C_2}) 
\]
that restricts to a functor 
\[ 
T : \CAlg^{C_2}(\Sp^{C_2})\to \CAlg^{C_2}(\Fil(\Sp^{C_2})) \,.
\]
If $\colim_{k}T(R)=R$ and $\lim_{k}T(R)=0$ then we call $T$ a \emph{convergent} tower functor. 
\end{defin}

\begin{exm}
The slice filtration $P_{\bullet}$ is a convergent tower functor. 
\end{exm}

\begin{exm}
Given a convergent tower functor 
\[ 
T : \CAlg^{C_2}\to \CAlg^{C_2}_{\fil}\,,
\] 
then there is a spectral sequence 
\[ 
\pi_{\star}^{C_2}\THR(\gr^{*}T(R))\implies \pi_{\star}^{C_2}\THR(R) 
\]
which conditionally converges.
\end{exm}

\begin{exm}
For any $C_{2}$-$\bE_{\infty}$-ring $R$, there is a Real Hochschild--May spectral sequence 
\[  \pi_{\star}^{C_2}\THR(P_{*}^{*}R)\implies \pi_{\star}^{C_2}\THR(R)\]
which conditionally converges.
\end{exm}

\subsection{Equivariant suspension maps}\label{sec:suspension}
Fix a finite group $G$ and an $\infty$-category $\cC$ tensored and cotensored over $G$-spaces. 
For each pair $X,Y\in \Top^{G}$ and $A\in \cC$, we have a map of $G$-spaces
\[ 
 \Map(X,Y)\to  \Map(X\odot  A,Y\odot  A).
\]
Taking $X=G/H$ and recalling that $G/H\odot A=N_{H}^{G}i_{H}^{*}A$, 
this produces a map of $G$-spaces
\begin{equation}\label{eq: unreduced suspension operator}
\Map(G/H,Y)_{+}\otimes N_H^{G}i_{H}^{*}A\longrightarrow Y\odot A.
\end{equation}
For example, when $\cC$ is a $C_2$-$\infty$-category and $B\in \CAlg^{G}(\mathcal{C})_{A\backslash}$, 
then we have a commutative diagram 
\begin{equation}\label{eq:front-face}
    \begin{tikzcd}
      \Map(G/H,Y)_+\otimes A \ar[r] \ar[d] &  A \ar[d] \\
     \Map(G/H,Y)_+\otimes{}_AN_{H}^Gi_H^*B \ar[r] & Y\odot (B/A) 
    \end{tikzcd}
\end{equation}
where ${}_AN_H^G$ is the $A$-relative norm functor of~\cite[Definition~2.20]{ABGHLM18} and $Y\odot (B/A)$ 
denotes the tensoring of $B$ with $X$ in $\mathrm{CAlg}_{G}(\cC)_{A/}$. Here, we use the facts that 
\[
G/H\odot (B/A)\simeq  {}_AN_{H}^Gi_H^*B \quad \text{ and } \quad Y\odot (A/A)\simeq A\simeq G/H\odot (A/A)\,.
\] 

\begin{defin}\label{def: suspension}
Let $G/H \to Y$ be a map in $\Top^{G}$. Let $R$ be a $C_{2}$-$\bE_{\infty}$-ring. 
Then since the map 
$\Map(G/H,Y)_+\otimes A \longrightarrow A$
 in the diagram \eqref{eq:front-face} splits, we have a commutative diagram 
\[ 
  \begin{tikzcd}
        \Map(G/H,Y)\otimes A \ar[r] \ar[d] &  0 \ar[d] \\
        \Map(G/H,Y)\otimes{}_{A}N_H^Gi_H^*B \ar[r] & Y \odot B/A
    \end{tikzcd}
\]
and consequently, writing ${}_{A}\overline{N_H^Gi_H^*B}=\mathrm{cof}  (A\to  {}_{A}N_H^Gi_H^*B)$, we define a map 
\[ 
\sigma^{M}_{H}\colon \Map(G/H,Y) \odot {}_{A}\overline{N_H^Gi_H^*B}  \longrightarrow Y\odot(A/R) \,.
\]
\end{defin}

\begin{warning}
The $\sigma$ in the notation $\sigma_{H}^{M}$ above 	is unrelated to the use of the symbol $\sigma$ as 
the sign representation. The former is used because it is the lower-case of the Greek letter sigma, 
whose upper-case $\Sigma$ is commonly used to denote the reduced suspension. 
\end{warning}

\begin{defin}\label{def:rho-suspension}
Let $Y=S^{\sigma}$. Let $k\in \mathrm{CAlg}^{C_2}$ and let $A$ be a $C_2$-$\bE_{\infty}$-$k$-algebra. 
Then we have a map $C_2/C_2 \to S^\sigma$ and an associated map
\[ 
    \sigma^{\mathbb{S}^{\sigma}}_{C_2}\colon \thinspace \Sigma^{\sigma}\overline{A}\to \mathrm{THR}(A/k)
\]
where $\overline{A}=\mathrm{cof} (k\to A)$. 
We write $\sigma^{\rho}$ 
for this map since if $y$ has non-trivial image 
\[
x\in \mathrm{im}(\upi_{\star}\overline{A}\to \upi_{\star}\Sigma k)=\mathrm{ker}(\upi_{\star}\Sigma k\to \upi_{\star}\Sigma A)
\]
then $|\sigma^{\rho}y|=|x|+\rho$.  
\end{defin}

\begin{remark}\label{rem:res-sigma}
It is clear from the construction that $\sigma^{\rho}$ from Definition~\ref{def:rho-suspension} restricts to $\sigma^2$ 
from~\cite[Example~A.2.4]{HW22}. 
\end{remark}

\begin{remark}
Note that there is canonical unit map $k\longrightarrow \THR(A/k)$ in $\CAlg^{h_{C_2}S^1}$ and this produces a 
unit map $k\longrightarrow \lim_{\mathbb{C}P_{\bR}^{k}}\THR(A/k)$ where $\mathbb{C}P_{\bR}^{k}$ is the $2k$-skeleton of 
the $C_2$-CW complex whose underlying space is $\mathbb{C}P^{k}$ with $C_2$-action by complex conjugation. 
\end{remark}

\begin{lem}\label{HWlemma}
There is a functorial commutative diagram 
\[
\begin{tikzcd}
\Sigma^{-1}\overline{A} \arrow{r} \arrow{d}{\Sigma^{-\rho}\sigma^{\rho}} & k \arrow{d} \\
\Sigma^{-\rho}\THR(A/k) \arrow{r}{\ol{t}} & \lim_{\mathbb{C}P_{\bR}^{1}}\THR(A/k) 
\end{tikzcd}
\] 
where $k\to \lim_{\mathbb{C}P_{\bR}^{1}}\THR(A/k)$ is the unit map. 
\end{lem}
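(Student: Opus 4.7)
The plan is to prove the commutative square by interpreting $\overline{t}$ via a fiber sequence coming from a cellular cofiber sequence for $\mathbb{C}P_{\bR}^1$, and then comparing the two induced lifts to the fiber. This mirrors the strategy used in the non-equivariant analogue in Hahn--Wilson, but with all spaces and spectra replaced by their $C_2$-counterparts.

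First, I would use that $\mathbb{C}P_{\bR}^1$ carries a $C_2$-CW structure with a $C_2$-fixed $0$-cell $\mathbb{C}P_{\bR}^0$ and a single equivariant $\rho$-cell, giving a cofiber sequence of pointed $C_2$-spaces
\[
S^0 \xrightarrow{i} \mathbb{C}P_{\bR}^1 \xrightarrow{q} S^{\rho}.
\]
Cotensoring into $\THR(A/k)$ produces a fiber sequence
\[
\Sigma^{-\rho}\THR(A/k) \xrightarrow{\overline{t}} \lim_{\mathbb{C}P_{\bR}^1}\THR(A/k) \xrightarrow{\mathrm{ev}} \THR(A/k)
\]
in which $\overline{t}$ is the inclusion of the fiber and $\mathrm{ev}$ is evaluation at the fixed basepoint. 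The lemma's unit map $k\to \lim_{\mathbb{C}P_{\bR}^1}\THR(A/k)$ factors as $k \xrightarrow{\eta_{\THR}} \THR(A/k) \xrightarrow{\mathrm{const}} \lim_{\mathbb{C}P_{\bR}^1}\THR(A/k)$, i.e. it splits $\mathrm{ev}$.

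Second, by the universal property of this fiber sequence, commutativity of the square is equivalent to the following two assertions: (a) the two composites from $\Sigma^{-1}\overline{A}$ to $\THR(A/k)$ obtained by post-composing with $\mathrm{ev}$ agree and are null-homotopic, and (b) the resulting lifts to the fiber $\Sigma^{-\rho}\THR(A/k)$ are homotopic. Assertion (a) is immediate: the bottom path is null because $\mathrm{ev}\circ \overline{t}\simeq 0$ by the fiber sequence, while the top path equals $\Sigma^{-1}\overline{A} \xrightarrow{\partial} k \xrightarrow{\eta_A} A \xrightarrow{\eta_{\THR/A}} \THR(A/k)$, which is null because $\partial$ and $\eta_A$ are two consecutive maps in the cofiber sequence $\Sigma^{-1}\overline{A} \to k \to A$ so their composition is canonically null.

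The main obstacle is assertion (b), which requires identifying the two canonical lifts. The bottom lift is $\Sigma^{-\rho}\sigma^{\rho}$ by definition. For the top lift, I would unpack the canonical null-homotopy of $\partial \circ \eta_A$ afforded by the cofiber sequence $k\to A\to \overline{A}$ and trace through $\eta_{\THR/A}$, obtaining a map $\Sigma^{-1}\overline{A}\to \Sigma^{-\rho}\THR(A/k)$ that sits in the diagram as the vertical lift. I would then compare this to the construction of $\sigma^{\rho}$ in Definition~\ref{def:rho-suspension}, where $\sigma^{\rho}$ is produced from exactly this kind of cofiber-sequence null-homotopy: it arises as the induced map on cofibers in the commutative square \eqref{eq:front-face} specialized to $Y=S^{\sigma}$, $H=C_2$, and uses the identification $\mathbb{C}P_{\bR}^{1}/\mathbb{C}P_{\bR}^{0}\simeq S^{\rho}\simeq \Sigma S^{\sigma}$ to match the shift. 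The cleanest way to close the identification is by naturality: reduce to the universal example where $k=\bS$ and $A$ is the free $C_2$-$\bE_{\infty}$-algebra on a single class in an appropriate $\RO(C_2)$-grading, where both lifts are determined up to unique homotopy by a single generator. Alternatively, one can argue directly by noting that both constructions functorially extract the boundary map $\partial$ of $k\to A\to \overline{A}$ and then twist it by the $\rho$-suspension encoded in either the $S^{\sigma}$-tensoring (for $\sigma^{\rho}$) or the $\mathbb{C}P_{\bR}^{1}/S^0\simeq S^{\rho}$ quotient (for $\overline{t}$), which are the same operation via the identification $\mathbb{C}P_{\bR}^1\simeq S^{\rho}$.
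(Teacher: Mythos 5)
Your overall skeleton is the same as the paper's: both arguments hinge on the fiber sequence
\[
\Sigma^{-\rho}\THR(A/k)\longrightarrow \lim_{\mathbb{C}P_{\bR}^{1}}\THR(A/k)\longrightarrow \THR(A/k)
\]
coming from the $C_2$-cell structure of $\mathbb{C}P_{\bR}^{1}$, and on identifying the induced map out of $\Sigma^{-1}\overline{A}$ with $\Sigma^{-\rho}\sigma^{\rho}$. Your step (a) is fine. The gap is in step (b), which you correctly flag as the main obstacle but do not close. Knowing that both composites become null after evaluation only tells you that each factors through the fiber \emph{for some choice of null-homotopy}; the lift depends on that choice, with indeterminacy a torsor over $[\Sigma^{-1}\overline{A},\Sigma^{-1}\THR(A/k)]$, so "the resulting lifts are homotopic" is not a well-posed assertion until you pin down which null-homotopies and which lifts you mean and then actually compare them. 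Your two suggested repairs do not obviously do this: the reduction to a free $C_2$-$\bE_\infty$-algebra on one class does not determine a map out of $\Sigma^{-1}\overline{A}$ (this source is not corepresentable, and a natural transformation of these functors is not determined by its effect on single homotopy classes of free examples), and the "direct" argument that both constructions "extract $\partial$ and twist by the same $\rho$-suspension" is an assertion of the conclusion rather than a proof. Note also the remark following the lemma in the paper: the statement only holds after fixing a sign convention for $\sigma^{\rho}$, so step (b) genuinely cannot be settled without tying the comparison to the construction of $\sigma^{\rho}$ itself.

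The paper avoids the comparison entirely by building everything at once. Starting from the defining square of the suspension operator, $k\to 0$ over $\THR(A)\xrightarrow{s^{\rho}}\Sigma^{-\sigma}\THR(A)$, it left Kan extends to produce $\overline{A}\to\Sigma^{-\sigma}\THR(A)$ (adjoint to $\sigma^{\rho}$ \emph{by definition}), then right Kan extends to a cube yielding a map of fiber sequences
\[
(k\to A\to\overline{A})\longrightarrow \bigl(\lim_{\mathbb{C}P_{\bR}^{1}}\THR(A)\to\THR(A)\to\Sigma^{-\sigma}\THR(A)\bigr),
\]
and the desired square is literally the square of horizontal fibers. In that construction the map on fibers \emph{is} $\Sigma^{-\rho}\sigma^{\rho}$, so there is nothing left to identify. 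If you want to salvage your version, you should reorganize it in this direction: construct the commuting square as part of a single coherent diagram extending the definition of $\sigma^{\rho}$, rather than producing the two composites separately and trying to match them afterwards.
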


\begin{proof}
As in the proof of~\cite[Lemma~A.4.1]{HW22}, 
we left Kan extend 
\[
\begin{tikzcd}
k\arrow{r} \arrow{d} & 0 \arrow{dd} \\ 
A \arrow{d} &   \\	
\mathrm{THR}(A) \arrow{r}{s^{\rho}} & \Sigma^{-\sigma}\THR(A)
\end{tikzcd}
\]
to the diagram 
\[
\begin{tikzcd}
k\arrow{drr} \arrow{r} & 0 \arrow{dd}  \arrow{drr} & & \\ 
& & A \arrow{dd} \arrow{r} &  \overline{A} \arrow{dd} \\	
& 0 \arrow{drr} & & \\ 
&  & \THR(A) \arrow{r} & \Sigma^{-\sigma}\THR(A)
\end{tikzcd}
\]
where 
$\overline{A} \to \Sigma^{-\sigma}\THR(A)$ is adjoint to $\sigma^{\rho}$. 
Further right Kan extension of the diagram above to a cube produces the map of fiber sequences 
\[
\begin{tikzcd}
k \arrow{r} \arrow{d} & A \arrow{r} \arrow{d} & \overline{A} \arrow{d} \\ 
\lim_{\mathbb{C}P_{\bR}^1}\THR(A) \arrow{r} & \THR(A) \arrow{r} & \Sigma^{-\sigma}\THR(A) 
\end{tikzcd}
\]
as well as the desired commuting diagram after taking horizontal fibers.  
\end{proof}

\begin{remark}
As in~\cite[Lemma~A.4.1]{HRW22}, a choice of suspension requires a choice of sign convention and we 
fix our choice of sign convention so that the lemma above holds. 
\end{remark}

\begin{remark}\label{rem:compatibility}
Lemma~\ref{HWlemma} is clearly compatible with~\cite[Lemma~A.4.1]{HW22} on underlying. 
\end{remark}

\section{Artin--Tate motivic homotopy theory and the strongly even filtration}\label{sec:realmotivic}
 The purpose of this section is twofold: to provide exposition for the reader interested in the connections to 
 real motivic homotopy theory and to provide a computational tool. 
 We do not use this tool in the proofs of the main theorems of the present work, but we expect it to have applications along the lines of~\cite{AKHW24}. 

\subsection{Artin--Tate $\mathbb{R}$-motivic homotopy theory}\label{sec:motivic}
Here we review work of Burklund--Hahn--Senger~\cite{BHS22}. 
Consider the filtered $C_2$-spectrum  
\[
\nu_{\bR}\bS_2 := \lim_{q \in \Delta} P_{2\bullet}(\MUR^{\otimes 
 q+1} )_2^\wedge  \,, 
\]
i.e. the slice d\'ecalage of the $C_2$-equivariant Adams--Novikov spectral sequence. 
Note that the associated graded of this filtration is not the $\EE_2$-page of the $C_2$-equivariant 
Adams--Novikov spectral sequence. Instead, it can be identified with 
\[ 
\pi_{\star}^{C_2}\gr^*\nu_{\bR}\bS_2 = \bigoplus \Ext_{\MU_*\MU}^{*,*}(\MU_*,\MU_*\otimes \pi_{\star}^{C_2}H\mZ_2) 
\]
by~\cite[Theorem~5.1]{BHS22}.
More generally, we write 
\[ 
\nu_{\bR}(X):=\lim_{q \in \Delta} P_{2\bullet}((X_2^\wedge 
\otimes \MUR^{\otimes q+1})_2^{\wedge}) 
\]
producing a functor 
\[ 
\nu_{\bR} : \Mod(\Sp^{C_2},\bS_2)\to \Fil(\Mod(\Sp^{C_2},\bS_2^\wedge))
\]
and we further write 
\[ 
\ol{\nu}_{\bR}(X):= \gr^*\nu_{\bR}(X) \,.
\]
In \emph{op. cit.}, it is also proven that 
\[ 
\Mod(\Fil(\Mod(\Sp^{C_2},\bS_2),\nu_{\bR}\bS)\simeq  \Mod(\SH^{\textup{AT}}(\bR),\bS_2) 
\]
where 
\[ 
\SH^{\AT}(\bR) \subset \SH(\bR)
\]
is the stable, full sub $\infty$-category of the stable $\infty$-category of stable  $\bR$-motivic 
homotopy theory closed under tensor products and colimits and generated by $\operatorname{Spec}(\bC)$, $\bG_m$, and $S^0$. 
We refer the reader to \cite{MV99} for the original definition of the $\bR$-motivic stable homotopy category 
and to~\cite[Sec. 4]{BH21} for the $\infty$-categorical analogue. 
In op.~cit., Burklund, Hahn, and Senger further produce a symmetric monoidal equivalence  
\begin{equation}\label{eq:cof-tau}
\Mod (\Gr(\Sp_{i2}^{C_2}),\ol{\nu}_{\bR}\bS) \simeq \Mod(\Sp^{C_2}_{i2},H\mZ_2)\otimes_{\Mod(\Sp,H\bZ)}\IndCoh(\cM_{\fg})   \,,
\end{equation}
where the subscript `$i2$' refers to modules over the $2$-completion of the unit and $\IndCoh(\cM_{\fg})$ is defined 
in~\cite[Definition~5.14]{BHS22}. For example, if $M$ is an $\MU_*\MU$-comodule concentrated in even degrees, 
then there is an associated sheaf $\mathcal{F}_{M}\in \mathrm{IndCoh}(\mathcal{M}_{\textup{fg}})$ and 
therefore a $\ol{\nu}_{\bR}\bS$-module associated to $\mathcal{F}_{M}\otimes \mZ $. 
We observe that this equivalence is sufficiently compatible with the $C_2$-equivariant structure to preserve 
$C_2$-$\bE_{\infty}$-ring structure.

\begin{construction}
We define $\underline{\mathrm{SH}}^{\AT}(\mathbb{R})$ as a $C_2$-$\infty$-category by defining a functor 
\[ 
c_{\mathbb{C}/\mathbb{R}}^{\textup{cat}} :\mathcal{O}_{C_2}^{\op} \to \mathrm{Cat}_{\infty} 
\]
by $C_2/H\mapsto \mathrm{SH}(\mathbb{C}^{H})$ on objects and on morphisms as follows. 
First, recall $\mathcal{O}_{C_2}^{\op}$ is a $1$-category, so its mapping spaces are descrete. A map
\[
\Delta^0 \to \mathrm{Map}(C_2/H,C_2/H')
\]
is then given by $gH'$ such that $gHg^{-1}\subset H'$. 
For any such $g$, there is a corresponding field automorphism $g:\mathbb{C}\to \mathbb{C}$, and $g$ restricts to a field 
homomorphism $\mathbb{C}^{H}\to \mathbb{C}^{H'}$. We define the associated morphism by 
\[
g^*:\mathrm{SH}(\mathbb{C}^{H'})\to \mathrm{SH}(\mathbb{C}^{H})
\]
and composition is defined by the formula $(gh)^*=h^*g^*$. 
This is essentially a categorification of~\cite[\S~4.3]{HO18}. 
In particular, the Galois group $C_2$ acts by a field homomorphism $\mathbb{C}^e\to \mathbb{C}^e$ and there is a 
corresponding Weyl group action on $\mathrm{SH}(\mathbb{C})$, which we refer to as ``complex conjugation''. 
\end{construction}

\begin{defin}\label{def:parametrized-categories}
We define a $C_2$-$\infty$-category $\m{\Mod}(\Fil(\m{\Sp}_{i2}^{C_2}),\m{\nu_{\bR}(\bS)})$ by 
\[ 
\begin{tikzcd}
\Mod(\Fil(\Sp^{C_2}),\nu_{\bR}\bS_2) \arrow{r} & \Mod(\Fil(\Sp),\nu_{\bC}\bS_2) \arrow[loop right]{r}
\end{tikzcd}
\]
and a $C_2$-$\infty$-category $\m{\Mod}(\Gr(\m{\Sp}^{C_2}),\m{\ol{\nu}_{\bR}(\bS_2)})$
by 
\[ 
\begin{tikzcd}
\Mod(\Gr(\Sp^{C_2}),\ol{\nu}_{\bR}\bS_2) \arrow{r} & \Mod(\Gr(\Sp),\ol{\nu}_{\bC}\bS_2)\arrow[loop right]{r}
\end{tikzcd}
\]
where the Weyl group action comes from the equivalences
\[
\Mod(\Fil(\Sp_{i2}),\ol{\nu}_{\bC}\bS)\simeq \Mod(\SH^{\AT}(\bC),\bS_2)
\]
and 
\[
\Mod(\Gr(\Sp),\ol{\nu}_{\bC}\bS_2)\simeq \Mod(\SH^{\AT}(\bC),(C\tau)_2)
\]
from~\cite{GIKR22} and the $C_2$-action on $\SH(\bC)$ given by complex conjugation. 

We further define 
$\m{\Mod}(\m{\Sp}^{C_2},H\mZ_2)\otimes_{\Mod(\Sp,H\bZ_2)}\Mod(\IndCoh(\cM_{\fg}),(\mathcal{O}_{\cM_{\fg}})_2)$ as 
\[
\begin{tikzcd}
\Mod(\Sp^{C_2},H\mZ_2)\otimes_{\Mod(\Sp,H\bZ_2)}\Mod(\IndCoh(\cM_{\fg}),(\mathcal{O}_{\cM_{\fg}})_2) \arrow{d}  \\ \Mod(\IndCoh(\cM_{\fg}),(\mathcal{O}_{\cM_{\fg}})_2) \arrow[loop right]{r}
\end{tikzcd}
\]
together with a compatible Weyl group action stemming from the equivalence 
\[
\Mod(\IndCoh(\cM_{\fg}),(\cO_{\cM_{\fg}})_2)\simeq \Mod(\SH^{\AT}(\bC),(C\tau)_2) 
\]
from~\cite{GWX21} and the $C_2$-action on $\SH(\bC)$ given by complex conjugation. 
\end{defin}

\begin{remark}
We say that a functor $\underline{\cC}\to \underline{\cD}$ between $C_2$-$\infty$-categories is fiberwise 
symmetric monoidal if 
$\underline{\cC}_{C_2/H}\to \underline{\cD}_{C_2/H}$ is symmetric monoidal for each subgroup $H\subset C_2$. 
\end{remark}

\begin{proposition}
There is a fiber-wise symmetric monoidal $C_2$-parametrized Betti realization functor 
\[ 
\DK : \m{\SH}^{\AT}(\bR)\longrightarrow \m{\Sp}^{C_2}  \,.
\] 
There is a fiber-wise symmetric monoidal  equivalence of $C_2$-$\infty$-categories
\[ 
\Mod(\m{\SH}^{\AT}(\bR),\bS_2^\wedge)\simeq \m{\Mod}(\Fil(\m{\Sp}^{C_2}),\m{\nu_{\bR}(\bS)}_2^\wedge)
\]
as well as fiber-wise symmetric monoidal equivalences of $C_2$-$\infty$-categories 
\begin{align*}
\m{\Mod}( \m{\SH}^{\AT}(\bR),C\ta_2^\wedge)&\simeq \m{\Mod}(\Gr(\m{\Sp}^{C_2}),\m{\ol{\nu}_{\bR}(\bS_2)}) \\ 
&\simeq \m{\Mod}(\m{\Sp}^{C_2},H\mZ_2)\otimes_{\Mod(\Sp_{i2},H\bZ)}\Mod(\IndCoh(\cM_{\fg}),(\cO_{\cM_{\fg}})_2^\wedge) \,.
\end{align*}
\end{proposition}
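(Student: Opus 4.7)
The proof strategy naturally splits into three parts following the three claims. Throughout, my approach will be to combine the underlying (non-equivariant) results of Burklund--Hahn--Senger~\cite{BHS22} over both $\bC$ and $\bR$ with the Galois action encoded by complex conjugation, organizing these into genuine $C_2$-parametrized data.

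First, I would construct the parametrized Betti realization. Classically one has Betti realization functors $\DK_{\bR} : \SH(\bR) \to \Sp^{C_2}$ and $\DK_{\bC} : \SH(\bC) \to \Sp$, and these are compatible in the sense that the forgetful square involving pullback along $\operatorname{Spec}(\bC) \to \operatorname{Spec}(\bR)$ commutes with the underlying functor $\Sp^{C_2} \to \Sp$. Moreover, the $C_2$-action on $\SH(\bC)$ defined by the field automorphism $g \in \operatorname{Gal}(\bC/\bR)$ acts on $\DK_{\bC}$ by the Weyl action on $\Sp$ coming from regarding a $C_2/e$-fiber as a genuine $C_2$-fiber. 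This assembles into a functor of $C_2$-$\infty$-categories $\underline{\DK} : \underline{\SH}(\bR) \to \underline{\Sp}^{C_2}$, and restricting along the inclusion $\underline{\SH}^{\AT}(\bR) \hookrightarrow \underline{\SH}(\bR)$ (which is well-defined since Artin--Tate generators are preserved by pullback and by complex conjugation) yields the claimed $\underline{\DK}$. Fiberwise symmetric monoidality is inherited from the symmetric monoidality of $\DK_{\bR}$ and $\DK_{\bC}$.

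Second, for the equivalence involving the filtration $\underline{\nu_{\bR}(\bS)}$, I would invoke the underlying BHS equivalences
\[
\Mod(\SH^{\AT}(k), \bS_2^\wedge) \simeq \Mod(\Fil(\Sp^{\operatorname{Gal}(k)}), \nu_k \bS_2^\wedge)
\]
for $k = \bR$ (with $\operatorname{Gal}(\bR) = C_2$) and $k = \bC$ (with $\operatorname{Gal}(\bC)$ trivial). Both sides assemble into $C_2$-$\infty$-categories: the LHS via the construction $c_{\bC/\bR}^{\mathrm{cat}}$, the RHS via Definition~\ref{def:parametrized-categories}. The key point is that BHS's equivalence is natural in base change along $\operatorname{Spec}(\bC) \to \operatorname{Spec}(\bR)$, since it arises from the slice d\'ecalage of the Adams--Novikov spectral sequence built from $\MUR$, whose restriction to $\Sp$ is the slice d\'ecalage built from $\MU$. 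Naturality of this construction under the Galois automorphism then identifies the two Weyl actions, providing the desired equivalence of $C_2$-$\infty$-categories. Fiberwise symmetric monoidality is again inherited from the underlying statements.

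Third, for the $C\tau$-version and its identification with the $\IndCoh(\cM_{\fg})$-model, I would proceed analogously: passing to associated graded in the previous step gives the first equivalence, and then the equivalence~\eqref{eq:cof-tau} of BHS combined with the GWX equivalence $\Mod(\SH^{\AT}(\bC), C\tau_2^\wedge) \simeq \Mod(\IndCoh(\cM_{\fg}), (\cO_{\cM_{\fg}})_2^\wedge)$ yields the second. The $C_2$-action on $\IndCoh(\cM_{\fg})$ is defined in Definition~\ref{def:parametrized-categories} so that these equivalences are $C_2$-equivariant by construction; I would verify compatibility by noting that on geometric points of $\cM_{\fg}$ the action coincides with the complex conjugation action on $\MU_*\MU$-comodules induced via the Real orientation of $\MUR$ (Proposition~\ref{prop:seff-map-MUR}). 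The main obstacle in the entire argument is verifying this coherence of Weyl actions across all three equivalences simultaneously; while each identification is independently natural, checking that complex conjugation on $\SH(\bC)$, the Weyl action on the filtered/graded categories defined via $\underline{\nu_{\bR}}$, and the action on $\IndCoh(\cM_{\fg})$ all agree under the stated equivalences is the nontrivial content, reducing ultimately to the functoriality of BHS's d\'ecalage construction under the change of symmetric monoidal ring $\MU \to \MUR$.
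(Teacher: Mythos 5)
Your treatment of the second and third equivalences follows the same route as the paper: cite \cite[Theorem~5.1]{BHS22} and \cite[Corollary~1.2]{GWX21} and observe that Definition~\ref{def:parametrized-categories} is set up precisely so that these non-equivariant equivalences, together with the complex conjugation action, assemble into fiberwise equivalences of $C_2$-$\infty$-categories. Your emphasis on the compatibility of the d\'ecalage construction under $\MU \to \MUR$ is a reasonable gloss on the ``unraveling'' step and is not in tension with the paper.

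Where you diverge is the first claim, the parametrized Betti realization. The paper does not construct this by hand; it cites \cite[Theorem~4.7, Theorem~6.4]{ES21}, which already produce a $C_2$-parametrized Betti realization $\m{\SH}(\bR)\to\m{\Sp}^{C_2}$, and then simply restricts to $\m{\SH}^{\AT}(\bR)\subset \m{\SH}(\bR)$. You instead propose to glue $\DK_\bR:\SH(\bR)\to\Sp^{C_2}$ and $\DK_\bC:\SH(\bC)\to\Sp$ along the base-change square and then check compatibility of the Galois/Weyl action. This is the right idea, but as written it is a gap: specifying a $C_2$-functor out of $\m{\SH}(\bR)$ (equivalently, a cocartesian-fibration map over $\cO_{C_2}^{\op}$) requires coherence data, not just objectwise functors plus a commutativity-up-to-homotopy of the square, and you explicitly flag the coherence of the Weyl actions as ``the main obstacle'' without resolving it. That coherence is precisely what the cited \cite{ES21} supplies, so your route reopens a problem the paper sidesteps by citation. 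If you want to keep the by-hand argument, you would need to either invoke the universal property of $\SH(\bR)$ in the parametrized setting or reduce to the already-established parametrized Betti realization and observe that your description agrees with it fiberwise.
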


\begin{proof}
The first statement follows from~\cite[Theorem 4.7,~Theorem~6.4]{ES21} by restricting to $\underline{\mathrm{SH}}^{\mathrm{AT}}(\mathbb{R}) \subset \underline{\mathrm{SH}}(\mathbb{R})$. The second and third equivalences follow  from~\cite[Theorem~5.1]{BHS22} and~\cite[Corollary~1.2]{GWX21} after unraveling Definition~\ref{def:parametrized-categories}. 
\end{proof}

\begin{remark}
The $\infty$-category 
\[
\m{\Mod}(\m{\Sp}^{C_2},H\mZ_2)\otimes_{\Mod(\Sp,H\bZ_2)}\Mod(\IndCoh(\cM_{\fg}),(\cO_{\cM_{\fg}})_2)
\]
is equipped with a levelwise Chow $t$-structure whose heart can be identified with 
\[ 
\begin{tikzcd}
\CoMod\left (\m{(\MU_2)}_{2*},\m{(\MU_*\MU_2})_{2*} \right)\arrow{r}  & \CoMod\left ((\MU_2)_{2*},(\MU_*\MU_2)_{2*})\right )\arrow[loop above]{r}
\end{tikzcd}
\]
as a $C_2$-$\infty$-category where the $C_2$-action again comes from complex conjugation. 
\end{remark}

For the sake of our computations, the only consequence that is necessary for us is the following definition, made possible by the results above. 

\begin{defin}
Fix $n\ge 0$. We write $\gr_{\sev,2}^*\bS_2/(\ol{v}_0,\cdots ,\ol{v}_n)$ for the $\bE_\infty$ $\gr_{\sev,2}^*\bS_2$-algebra corresponding to the $\MU_*\MU_2$-comodule algebra $\MU_*/(2,v_1,\cdots ,v_n)\otimes_{\bZ_2}\mZ_2$ under the equivalence \eqref{eq:cof-tau} as described above. Given a $\gr_{\sev,2}^*\bS_2$-module $M$, we write
\[ 
M/(\ol{v}_0,\cdots ,\ol{v}_n) := \gr_{\sev,2}^*\bS/(\ol{v}_0,\cdots ,\ol{v}_n)\otimes_{\gr_{\sev,2}^*\bS_2}M  \,.
\]
\end{defin}

\subsection{The strongly even filtration for modules}\label{sec:filtration-modules}
Consider the following construction of an $\infty$-category of pairs $(A,M)$ where $A$ is a $C_2$-$\bE_\infty$-algebra 
and $M$ is a module over the underlying $\bE_\infty$-algebra. First, let $\Mod(\cC)$ be the $\infty$-category of 
pairs $(A,M)$ where $A$ is an $\bE_\infty$-algebra in $\cC$ and $M$ is a module over $A$ in $\cC$, 
defined as in \cite[Definition~2.5]{EM06} and \cite[Definition~4.2.1.13]{Lur17}. 
This $\infty$-category is equipped with forgetful functors  
\begin{align*}  
\cU_{\Alg} : \Mod(\cC)&\longrightarrow \CAlg(\cC) \\
(A,M) & \longmapsto A
\end{align*}
and 
\begin{align*}
\cU_{\Mod}  : \Mod(\cC)& \longrightarrow \cC \\
(A,M) & \longmapsto M  \,.
\end{align*} 

\begin{defin}\label{def:mod}
We define an $\infty$-category $\mathrm{Mod}^{C_2}$ as the pullback 
\[
\begin{tikzcd}
\Mod^{C_2} \arrow{d}{\cU_{\Alg}^{C_2}} \ar[r] & \Mod(\Sp^{C_2}) \arrow{d}{\cU_{\Alg}} \\ 
\CAlg^{C_2}\arrow{r}{\textup{forget}} & \CAlg(\Sp^{C_2}) \,.
\end{tikzcd}
\]
An object in this pullback is the data of a $C_2$-$\bE_\infty$-algebra together with a module $M$ over the 
underlying $\bE_{\infty}$-algebra. 
\end{defin}

\begin{defin}\label{def:modsev}
Let $\Mod^{C_2}_{\sev}$ denote the pullback 
\[
\begin{tikzcd}
\Mod^{C_2}_{\sev} \arrow{r}{\inc} \ar[d] &  \Mod^{C_2}\arrow{d}{\cU_{\Alg}^{C_2}} \\
\CAlg^{C_2}_{\sev}\arrow{r}{\inc} & \CAlg^{C_2}
\end{tikzcd}
\]
where $\inc$ is the canonical inclusion. 
\end{defin}

\begin{prop}
The functor 
\[
\inc : \Mod^{C_2}_{\sev}\longrightarrow \Mod^{C_2}
\]
is accessible. 
\end{prop}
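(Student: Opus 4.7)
The plan is to apply \cite[Proposition~5.4.6.6]{Lur09}, which states that the $\infty$-category of accessible $\infty$-categories and accessible functors is closed under pullback, together with the accessibility result of Proposition~\ref{prop:acc1}. This reduces the problem to verifying that the relevant forgetful functors preserve $\kappa$-filtered colimits for some regular cardinal $\kappa$.

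First, I would establish that $\Mod^{C_2}$ is itself accessible. It sits in the defining pullback square of Definition~\ref{def:mod}, so it suffices to show that each of the functors $\cU_{\Alg}: \Mod(\Sp^{C_2}) \to \CAlg(\Sp^{C_2})$ and $\textup{forget}: \CAlg^{C_2} \to \CAlg(\Sp^{C_2})$ is accessible. The first is the structural projection for the presentable cocartesian fibration of modules (see \cite[\S~4.2]{Lur17}), and in particular preserves all colimits; the second preserves filtered colimits since both source and target are presentable and the underlying $C_2$-spectrum functor preserves them. An application of \cite[Proposition~5.4.6.6]{Lur09} yields the accessibility of $\Mod^{C_2}$.

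Next, the forgetful functor $\cU_{\Alg}^{C_2}: \Mod^{C_2} \to \CAlg^{C_2}$ is obtained as a base change of $\cU_{\Alg}$, and since limits of accessible functors along accessible functors remain accessible, $\cU_{\Alg}^{C_2}$ is accessible. Combined with Proposition~\ref{prop:acc1}, which provides the accessibility of the inclusion $\CAlg^{C_2}_{\sev} \hookrightarrow \CAlg^{C_2}$, a second application of \cite[Proposition~5.4.6.6]{Lur09} applied to the defining pullback square of Definition~\ref{def:modsev} gives that $\Mod^{C_2}_{\sev}$ is accessible and that the map $\Mod^{C_2}_{\sev} \to \Mod^{C_2}$ (the base change of the accessible inclusion $\CAlg^{C_2}_{\sev} \hookrightarrow \CAlg^{C_2}$ along $\cU_{\Alg}^{C_2}$) is accessible.

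There is no substantive obstacle here; the argument is a formal cascade of the pullback stability of accessibility. The only mild subtlety is being careful that all four functors in the two pullback squares genuinely preserve $\kappa$-filtered colimits for a common cardinal $\kappa$, but this follows by taking $\kappa$ to be any regular cardinal above the cardinals witnessing accessibility of the constituent functors, which exists since there are only finitely many of them.
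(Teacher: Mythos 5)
Your argument is correct and follows essentially the same route as the paper: both establish accessibility of the forgetful functors $\cU_{\Alg}$ and $\CAlg^{C_2}\to\CAlg(\Sp^{C_2})$ (the paper via adjoints between presentable categories, you via colimit preservation, which amounts to the same thing), deduce accessibility of $\cU_{\Alg}^{C_2}$, and then combine with Proposition~\ref{prop:acc1} and \cite[Proposition~5.4.6.6]{Lur09} applied to the pullback square of Definition~\ref{def:modsev}. No substantive difference.
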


\begin{proof}
 
The forgetful functors 
$\cU_{\Alg} : \Mod(\Sp_{C_2})\longrightarrow \CAlg$ and $\CAlg^{C_2}\longrightarrow  \CAlg$ 
are also accessible since they each admit an adjoint and both source and target are presentable. 
Consequently, the forgetful functor $\cU_{\Alg}^{C_2}:\Mod^{C_2}\to\CAlg^{C_2}$ is accessible. 
By Proposition~\ref{prop:acc1}, the functor $\inc:\CAlg^{C_2}_{\sev}\longrightarrow \CAlg^{C_2}$ is accessible. 
Therefore, the result follows from~\cite[Proposition~5.4.6.6]{Lur09}. 
\end{proof}

\begin{defin}\label{def:sev-filt-module}
Let $(A,M)\in \Mod^{C_2}$. We define the \emph{strongly even filtration} of $(A,M)$ as the limit 
\[ 
\fil_{\sev/A}^{\bullet}M \coloneqq \lim_{A\to B,B\in \CAlg^{C_2}_{\sev}}P_{ 2\bullet}(M\otimes_{A}B)  \,.
\]
If $M=A$, then $\fil^\bullet_{\sev}(A) \simeq  \fil^\bullet_{\sev/A}A$.

Let $(A,M)\in \Mod^{C_2}$. We define the \emph{$p$-completely strongly even filtration} of $(A,M)$ as the limit 
\[ 
\fil_{\sev/A,\,p}^{\bullet}M \coloneqq 
\lim_{A\to B,B\in \CAlg_{\sec,\,p}^{C_2}}P_{ 2\bullet}(M\otimes_{A}B)_p^\wedge  \,.
\]
If $M=A$, then $\fil^\bullet_{\sev,\,p}(A) \simeq  \fil^\bullet_{\sev/A,\,p}A$.
\end{defin}

\begin{remark}
More formally, we could alternatively define 
\[ 
\fil_{\sev/\--}^*(-) :\Mod^{C_2}\longrightarrow \Fil(\Sp_{C_2}) 
\]
to be the right Kan extension 
\[
\begin{tikzcd}
\Mod^{C_2}_{\sev} \arrow{d}{\textup{inc}} \arrow{rr}{P_{2*} \mathcal{U}_{\textup{Mod}}} & & \Fil(\Sp_{C_2}) \\
\Mod^{C_2}\arrow{urr} & &
\end{tikzcd}
\]
where the top horizontal functor is the composite of the forgetful functor $(A,M)\mapsto M$ and the functor $P_{2\bullet}$. 
Proposition~\ref{prop:acc1} implies that this right Kan extension exists and can be identified with a $\kappa_1$-filtered 
limit appearing in Definition~\ref{def:sev-filt}. 
\end{remark}

\subsection{Descent for modules}\label{sec:desent-modules}

\begin{defin}
We say that a sieve on $(A,M)$ in $(\Mod^{C_2}_{\sev})^{\op}$ is a \emph{pure covering sieve} if it 
contains a finite collection of maps $\{ (A,M) \to (B_i,M_i) \}_{1\le i\le n}$ in $\Mod^{C_2}_{\sev}$ 
such that $A\to \prod_i B_i$ is a strongly evenly pure map and for each $i$ there are equivalences $M\otimes_AB_i \simeq M_i$. 

We say that a sieve on $(A,M)$ in $(\Mod_{\sev,\,p}^{C_2})^{\op}$ is a \emph{$p$-completely pure covering sieve} 
if it contains a finite collection of maps $\{ (A,M) \to (B_i,M_i) \}_{1\le i\le n}$ in $\Mod_{\sev,\,p}^{C_2}$ 
such that $A\to \prod_i B_i$ is a $p$-completely strongly evenly pure map and for each $i$ there are equivalences 
$(M \otimes_AB_i)_p^\wedge\simeq (M_i)_p^\wedge$. 
\end{defin}

\begin{prop}\label{prop:descent-for-modules}
The pure (resp. $p$-completely pure) covering sieves form a Grothendieck topology on $(\Mod_{\sev}^{C_2})^{\op}$ 
(resp. $(\Mod_{\sev,\,p}^{C_2})^{\op}$) called the \emph{pure topology} (resp. \emph{$p$-completely pure topology}). 
For a category $\mathcal{C}$ admitting small limits, a functor 
\[ 
F : (\Mod_{\sev}^{C_2})^{\op}\to \cC 
\]
(resp. $F : (\Mod_{\sev,\,p}^{C_2})^{\op}\to \cC$)
is a sheaf for the pure (resp. $p$-completely pure) topology if it preserves finite products and for any pure map 
$A\to B$ in $\CAlg_{\sev}^{C_2}$ (resp. $\CAlg_{\sev,\,p}^{C_2}$) the canonical map 
\[ 
F(M)\to \lim_{\Delta} F(M\otimes_{A}B^{\otimes_{A}\bullet+1})
\]
is an equivalence (resp. the canonical map 
\[ 
F(M)\to \lim_{\Delta} F((M\otimes_{A}B^{\otimes_{A}\bullet+1})_p^\wedge)
\]
is an equivalence.)
\end{prop}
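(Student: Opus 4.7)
The plan is to mimic the proof of Proposition~\ref{Prop:Topologies} (which established the seff and $p$-completely seff topologies on $(\CAlg^{C_2}_{\sev})^{\op}$), transporting it to the module setting $(\Mod^{C_2}_{\sev})^{\op}$. As in that proof, we will appeal to~\cite[A.3.2.1, A.3.3.1]{Lur18}, which reduce the verification that a class of morphisms generates a Grothendieck topology to two things: (i) the class is stable under base change in the relevant $\infty$-category, and (ii) the canonical bicartesian squares exist. Here both conditions concern only pushouts along pure (resp.\ $p$-completely pure) maps, so we only need these to exist in $\Mod^{C_2}_{\sev}$ (resp.\ $\Mod^{C_2}_{\sev,p}$).

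First I would verify that pushouts along pure maps exist in $\Mod^{C_2}_{\sev}$. Given a diagram $(A_1,M_1)\leftarrow (A,M)\to (A_2,M_2)$ in $\Mod^{C_2}_{\sev}$ where the leftward map is a pure covering, I form the candidate pushout $(A_1\otimes_A A_2, M_1\otimes_A M_2)$. By the definition of purity, tensoring with $A_1$ over $A$ sends any strongly even target $C$ with a map from $A_2$ to a wedge of $\rho$-divisible suspensions of $C$, which remains strongly even; hence $A_1\otimes_A A_2$ is strongly even. The pair $(A_1\otimes_A A_2, M_1\otimes_A M_2)$ then lies in $\Mod^{C_2}_{\sev}$, and a cofinality argument over the pushout diagram in $\Mod^{C_2}$ shows it is the pushout. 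Stability of purity under base change is immediate from the defining universal property (a map into any strongly even $C$ over the base change is equivalently a map over $A$).

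Next I would run exactly the argument of~\cite[A.3.2.1]{Lur18} for the collection of pure covering sieves, which upgrades the above stability and existence of pushouts to the statement that these sieves form a Grothendieck topology. For the sheaf characterization, I would invoke~\cite[A.3.3.1]{Lur18}: a presheaf valued in an $\infty$-category admitting small limits is a sheaf for a topology generated by single-morphism covers if and only if it preserves finite products and, for every covering $A\to B$, sends the \v{C}ech nerve to a limit diagram. In our setting the covers $A\to\prod_i B_i$ decompose into a finite product together with the single map, so the two conditions are exactly the ones in the statement of the proposition. The $p$-completely pure case proceeds \emph{mutatis mutandis}, with $p$-completion applied to pushouts and \v{C}ech nerves as in the definition.

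The only mild obstacle is checking purity is preserved under the base change needed to form pushouts in $\Mod^{C_2}_{\sev}$, but this is essentially formal: since purity is defined by a universal property against strongly even targets, and pushout-along-pure preserves strong evenness by Proposition~\ref{prop:seff-map-MUR}-style $\rho$-suspension arguments, the axioms of~\cite[A.3.2.1]{Lur18} apply verbatim. Everything else is a routine translation from the algebra case in Proposition~\ref{Prop:Topologies} to the module case, using the fibration $\cU_{\Alg}^{C_2}:\Mod^{C_2}\to\CAlg^{C_2}$ from Definition~\ref{def:mod} to transport the construction.
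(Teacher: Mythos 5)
Your approach matches the paper's: both reduce to~\cite[A.3.2.1, A.3.3.1]{Lur18} after checking that pushouts along pure covers exist in $\Mod^{C_2}_{\sev}$, and purity is what guarantees the pushed-out algebra stays strongly even. Two small points deserve correction, though neither affects the viability of the argument. First, the pushout formula: a pure cover of $(A,M)$ has the form $(A,M)\to(A_1,M\otimes_A A_1)$, and pushing out along an arbitrary $(A,M)\to(A_2,M_2)$ in the cocartesian fibration $\cU^{C_2}_{\Alg}:\Mod^{C_2}\to\CAlg^{C_2}$ yields $(A_1\otimes_A A_2, M_2\otimes_A A_1)$, not $(A_1\otimes_A A_2, M_1\otimes_A M_2)$; the latter is a tensor of two $A$-modules and introduces two copies of $M$, as one can check on a module of the form $M_2 = M\otimes_A A_2 \oplus \Sigma A_2$. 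Since membership in $\Mod^{C_2}_{\sev}$ is detected entirely on the algebra part (the module carries no strong evenness constraint in Definition~\ref{def:modsev}), and you correctly show $A_1\otimes_A A_2$ is strongly even from the definition of purity, the conclusion you draw survives the correction. Second, citing Proposition~\ref{prop:seff-map-MUR} for ``pushout-along-pure preserves strong evenness'' is off: that proposition exhibits $\bS\to\MUR$ as an example of a pure map, not a general preservation statement; what you need here is immediate from Definition~\ref{def:seff}, which you in fact invoke correctly earlier in the same paragraph.
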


\begin{proof}
    It is elementary to check the conditions in the proofs 
    of~\cite[A.3.2.1,~A.3.3.1]{Lur18} since pushouts in $\Mod_{\sev}^{C_2}$ 
    (resp. $\Mod_{\sev,\,p}^{C_2}$) along maps $(A,M)\to (B,N)$ where $A\to B$ is  pure (resp. $p$-completely pure) and 
    $M\otimes_{A}B\simeq N$ (resp. $(M\otimes_{A}B)_p^\wedge \simeq N_p^\wedge$) exist. 
    As these are the only pushouts for which we need to check these conditions, the result follows. 
\end{proof}

\begin{prop}\label{prop:module-sheaves}
The composite functors
\[ 
\begin{tikzcd}
\Mod_{\sev}^{C_2} \arrow{r}{\mathcal{U}_{\textup{Mod}}} & \CAlg_{\sev}^{C_2} \arrow{r}{P_{2*}} & \Fil( \Sp^{C_2}) ,
\end{tikzcd}
\]
\[ 
\begin{tikzcd}
\Mod_{C_2,p}^{\sev} \arrow{r}{\mathcal{U}_{\textup{Mod}}} & \CAlg_p^{\sev} \arrow{r}{P_{2*}} & \Fil( \Sp_p^{C_2})
\end{tikzcd}
\]
are sheaves for the pure topology and the $p$-completely pure topology, respectively. 
\end{prop}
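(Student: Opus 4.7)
The plan is to adapt the proof strategy of Proposition~\ref{prop:babysheaf} to the module setting, with the wedge decomposition afforded by purity playing the role that strong evenness played in the algebra case. First I would note that $P_{2*}$ commutes with finite products by Lemma~\ref{lem:property-of-slices}, and by completeness of the slice filtration it suffices to verify the \v{C}ech descent statement
\[
P_*^* M \simeq \lim_{q \in \Delta} P_*^*(M \otimes_A B^{\otimes_A q+1})
\]
in graded $C_2$-spectra for any pure covering $(A,M) \to (B,N)$ in $\Mod_{\sev}^{C_2}$ (and analogously for the $p$-complete variant, where completions are inserted throughout and Lemma~\ref{lem:acessible-p-power} ensures the relevant accessibility).

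The crucial observation is that purity furnishes an explicit $A$-module decomposition. Applying Definition~\ref{def:seff} with $C = A$, we see that $B \simeq \bigoplus_\alpha \Sigma^{n_\alpha \rho} A$ as an $A$-module. Since strong evenness is preserved under $\rho$-divisible shifts and coproducts, $B$ is itself strongly even; iterating purity with $C = B, B\otimes_A B, \ldots$ gives a wedge decomposition $B^{\otimes_A q+1} \simeq \bigoplus_{\alpha} \Sigma^{n_\alpha^q \rho} A$, and consequently
\[
M \otimes_A B^{\otimes_A q+1} \simeq \bigoplus_{\alpha} \Sigma^{n_\alpha^q \rho} M
\]
as an $A$-module in each cosimplicial degree.

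Since $P_*^*$ commutes with arbitrary coproducts (by combining its commutation with filtered colimits and finite products, which agree with coproducts in the stable setting) and with $\rho$-divisible shifts via the formula $P_{2k+n}^{2k+n}(\Sigma^{k\rho}X) \simeq \Sigma^{k\rho} P_n^n(X)$ invoked in the proof of Proposition~\ref{prop:slices}, each term $P_*^*(M \otimes_A B^{\otimes_A q+1})$ is identified as an explicit wedge of shifts of $P_*^* M$. The cosimplicial limit then reduces to a statement about homotopy Mackey functors: purity forces $\pi_{2*}^e A \to \pi_{2*}^e B$ to be free (not merely faithfully flat), so classical faithfully flat descent for modules together with the Milnor sequence yields $\underline{\pi_n^e M} \simeq \lim_\Delta \underline{\pi_n^e(M \otimes_A B^{\otimes_A \bullet+1})}$ with vanishing derived limits, in direct analogy with the vanishing in the proof of Proposition~\ref{prop:babysheaf}. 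Since limits of Mackey functors are computed pointwise and integer-graded homotopy Mackey functors detect equivalences in $\Sp^{C_2}$, the descent claim follows.

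The main obstacle will be coherence: the wedge decompositions of $B^{\otimes_A q+1}$ are not canonical, so one should not attempt to match them up cosimplicially. I would sidestep this by arguing entirely on $\underline{\pi}_*$, where the faithful flatness afforded by purity is exactly what is needed for the Mackey-functor-level Milnor sequence computation, as in \eqref{eq:derived-vanishing}. The $p$-complete case follows by the same argument after $p$-completing everywhere and invoking the bounded $p$-power torsion hypothesis to ensure the relevant $p$-completions commute with the wedges and limits in play.
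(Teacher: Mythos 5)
Your overall structure (pass to associated graded by completeness, exploit the wedge decomposition coming from purity, conclude via a Milnor sequence) is a reasonable outline and genuinely differs from the paper's route: the paper does not use the wedge decomposition of $B^{\otimes_A q+1}$ at all, but instead applies the slice filtration to $M$, $A$, and $B$ simultaneously, passes to associated graded to reduce to a statement about $P_*^*M$ as a module over the graded ring $P_*^*A$, and then invokes underlying faithfully flat descent because $P_*^*A$ and $P_*^*B$ are constant Mackey functor Eilenberg--MacLane rings. That last point is the crux, and it is exactly the place where your proposal has a gap.

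The problem is the sentence asserting that ``classical faithfully flat descent for modules together with the Milnor sequence yields $\underline{\pi_n^e M}\simeq\lim_\Delta \underline{\pi_n^e(M\otimes_A B^{\otimes_A\bullet+1})}$.'' First, $\underline{\pi_n^e M}$ is not the Mackey functor you need to control: what must be shown is $\underline{\pi}_kP^n_nM\simeq\lim_\Delta\underline{\pi}_kP^n_n(M\otimes_A B^{\otimes_A\bullet+1})$ with vanishing $R^1\lim$ for all $k,n$, and since $M$ is an \emph{arbitrary} module over $A$, the slice Mackey functors $\underline{\pi}_kP^n_nM$ are in general not constant — they are genuine Mackey functors $\underline{\pi}_{k\rho}M$ or $\mathcal F_1\underline{\pi}_{k\rho-1}M$. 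The reduction to constant Mackey functors that powers the proof of Proposition~\ref{prop:babysheaf} is unavailable here precisely because $M$ is not strongly even. Second, classical faithfully flat descent over $\pi_{2*}^eA$ controls only the $C_2/e$-level of these Mackey functors; pointwise computation of limits of Mackey functors does not let you bootstrap descent at the $C_2/C_2$-level from descent at the $C_2/e$-level without some further input (such as the constancy present in the algebra case or in the paper's bifiltered reduction). Your instinct to sidestep the coherence problem by ``arguing entirely on $\underline{\pi}_*$'' is actually what creates this difficulty: the cleaner move is to use the \emph{natural} comparison map from the Amitsur complex $P_*^*M\otimes_{P_*^*A}(P_*^*B)^{\otimes_{P_*^*A}\bullet+1}$ (a map of cosimplicial graded $C_2$-spectra, defined without any choice of decomposition), check levelwise that it is an equivalence using the wedge decomposition, and then do descent for the ring map $P_*^*A\to P_*^*B$ — which is where the constant Green functor structure, and hence the paper's reduction to the underlying, finally enters. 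As written, the key Mackey-functor descent assertion is not justified.
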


\begin{proof}
We just prove the case of the pure topology since the other case is similar. We consider the map of filtered graded spectra 
\begin{equation}\label{can map for sheaf 1.5} 
P_*^*P_{\ast}M\to \lim_{\Delta}  P_*^* \left ( P_{\ast}M\otimes_{P_{\ast}A}\left ({P_{\ast}B}^{\otimes_{P_{\ast}A} \bullet+1} \right ) \right ) 
\end{equation}
which is a map of complete filtered graded $C_2$-spectra whose colimit is the map 
\[ P_*^*M\to \lim_{\Delta} P_*^*\left (M\otimes_{ A}\left (B^{\otimes_{A} \bullet+1} \right ) \right ) \,.
\]
By completeness of the slice filtration, it suffices to show that the map 
\begin{equation}\label{can map for sheaf 1.5} 
P_{\ast}^{\ast}P_{\ast}^{\ast}M\longrightarrow 
\lim_{\Delta}  P_{\ast}^{\ast} \left ( P_{\ast}^{\ast}M
\otimes_{P_{\ast}^{\ast}A}\left ( P_*^*B^{\otimes_{P_*^*A} \bullet+1} \right )  \right ) 
\end{equation}
is an equivalence of bigraded spectra. We then observe that we can write this as 
\[
P_*^*M\longrightarrow \lim_{\Delta}P_*^* \left ( P_{\ast}^{\ast}M
\otimes_{\bigoplus_{s}\Sigma^{\rho s}H\m{\pi_{2s}^eA}} \left ( 
\bigoplus_{s}\Sigma^{s\rho}H\m{\pi_{2s}^eB}^{\otimes_{\bigoplus_{s}\Sigma^{s\rho}H\m{\pi_{2s}^eA}} \bullet+1} \right )  \right ) 
\]
and the result follows from the underlying faithfully flat descent~\cite[Appendix~D]{Lur18}. 
To see this, note that we can forget the grading to identify $\bigoplus_{s} \Sigma^{\rho s}H\pi_{2s}^eA$ and  $\bigoplus_{s} \Sigma^{\rho s}H\pi_{2s}^eB$ with  commutative Green functors by forgetting the grading.\footnote{Here we use Voevodsky's grading convention. See~\cite{DDIO24} for details.} Since they are also constant Mackey functors, descent is determined on the underlying. 
\end{proof}

\begin{prop}\label{prop:descent-modules}
Let $A\to B$ be a pure (resp. $p$-pure) map of $C_2$-$\bE_{\infty}$-ring spectra 
and $M$ be an $A$-module. 
There is an equivalence 
\begin{align}\label{eq:fil-module1} 
\fil_{\sev/A}^{\bullet}M  
\simeq \lim_{\Delta}\fil_{\sev/B^{\otimes_{A}\bullet+1}}^{\bullet}M\otimes_{A}B^{\otimes_{A}\bullet+1}   
\end{align}
of filtered $C_2$-spectra (resp. there is an equivalence of 
\begin{align}\label{eq:fil-module2}
\fil_{\sev/A_p,\,p}^{\bullet}M_p^\wedge  
\simeq \lim_{\Delta}\fil_{\sev/(B^{\otimes_{A}\bullet+1})_p}^{\bullet}(M\otimes_{A}B^{\otimes_{A}\bullet+1})_p^\wedge   
\end{align}
of filtered $p$-complete $C_2$-spectra).
\end{prop}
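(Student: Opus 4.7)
The plan is to mimic the proof of Theorem~\ref{thm:descent} almost verbatim, using Proposition~\ref{prop:module-sheaves} in place of Proposition~\ref{prop:babysheaf}. I will spell out the non-$p$-complete case; the $p$-complete statement follows by the same argument after inserting $(-)_p^{\wedge}$ everywhere, since $p$-completion commutes with the relevant limits and the $p$-completely pure topology enjoys the same formal properties.

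First I would unwind the definition of the left-hand side as
\[
\fil_{\sev/A}^{\bullet}M = \lim_{\substack{A\to C\\ C\in \CAlg^{C_2}_{\sev}}} P_{2\bullet}(M\otimes_A C).
\]
Since $A\to B$ is pure, for every strongly even $A$-algebra $C$ the base change $C\to B\otimes_A C$ is again strongly even (being a wedge of $\rho$-divisible suspensions of $C$ by Definition~\ref{def:seff}) and pure. In particular $(C,M\otimes_A C)\to (B\otimes_A C, M\otimes_A B\otimes_A C)$ is a pure cover in $\Mod^{C_2}_{\sev}$, so Proposition~\ref{prop:module-sheaves} yields
\[
P_{2\bullet}(M\otimes_A C) \simeq \lim_{q\in\Delta} P_{2\bullet}\bigl(M\otimes_A B^{\otimes_A q+1}\otimes_A C\bigr),
\]
where I have used the identification $(M\otimes_A C)\otimes_C (B\otimes_A C)^{\otimes_C q+1}\simeq M\otimes_A B^{\otimes_A q+1}\otimes_A C$. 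Commuting the two limits gives
\[
\fil_{\sev/A}^{\bullet}M \simeq \lim_{q\in\Delta}\;\lim_{\substack{A\to C\\ C\in\CAlg^{C_2}_{\sev}}} P_{2\bullet}\bigl(M\otimes_A B^{\otimes_A q+1}\otimes_A C\bigr).
\]

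Next I would compare this with the right-hand side, which by definition is
\[
\lim_{q\in\Delta}\;\lim_{\substack{B^{\otimes_A q+1}\to D\\ D\in \CAlg^{C_2}_{\sev}}} P_{2\bullet}\bigl((M\otimes_A B^{\otimes_A q+1})\otimes_{B^{\otimes_A q+1}} D\bigr)\simeq \lim_{q\in\Delta}\;\lim_{\substack{B^{\otimes_A q+1}\to D\\ D\in\CAlg^{C_2}_{\sev}}} P_{2\bullet}(M\otimes_A D).
\]
For each fixed $q$, the functor $(A\to C)\mapsto (B^{\otimes_A q+1}\to B^{\otimes_A q+1}\otimes_A C)$ from the indexing category of the outer limit of the LHS to that of the RHS is final: the relevant comma category $\mathscr{C}\times_{\mathscr{D}}\mathscr{D}_{/X}$ has a terminal object given by $(A\to D,\; B^{\otimes_A q+1}\to B^{\otimes_A q+1}\otimes_A D\to D)$, exactly as in the proof of Theorem~\ref{thm:descent}. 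Applying Quillen's Theorem~A identifies the two limits and yields \eqref{eq:fil-module1}.

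The main technical obstacle is verifying that the sheaf condition of Proposition~\ref{prop:module-sheaves} applies in the form I invoked it, namely to a strongly even $C$ equipped with the base-changed pure map $C\to B\otimes_A C$ together with the module $M\otimes_A C$. This is straightforward but requires knowing that pure covers base change to pure covers in $\CAlg^{C_2}_{\sev}$ and that the module in the cover matches, i.e.\ $(M\otimes_A C)\otimes_C (B\otimes_A C)\simeq M\otimes_A B\otimes_A C$; both are immediate from the definitions of pure maps and base change. The $p$-complete case \eqref{eq:fil-module2} is proved by the same steps, replacing $\CAlg^{C_2}_{\sev}$ with $\CAlg^{C_2}_{\sev,p}$, invoking the $p$-completely pure topology of Proposition~\ref{prop:descent-for-modules}, and observing that the iterated tensor products $(B^{\otimes_A q+1})_p$ remain strongly even with bounded $p$-power torsion by $p$-complete purity.
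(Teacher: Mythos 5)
Your proof is correct and is precisely the argument the paper has in mind: the paper's own proof consists of the single sentence that it is ``essentially the same as the proof of Theorem~\ref{thm:descent},'' and you have faithfully carried out that adaptation, swapping the seff topology and Proposition~\ref{prop:babysheaf} for the pure topology and Proposition~\ref{prop:module-sheaves}, and correctly checking the base-change identifications (purity of $C\to B\otimes_A C$ and $(M\otimes_A C)\otimes_C(B\otimes_A C)^{\otimes_C q+1}\simeq M\otimes_A B^{\otimes_A q+1}\otimes_A C$) along the way.
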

\begin{proof}
The proof is essentially the same as the proof of Theorem~\ref{thm:descent}. 
\end{proof}

\begin{defin}\label{def:projective}
Given a seff map $A\to B$, we say that an $A$-module $M$ is \emph{$B$-projective} if $M\otimes_A B$ is a pure $B$-module. We say an $A$-module is \emph{$p$-completely $B$-projective} if $(M\otimes_A B)_p^\wedge$ is a pure $B_p^\wedge$-module.
\end{defin}

To produce an interesting example, we will use the following observation. 

\begin{remark}
When $M$ is a $C_2$-spectrum and $A\to B$ is the map
$\bS\to \MUR$, then Definition~\ref{def:projective} specializes to the notion of $\MUR$-projective considered in \cite{BHS22}. 
\end{remark}

\begin{prop}\label{prop:descent-for-modules-again}
Suppose $A\to B$ is pure (resp. $p$-completely pure) and $X$ is $B$-projective (resp. $p$-completely $B$-projective). Then there is an equivalence 
\[ 
\fil_{\sev/A}^sB\simeq \lim_{\Delta} P_{2s}(X\otimes_{A} B^{\otimes_{A}\bullet+1}) 
\]
of filtered $C_2$-spectra (resp. there is an equivalence
\[ 
\fil_{\sev/A_p^\wedge,\,p}^sB_p^\wedge\simeq \lim_{\Delta} P_{2s}(X\otimes_{A_p^\wedge} 
(B_p^\wedge)^{\otimes_{A_p^\wedge}\bullet+1})_p^\wedge
\]
of filtered $C_2$-spectra.)
\end{prop}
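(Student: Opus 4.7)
The plan is to combine Proposition~\ref{prop:descent-modules} with a pointwise collapse of the strongly even Kan extension afforded by the $B$-projectivity hypothesis; we treat the uncompleted case, since the $p$-completed case is entirely analogous (using $p$-completely pure descent in place of pure descent and $\fil_{\sev/-,p}^{\bullet}$ throughout). We interpret the left-hand side of the claimed equivalence as $\fil_{\sev/A}^s X$, matching the $B$-projectivity hypothesis on $X$.

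First, apply Proposition~\ref{prop:descent-modules} to the pure map $A\to B$ and the $A$-module $X$ to obtain
\[
\fil_{\sev/A}^{\bullet} X \simeq \lim_{q\in\Delta} \fil_{\sev/B^{\otimes_A q+1}}^{\bullet}\bigl(X\otimes_A B^{\otimes_A q+1}\bigr).
\]
It therefore suffices to show that for each $q\ge 0$ the canonical map
\[
P_{2s}\bigl(X\otimes_A B^{\otimes_A q+1}\bigr)\longrightarrow \fil_{\sev/B^{\otimes_A q+1}}^s\bigl(X\otimes_A B^{\otimes_A q+1}\bigr)
\]
is an equivalence, where by Definition~\ref{def:sev-filt-module} the right-hand side equals
\[
\lim_{\substack{B^{\otimes_A q+1}\to C\\ C\in \CAlg^{C_2}_{\sev}}} P_{2s}\bigl(X\otimes_A C\bigr).
\]

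Second, since $X$ is $B$-projective, $X\otimes_A B$ is a pure $B$-module. Consequently, for any strongly even $C_2$-$\bE_\infty$-ring $C$ equipped with a map from $B^{\otimes_A q+1}$ (and hence from $B$ via composition with any structure map $B\to B^{\otimes_A q+1}$), we have $X\otimes_A C\simeq (X\otimes_A B)\otimes_B C\simeq \bigoplus_i \Sigma^{n_i\rho} C$ for some integers $n_i$. Since $C$ is strongly even, so is each $\Sigma^{n_i\rho}C$ and hence the wedge. By Lemma~\ref{lem:property-of-slices} and Proposition~\ref{prop:slices} we have
\[
P_{2s}(X\otimes_A C) \simeq \bigoplus_i \Sigma^{n_i\rho} P_{2(s-n_i)} C.
\]
This computes each term in the limit in closed form.

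Third, we identify the limit on the right with $P_{2s}(X\otimes_A B^{\otimes_A q+1})$. Choose the pure (hence seff, by Proposition~\ref{prop:seff-map-MUR}) cover $B^{\otimes_A q+1}\to B^{\otimes_A q+1}\otimes \MUR =: D_0$. Since $X\otimes_A B^{\otimes_A q+1}$ is a pure $B^{\otimes_A q+1}$-module (as $X$ is $B$-projective and purity is preserved by base change along $B\to B^{\otimes_A q+1}$), Proposition~\ref{prop:module-sheaves} applied to this pure cover gives
\[
P_{2s}\bigl(X\otimes_A B^{\otimes_A q+1}\bigr)\simeq \lim_{r\in\Delta} P_{2s}\bigl(X\otimes_A D_0^{\otimes_{B^{\otimes_A q+1}} r+1}\bigr),
\]
and by a cofinality argument analogous to the one in the proof of Theorem~\ref{thm:descent}, this Cech limit computes the inner Kan extension. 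The main obstacle will be this third step: verifying that $X\otimes_A B^{\otimes_A q+1}$ really is a pure $B^{\otimes_A q+1}$-module in the sense needed to invoke Proposition~\ref{prop:module-sheaves}, and carrying out the cofinality argument for the Kan extension limit over $\CAlg^{C_2}_{\sev}$. This requires careful tracking of how purity of modules propagates through iterated tensor products and how wedges of $\rho$-suspensions interact with the slice filtration, mirroring the module-theoretic descent arguments of Proposition~\ref{prop:descent-for-modules} and Proposition~\ref{prop:module-sheaves}.
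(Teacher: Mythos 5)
You correctly infer that the left-hand side should read $\fil_{\sev/A}^s X$ rather than $\fil_{\sev/A}^s B$; that is indeed a typo. Your step 1 (invoking Proposition~\ref{prop:descent-modules}) is exactly the paper's starting point, and your step 2, while correct, is a computation you do not actually need.

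Your step 3 is where the proposal overcomplicates matters, and you rightly flag it as the sticking point. But no secondary cover by $\MUR$, no further invocation of Proposition~\ref{prop:module-sheaves}, and no cofinality argument are needed. The statement implicitly assumes that $B$ is strongly even (this holds in every application, e.g.\ $B=\MUR$, and is needed for the $P_{2s}$ on the right-hand side to compute the strongly even filtration correctly). Granted this, purity of $A\to B$ forces each $B^{\otimes_A q+1}$ to be strongly even: apply the purity condition with $C=B^{\otimes_A q}$ to see that $B^{\otimes_A q+1}$ is a wedge of $\rho$-divisible suspensions of a strongly even ring, and such a wedge is strongly even. Consequently, the comma category $B^{\otimes_A q+1}\backslash\CAlg_{\sev}^{C_2}$ indexing the inner Kan extension $\fil_{\sev/B^{\otimes_A q+1}}^{\bullet}(-)$ in Definition~\ref{def:sev-filt-module} has an initial object, namely the identity of $B^{\otimes_A q+1}$. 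The limit therefore collapses to its value at this initial object, giving $\fil_{\sev/B^{\otimes_A q+1}}^{\bullet}(M)\simeq P_{2\bullet}(M)$ for \emph{any} $B^{\otimes_A q+1}$-module $M$ --- precisely the module analogue of the remark following Definition~\ref{def:sev-filt} that $\filsev^{\bullet}R = P_{2\bullet}R$ when $R$ is strongly even. Substituting $M = X\otimes_A B^{\otimes_A q+1}$ into the output of your step 1 finishes the proof at once. As a side observation, note that the equivalence as stated does not actually use the $B$-projectivity of $X$: purity of $A\to B$ (with $B$ strongly even) alone suffices. The $B$-projectivity hypothesis becomes relevant in the corollary that follows, where one wants each $X\otimes_A B^{\otimes_A q+1}$ to split as a wedge of $\rho$-multiple suspensions.
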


\begin{proof}
This follows from Propositions \ref{prop:seff-map-MUR} and~\ref{prop:descent-modules}. 
\end{proof}

\begin{cor}
If $X$ is a $2$-complete $C_2$-spectrum such that $(\MUR\otimes X)_2$ has bounded $2$-torsion and $(\MUR\otimes X)_2$ is a 
wedge of $\rho$-multiple suspensions of $\MUR$, then 
\[
\fil_{\sev/\mathbb{S}_2^\wedge,\,2}^{\bullet}X\simeq \nu_{\mathbb{R}}X\,.
\] 
When $X=\bS_2^\wedge$ this is an equivalence of $C_2$-$\bE_{\infty}$-algebras.
\end{cor}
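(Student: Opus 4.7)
The plan is to deduce this corollary as a direct application of Proposition~\ref{prop:descent-for-modules-again} to the pair $\mathbb{S}\to\MUR$, followed by a straightforward identification of the resulting limit formula with the definition of $\nu_{\mathbb{R}}$.

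First, I would verify that the hypotheses of Proposition~\ref{prop:descent-for-modules-again} are met with $A=\mathbb{S}$ and $B=\MUR$. The unit map $\mathbb{S}\to\MUR$ is pure by Proposition~\ref{prop:seff-map-MUR}, and hence $2$-completely pure by the remark immediately following Definition~\ref{def:seff}. The two hypotheses on $X$, namely that $(\MUR\otimes X)_2$ has bounded $2$-torsion and is equivalent to a wedge of $\rho$-multiple suspensions of $\MUR$, together say precisely that $(\MUR\otimes_{\mathbb{S}} X)_2^\wedge$ is equivalent to the $2$-completion of a wedge of $\rho$-divisible suspensions of $\MUR_2^\wedge$; i.e., $X$ is $2$-completely $\MUR$-projective in the sense of Definition~\ref{def:projective}.

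Second, I would invoke the $p$-completely pure part of Proposition~\ref{prop:descent-for-modules-again} at $p=2$ to obtain
\[
\fil_{\sev/\mathbb{S}_2^\wedge,\,2}^{\bullet} X \;\simeq\; \lim_{q\in\Delta} P_{2\bullet}\bigl(X\otimes_{\mathbb{S}_2^\wedge}(\MUR_2^\wedge)^{\otimes_{\mathbb{S}_2^\wedge} q+1}\bigr)_2^\wedge.
\]
Since $X$ is assumed $2$-complete, the outer $2$-completion absorbs the distinction between tensor products formed over $\mathbb{S}$ versus $\mathbb{S}_2^\wedge$ and between $\MUR$ versus $\MUR_2^\wedge$, so that the right-hand side agrees with
\[
\lim_{q\in\Delta} P_{2\bullet}\bigl((X\otimes \MUR^{\otimes q+1})_2^\wedge\bigr),
\]
which is exactly $\nu_{\mathbb{R}}(X)$ by definition. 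This is the only mildly technical point in the proof, and it is routine given that $X$ and $\MUR_2^\wedge$ are already $2$-complete.

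Finally, when $X=\mathbb{S}_2^\wedge$, the entire argument takes place in the $\infty$-category $\CAlg^{C_2}_{\fil,\,2}$: the cosimplicial object $(\MUR^{\otimes\bullet+1})_2^\wedge$ is a cosimplicial diagram of $C_2$-$\mathbb{E}_\infty$-algebras, and in this case Proposition~\ref{prop:descent-for-modules-again} reduces, via $M=A$, to the multiplicative descent statement of Theorem~\ref{thm:descent}~\eqref{it2:thm:descent}. Both $\fil_{\sev,\,2}^{\bullet}\mathbb{S}_2^\wedge$ and $\nu_{\mathbb{R}}(\mathbb{S}_2^\wedge)$ are thus constructed as limits in $\CAlg^{C_2}_{\fil,\,2}$ of the same cosimplicial $C_2$-$\mathbb{E}_\infty$-algebra, so the identification is automatically an equivalence of $C_2$-$\mathbb{E}_\infty$-algebras.
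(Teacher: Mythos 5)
Your proposal is correct and follows essentially the same route as the paper: the paper's proof consists precisely of invoking the $2$-completely pure descent formula of Proposition~\ref{prop:descent-for-modules-again} for $\mathbb{S}\to\MUR$ to identify $\fil_{\sev/\mathbb{S}_2^\wedge,\,2}^{\bullet}X$ with the cobar limit defining $\nu_{\mathbb{R}}X$, and noting that for $X=\mathbb{S}_2^\wedge$ the identification is one of $C_2$-$\mathbb{E}_\infty$-algebras in filtered spectra. You simply spell out the verification of the hypotheses (purity of the unit map via Proposition~\ref{prop:seff-map-MUR} and $2$-complete $\MUR$-projectivity of $X$) that the paper leaves implicit.
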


\begin{proof}
Since $\mathbb{S} \to \MUR$ is $2$-completely pure, we can identify 
\[
\fil_{\sev/\bS_2^\wedge,2}^{s}X\simeq \lim_{\Delta }P_{2s}(X\otimes_{\bS_2^\wedge} 
(\MUR)_2^\wedge)^{\otimes_{\bS_2^\wedge} \bullet +1})_2^\wedge\,.
\]
When $X=\bS_2^\wedge$ this is an identification as $C_2$-$\bE_{\infty}$-algebras in filtered spectra. 
\end{proof}

\begin{exm}
If $\bS/2$ is the mod $2$ Moore spectrum, then there are equivalences of $\gr_{\sev}^*\bS$-modules
\[ 
\gr_{\sev}^{\bullet}(\bS)/(\ol{v}_0)\simeq \gr_{\sev/\bS}^{\bullet}(\bS/2)\simeq \gr^*\nu_{\bR}(\bS/2) \,.
\]
\end{exm}

\subsection{Variants for modules}\label{sec:motivic-variants}
In this section, we note that all of these arguments also carry over to the context of spectra 
with $C_2$-twisted $S^1$-action. 

\begin{defin}
We define $\infty$-categories
\[
\Mod^{h_{C_2}S^1}_p \text{ and }\Mod_p^{\rcyc}
\]
as pullbacks just as in Definition~\ref{def:mod} along with forgetful functors 
\[ 
\cU_{\Mod}:\Mod_p^{h_{C_2}S^1}\longrightarrow \Sp_p^{h_{C_2}S^1}\text{ and } 
\cU_{\Mod}:\Mod_p^{\rcyc}\longrightarrow
\Sp_p^{\rcyc}\,.
\]
We further write 
\[
 \Mod_{\sev,\,p}^{h_{C_2}S^1}  \text{ and }  \Mod_{\sev,\,p}^{\rcyc}
\]
for the full subcategories spanned by those pairs $(A,M)$ such that the underlying object of $A$ in $\CAlg(\Sp^{C_2})$ is strongly even. 
\end{defin}

\begin{remark}
Note that the inclusions 
\[
 \Mod_{\sev,\,p}^{h_{C_2}S^1}\subset  \Mod_{p}^{h_{C_2}S^1} \text{ and }\Mod_{\sev,\,p}^{\rcyc}\subset \Mod_{p}^{\rcyc}
\]
are accessible functors by the same argument as before. 
\end{remark}
\begin{defin}
We define 
\begin{align*}
\fil_{\sev/\--,\,h_{C_2}S^1,p}^{\bullet}:&\Mod_{p}^{h_{C_2}S^1}\longrightarrow \Fil(\Sp_p^{C_2}) \,, \\
\fil_{\sev/\--,\,t_{C_2}S^1,p}^{\bullet}:&\Mod_p^{h_{C_2}S^1}\longrightarrow \Fil(\Sp^{C_2}_p)  
\end{align*}
as the right Kan extensions of 
$(\cU_{\Mod})^{h_{C_2}S^1}$ and 
$
(\cU_{\Mod})^{t_{C_2}S^1}$, respectively,
along the inclusion 
$\Mod_{\sev,\,p}^{h_{C_2}S^1}\subset \Mod_p^{h_{C_2}S^1}$. 
Similarly, we define
\[ 
\fil_{\sev/\--,\TCR}^{\bullet} : \Mod_{p}^{\rcyc}\longrightarrow \Fil(\Sp_p^{C_2}) 
\]
by right Kan extension of the equalizer 
\[\mathrm{eq} \left ( \can,\varphi : \cU_{\Mod}^{h_{C_2}S^1}\longrightarrow  \cU_{\Mod}^{t_{C_2}S^1}\right ) \]
along the inclusion 
\[
\Mod_{\sev,\,p}^{\rcyc}\subset \Mod_{p}^{\rcyc} \,.
\]
\end{defin}

\begin{thm}
The underlying $p$-completely pure covering sieves form a Grothendieck topology on $\Mod_{\sev,\,p}^{h_{C_2}S^1}$ and $\Mod_{\sev,\,p}^{\rcyc}$ called the \emph{$p$-completely pure topology} and the functors 
\[ 
P_*^*\left ( (\--)^{h_{C_2}S^1}\right )_p\,,  P_*^*\left ( (\--)^{t_{C_2}S^1}\right )_p \,, P_*^*\left ( (P_*^*\--)^{h_{C_2}S^1}\right )_p\,, \text{ and }P_*^*\left ((P_*^*\--)^{t_{C_2}S^1} \right )_p
\]
are sheaves for this topology. 
\end{thm}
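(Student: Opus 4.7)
The plan is to mirror the argument structure already established in the paper, combining the techniques of Proposition~\ref{prop:descent-for-modules} (for the module-theoretic pure topology) with those of Theorem~\ref{thm:sheaf} (for the $h_{C_2}S^1$ and $\rcyc$ variants of the $p$-completely seff topology). Since each of these has been proven in analogous settings, the main task is to verify that the constructions interact correctly with the twisted $S^1$-action and Real $p$-cyclotomic structure.

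First, I would establish that the $p$-completely pure covering sieves form a Grothendieck topology on the categories $\Mod_{\sev,\,p}^{h_{C_2}S^1}$ and $\Mod_{\sev,\,p}^{\rcyc}$. Following the proof of Proposition~\ref{prop:descent-for-modules} verbatim, this reduces to verifying the conditions of~\cite[A.3.2.1, A.3.3.1]{Lur18}, which is possible because the only pushouts involved are those along pure maps $(A,M)\to(B,N)$ with $(M\otimes_AB)_p^\wedge \simeq N_p^\wedge$, and these exist in the $h_{C_2}S^1$ and $\rcyc$ settings since pushouts there are computed at the level of underlying objects in $\Mod_{\sev,\,p}^{C_2}$.

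Next, for the sheaf property, I would reduce (as in Theorem~\ref{thm:sheaf}) to checking the statement on associated graded, since the filtrations involved are complete. For a $p$-completely pure map $(A,M)\to (B,N)$, one must verify that the canonical maps
\begin{align*}
P_*^*\bigl(M^{h_{C_2}S^1}\bigr)_p &\longrightarrow \lim_{\Delta} P_*^*\bigl((M\otimes_A B^{\otimes_A \bullet+1})^{h_{C_2}S^1}\bigr)_p, \\
P_*^*\bigl(M^{t_{C_2}S^1}\bigr)_p &\longrightarrow \lim_{\Delta} P_*^*\bigl((M\otimes_A B^{\otimes_A \bullet+1})^{t_{C_2}S^1}\bigr)_p
\end{align*}
are equivalences. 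Since strongly even Mackey functor structures are determined by their underlying values and since $(-)^{h_{C_2}S^1}$ and $(-)^{t_{C_2}S^1}$ of a strongly even $C_2$-spectrum remain controlled by their Mackey functor homotopy groups in the relevant degrees, the argument of Proposition~\ref{prop:module-sheaves} combined with~\cite[Lemma~2.2.13(b)]{HRW22} applied level-wise to the Mackey functor homotopy groups yields the equivalence. For the bi-filtered variants $P_*^*((P_*^*\--)^{h_{C_2}S^1})_p$ and $P_*^*((P_*^*\--)^{t_{C_2}S^1})_p$, the same argument applies since the inner $P_*^*$ simply provides an additional filtration that commutes with the descent limit.

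The main obstacle I anticipate is the Real $\rcyc$ case, which requires one more step: compatibility with the Frobenius $\varphi_p$. Once the sheaf properties for $P_*^*((-)^{h_{C_2}S^1})_p$ and $P_*^*((-)^{t_{C_2}S^1})_p$ are established, the equalizer functor defining the $\TCR$-variant is automatically a sheaf because limits commute with limits. This reduces the Real $\rcyc$ case to the $h_{C_2}S^1$ case, mirroring the argument in Corollary~\ref{thm:can-varphi}.
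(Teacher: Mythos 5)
Your proposal is correct and takes essentially the same approach as the paper: the paper's proof of this theorem is simply the one-line remark that the argument is the same as Proposition~\ref{prop:descent-for-modules} (for the Grothendieck topology) and Proposition~\ref{prop:module-sheaves} (for the sheaf property), and your expanded version correctly traces out what that combination entails, including the levelwise application of~\cite[Lemma~2.2.13(b)]{HRW22} and the reduction of the $\rcyc$ case to the $h_{C_2}S^1$ case.

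Two small imprecisions worth noting, neither of which affects the substance. First, your step ``I would reduce to checking the statement on associated graded, since the filtrations involved are complete'' is unnecessary here: unlike the functors $P_{2\ast}((\--)^{h_{C_2}S^1})$ in Theorem~\ref{thm:sheaf}, the four functors in the present theorem are already of the form $P_*^*(\--)$, i.e.\ already associated graded, so there is no filtered-to-graded reduction to perform; the completeness argument runs in the opposite direction (filtered follows from graded) and is simply not invoked. Second, your final paragraph discusses the equalizer functor defining $\filsev_{\TCR}$, but that functor is not among the four listed in the theorem; what the $\rcyc$ case actually requires is observing that the four functors on $\Mod_{\sev,\,p}^{\rcyc}$ factor through the forgetful functor to $\Mod_{\sev,\,p}^{h_{C_2}S^1}$, so the sheaf property on the cyclotomic category follows from the twisted-$S^1$ case. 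Both of these are harmless conflations rather than gaps.
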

\begin{proof}
The proof is the same as that of Proposition~\ref{prop:descent-for-modules} and Proposition~\ref{prop:module-sheaves}. 
\end{proof}

\begin{defin}
Let $A$ be a $C_2$-$\bE_{\infty}$-ring with $S^1$-action, let $B$ be an $\bE_\sigma$-ring such that $\THR(B)$ is an $A$-module in $C_2$-spectra with $C_2$-twisted $S^1$-action. We define 
\begin{align*}
\fil_{\mot/A}^\bullet\TCR^{-}(B;\bZ_p)&:= \fil_{\sev/A,h_{C_2}S^1,p}^\bullet \THR(B;\bZ_p) \,,  \\ 
\fil_{\mot/A}^\bullet\TPR(B;\bZ_p)&:= \fil_{\sev/A,t_{C_2}S^1,p}^\bullet \THR(B;\bZ_p)\,.  
\end{align*}
If $A$ is additionally Real $p$-cyclotomic and $\THR(B)$ is additionally a module over $A$ in Real $p$-cyclotomic spectra, then we define 
\[
\fil_{\mot/A}^\bullet\TCR(B;\bZ_p):=\fil_{\sev/A,\,\TCR,\,p}^{\bullet}\THR(B;\bZ_p) \,.
\]
\end{defin}

\begin{thm}
Let $A\to B$ be a $p$-completely pure map of $C_2$-$\bE_\infty$-rings and let $\THR(C)$ be an $S^1$-equivariant $A$-module. We have equivalences
\begin{align*}
\fil_{\mot/A}^{\smallblacksquare}\TCR^{-}(C;\bZ_p)&:=\lim_{\Delta}\fil_{\sev/B^{\otimes_{A}\bullet+1},\,h_{C_2}S^1,\,p}^{\smallblacksquare}\left (
\THR(C)\otimes_{A}B^{\otimes_{A}\bullet+1} \right ) \,, \\ 
\fil_{\mot/A}^{\smallblacksquare}\TPR(C;\bZ_p)&:=\lim_{\Delta}\fil_{\sev/B^{\otimes_{A}\bullet+1},\,t_{C_2}S^1,\,p}^{\smallblacksquare}\left (\THR(C)\otimes_{A}B^{\otimes_{A}\bullet+1} \right )
\end{align*}
of filtered $C_2$-spectra. 
\end{thm}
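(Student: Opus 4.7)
The plan is to mimic the proofs of Theorem~\ref{thm:descent} and Corollary~\ref{thm:motivic-descent}, feeding in the sheaf property just established in Theorem~\ref{thm:sheaf} upgraded to the module setting along the lines of Proposition~\ref{prop:module-sheaves} and Proposition~\ref{prop:descent-for-modules}. First I would unwind the defining right Kan extension to write
\[
\fil_{\mot/A}^{\smallblacksquare}\TCR^{-}(C;\bZ_p)\simeq \lim_{\substack{(A,\THR(C))\to(D,M)\\ (D,M)\in\Mod_{\sev,p}^{h_{C_2}S^1}}} P_{2\smallblacksquare}\bigl(M^{h_{C_2}S^1}\bigr)_p^{\wedge},
\]
and analogously for $\TPR$, with $h_{C_2}S^1$ replaced by $t_{C_2}S^1$. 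For each object $(D,M)$ in the indexing $\infty$-category, base change along the $p$-completely pure map $A\to B$ produces a $p$-completely pure map $D\to(D\otimes_A B)_p^{\wedge}$ in $\CAlg_{\sev,p}^{h_{C_2}S^1}$ whose associated module is $(M\otimes_A B)_p^{\wedge}$. The module version of the sheaf property then yields
\[
P_{2\smallblacksquare}\bigl(M^{h_{C_2}S^1}\bigr)_p^{\wedge}\simeq \lim_{\Delta} P_{2\smallblacksquare}\!\bigl((M\otimes_A B^{\otimes_A\bullet+1})^{h_{C_2}S^1}\bigr)_p^{\wedge}.
\]

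Substituting this into the previous display and commuting the two limits reduces the theorem, at each simplicial level $q$, to the identification
\[
\lim_{\substack{(A,\THR(C))\to(D,M)}} P_{2\smallblacksquare}\!\bigl((M\otimes_A B^{\otimes_A q+1})^{h_{C_2}S^1}\bigr)_p^{\wedge}\simeq \fil_{\sev/B^{\otimes_A q+1},\,h_{C_2}S^1,\,p}^{\smallblacksquare}\!\bigl(\THR(C)\otimes_A B^{\otimes_A q+1}\bigr).
\]
This is a cofinality check of exactly the same form as the one at the end of the proof of Theorem~\ref{thm:descent}: the functor sending $(D,M)$ to $\bigl((D\otimes_A B^{\otimes_A q+1})_p^{\wedge},(M\otimes_A B^{\otimes_A q+1})_p^{\wedge}\bigr)$ is final by Quillen's Theorem~A, since the comma fiber over any $((B^{\otimes_A q+1})_p^{\wedge},\THR(C)\otimes_A B^{\otimes_A q+1})\to(E,N)$ has a terminal object given by the composite $A\to B^{\otimes_A q+1}\to E$ paired with the composite $\THR(C)\to\THR(C)\otimes_A B^{\otimes_A q+1}\to N$. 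The identical argument proves the $\TPR$ statement after replacing $h_{C_2}S^1$ by $t_{C_2}S^1$ throughout.

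The main obstacle is verifying the module sheaf property in the parametrized twisted $S^1$-setting, namely that $P_{2\smallblacksquare}((-)^{h_{C_2}S^1})_p^{\wedge}$ and $P_{2\smallblacksquare}((-)^{t_{C_2}S^1})_p^{\wedge}$ are sheaves for the $p$-completely pure topology on $\Mod_{\sev,p}^{h_{C_2}S^1}$. This should combine the argument of Proposition~\ref{prop:module-sheaves}, which reduces to underlying faithfully flat descent for the constant Green functors $\bigoplus_s\Sigma^{\rho s}H\m{\pi_{2s}^e A}$, with the homotopy and Tate fixed-point computations in Theorem~\ref{thm:sheaf}, both of which are pointwise on Mackey functors. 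Once this module-level sheaf statement is in place, the rest of the proof is formal, and the cofinality step transfers verbatim from the algebra case because base change preserves both the pure cover and the $A$-module structure on $\THR(C)$.
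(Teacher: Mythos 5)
Your proposal is correct and follows the same line of argument as the paper; the paper's own proof is a one-sentence reference to Propositions~\ref{prop:descent-modules} and~\ref{prop:descent-for-modules-again}, and your expansion—unwind the right Kan extension over $\Mod_{\sev,p}^{h_{C_2}S^1}$, apply the sheaf property to each strongly even $(D,M)$, commute limits, and conclude by the cofinality check via Quillen's Theorem~A—is exactly what that reference points to. The one place you over-estimate the remaining work is in calling the module-level sheaf property ``the main obstacle'': the theorem stated immediately before this one already establishes that $P_*^*((-)^{h_{C_2}S^1})_p$ and $P_*^*((-)^{t_{C_2}S^1})_p$ (and their $P_*^*$-filtered variants) are sheaves for the $p$-completely pure topology on $\Mod_{\sev,p}^{h_{C_2}S^1}$, so you may simply cite it rather than rebuild it from Proposition~\ref{prop:module-sheaves} and Theorem~\ref{thm:sheaf}.
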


\begin{proof}
The proof is essentially the same as Propositions \ref{prop:descent-modules} and~\ref{prop:descent-for-modules-again}. 
\end{proof}

\section{Real cyclotomic bases}\label{sec:base}
First, we give a general discussion of $\rho$-cellular $C_2$-$\bE_\infty$-rings; see~Definition~\ref{def: cell decomp}. 
We then discuss forms of $\BPR\langle n\rangle$. 
Next, we produce examples of $C_2$-$\bE_{\infty}$-rings that admit $\rho$-cellular decompositions and use this to produce 
a useful Real cyclotomic base. We then discuss a notion of Real chromatically quasisyntomic $C_2$-$\bE_\infty$-rings and 
sufficient conditions so that the canonical and Frobenius maps lift to filtered maps. 

\subsection{$\rho$-cellular decompositions}\label{sec:base-cellular}
The main objects of study will be $C_2$-$\bE_{\infty}$-rings that admit a nice cellular filtration.

\begin{defin}\label{def: cell decomp}
A $C_2$-$\bE_\infty$-ring $A$ is \emph{$\rho$-cellular} if there exists a sequence of $C_2$-$\bE_\infty$-ring maps 
\[ 
A_i\longrightarrow A_{i+1}\to \dots \longrightarrow A_{\infty}=A
\]
where $A_{i}=\Sym_{C_2} (\bigoplus \bS^{i\rho -1})$ and $A_j$ is defined inductively by a pushout 
\[
    \begin{tikzcd}
       \Sym_{C_2}(\bigoplus \bS^{j\rho -1}) \ar[r]\ar[d] & A_j\ar[d] \\
       \bS \ar[r] & A_{j+1}
    \end{tikzcd}
\]
for $j>i$. Here $\Sym_{C_2}$ denotes the left adjoint to the forgetful functor $\CAlg_{C_2}\to \Sp^{C_2}$. 
\end{defin}

Our primary reason for focusing on $\rho$-cellular $C_2$-$\bE_\infty$-rings is the fact that obstruction theory arguments 
work particularly well, see Theorem~\ref{thm: obstruction thy} below for example. 
First, we recall a universal property of $\THR$. 

\begin{thm}[{\cite[Theorem 4.12]{AKGH25}},~{\cite[\S~5]{QS21a}}]\label{thm: universal property}
Given an object~$A\in \CAlg^{C_2}$ and an object~$B\in\CAlg^{h_{C_2}S^1}$, 
there is a natural isomorphism 
\[ 
\Hom_{h\CAlg^{h_{C_2}S^1}}(\THR(A),B)\cong \Hom_{h\CAlg^{C_2}}(A,B) \,.
\]
\end{thm}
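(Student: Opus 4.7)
The plan is to establish the universal property as an instance of a natural free--forgetful adjunction between tensoring by $C_2$-spaces and the forgetful functor $U : \CAlg^{h_{C_2}S^1} \to \CAlg^{C_2}$. Under the chosen splitting $\OO(2) \cong S^1 \rtimes C_2$ recalled in Section~\ref{sec:trace-background}, a $C_2$-$\bE_\infty$-ring with $C_2$-twisted $S^1$-action is equivalent to an $\bE_\infty$-ring with $\OO(2)$-action whose underlying $C_2$-action, via the splitting, agrees with the prescribed one. Consequently the left adjoint to $U$, if it exists, should be computed by tensoring in $\CAlg^{C_2}$ with the free $C_2$-space with $C_2$-twisted $S^1$-action generated by a single $C_2$-fixed point, namely $\OO(2)/C_2$.

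First I would identify $\OO(2)/C_2$ with $S^\sigma$ as a $C_2$-space: the quotient of $\OO(2)$ by the reflection subgroup $C_2 \subset \OO(2)$ recovers the circle with its residual complex conjugation action, which is $S^\sigma$, while the rotation action of $S^1$ on $\OO(2)$ descends to equip $S^\sigma$ with its canonical $C_2$-twisted $S^1$-action. This upgrade of $S^\sigma$ promotes $\THR(A) = S^\sigma \odot A$ from an object of $\CAlg^{C_2}$ to one of $\CAlg^{h_{C_2}S^1}$, functorially in $A$.

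Next I would invoke the enriched tensor--hom adjunction for $\odot$ to produce
\[
\Map_{\CAlg^{h_{C_2}S^1}}(S^\sigma \odot A, B) \simeq \Map_{\Top^{h_{C_2}S^1}}\bigl(S^\sigma,\, \Map_{\CAlg^{C_2}}(A, UB)\bigr),
\]
where $\Top^{h_{C_2}S^1}$ denotes $C_2$-spaces equipped with $C_2$-twisted $S^1$-action and the mapping space of $C_2$-$\bE_\infty$-algebras on the right-hand side inherits its $C_2$-twisted $S^1$-action from $B$. Under the identification $S^\sigma \simeq \OO(2)/C_2$, the corepresentability of the free orbit shows that this last mapping space simplifies to the $C_2$-fixed-point mapping space, which is precisely $\Map_{\CAlg^{C_2}}(A, UB)$. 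Passing to homotopy categories yields the stated natural bijection.

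The main obstacle will be the rigorous formulation of the enriched adjunction in the parametrized setting: one must verify that $\CAlg^{C_2}$ is tensored and cotensored over $C_2$-spaces with $C_2$-twisted $S^1$-action in a manner compatible with the multiplicative structure, and that $\OO(2)/C_2$ genuinely corepresents the appropriate forgetful functor. In \cite{AKGH21} and \cite{QS21a} these steps are carried out by presenting $\CAlg^{h_{C_2}S^1}$ as a functor $\infty$-category out of a $C_2$-equivariant classifying space for $S^1$ and realizing $\THR$ as a left Kan extension along the inclusion of a point; the identification of this Kan extension with the dihedral bar construction, and hence with $S^\sigma \odot A$, is the technical heart of the argument.
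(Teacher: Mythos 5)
Your proposal is correct and takes essentially the same approach as the cited references~\cite{AKGH21} and~\cite{QS21a}, which the present paper invokes rather than reproving. There, as you describe, $\THR$ is realized as left Kan extension along $\ast \to B_{C_2}S^1$---equivalently as tensoring with the fiber $\OO(2)/C_2\simeq S^\sigma$, regarded as a $C_2$-space with twisted $S^1$-action---and the universal property is precisely the resulting left adjointness to the restriction functor.
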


\begin{exm}
Given a $C_2$-$\bE_\infty$-ring $A$, the identity map~$\id_A : A\to A$ produces a canonical map~$\THR(A)\to A$ 
in $\CAlg^h_{C_2}$ 
\end{exm}

\begin{thm}\label{thm: obstruction thy}
Let $A$ be a $\rho$-cellular $C_2$-$\bE_\infty$-ring. Let $\THR(A)\to R$ be a map in $\CAlg^{h_{C_2}S^1}$ where $R$ is even. 
Then the obstructions to producing an extension
\[
    \begin{tikzcd}
       \THR(A) \ar[d] \ar[r] & R \\
        A \ar[ur,dashed] & 
    \end{tikzcd}
\]
along the canonical map~$\mathrm{THR}(A)\to A$ vanish. 
\end{thm}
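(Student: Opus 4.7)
The plan is to combine the universal property of $\THR$ (Theorem~\ref{thm: universal property}) with a cellular obstruction-theory argument along the $\rho$-cellular filtration $A=\colim_j A_j$. Using the universal property and the $((-)^{\triv},(-)^{hS^1})$-adjunction, producing the desired extension $A\to R$ in $\CAlg^{h_{C_2}S^1}$ (with $A$ bearing the trivial $S^1$-action) is equivalent to lifting the adjoint map $\widetilde{\varphi}\colon A\to R$ in $\CAlg^{C_2}$ along the canonical map $R^{hS^1}\to R$ to some $\psi\colon A\to R^{hS^1}$ in $\CAlg^{C_2}$.

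First I would build $\psi$ inductively along the cellular filtration. Suppose that a lift $\psi_j\colon A_j\to R^{hS^1}$ of $\widetilde{\varphi}|_{A_j}$ has been produced. Since $A_{j+1}$ is the pushout $\bS\sqcup_{\Sym_{C_2}(\bigoplus\bS^{j\rho-1})}A_j$, the obstruction to extending $\psi_j$ to a lift $\psi_{j+1}$ of $\widetilde{\varphi}|_{A_{j+1}}$ is the class represented by the attaching map $\bigoplus\bS^{j\rho-1}\to A_j\to R^{hS^1}$, which lives in a product of copies of $\pi^{C_2}_{j\rho-1}(R^{hS^1})$. The base case $A_i=\Sym_{C_2}(\bigoplus\bS^{i\rho-1})$ is handled identically, and once the primary obstruction vanishes, the specific extension compatible with $\widetilde{\varphi}_{j+1}$ is singled out within the resulting torsor over $\prod\pi^{C_2}_{j\rho}(R^{hS^1})$.

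The key computation is the vanishing $\pi^{C_2}_{j\rho-1}(R^{hS^1})=0$. For this I would use the Amitsur--Dress homotopy fixed point spectral sequence from Section~\ref{sec:trace-Tate},
\[
    \mathrm{E}_2^{V,s}=\pi^{C_2}_{V+s\rho}(R)\{\overline{t}^{\,s}\}\Longrightarrow \pi^{C_2}_V(R^{hS^1}),\qquad |\overline{t}|=-\rho,
\]
arising from the Bredon cohomology of $\mathbb{C}P^{\infty}_{\bR}$. All $\mathrm{E}_2$-entries contributing to $\pi^{C_2}_{j\rho-1}(R^{hS^1})$ take the form $\pi^{C_2}_{(j+s)\rho-1}(R)$ for $s\ge 0$, each of which vanishes by the evenness hypothesis on $R$. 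Hence the obstruction vanishes at every stage of the induction, and assembling the $\psi_j$ along the cellular colimit yields the desired lift.

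The main obstacle is pinpointing the correct obstruction group. Naively, the obstruction to attaching a $(j\rho-1)$-cell to a map of $C_2$-$\bE_\infty$-rings landing in $R$ would lie in $\pi^{C_2}_{j\rho-1}(R)$, which vanishes trivially by the evenness of $R$; but because we are genuinely lifting to $\CAlg^{h_{C_2}S^1}$ rather than merely to $\CAlg^{C_2}$, the true obstruction group is $\pi^{C_2}_{j\rho-1}(R^{hS^1})$, and it is only by tracing the HFPSS above back to the evenness hypothesis that this lifted obstruction can be shown to vanish.
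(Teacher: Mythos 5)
Your approach matches the paper's in substance: both reduce to a cellular obstruction-theory argument via the universal property of $\THR$ and the parametrized homotopy fixed point spectral sequence, using evenness of $R$ to kill the relevant groups. The main presentational difference is that you package each inductive step into a single obstruction group $\pi^{C_2}_{j\rho-1}(R^{h_{C_2}S^1})$, whereas the paper splits the step into first extending the underlying $C_2$-$\bE_\infty$-ring map (obstruction in $\pi^{C_2}_{j\rho-1}R$, zero by evenness) and then lifting into $\CAlg^{h_{C_2}S^1}$ using the collapse of the spectral sequence; this is the same mathematics and rests on the same inputs. One point to tighten: ensuring that the chosen extension $\psi_{j+1}$ actually satisfies $p\circ\psi_{j+1}=\widetilde{\varphi}|_{A_{j+1}}$ (your ``singled out in a torsor'' remark) requires either surjectivity of the edge map $\pi^{C_2}_{j\rho}(R^{h_{C_2}S^1})\to\pi^{C_2}_{j\rho}(R)$ — which holds because the spectral sequence collapses for the Real oriented $R$, as the paper invokes explicitly via~\cite[Lemma~2.3]{HM17} — or, more cleanly, placing the single compatible-lift obstruction in $\pi^{C_2}_{j\rho-1}$ of the fiber of $R^{h_{C_2}S^1}\to R$, whose contributing $\mathrm{E}_2$-entries are $\pi^{C_2}_{(j+s)\rho-1}(R)$ for $s\ge 1$ and again all vanish by evenness.
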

\begin{proof}
As in the proof of~\cite[Theorem 3.1.9]{HRW22}, we prove this by induction on the cells of $A$. 
By Theorem~\ref{thm: universal property}, asking for an extension 
\[
    \begin{tikzcd}
       \THR(\Sym_{C_2}(\bigoplus \bS^{k\rho-1})) \ar[d] \ar[r] & R \\
        \Sym_{C_2}(\bigoplus \bS^{k\rho-1}) \ar[ur,dashed] & 
    \end{tikzcd}
\]
amounts to extending the map of $C_2$-spectra
\begin{align}\label{eq: map from wedge of rho k spheres}
\bigoplus \bS^{k\rho-1}\to R
\end{align}
to map in $\Sp^{h_{C_2}S^1}$.

Since $R$ is Real oriented \cite[Lemma 2.3]{HM17},  the parametrized homotopy fixed point spectral sequence 
\begin{align}\label{eq: Tate spectral sequence for obstruciton theory proof}
  \mrH^{*}_{\AD}(\bC P^{\infty}_{\bR};\pi_{\star}^{C_2}R)\implies \pi_{\star}^{C_2}R^{h_{C_2}S^1}
\end{align}
collapses at the $\EE_2$-page and thus the edge homomorphism $\pi_{\star}^{C_2}R^{h_{C_2}S^1}\to \pi_{\star}^{C_2} R$ is 
surjective.  We conclude that there is a factorization
\[ \bigoplus \bS^{k\rho-1}\to R^{h_{C_2}S^1}\to R\]
of the map \eqref{eq: map from wedge of rho k spheres} and by adjunction this implies that the map of $C_2$-$\bE_\infty$-rings 
adjoint to \eqref{eq: map from wedge of rho k spheres} extends to a map in $\CAlg^{h_{C_2}S^1}$.

Suppose we have produced an extension 
\[
    \begin{tikzcd}
       \THR(A_j) \ar[d] \ar[r] & R \\
      A_j \ar[ur] & 
    \end{tikzcd}
\]
and we want to extend this map further to a commutative diagram
\begin{equation}\label{eq: diagram for obstruction to extending Aj to Aj+1}
    \begin{tikzcd}
        \THR(A_{j}) \ar[d]  \ar[r] & \THR(A_{j+1})\ar[d] \ar[r] & R \\
      A_{j} \ar[r] & A_{j+1} \ar[ur] & 
    \end{tikzcd}
\end{equation}
Since $\THR$ sends pushouts of $C_2$-$\bE_\infty$-rings to pushouts of $C_2$-$\bE_\infty$-rings and we can apply the 
natural transformation $\THR \Rightarrow \id$ from Theorem~\ref{thm: universal property} to the pushout defining $A_{j+1}$. 
We observe that the obstructions to extending the map
\[ 
\begin{tikzcd} A_{j} \ar[r] \ar[d] & R \\  A_{j+1} \arrow[dashed]{ur} & \end{tikzcd} 
\]
of $C_2$-$\bE_\infty$-rings along the dashed arrow lie in $\pi_{j\rho -1}^{C_2}R$, which is zero. 
To ensure that the extension $A_{j+1}\to R$ is compatible with the 
diagram~\eqref{eq: diagram for obstruction to extending Aj to Aj+1} we observe that the obstructions of this map to extend to 
a map in $\CAlg^{h_{C_2}S^1}$ are trivial, again because the spectral 
sequence~\eqref{eq: Tate spectral sequence for obstruciton theory proof} collapses. 
\end{proof}

\subsection{Forms of $\BPR\langle n\rangle$}\label{sec:forms}
Following~\cite{HM17}, we define forms of $\mathrm{BP}_\mathbb{R}\langle n\rangle$ as follows:

\begin{defin}
An even Real oriented $p$-local homotopy commutative and associative $C_2$-ring spectrum $R$ is \emph{a form of $\BPR\langle n \rangle$ at the prime $p$} if 
the map
\[ 
\mZ_{(p)}[\bar{v}_1,\cdots ,\bar{v}_n]\subset  \upi_{*\rho }\BPR \subset \upi_{*\rho }(\MUR)_{(p)} \to \upi_{*\rho}R 
\]
induced by the Real orientation of $R$ is an isomorphism of constant Mackey functors.
When $R$ is additionally a $C_2$-$\bE_{\infty}$-algebra, then we say that $R$ is a \emph{$C_2$-$\bE_{\infty}$ form of $\BPR\langle n\rangle$}. This does not depend on a choice of classes $\bar{v}_1,\cdots ,\bar{v}_n$.  
\end{defin}

\begin{exm}\label{Ex:BPRn}
When $n=-1$, $H\mF_p$ is a $C_2$-$\bE_{\infty}$-form of $\BPR\langle -1\rangle$. When $n=0$, $H\mZ_p$ is a $C_2$-$\bE_{\infty}$-form of $\BPR\langle 0\rangle$. When $n=1$, $\ku_{\mathbb{R}}$ is a $C_2$-$\bE_{\infty}$-form of $\BPR\langle 1\rangle$ at the prime $2$. When $n=2$, $\tmf_1(3)$ is a $C_2$-$\bE_{\infty}$-form of $\BPR\langle 2\rangle$ at the prime $p=2$ by~\cite{HM17}. 
\end{exm}

\begin{remark}
Although there exist $C_2$-$\bE_\infty$-forms of $\BPR\langle n\rangle$ for $-1\le n\le 2$ at the prime $2$, it is not known whether the orientations $\MUR\to \tmf_1(3)$ and $\MUR\to \kr$ are maps of $C_2$-$\bE_{\infty}$-rings, or even $\mathbb{E}_{\rho}$-rings. 
There is work in progress of Ryan Quinn which should shed light on this question. To the authors' knowledge, it is currently not known whether $\BPR\langle n\rangle$ admits a homotopy commutative ring structure for $n>2$~\cite{KLW17}, but it has been predicted that one can produce $\bE_{2\sigma +1}$-$\MUR$-algebra forms of $\BPR\langle n\rangle$~\cite[Remark~1.0.14]{HW22}. 
\end{remark}

\subsection{Examples of Real cyclotomic bases}\label{sec:realcyc}
Recall that we defined Real $p$-cyclotomic bases in Definition~\ref{def:cyc-base}. We now use the results of the previous section to produce Real cyclotomic bases.

\begin{construction}
Let $h\colon \thinspace \MUR \to \kr$ denote the Conner--Floyd real orientation of $\kr$~\cite{HS20}. By Real Bott periodicity~\cite{Ati66}, there is an equivalence $\Omega^{\infty \rho}\Sigma^{\rho}\kr\simeq BU_{\bR}$. There is therefore an equivariant infinite loop map
\[ 
\Omega^{\infty\rho}\Sigma^{\rho}h \colon \thinspace \Omega^{\infty \rho} \Sigma^{\rho}\MUR \longrightarrow \Omega^{\infty \rho}\Sigma^{\rho}\kr \simeq BU_{\bR}\longrightarrow \m{\Pic}(\Sp^{C_2}) \,,
\]
where the last map in the composite is the one discussed in~\cite[Remark~13]{HL18}.  Viewing this as a map of $C_2$-$\infty$-categories, we define $\MWR$ to be the $C_2$-colimit of the composite $C_2$-functor 
\[ 
\Omega^{\infty\rho}\Sigma^{\rho}h \colon \thinspace \Omega^{\infty \rho} \Sigma^{\rho}\MUR\longrightarrow \BUR\longrightarrow \m{\Pic}(\Sp^{C_2}) \to \underline{\CAlg},
\]
which is a model for the Thom spectrum by~\cite[Theorem~5.0.2]{HHKWZ20}. We refer the reader to~\cite[\S~5]{HHKWZ20} for further details on this construction (see also~\cite[Construction 4.9]{HW20}). 
\end{construction} 

\begin{prop}\label{prop:MW}
There is an isomorphism of $\pi_{\star}^{C_2}\MUR$-algebras 
\[ 
\pi_{\star}^{C_2}\MWR \cong \pi_{\star}^{C_2}\MUR[\ol{c}_j : j \in J ] \]
where $|\ol{c}_j|=k_j\rho$ for some integer $k_j$ for each $j\in J$, where $J$ ranges over polynomial generators of $\MW_*$ as an $\MU_*$-algebra. 
\end{prop}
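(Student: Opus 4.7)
The plan is to use the Thom isomorphism for $\MUR$ (via its tautological Real orientation) to compute $\pi_{\star}^{C_2}(\MUR \otimes \MWR)$ as a free polynomial algebra, and then descend to $\pi_{\star}^{C_2} \MWR$ using the purity of $\bS \to \MUR$ from Proposition~\ref{prop:seff-map-MUR}.

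First, I would observe that, since $\MUR$ is tautologically Real-oriented, the composite of
\[
\Omega^{\infty\rho}\Sigma^\rho\MUR \longrightarrow \BUR \longrightarrow \m{\Pic}(\Sp^{C_2})
\]
with the canonical map into the Picard space of $\MUR$-modules admits a null-homotopy arising from the Real Thom isomorphism: every Real vector bundle of virtual rank $k$ becomes equivalent to $\Sigma^{k\rho}\MUR$ after smashing with $\MUR$, exactly as used in the proof of Proposition~\ref{prop:seff-map-MUR}. This yields an equivalence of $\bE_\infty$-$\MUR$-algebras
\[
\MUR \otimes \MWR \simeq \MUR \otimes \Sigma^{\infty}_+ \Omega^{\infty\rho}\Sigma^\rho\MUR.
\]

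Next, I would compute $\pi_{\star}^{C_2}$ of the right-hand side. Because $\MUR$ is strongly even and connective, its equivariant infinite loop space $\Omega^{\infty\rho}\Sigma^\rho\MUR$ should admit a $\rho$-cellular decomposition whose cells, upon restriction to underlying, match the classical polynomial generators $c_j$ of $MU_*(\Omega^\infty\Sigma^2 MU) \cong MU_*[c_j : j \in J]$, with $|c_j|=2k_j$. A parametrized Atiyah--Hirzebruch argument computing the $\MUR$-homology of $\Omega^{\infty\rho}\Sigma^\rho\MUR$ then collapses on account of this concentration in $\rho$-degrees together with the strong evenness of $\MUR$, yielding
\[
\pi_{\star}^{C_2}\bigl(\MUR \otimes \Sigma^{\infty}_+ \Omega^{\infty\rho}\Sigma^\rho\MUR\bigr) \cong \pi_{\star}^{C_2}\MUR[\ol{c}_j : j \in J]
\]
with $|\ol{c}_j| = k_j\rho$.

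Finally, I would descend to $\MWR$ itself. The unit map $\MWR \to \MUR \otimes \MWR$ is detected on $\pi_{\star}^{C_2}$ via purity (Proposition~\ref{prop:seff-map-MUR}): the Amitsur resolution by $\MUR^{\otimes \bullet+1}$ converges, and the $\rho$-cells of the base space $\Omega^{\infty\rho}\Sigma^\rho\MUR$ lift to $\rho$-cells of the total Thom spectrum $\MWR$. Hence each polynomial generator $\ol{c}_j$ lifts to a class in $\pi_{k_j\rho}^{C_2}\MWR$, and these classes generate $\pi_{\star}^{C_2}\MWR$ as a $\pi_{\star}^{C_2}\MUR$-algebra, giving the desired isomorphism.

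The main obstacle is the middle step: verifying that the $\MUR$-homology of $\Omega^{\infty\rho}\Sigma^\rho\MUR$ really is a free polynomial algebra on generators in $\rho$-degrees. This requires a careful equivariant refinement of the classical computation of $MU$-homology for infinite loop spaces of complex-oriented spectra. The comparison to $\BUR$ via the map $\Omega^{\infty\rho}\Sigma^\rho\MUR \to \Omega^{\infty\rho}\Sigma^\rho\kr \simeq \BUR$ together with Real Bott periodicity should provide the necessary cell-level control and identification of the degrees $k_j$.
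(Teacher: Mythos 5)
Your first two steps line up with the paper's strategy: both start from the Thom isomorphism $\MUR \otimes \MWR \simeq \MUR \otimes \Sigma^\infty_+\Omega^{\infty}\Sigma^\rho\MUR$ and then compute $\pi_\star^{C_2}(\MUR\otimes\MWR)$ by an Atiyah--Hirzebruch--type argument. The paper sets this up as the slice filtration of $\MUR$ smashed with $\MWR$, and the key equivariant input is a citation to Hill--Hopkins (Theorem~2 of the cited HH18) that $H\mZ\otimes\Omega^{\infty}\Sigma^\rho\MUR$ is strongly even, together with Wilson's classical computation of the underlying. You instead assert an explicit $\rho$-cellular decomposition of $\Omega^{\infty}\Sigma^\rho\MUR$ with generators matching the classical $c_j$, which you correctly flag as the unproven step; the strong-evenness result from the literature is precisely what closes that gap, and you should use it rather than trying to exhibit cells by hand.

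The real divergence, and a genuine gap, is your final step. Purity of $\bS\to\MUR$ (Proposition~\ref{prop:seff-map-MUR}) feeds into descent for the \emph{strongly even filtration} via Theorem~\ref{thm:descent}; it does not by itself give descent for $\pi_\star^{C_2}$. Running an Amitsur/Adams--Novikov tower to recover $\pi_\star^{C_2}\MWR$ from $\pi_\star^{C_2}(\MUR^{\otimes\bullet+1}\otimes\MWR)$ would require (i) that $\MWR$ be $\MUR$-nilpotent complete in $\Sp^{C_2}$, and (ii) a collapse argument for the resulting descent spectral sequence, neither of which you supply. Moreover, for the statement ``isomorphism of $\pi_\star^{C_2}\MUR$-algebras'' even to make sense, one needs a ring map $\MUR\to\MWR$, and your argument never produces one. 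The paper avoids descent entirely: from the computed $E_2$-page (concentrated in $\rho$-multiple degrees with the gap $\pi_{\rho k - i}=0$, $i=1,2,3$) and strong convergence, it deduces that $\MWR$ is \emph{strongly even}; strong evenness automatically supplies a Real orientation $\MUR \to \MWR$ (via Lemma~2.3 of Hill--Meier), and a comparison of slice spectral sequences along this map then shows the AHSS collapses, identifying $\pi_\star^{C_2}\MWR$ as a free $\pi_\star^{C_2}\MUR$-algebra with no room for extensions. You should replace your Amitsur-descent step with this ``strong evenness $\Rightarrow$ Real orientation $\Rightarrow$ collapsing slice spectral sequence'' argument; the strong evenness of $\MWR$ itself is also needed downstream in the paper, so proving it en route is not optional.
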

\begin{proof}
Note that $\MUR_{\star}^{C_2}\MWR=\MUR_{\star}^{C_2}\Omega^{\infty}\Sigma^{\rho}\MUR$ by the Thom isomorphism. We can apply the Atiyah--Hirzebruch type spectral sequence for the filtered $C_2$-spectrum $(P_{*}\MUR)\otimes \MWR$ with $\EE_2$-page 
\[ 
\pi_{\star}^{C_2}(P_{*}^*\MUR)\otimes \MWR\cong \pi_{\star}^{C_2}H\mZ\otimes \Omega^{\infty}\Sigma^{\rho}\MUR[\ol{a}_i :i\ge 1]
\]
where $|\ol{a}_i|=i\rho $. 
 We then apply \cite[Theorem~2]{HH18} to deduce that $H\mZ\otimes \Omega^{\infty}\Sigma^{\rho}\MUR$ is strongly even and use the computation of the underlying from \cite[Theorem~3.3, Corollary~3.4]{Wil73} to deduce that 
\[
\pi_{\star}^{C_2}H\mZ\otimes \Omega^{\infty}\Sigma^{\rho}\MUR\cong \pi_{\star}^{C_2}H\mZ[\ol{c}_j : j\in J]
\]
where $|\ol{c}_j|=k_j\rho$ for some integer $k_j$. 
The $\EE_2$-page of the spectral sequence is therefore $\pi_{\star}^{C_2}H\mZ[c_j : j\in J][a_i : i\ge 1]$. From this, we can deduce that $\MWR$ is strongly even since $\pi_{\rho k-i}^{C_2}\MWR=0$ for $i=1,2,3$ is clear already at the $\EE_2$-page and the spectral sequence strongly converges. Consequently, it receives a homotopy commutative ring map $\MUR\to \MWR$ and we can consider the map of slice spectral sequences 
\[ \pi_{\star}^{C_2}P_*^*\MUR \to \pi_{\star}^{C_2}P_*^*\MUR\otimes \MWR\]
to determine that the classes $\ol{a}_i$ are infinite cycles and consequently, the target Atiyah--Hirzebruch type spectral sequence collapses at the $\EE_2$-page. We can also use this map to determine the $\MUR_{\star}$-module structure on $\MWR_{\star}$ is free as desired and there is no room for $\pi_{\star}^{C_2}\MUR$-algebra extensions, leading to the desired answer as a $\pi_{\star}^{C_2}\MUR$-algebra. 
\end{proof}


\begin{thm}\label{thm: examples of Real cyclotomic bases}
The $C_2$-$\bE_\infty$-ring $\MW_{\bR}$ satisfies the following:
\begin{enumerate}
\item $\MW_{\bR}$ is equipped with a $\rho$-cellular decomposition with exactly one $0$-cell.
\item The unit map $\bS\longrightarrow \MWR$ is strongly evenly pure. 
\item $\MW_{\bR}$ is a Real cyclotomic base. 
\item There are maps of $C_2$-$\bE_{\infty}$-ring spectra 
\[
\MWR\longrightarrow \tmf_1(3) \longrightarrow \kr \longrightarrow H\mZ_2\longrightarrow H\mF_2\,. 
\]
\end{enumerate}
\end{thm}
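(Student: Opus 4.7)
The plan is to establish the four parts in sequence, using Proposition~\ref{prop:MW} as the main computational input and Theorem~\ref{thm: obstruction thy} as the main structural input.

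For (1), I construct the $\rho$-cellular tower $A_0\to A_1\to\cdots\to \MWR$ inductively, starting with the unique $0$-cell $A_0=\bS$. Given $A_j$ strongly even together with a $C_2$-$\bE_\infty$-ring map $A_j\to\MWR$ that is an isomorphism on $\pi_\star^{C_2}$ in degrees $<j\rho$, I use Proposition~\ref{prop:MW} to choose a set of polynomial generators of $\pi_{j\rho}^{C_2}\MWR$ not already in the image from $A_j$. Since $\pi_{j\rho-1}^{C_2}A_j=0$ by strong evenness, the corresponding attaching map $\bigoplus\bS^{j\rho-1}\to A_j$ is nullhomotopic, so the pushout in Definition~\ref{def: cell decomp} produces $A_{j+1}$ from $A_j$ by freely adjoining polynomial generators in degree $j\rho$. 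Strong evenness of $A_{j+1}$ is immediate, and the colimit is identified with $\MWR$ by comparing homotopy via Proposition~\ref{prop:MW}.

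For (2), given any non-zero strongly even $C_2$-$\bE_\infty$-ring $C$ under $\bS$, an Atiyah-Hirzebruch-type slice spectral sequence for $\MWR\otimes C$ modeled on the proof of Proposition~\ref{prop:MW}, combined with the fact that $C$ is Real oriented by \cite[Lemma~2.3]{HM17}, identifies $\pi_\star^{C_2}(\MWR\otimes C)\cong \pi_\star^{C_2}C[\ol{c}_j:j\in J]$ with $|\ol{c}_j|=k_j\rho$. Choosing a $\pi_\star^{C_2}C$-basis produces a map from a wedge of $\rho$-divisible suspensions of $C$ into $\MWR\otimes C$ that is an isomorphism on $\pi_\star^{C_2}$, hence an equivalence. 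For (4), the map $\MWR\to\tmf_1(3)$ follows from the Thom-spectrum universal property of $\MWR$: since $\tmf_1(3)$ is a $C_2$-$\bE_\infty$ Real oriented form of $\BPR\langle 2\rangle$ by \cite{HM17}, the composite $\Omega^{\infty\rho}\Sigma^\rho\MUR\to\m{\Pic}(\Sp^{C_2})\to\m{\Pic}(\Mod_{\tmf_1(3)})$ is trivialized by the Real orientation, producing the desired $C_2$-$\bE_\infty$-map. The map $\tmf_1(3)\to\kr$ is the $C_2$-$\bE_\infty$-quotient by $\ol{v}_2$, the map $\kr\to H\mZ_2$ is the $2$-completion of the $0$-Postnikov section, and $H\mZ_2\to H\mF_2$ is the standard mod-$2$ reduction.

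For (3), I apply Theorem~\ref{thm: obstruction thy} with $A=\MWR$ (which is $\rho$-cellular by (1)) and $R=\MWR^{t_{C_2}\mu_2}$ to the $\CAlg^{h_{C_2}S^1}$-map
\[
\THR(\MWR)\xrightarrow{\varphi_2} \THR(\MWR)^{t_{C_2}\mu_2}\longrightarrow \MWR^{t_{C_2}\mu_2}
\]
obtained by composing the cyclotomic Frobenius with the augmentation $\THR(\MWR)\to\MWR$; the resulting dashed extension is precisely the datum required by Definition~\ref{def:cyc-base}. The principal obstacle is verifying that $\MWR^{t_{C_2}\mu_2}$ is even. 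I plan to establish this via the parametrized Tate spectral sequence for the trivial $\mu_2$-action on $\MWR$: its $E_2$-page takes the form $\pi_\star^{C_2}\MWR[\ol{t}^{\pm 1}]$ with $|\ol{t}|=-\rho$, so by Proposition~\ref{prop:MW} it is concentrated in $\rho$-multiple degrees, forcing collapse and yielding strong evenness of the abutment.
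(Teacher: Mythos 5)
Your proposal makes substantive errors in parts (1) and (4); parts (2) and (3) are essentially correct, though (2) is more laborious than the paper's base-change argument.

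\textbf{Part (1).} Your inductive construction assumes $A_j$ is strongly even at every stage, beginning with $A_0 = \bS$, and uses $\pi^{C_2}_{j\rho-1}A_j = 0$ to conclude that attaching maps are nullhomotopic. This fails at the base case: $\bS$ is not strongly even, since for example $\eta_{C_2} \in \pi^{C_2}_\sigma \bS = \pi^{C_2}_{\rho-1}\bS$ is nonzero, and freely adjoining $\rho$-multiple cells to $\bS$ does not kill this class. The intermediate stages $A_j$ will not be strongly even, so your inductive hypothesis never gets off the ground. The paper instead observes that $\MWR$ inherits a $\rho$-cellular decomposition directly from the Thom spectrum construction: $\MUR$ has a $\rho$-cellular structure as a spectrum, hence so does the $C_2$-space $\Omega^\infty\Sigma^\rho\MUR$, and the Thom spectrum over a $\rho$-cellular base is $\rho$-cellularly built from free $C_2$-$\bE_\infty$-cells. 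This sidesteps the need to control $\pi^{C_2}_{j\rho-1}$ of intermediate stages entirely.

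\textbf{Part (4).} Your construction of $\MWR \to \tmf_1(3)$ via the Thom-spectrum universal property requires the Real orientation $\MUR \to \tmf_1(3)$ to trivialize the Picard-valued functor coherently, i.e., in a $C_2$-$\bE_\infty$ (or at least $\bE_\rho$) manner. The paper explicitly records in Section~\ref{sec:forms} that this is not known: ``it is not known whether the orientations $\MUR\to\tmf_1(3)$ and $\MUR\to\kr$ are maps of $C_2$-$\bE_\infty$-rings, or even $\bE_\rho$-rings.'' The paper avoids this obstacle by running obstruction theory against the $\rho$-cellular decomposition of $\MWR$ from (1): the obstructions to extending $\bS \to \tmf_1(3)$ over each cell lie in $\pi^{C_2}_{\rho k - 1}\tmf_1(3)$, which vanish because $\tmf_1(3)$ is strongly even by Hill--Meier. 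This is exactly why part (1) is needed. Your description of $\tmf_1(3) \to \kr$ as ``the $C_2$-$\bE_\infty$-quotient by $\ol{v}_2$'' is also not well-formed: quotienting a commutative ring spectrum by a regular element is not automatically $\bE_\infty$, and the $C_2$-$\bE_\infty$-structure on this map comes from the modular-forms/level-structure geometry in \cite{HM17}, not from a quotient construction.

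\textbf{Parts (2) and (3).} For (2), your Atiyah--Hirzebruch-style computation of $\pi^{C_2}_\star(\MWR\otimes C)$ is valid but heavier than the paper's argument, which simply writes $C\otimes\MWR \simeq \bigvee \Sigma^{\rho k_j}(C\otimes\MUR)$ using that $\MWR$ is a free $\MUR$-module on $\rho$-multiple suspensions, and then appeals to Proposition~\ref{prop:seff-map-MUR} to conclude each $C\otimes\MUR$ splits as $\rho$-multiple suspensions of $C$. For (3), your argument is essentially the paper's one-line appeal to Theorem~\ref{thm: obstruction thy}; the verification that $\MWR^{t_{C_2}\mu_2}$ is even via the parametrized Tate spectral sequence is the right supporting detail and is a useful expansion of what the paper leaves implicit.
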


\begin{proof}
There is a $\rho$-cellular decomposition of $\MUR$ as a spectrum and $\MWR$ arises as the Thom construction after applying $\Omega^{\infty}\Sigma^{\rho}$. This produces a $C_2$-$\bE_\infty$-$\rho$-cellular decomposition of $\MWR$. The fact that the unit map $\bS \longrightarrow \MWR$ is strongly evenly pure follows. 
To see this, note that any even $C_2$-$\bE_{\infty}$-ring $C$ is real oriented and consequently 
\[ C\otimes \MWR\simeq C \otimes \MUR\otimes_{\MUR}\MWR \,.
\]
Since $\MWR$ is a wedge of $\rho$ multiple suspensions of $\MUR$, we know that $C\otimes \MWR$ is a wedge of $\rho$ multiple suspensions of $C$ as desired. 

The fact that $\MW_{\bR}$ is a Real cyclotomic base follows from Theorem~\ref{thm: obstruction thy}. There are $C_2$-$\bE_{\infty}$-ring maps
\[ 
\tmf_1(3)\longrightarrow \kr \longrightarrow H\mZ_2 \longrightarrow H\mF_2 \,.
\]
We start with the unit map
\[ \bS \to \tmf_1(3)\]
which is a $C_2$-$\bE_{\infty}$-ring map. The obstructions to extending the unit map to a map 
\[
\MWR\longrightarrow \tmf_1(3)
\]
of $C_2$-$\bE_{\infty}$-rings lie in degrees $\rho k-1$ for some $k$ and therefore vanish. 
\end{proof}

\subsection{Real chromatically quasisyntomic $C_2$-$\bE_\infty$-rings}\label{sec:base-quasisyntomic}
In this section, we introduce the notion of a Real chromatically quasisyntomic $C_2$-$\bE_\infty$-rings. 
\begin{defin}\label{def:real-quasisyntomic}
We say that a $C_2$-$\bE_\infty$-ring $A$ is \emph{Real chromatically quasisyntomic} if 
\begin{enumerate}
\item the spectrum $\MUR\otimes A$ is strongly even, 
\item  the commutative graded ring $\MU_{2*}A^e$ has bounded $p$-power torsion, and  
\item the algebraic cotangent complex $L^{\textup{alg}}_{\MU_{2*}A^e/\bZ}$ is concentrated in (homological) degrees $[0,1]$. 
\end{enumerate}
\end{defin}
\begin{remark}
A $C_2$-$\bE_\infty$-ring $A$ is Real chromatically quasisyntomic if and only if $\MUR \otimes A$ is strongly even and the underlying $\bE_\infty$-ring $A^e$ is chromatically quasisyntomic in the sense of \cite[Definition~1.3.1]{HRW22}. Consequently, if $A$ is a Real chromatically quasisyntomic $C_2$-$\bE_\infty$ ring, then $A^e$ is a chromatically quasisyntomic $\bE_{\infty}$-ring. 
\end{remark}

\begin{exm}
If $R$ is a quasisyntomic discrete ring, then the Eilenberg--MacLane spectrum $\m{R}$ of the associated $C_2$-Tambara functor is a Real chromatically quasisyntomic $C_2$-$\bE_\infty$-ring. 

For $-1\le n\le 2$, the $C_2$-$\bE_\infty$-rings $\BPR\langle n\rangle$ are Real chromatically quasisyntomic. 
\end{exm}

\section{Comparing filtrations}\label{sec:comparison}

In this section, we compare the motivic filtration on $\THR$ to the generalization of the Bhatt--Morrow--Scholze 
filtration defined by Park in \cite{Par23}, which we call the BMSP-filtration, and the HKR filtration from Hornbostel--Park \cite{HP23} and Yang~\cite{Yan25} 

Recall from \cite[Def. 4.10]{BMS19} that a ring $R$ is $p$-quasisyntomic if it is $p$-complete with bounded $p^\infty$-torsion 
and the cotangent complex $L_{R/\bZ_p} \in D(R)$ has $p$-complete Tor-amplitude in $[0,1]$. 

\begin{defin}
For a $p$-quasisyntomic commutative ring $R$, we write  
\[
\fil_{\BMSP}^{*}\THR(H\m{R};\bZ_p)\,, \fil_{\BMSP}^{*}\TCR^{-}(H\m{R};\bZ_p)\,, 
\fil_{\BMSP}^{*}\TPR(H\m{R};\bZ_p)\,, \fil_{\BMSP}^{*}\TCR(H\m{R};\bZ_p)
\] 
for the filtrations defined in~\cite[Theorem~10.1]{Par23}. 
\end{defin}

Let $\mbN$ denote the constant $C_{2}$-semi-Mackey functor on the natural numbers $\bN$, and for each $s \in \bN$, 
let  $\mbN[1/s^{n}]$ denote the constant $C_{2}$-semi-Mackey functor on $\bN[1/s^{n}]$. We regard both $\mbN$ and $\mbN[1/s^n]$ 
as $C_2$-$\bE_\infty$-algebras in $C_2$-spaces by equipping them with the discrete topology. 
Let 
\[
\bS[\bar{z}] := \bS[\mbN]\,, \quad \bS[\bar{z}^{1/s^n}] := \bS[\mbN[1/s^n]]\,, \quad \bS[\bar{z}^{1/s^\infty}] := \colim_n \bS[\bar{z}^{1/s^n}]
\]
be the associated $C_2$-$\bE_\infty$-rings, with underlying $C_2$-spectra 
\[
\Sigma^{\infty}_+\mbN \,,\quad \Sigma^{\infty}_+\mbN[1/s^n] \,, \quad  \colim_n \Sigma^{\infty}_+\mbN[1/s^n] \,,
\]
respectively.

\begin{lem}\label{lem:another-seff-map}
Let $X$ be a set. Then $\bigotimes_{X}\bS[z]\longrightarrow \bigotimes_{X}\bS[z^{1/s^{\infty}}]$ is seff. 
\end{lem}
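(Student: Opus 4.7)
The plan is to exhibit $\bigotimes_{X}\bS[\bar{z}^{1/s^{\infty}}]$ as a wedge of copies of $\bigotimes_{X}\bS[\bar{z}]$ whose underlying $C_{2}$-spectra are concentrated in degree zero. This would immediately yield the seff property: base changing along any map to a strongly even $C\in\CAlg^{C_{2}}_{\sev}$ produces a wedge of degree-zero copies of $C$, which is strongly even (the vanishing $\pi^{C_{2}}_{k\rho-i}E=0$ for $i=1,2,3$ in Lemma~\ref{lem:criteria} is preserved by wedges, as is the restriction isomorphism $\pi^{C_{2}}_{k\rho}E\cong \pi^{e}_{2k}E$) and faithfully flat over $C$, since the even-graded underlying homotopy of a wedge of copies of $C$ is a free $\bigoplus_{n}\pi^{e}_{2n}C$-module on a nonempty basis.

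The first step is the single-variable case. Under translation, the additive monoid $\bN[1/s^{\infty}]$ is a free $\bN$-set with basis
\[
J\ :=\ \{0\}\cup\{a/s^{n}:n\geq 1,\ 0\leq a<s^{n},\ \gcd(a,s)=1\},
\]
so that $\bN[1/s^{\infty}]\cong\coprod_{j\in J}(\bN+j)$ as $\bN$-sets. Because $\mbN$ and $\mbN[1/s^{\infty}]$ are constant $C_{2}$-semi-Mackey functors, this decomposition lifts to one in constant $\mbN$-modules. Applying the symmetric monoidal left adjoint $\bS[-]=\Sigma^{\infty}_{+}$, which converts coproducts of (discrete) $C_{2}$-spaces into wedges, produces an equivalence of $\bS[\bar{z}]$-modules
\[
\bS[\bar{z}^{1/s^{\infty}}]\ \simeq\ \bigvee_{j\in J}\bS[\bar{z}]
\]
whose summands are in underlying degree zero.

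To pass to the general case, I would write the infinite tensor product as a filtered colimit over finite subsets $F\subseteq X$ and use that finite tensor products of $\bS[\bar{z}]$-modules distribute over wedges, obtaining
\[
\bigotimes_{X}\bS[\bar{z}^{1/s^{\infty}}]\ \simeq\ \bigvee_{f\in J^{(X)}}\ \bigotimes_{X}\bS[\bar{z}]
\]
as $\bigotimes_{X}\bS[\bar{z}]$-modules, where $J^{(X)}$ denotes functions $X\to J$ vanishing outside a finite set. Combined with the first paragraph, this completes the proof. The step requiring the most care is verifying that the transition maps in the filtered colimit correspond to the extension-by-$0$ maps $J^{F}\hookrightarrow J^{F'}$ for $F\subseteq F'$, which boils down to checking that the unit map $\bS[\bar{z}]\to\bS[\bar{z}^{1/s^{\infty}}]$ is the summand inclusion at the basepoint $0\in J$ in the decomposition of the single-variable case.
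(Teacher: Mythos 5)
Your approach is correct and, in fact, more explicit than the paper's. The paper's argument first asserts that $\bS[\mbN]\to\bS[\mbN[1/s^{n}]]$ is ``clearly'' seff, passes to the filtered colimit to get $\bS[\mbN]\to\bS[\mbN[1/s^{\infty}]]$, and then obtains the general case via base change and another filtered colimit over finite subsets of $X$. That route implicitly uses closure of the class of seff maps under filtered colimits and under tensoring, neither of which is spelled out. You instead unwind the reason those ``clear'' claims hold --- the free $\bN$-set decomposition $\bN[1/s^{\infty}]\cong\coprod_{j\in J}(j+\bN)$ --- and carry it all the way through, landing directly on an identification of $\bigotimes_{X}\bS[\bar{z}^{1/s^{\infty}}]$ as a nonempty wedge of unsuspended copies of $\bigotimes_{X}\bS[\bar{z}]$ as a module. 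Since after base change to strongly even $C$ this is a free $C$-module on a nonempty basis, you in fact prove the stronger statement that the map is \emph{pure} in the sense of Definition~\ref{def:seff}, which immediately implies seff.

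One small correction: your description of the basis $J$ is only right when $s$ is prime. The minimal representative of a fractional element $a/s^{n}$ (with $0\le a<s^{n}$) is characterized by $s\nmid a$, not by $\gcd(a,s)=1$. For example, with $s=4$, the element $1/2=2/4\in\bN[1/4^{\infty}]$ has minimal representative $2/4$ with $4\nmid 2$, but $\gcd(2,4)=2$, so it would be wrongly excluded from your $J$. Replace the condition with $s\nmid a$ and the argument is correct in full generality. (Everything else --- the lift of the $\bN$-set decomposition through $\mbN$ and $\Sigma^{\infty}_{+}$, the distributivity of finite tensor products over wedges, the identification of transition maps with the summand inclusion at $0\in J$ via the unit map, and the resulting indexing by finitely supported functions $X\to J$ --- checks out.)
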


\begin{proof}
It is clear that the maps $\mathbb{S}\to \bS[\mbN]$, $\mathbb{S}\to \bS[\mbN[1/s^{n}]]$, and $\bS[\mbN]\to \bS[\mbN[1/s^{n}]]$ are seff and consequently the map $\bS[\mbN]\to \bS[\mbN[1/s^{\infty}]]$ is also {seff}. By base change, this implies the case when $X$ is finite. The general claim follows by writing $X$ as a filtered colimit of finite sets. 
\end{proof}

\begin{lem}\label{lem:seff-map}
Let $k\to S$ be a map of strongly even $C_2$-$\bE_{\infty}$ rings such that $\pi_{\rho *}^{C_2}S$ is polynomial over $\pi_{\rho *}^{C_2}k$ on generators in degrees divisible by $\rho$. Then the canonical map 
\[\THR(S/k)\to S\]
is seff. 
\end{lem}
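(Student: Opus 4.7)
The plan is to reduce to a single-generator universal case over the sphere and compute the relevant pushout directly. First, since $k$ and $S$ are strongly even and the hypothesis identifies $\pi_{\rho*}^{C_2}S$ with the polynomial ring $\pi_{\rho*}^{C_2}k[\bar x_i:i\in I]$ where $|\bar x_i|=n_i\rho$, I would lift each generator to a class in $\pi_{n_i\rho}^{C_2}S$ and assemble an equivalence $S\simeq k\otimes_{\bS}T$ in $\CAlg^{C_2}$, where $T:=\bigotimes_{i\in I}\bS[\bar x_i]$ with $\bS[\bar x_i]:=\Sym_{C_2}(\Sigma^{n_i\rho}\bS)$; this map is an equivalence because it induces an isomorphism on $\pi_{\rho*}^{C_2}$ between strongly even $C_2$-spectra. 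The base-change identity $\THR(k\otimes_{\bS}T/k)\simeq k\otimes\THR(T)$ rewrites $C\otimes_{\THR(S/k)}S\simeq C\otimes_{\THR(T)}T$, reducing to the case $k=\bS$. Writing $T=\colim_{J}T_J$ as a filtered colimit over finite subsets $J\subset I$ of $T_J:=\bigotimes_{i\in J}\bS[\bar x_i]$ and using that $\THR$ and the relative tensor product commute with filtered colimits reduces further to finite $I$; the coproduct formula $\THR(T_J)\simeq \bigotimes_{i\in J}\THR(\bS[\bar x_i])$ then decomposes $C\otimes_{\THR(T_J)}T_J\simeq \bigotimes^{C}_{i\in J}\bigl(C\otimes_{\THR(\bS[\bar x_i])}\bS[\bar x_i]\bigr)$, reducing to the single-generator case.

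For a single generator $\bar x$ of degree $n\rho$, the identity $X\odot \Sym_{C_2}(M)\simeq \Sym_{C_2}(\Sigma^{\infty}_+X\wedge M)$ together with the splitting $\Sigma^{\infty}_+S^\sigma\simeq \bS\oplus \Sigma^\sigma\bS$ identifies
\[
\THR(\bS[\bar x])\simeq \Sym_{C_2}\bigl(\Sigma^{n\rho}\bS\oplus\Sigma^{n\rho+\sigma}\bS\bigr)\simeq \bS[\bar x, d\bar x],
\]
with $|d\bar x|=n\rho+\sigma=(n+1)\rho-1$, and the canonical map $\THR(\bS[\bar x])\to\bS[\bar x]$ (induced by the collapse $S^\sigma\to *$) sends $d\bar x\mapsto 0$. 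The crux is that for any strongly even $C_2$-$\bE_\infty$-ring $C$ receiving a map from $\THR(\bS[\bar x])$, Lemma~\ref{lem:criteria} forces $\pi_{(n+1)\rho-1}^{C_2}C=0$, so the image of $d\bar x$ in $C$ must vanish; a pushout computation along the factorization $\bS[\bar x]\simeq \bS[\bar x, d\bar x]\otimes_{\bS[d\bar x]}\bS$ then yields
\[
C\otimes_{\THR(\bS[\bar x])}\bS[\bar x]\simeq C\otimes_{\bS[d\bar x]}\bS\simeq C\oplus\Sigma^{(n+1)\rho}C
\]
as a free $C$-module on generators in $\rho$-multiple degrees.

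Tracing back through the reductions, the finite-generator case yields a free $C$-module with basis indexed by $\{0,1\}^J$ and grading degrees of the form $\sum_{i\in J}\epsilon_i(n_i+1)\rho$, and the filtered colimit over finite $J\subset I$ produces a free $C$-module with basis indexed by finite subsets of $I$, all in $\rho$-multiple degrees. Each such module is strongly even (a direct sum of $\rho$-multiple suspensions of the strongly even $C$) and faithfully flat over $C$ (as an explicit nontrivial free module), completing the proof. The main obstacle I anticipate is carefully tracking the single-generator pushout as a $C$-module rather than merely as a $C_2$-spectrum, and verifying the base-change identity for $\THR$; once the strong-evenness vanishing $\pi_{(n+1)\rho-1}^{C_2}C=0$ is in hand --- this is exactly where the hypothesis that the polynomial generators lie in $\rho$-multiple degrees is essential --- the remainder of the argument proceeds by routine assembly.
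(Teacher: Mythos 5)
Your reduction to a universal free case breaks at its very first step: the claimed equivalence $S\simeq k\otimes_\bS T$, with $T=\bigotimes_{i\in I}\Sym_{C_2}(\Sigma^{n_i\rho}\bS)$, does not hold. In this paper (Definition~\ref{def: cell decomp}), $\Sym_{C_2}$ is the free $C_2$-$\bE_\infty$-algebra functor, i.e.\ the left adjoint to the forgetful functor $\CAlg^{C_2}\to\Sp^{C_2}$; it is built out of extended powers and Hill--Hopkins--Ravenel norms $N_e^{C_2}$, not out of a discrete monoid. Consequently $k\otimes_\bS\Sym_{C_2}(\Sigma^{n\rho}\bS)$ is the free $C_2$-$\bE_\infty$-$k$-algebra on $\Sigma^{n\rho}k$, whose homotopy contains, beyond the polynomial classes $\bar{x}^j$, a host of additional classes coming from power operations and norm classes; already on underlying spectra the extended powers $(\Sigma^{2nj}k)_{h\Sigma_j}$ contribute, so the map $k\otimes_\bS T\to S$ is not an isomorphism on $\pi_*^e$, and $k\otimes_\bS T$ is not strongly even. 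The ``isomorphism of $\pi_{\rho *}^{C_2}$ between strongly even spectra'' argument therefore cannot be invoked, and every subsequent reduction inherits the error. For the same reason the Koszul-type pushout identification $C\otimes_{\Sym_{C_2}(\Sigma^m\bS)}\bS\simeq C\oplus\Sigma^{m+1}C$ is not valid: that formula would hold for a free $\bE_1$ tensor algebra, but the bar construction over the free $\bE_\infty$-algebra contributes extended-power and norm terms beyond a two-term sum.

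What the paper does instead is engineered precisely to avoid these issues. It filters by the regular slice filtration and runs the Real Hochschild--May spectral sequence (Section~\ref{sec:RHMSS}), placing the computation on the associated graded $P_*^*(-)$. Because $k$, $S$, and the strongly even test algebra $C$ all have slice associated graded given by $\rho$-multiple suspensions of Eilenberg--MacLane spectra of constant Mackey functors, the HKR-type identification $\upi_\star\THR(P_*^*S/P_*^*k)\cong\upi_\star P_*^*k[x_i]\otimes\Lambda(d^{\sigma}x_i)$ applies cleanly, with no power-operation classes to account for. Your observation that $d^{\sigma}x_i$ must die in any strongly even $C$ because $\pi_{(n_i+1)\rho-1}^{C_2}C=0$ by Lemma~\ref{lem:criteria} is exactly the right mechanism --- and it reappears in the paper's proof --- but it has to be applied on the slice associated graded, not to a direct free-$\Sym_{C_2}$ presentation. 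Faithful flatness on underlying spectra is then inherited from the corresponding non-equivariant statement~\cite[Proposition~4.2.4]{HRW22}.
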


\begin{proof}
We compute that the $\mathrm{E}_2$-term of the Hochschild--May spectral sequence 
\[ 
\upi_{\star}\THR(P_*^*S/P_*^*k)\implies \upi_{\star}\THR(S/k)
\]
is 
\[\upi_{\star}\THR(P_*^*S/P_*^*k)\cong \upi_*P_*^*k[x_i :i\in I]\otimes \Lambda (d^{\sigma}x_i) 
\]
where $|d^{\sigma}x_i|=|x_i|+\sigma$. Since $S$ is strongly even, the map 
\[ 
\upi_{\star}\THR(P_*^*S/P_*^*k)\to \upi_{\star}P_*^*S
\]
sends the classes $d^{\sigma}x_i$ to zero. If $C$ is strongly even, we consider the pushout of the diagram
\[ 
P_*^*C=\THR(P_*^*C/P_*^*C)\leftarrow \THR(P_*^*S/P_*^*k) \rightarrow P_*^*S
\]
and note that since $C$ is strongly even the classes $d^{\sigma}x_i$ also map to zero in $P_*^*C$. 
Consequently, the pushout 
\[P_*^*C\otimes_{\THR(P_*^*S/P_*^*k)} P_*^*S
\]
is strongly even and applying the spectral sequence for the filtration 
\[ P_*C\otimes_{\THR(P_*S/P_*k)}P_*S\]
of $\upi_*C\otimes_{\THR(S/k)}S$ we determine that 
$\upi_*C\otimes_{\THR(S/k)}S$ is strongly even as well. It therefore suffices to check that 
\[
C^e\to (C\otimes_{\THR(S/k)}S)^e= C^e\otimes_{\THR(S/k)^e}S^e
\] 
is faithfully flat, which is clear since the underlying map of $\bE_{\infty}$-rings $\THH(S^e/k^e)\to S^e$ is eff 
by~\cite[Proposition~4.2.4]{HRW22}.
\end{proof}

Following~\cite{HP23}, 
we recall the following definition (cf.~\cite[Definition~1.5]{AKGH25}). 

\begin{defin}
Let $E$ be a $C_2$-$\bE_{\infty}$-algebra and let $R$ be a 
$C_2$-$\bE_{\infty}$-$E$-algebra. We define
\[ \HR(R/E):=\THR(R)\otimes_{\THR(E)}E\,.
\]
\end{defin}

\begin{lem}\label{lem:completion}
After $p$-completion, the map 
\[ \THR(\bS[z^{1/p^{\infty}}])\to \bS[z^{1/p^{\infty}}]\]
is an equivalence. 
\end{lem}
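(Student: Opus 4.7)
The plan is to reduce the statement to the underlying non-equivariant result by realizing $\bS[z^{1/p^\infty}]$ as a filtered colimit of copies of $\bS[z]$ with $p$-th power transitions, and then showing that the fiber of the augmentation vanishes $p$-completely in the colimit.

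First, $\bS[z^{1/p^\infty}] \simeq \colim_n \bS[z^{1/p^n}]$ as $C_2$-$\bE_\infty$-rings, and each $\bS[z^{1/p^n}]$ is isomorphic to $\bS[z]$ via the renaming $z^{1/p^n} \mapsto z$. Under these identifications, the structure map $\bS[z^{1/p^n}] \to \bS[z^{1/p^{n+1}}]$ becomes the $p$-th power map $\varphi \colon \bS[z] \to \bS[z]$, $z \mapsto z^p$. Since $\THR$ commutes with filtered colimits, $\THR(\bS[z^{1/p^\infty}]) \simeq \colim_n \THR(\bS[z])$ with transitions $\THR(\varphi)$. The augmentation $\THR(\bS[z]) \to \bS[z]$ admits a canonical section given by the unit $\bS[z] \to \THR(\bS[z])$, splitting off a fiber $F := \mathrm{fib}(\THR(\bS[z]) \to \bS[z])$. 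Hence the problem reduces to showing $(\colim_n F)_p^\wedge \simeq 0$, where the colimit is taken along the restriction of $\THR(\varphi)$ to $F$.

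To verify this vanishing, I would check it on underlying spectra and on geometric $C_2$-fixed points, which together detect equivalences of $C_2$-spectra. On underlying, the statement becomes the classical computation $\THH(\bS[z^{1/p^\infty}])_p^\wedge \simeq \bS[z^{1/p^\infty}]_p^\wedge$ from e.g.~\cite{BMS19}. On geometric fixed points, using the identification $\THR(\bS[\bN]) \simeq \bS[B^{\mathrm{di}}\bN]$ with the Real (dihedral) bar construction, whose $C_2$-action comes from $S^\sigma$, together with the formula $\Phi^{C_2}\Sigma^\infty_+ X \simeq \Sigma^\infty_+ X^{C_2}$ for $C_2$-spaces $X$, one analyzes the $C_2$-fixed points $(B^{\mathrm{di}}\bN)^{C_2}$ weight-by-weight and verifies that the transitions induced by $\varphi$ on the reduced summand become $p$-power multiplications after $p$-completion.

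The main obstacle is this last geometric fixed point analysis: one must identify $(B^{\mathrm{di}}\bN)^{C_2}$, which mixes a discrete part (from ``diagonal'' reflection-fixed configurations) and a continuous part (from ``antipodal'' pairings), and then track how $\varphi$ acts across weights, with the behavior depending subtly on the parity of $p$. A cleaner conceptual route is to apply the Real HKR theorem of Hornbostel--Park~\cite{HP23} (or Yang~\cite{Yan25}) together with the observation that the $p$-th power self-map of $\bS[z^{1/p^\infty}]$ is a $p$-complete equivalence by the colimit description above. This makes $\bS[z^{1/p^\infty}]$ ``Real perfectoid'' in the relevant sense, so the Real cotangent complex of $\bS[z^{1/p^\infty}]$ relative to $\bS$ vanishes $p$-completely and the Real HKR filtration on $\THR(\bS[z^{1/p^\infty}])_p^\wedge$ degenerates, yielding the claimed equivalence.
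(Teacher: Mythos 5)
Your first route is sound in outline: $\THR$ does commute with filtered colimits (being a tensoring $S^{\sigma}\odot(-)$), the colimit description of $\bS[z^{1/p^\infty}]$ is correct, and a map of $C_2$-spectra is an equivalence precisely when its underlying and $\Phi^{C_2}$-geometric fixed points are. But as you note, you never actually carry out the geometric fixed point analysis, and that is where all the work lives: you would need to identify $(B^{\mathrm{di}}\bN)^{C_2}$ weight by weight, track how the $p$-th power map acts on the two ``reflection-fixed'' points in each weight, and then show the resulting colimit vanishes $p$-completely relative to the weight-$0$ summand. So as written this route is a proof sketch with the hardest step left open.

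Your ``cleaner conceptual route'' contains a genuine gap. The Real HKR theorem of Hornbostel--Park (and the version in Yang's thesis) filters $\HR(\m{R}/\m{k})$ for a map of \emph{discrete} commutative rings $k\to R$ by exterior powers of the \emph{algebraic} cotangent complex $L^{\mathrm{alg}}_{R/k}$. There is no statement of this form for $\THR$ of a $C_2$-$\bE_\infty$-ring relative to the sphere spectrum; the ``Real cotangent complex of $\bS[z^{1/p^\infty}]$ relative to $\bS$'' you invoke is not the object controlled by those theorems, and the spectral analogue (Real TAQ over $\bS$) certainly does not vanish for a spherical monoid ring. So ``$\bS[z^{1/p^\infty}]$ is Real perfectoid, hence the HKR filtration degenerates'' does not follow from the cited results.

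The paper closes exactly this gap with a preliminary reduction that is missing from your argument. One first observes that the unit $\bS\to H\mZ$ induces an isomorphism on Hill--Hopkins--Ravenel $0$-slices, and that by a connectivity argument the $H\mZ$-nilpotent completion of $\bS$ is $\bS$ itself. Since the nilpotent completion is a totalization of copies of $H\mZ$, this reduces the lemma to a statement over $H\mZ$: it suffices to show $\HR(\mZ[z^{1/p^\infty}]/\mZ)\to H\mZ[z^{1/p^\infty}]$ is a $p$-complete equivalence. Now the Real HKR theorem of \cite{HP23} applies (the base is a discrete commutative ring), and the relevant algebraic cotangent complex $L_{\bZ[z^{1/p^\infty}]/\bZ}$ vanishes $p$-completely by \cite[Proposition~11.7]{BMS19}, giving the result. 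This linearization step is the crucial ingredient: it converts a sphere-linear statement (where HKR is unavailable) into an $H\mZ$-linear one (where it is). If you want to repair your proposal, replace the ``Real perfectoid over $\bS$'' paragraph with the $H\mZ$-nilpotent completion reduction and then your appeal to HP23 and BMS19 goes through.
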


\begin{proof}
The unit map $\bS \to H\mZ$ is an isomorphism on Hill--Hopkins--Ravenel $0$-slices, so by a standard connectivity argument, 
the $H\mZ$-nilpotent completion of the sphere spectrum $\bS_{H\mZ}^\wedge$ is equivalent to the sphere spectrum $\bS$. 
As the $H\mZ$-nilpotent completion can be obtained through a cosimplicial resolution of $\bS$ by copies of $H\mZ$, 
we are reduced to proving the result after base change to $H\mZ$. 
That is, it suffices to show 
\[
\HR(\mZ[z^{1/p^{\infty}}]/\mZ)\to H\mZ[z^{1/p^{\infty}}]
\]
is an equivalence after $p$-completion. This follows from \cite[Theorem~4.1]{HP23} and the result that the cotangent complex is 
trivial, i.e. $L_{\bZ[z^{1/p^{\infty}}]/\bZ}\simeq 0$, by~\cite[Proposition~11.7]{BMS19}. 
\end{proof}

Recall \cite[Def. 4.20]{BMS19} that a quasisyntomic ring $S$ is \emph{quasiregular semiperfectoid} if there exists a 
map~$R \to S$ with $R$ perfectoid and the Frobenius of $S/pS$ is surjective. 

\begin{prop}\label{prop:seff-map}
Suppose that $R$ is a $p$-quasisyntomic commutative ring, $S$ is a quasiregular semiperfectoid commutative ring, and $R\to S$ 
is a map of commutative rings. Then 
\[ 
\THR(H\m{R};\bZ_p)\to \THR(H\m{S};\bZ_p)
\]
is seff. 
\end{prop}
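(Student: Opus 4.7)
\emph{Plan.} I would follow the strategy of~\cite[Proposition~4.2.8]{HRW22} adapted to the Real setting, combining Lemmas~\ref{lem:another-seff-map}, \ref{lem:completion}, and~\ref{lem:seff-map}. First, choose a perfectoid presentation: since $S$ is quasiregular semiperfectoid, there exists a surjection $A = \bZ_p\langle x_i^{1/p^\infty}\rangle_{i \in I} \twoheadrightarrow S$ for some index set $I$. This factors the given map $R \to S$ as $R \to R \otimes_{\bZ_p}^L A \to S$. Because seff-ness is stable under composition and pushouts of $C_2$-$\bE_{\infty}$-rings, and because $\THR$ preserves pushouts of $C_2$-$\bE_\infty$-algebras, it suffices to show that each of the two induced maps on $\THR(\--;\bZ_p)$ is seff.

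For the first leg $\THR(H\m{R};\bZ_p) \to \THR(H\underline{R\otimes_{\bZ_p}A};\bZ_p)$, I would reduce by base change to showing that the absolute map $\THR(H\m{\bZ}_p;\bZ_p) \to \THR(H\m{A};\bZ_p)$ is seff. The latter follows by combining Lemma~\ref{lem:another-seff-map} (applied with $s = p$), which gives that $\bigotimes_I \bS[z] \to \bigotimes_I \bS[z^{1/p^\infty}]$ is seff, with Lemma~\ref{lem:completion}, which identifies $\THR$ of the perfectoid side with the algebra itself after $p$-completion; one then pushes out along the unit map $\bS \to H\m{\bZ}$. For the second leg $\THR(H\underline{R\otimes_{\bZ_p}A};\bZ_p) \to \THR(H\m{S};\bZ_p)$, I would apply Lemma~\ref{lem:seff-map} to the canonical map $\THR(H\m{S}/H\underline{R\otimes A};\bZ_p) \to H\m{S}$. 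The hypotheses of Lemma~\ref{lem:seff-map} amount to verifying that $\pi^{C_2}_{\rho\ast}\THR(H\m{S}/H\underline{R\otimes A};\bZ_p)$ is polynomial over $\pi^{C_2}_{\rho\ast} H\underline{R\otimes A}$ on generators in $\rho$-divisible degrees; this should follow from the quasi-regularity of the kernel of $R \otimes A \to S$ (a consequence of the $p$-quasisyntomic hypothesis on $R$ and the quasiregular semiperfectoid hypothesis on $S$) via a Koszul-type resolution computed by the Real Hochschild--May spectral sequence of Section~\ref{sec:RHMSS}.

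The main obstacle is the polynomiality verification in the second leg. In the non-equivariant analog~\cite[Proposition~4.2.8]{HRW22}, the corresponding statement reduces to a cotangent-complex computation that uses only the Tor-amplitude bound on $L_{R/\bZ_p}$. In the Real setting one must additionally propagate strong evenness through the Real Hochschild--May spectral sequence of Section~\ref{sec:RHMSS} and verify that the relevant $\pi^{C_2}_{\rho\ast}$ carries the constant-Mackey-functor polynomial structure demanded by Lemma~\ref{lem:seff-map}. This reduces to a slice-theoretic analysis of the base change along $\MUR$ combined with the Real analog of the HRW Koszul argument, but the bookkeeping of $C_2$-slices through the Hochschild--May filtration is the delicate technical heart of the proof.
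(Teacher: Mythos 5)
Your factorization $R \to R\otimes_{\bZ_p}A \to S$ through a perfectoid $H\bZ_p$-algebra is a genuinely different route from the paper's, and the second leg has a real gap. Lemma~\ref{lem:seff-map} takes as input a map $k\to S$ of strongly even rings with $\pi_{\rho*}^{C_2}S$ \emph{polynomial} over $\pi_{\rho*}^{C_2}k$ and concludes that $\THR(S/k)\to S$ is seff; its proof hinges on that polynomiality to identify the Hochschild--May $\mathrm{E}_2$-page as an exterior algebra on classes $d^{\sigma}x_i$ that necessarily die in any strongly even target. In your second leg the map $H\underline{R\otimes A}\to H\m{S}$ is a \emph{surjection} with quasiregular kernel, so the hypothesis fails outright (and the condition you state --- polynomiality of $\pi_{\rho*}^{C_2}\THR(H\m{S}/H\underline{R\otimes A})$ --- is not the hypothesis of the lemma but something close to the conclusion you are trying to reach). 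The relative $\THR$ of a quotient is governed by a divided-power algebra on a shifted cotangent complex, not an exterior algebra, and establishing that it stays strongly even and faithfully flat after base change to an \emph{arbitrary} strongly even $C_2$-$\bE_\infty$-ring is essentially Park's computation for quasiregular semiperfectoid rings. No lemma in the paper supplies this, so what you flag as "delicate bookkeeping" is in fact a missing theorem. (Your first leg is also stated loosely --- seff-ness of $\THR$ applied to a seff map is not formal --- but it is salvageable since $\THR(H\m{A};\bZ_p)$ splits off $\bS[\bar z^{1/p^\infty}]$ factors $p$-adically.)

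The paper's proof is engineered precisely to avoid any computation of $\THR$ of $S$ or of a quotient. It sets $\bS_{R'}=\otimes_{R}\bS[\bar z]$, surjecting onto $\m{R}$, and $\bS_{S'}=\otimes_{R}\bS[\bar z^{1/p^{\infty}}]$; it applies Lemma~\ref{lem:seff-map} only where the polynomiality hypothesis holds trivially, namely to $\THR(\bS_{R'})\to \bS_{R'}$; it composes with the seff map $\bS_{R'}\to\bS_{S'}$ from Lemma~\ref{lem:another-seff-map}; and it base changes to conclude that $\THR(H\m{R})\to\THR(H\m{S}/\bS_{S'})$ is seff. The quasiregular semiperfectoid hypothesis then enters only through Lemma~\ref{lem:completion}: because $\THR(\bS[\bar z^{1/p^{\infty}}])\to\bS[\bar z^{1/p^{\infty}}]$ is a $p$-adic equivalence, the relative term $\THR(H\m{S}/\bS_{S'};\bZ_p)$ \emph{is} $\THR(H\m{S};\bZ_p)$, so seff-ness is inherited by formal base change. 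If you want to keep your two-leg structure, replace the appeal to Lemma~\ref{lem:seff-map} in the second leg by this collapse mechanism: cover $S$ by sphere-level algebras of the form $\otimes\,\bS[\bar z^{1/p^{\infty}}]$ whose Real topological Hochschild homology is $p$-adically trivial, so that the relative $\THR$ you must control coincides with the absolute one.
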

\begin{proof}
Let $\bS_{R^{\prime}}=\otimes_{R}\bS[\bar{z}]$ so that the canonical map $\otimes_{R}\bS[\bar{z}]\to \m{R}$ is a $\upi_{*}$-surjection. 
Let $\bS_{S^{\prime}}=\otimes_{R}\bS[z^{1/p^{\infty}}]$. Then $\bS_{R^{\prime}}\to \bS_{S^{\prime}}$ is seff by 
Lemma~\ref{lem:another-seff-map}. 
By Lemma~\ref{lem:seff-map}, we know that $\THR(\mathbb{S}_{R^{\prime}})\to \mathbb{S}_{R^{\prime}}$ is seff. 
Therefore, the composite 
$\THR(\mathbb{S}_{R^{\prime}})_{p}^{\wedge}\to \mathbb{S}_{R^{\prime}}\to  \mathbb{S}_{S^{\prime}}$
is seff and by base change the map $\THR(H\m{R})\to \THR(H\m{S}/\mathbb{S}_{S^{\prime}})$
is seff. 
To see this consider the commutative diagram of pushouts
\[
\begin{tikzcd}
\THR(\bS_{R'})\ar[r] \ar[d] & \bS_{R'}\ar[r] \ar[d] & \bS_{S'}  \ar[d] \\
\THR(R) \ar[r] & \THR(R/\bS_{R'}) \ar[r] &  \THR(S/\bS_{S'})\,.
\end{tikzcd}
\]
By Lemma~\ref{lem:completion}, the map $\THR(H\m{S}/\mathbb{S}_{S^{\prime}};\bZ_p)\simeq  \THR(H\m{S};\bZ_p)$
is an equivalence. 
\end{proof}

\begin{thm}
Let $R$ be $p$-quasisyntomic commutative ring and let $\m{R}$ denote the associated constant Mackey functor.  
Then there are natural equivalences
\[
\filmot^{*}F(\m{R}) \simeq \fil_{\BMSP}^{*}F(\m{R}) 
\]
of $C_{2}$-$\bE_{\infty}$ filtered $C_{2}$-spectra for 
\[ F\in \{\THR(\--;\bZ_p)\,, \TCR^{-}(\--;\bZ_p)\,, \TPR(\--;\bZ_p)\,, \TCR(\--;\bZ_p)\} \,. \]
\end{thm}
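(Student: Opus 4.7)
The plan is to reduce the comparison to the case of quasiregular semiperfectoid inputs, where both filtrations become the double-speed regular slice filtration. By \cite[Lemma~4.27]{BMS19}, any $p$-quasisyntomic commutative ring $R$ admits a quasisyntomic cover $R\to S$ with $S$ quasiregular semiperfectoid. Proposition~\ref{prop:seff-map} upgrades this to a $p$-completely seff map $\THR(H\m{R};\bZ_p)\to \THR(H\m{S};\bZ_p)$, and the same proof (using that the definitions of $p$-completely seff covering sieves in $\CAlg_p^{h_{C_2}S^1}$ and $\CAlg_p^{\rcyc}$ are pulled back from the underlying sieves on $\CAlg_{\sev,p}^{C_2}$) shows this map is a cover in each of the three ambient categories. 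Corollary~\ref{thm:motivic-descent} then expresses $\filmot^* F(H\m{R})$ as a totalization of $\filmot^* F$ applied to the \v{C}ech nerve, while the BMSP filtration satisfies the analogous descent statement by the construction in~\cite[Theorem~10.1]{Par23}. Since each term in the \v{C}ech nerve is of the form $\THR(H\m{T};\bZ_p)$ with $T$ still quasiregular semiperfectoid over $R$, it suffices to compare the two filtrations in this case.

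For quasiregular semiperfectoid $T$, the Real HKR theorem of Hornbostel--Park~\cite{HP23} and Yang~\cite{Yan25} (discussed earlier in Section~\ref{sec:comparison}) implies that $\THR(H\m{T};\bZ_p)$ is strongly even and has bounded $p$-power torsion. Consequently, Definition~\ref{def:sev-filt} yields
\[
\fil_{\mot}^{\bullet}\THR(H\m{T};\bZ_p) \simeq P_{2\bullet}\THR(H\m{T};\bZ_p),
\]
the double-speed regular slice filtration. By construction, Park's $\fil_{\BMSP}^{\bullet}\THR(H\m{T};\bZ_p)$ is also the double-speed slice filtration on such an input, matching conventions via the Bott-type class $\bar{v}_1$. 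This handles the case $F=\THR(\--;\bZ_p)$.

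For $F\in\{\TCR^{-}(\--;\bZ_p),\TPR(\--;\bZ_p),\TCR(\--;\bZ_p)\}$, the same argument applies verbatim using the refined descent results of Theorem~\ref{thm:sheaf}, Corollary~\ref{thm:can-varphi}, and the bifiltered structure of Definition~\ref{def:bifiltered} and Construction~\ref{const:Nygaard-filtration}. Strong evenness of $\THR(H\m{T};\bZ_p)$ for quasiregular semiperfectoid $T$ guarantees that its parametrized homotopy fixed points and Tate constructions are also accessible to the motivic formalism, so that both the motivic and BMSP filtrations on $\TCR^{-}$, $\TPR$ reduce to $P_{2\bullet}$ applied after $(\--)^{h_{C_2}S^1}$ and $(\--)^{t_{C_2}S^1}$ respectively; compatibility under $\can$ and $\varphi$ at the filtered level (Corollary~\ref{thm:can-varphi}) then transports the equivalence to $\TCR$.

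The main technical step, which I expect to be the principal obstacle, is pinning down that Park's filtration reduces to $P_{2\bullet}\THR(H\m{T};\bZ_p)$ on quasiregular semiperfectoid inputs (and similarly after $h_{C_2}S^1$ and $t_{C_2}S^1$), including the matching of grading and Bott-style normalizations; once this identification is made, the rest of the argument is a formal descent comparison. A secondary, but routine, point is checking that the \v{C}ech nerve of $R\to S$ consists of quasiregular semiperfectoid rings, which follows from the usual stability properties of this class in~\cite[\S~4]{BMS19}.
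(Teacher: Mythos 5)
Your proposal follows essentially the same route as the paper: cover $R$ by a quasiregular semiperfectoid $S$, invoke Proposition~\ref{prop:seff-map} for seffness, apply Theorems~\ref{thm:descent} and~\ref{thm:motivic-descent} on the motivic side, and use that Park's proof of~\cite[Theorem~10.1]{Par23} already exhibits $\fil_{\BMSP}^{s}F$ as $\lim_{\Delta}P_{2s}F$ of the \v{C}ech nerve, so the ``normalization matching'' you flag as the principal obstacle is in fact supplied directly by Park. The only substantive difference is that the paper deduces strong evenness of $\THR(H\m{S};\bZ_p)$ from~\cite[Theorem~8.2]{Par23} rather than from the HKR theorem of~\cite{HP23,Yan25}, which is the more direct citation for quasiregular semiperfectoid inputs.
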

\begin{proof}
As in the proof of~\cite[Theorem~5.0.3]{HRW22}, there exists a $p$-quasisyntomic cover $R\to S$ with $S$ quasiregular 
semiperfectoid.  By~\cite[Theorem~8.2]{Par23}, we know that $\THR(H\m{S}; \mathbb{Z}_p)$ is strongly even. 
The proof of~\cite[Theorem~10.1]{Par23} provides an equivalence
\[ 
\fil_{\BMSP}^{s}F(H\m{R})\simeq \lim_{\Delta} P_{2s}F(H\m{S}^{\wedge_{H\m{R}} \bullet+1})
\] 
for $F\in \{\THR(\--;\bZ_p)\,,\TCR^{-}(\--,;\bZ_p)\,,\TPR(\--;\bZ_p)\,,\TCR(\--;\bZ_p)\}$. 
The map 
\[
\THR(H\m{R};\bZ_p)\longrightarrow\THR(H\m{S};\bZ_p)
\]
is seff by Proposition~\ref{prop:seff-map}, so the result follows from Theorems \ref{thm:descent} and~\ref{thm:motivic-descent}, 
which identify 
\[ 
\lim_{\Delta} P_{2s}F(H\m{S}^{\wedge_{H\m{R}} \bullet+1})\simeq \filmot^*F(H\m{R}) 
\]
for $F\in \{\THR(\--;\bZ_p)\,,\TCR^{-}(\--,;\bZ_p)\,,\TPR(\--;\bZ_p)\,,\TCR(\--;\bZ_p)\}$.
\end{proof}

We can also compare to the HKR filtration from Hornbostel--Park \cite{HP23} and Yang~\cite{Yan25} 
under the additional hypotheses that ensure that these filtrations agree.

Given a commutative ring $k$ and an $k$-algebra $R$, we write $\fil_{\HKR}^{\bullet}\mathrm{HR}(\m{R}/\m{k})$ for the 
filtration constructed in~\cite[Theorem~4.30]{HP23} and \cite[\S~7.4]{Yan25} where $\m{R}$ and $\m{k}$ are the associated 
constant Mackey functors.

\begin{prop}
If $k\to R$ is a quasi-lci map of commutative rings, then there is an equivalence 
\[ 
\fil_{\mot}^{\bullet}\THR(\m{R}/\m{k})\simeq \fil_{\HKR}^{\bullet}\mathrm{HR}(\m{R}/\m{k}) \,.
\]
when either $R$ is smooth over $k$ or $1/2\in k$. 
\end{prop}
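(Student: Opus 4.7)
The plan is to reduce to the strongly even case via descent along a common cover, on which both filtrations can be identified with the slice filtration $P_{2\bullet}$. Under the quasi-lci hypothesis, the algebraic cotangent complex $L_{R/k}$ has Tor-amplitude in $[0,1]$, so there is a simplicial resolution $\m{P}_\bullet \to \m{R}$ in which each $P_n$ is a polynomial $k$-algebra. I would first verify, under the additional hypothesis that $R$ is smooth over $k$ or $1/2\in k$, that each $\THR(\m{P}_n/\m{k})$ is strongly even. When $R$ is smooth one can take $P_0 = R$ and conclude directly from the Real HKR theorem of \cite{HP23,Yan25}, which identifies $\pi^{C_2}_{k\rho}\THR(\m{R}/\m{k})$ with the constant Mackey functor on $\Omega^k_{R/k}$ and provides the vanishing needed in degrees $k\rho - i$ for $i\in\{1,2,3\}$. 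When $1/2\in k$, the analogous computation for polynomial $k$-algebras proceeds without the $2$-torsion obstructions from the sign representation.

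Next, I would show that the augmentation $\THR(\m{R}/\m{k}) \to \THR(\m{P}_\bullet/\m{k})$ is a seff hypercover, analogous to the descent setup of Section \ref{sec:descent}, using an argument in the spirit of Lemma \ref{lem:seff-map} together with base change along $\THR(\m{k}) \to \m{k}$. Corollary \ref{thm:motivic-descent} then yields
\[
\fil_{\mot}^{\bullet}\THR(\m{R}/\m{k}) \simeq \lim_{[n]\in\Delta}\fil_{\mot}^{\bullet}\THR(\m{P}_n/\m{k}).
\]
The construction of the HKR filtration in \cite{HP23,Yan25} is itself built as a totalization along a simplicial resolution by polynomial algebras, so the analogous identity holds for $\fil_{\HKR}^{\bullet}\mathrm{HR}(\m{R}/\m{k})$. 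On each strongly even $\THR(\m{P}_n/\m{k})$, both filtrations coincide with the slice filtration $P_{2\bullet}$: for the motivic filtration this is immediate from Definition \ref{def:sev-filt}, and for the HKR filtration this is essentially a defining property in \cite{HP23,Yan25}. Combining this pointwise agreement with descent produces the desired equivalence.

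The hard part will be verifying strong evenness of $\THR(\m{P}_n/\m{k})$ in the claimed generality. The dichotomy between the smooth case and the $1/2\in k$ case is exactly what ensures the Real HKR computation yields a strongly even object: without one of these hypotheses, the interaction between the sign representation and the lower-degree classes produced by the dihedral bar construction can fail to be killed, violating the vanishing in degrees $k\rho - i$ required by Lemma \ref{lem:criteria}. Once this input is secured, the descent formalism of Sections \ref{sec:descent} and \ref{sec:motivic-filtrationsB} formally closes the comparison.
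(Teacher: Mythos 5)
There is a genuine gap, and it sits at the heart of your argument: the claim that each $\THR(\m{P}_n/\m{k})$ is strongly even for $P_n$ a polynomial $k$-algebra (and, in the smooth case, that $\pi^{C_2}_{k\rho}\THR(\m{R}/\m{k})$ carries $\Omega^k_{R/k}$) is false. The Real HKR theorem of Hornbostel--Park and Yang places $\Lambda^n_R\mathbb{L}^{\textup{alg}}_{R/k}$ in degree $n\sigma$, not $n\rho$: for a polynomial or smooth algebra the cotangent complex is concentrated in homological degree $0$, so the $n$-th graded piece $\m{\Omega^n_{R/k}}[n\sigma]$ contributes to $\pi^{C_2}_{n\sigma} = \pi^{C_2}_{n\rho-n}$. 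Already for $n=1$ this is a nonzero group in degree $\rho-1$, which violates the criterion of Lemma~\ref{lem:criteria}. So $\THR(\m{k}[x]/\m{k})$ is not strongly even, and resolving $R$ "from below" by polynomial algebras cannot produce a strongly even cover. (There is also a variance problem: a simplicial polynomial resolution is an augmented \emph{simplicial} object, so $\THR(\m{P}_\bullet/\m{k})$ would be resolved by a geometric realization, whereas the motivic filtration is a limit construction and the descent results of Sections~\ref{sec:descent} and~\ref{sec:motivic-filtrationsB} only apply to the \emph{cosimplicial} Amitsur complex of a seff map. Your $\lim_{[n]\in\Delta}$ does not typecheck against a simplicial resolution.)

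The fix — and what the paper actually does — is to cover from above: take the polynomial algebra $S=k[r\in R]$ with its \emph{surjection} $S\twoheadrightarrow R$, and use the cosimplicial object $\THR(\m{R}/\m{S}^{\otimes_{\m{k}}\bullet+1})$. Because $S^{\otimes_k q+1}\to R$ is surjective and quasi-lci, $\mathbb{L}^{\textup{alg}}_{R/S^{\otimes_k q+1}}$ has Tor-amplitude concentrated in homological degree $1$, so $\Lambda^n\mathbb{L}[n\sigma]$ now lands in degree $n+n\sigma=n\rho$ and each term of the cosimplicial object \emph{is} strongly even (this is exactly where the hypothesis "$R$ smooth over $k$ or $1/2\in k$" enters). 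Seffness of $\THR(\m{S}/\m{k})\to\m{S}$ comes from Lemma~\ref{lem:seff-map}, descent identifies $\fil_{\mot}^{\bullet}\THR(\m{R}/\m{k})$ with the totalization, and on each strongly even term both filtrations agree with $P_{2\bullet}$. One still needs completeness of $\fil_{\HKR}^{\bullet}$ and the identification $\m{\Lambda^*\mathbb{L}}^{\textup{alg}}_{R/k}[*\sigma]\simeq\lim_{\Delta}\m{\Lambda^*\mathbb{L}}^{\textup{alg}}_{R/S^{\otimes_k\bullet+1}}[*\sigma]$ (checked on underlying spectra since these are constant Mackey functors) to close the comparison; your proposal omits both of these steps.
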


\begin{proof}
We let $S=k[r\in R]$ denote the polynomial $k$-algebra on all elements $r\in R$, which is equipped with a surjection $S\to R$. 
We showed that $\THR(S/k)\to S$ is seff in Lemma~\ref{lem:seff-map} and consequently the map 
$\THR(\m{R})\to \THR(\m{R})\otimes_{\THR(\m{S}/\m{k})}S$ is seff and 
\[ 
\fil_{\mot}^*\THR(\m{R}/\m{k})\simeq \lim_{\Delta}\fil_{\mot}^*\THR(\m{R}/\m{S}^{\otimes_{\m{k}\bullet+1}})\,.
\]
We claim that 
\[
\fil_{\mot}^*\THR(\m{R}/\m{S}^{\otimes_{\m{k}\bullet+1}})\simeq \fil_{\HKR}^*\mathrm{HR}(\m{R}/\m{S}^{\otimes_{\m{k}\bullet+1}})\,.
\]
To see this, we note that 
\[
\gr_{\HKR}^n\mathrm{HR}(\m{R}/\m{S}^{\otimes_{\m{k}\bullet+1}}) \simeq 
\m{\Lambda_R^n\mathbb{L}^{\textup{alg}}_{R/S^{\otimes_{k}\bullet+1}}}[n\sigma]
\]
by~\cite[Theorem~4.30--4.31]{HP23} (cf.~\cite[\S~7.4]{Yan25}).\footnote{Here we write  $\mathbb{L}^{\textup{alg}}_{A/B}$ instead of $\mathbb{L}_{A/B}$ to be consistent with the 
notation in~\cite{HRW22}.} 
Here the shift $[n\sigma]$ is explained in~\cite[Definition~2.8]{HP23}.
Since $\mathbb{L}_{R/S^{\otimes_k \bullet}}$ has Tor amplitude concentrated in homological degree $1$, 
we conclude that the $C_2$-spectrum $\THR(\m{R}/\m{S}^{\otimes_{\m{k}} \bullet+1})$ is strongly even 
whenever $R$ is smooth over $k$ or $1/2\in k$. 
Consequently, we can identify 
\[ 
\grmot^*\THR(\m{R}/\m{S}^{\otimes_{k}{\bullet+1}})=\Sigma^{\rho *}H\m{\pi_{2*}\THH(R/S^{\otimes_k\bullet+1})}=\m{\Lambda_R^*\mathbb{L}^{\textup{alg}}}_{R/S^{\otimes_{k}\bullet+1}}[*\sigma]
\]
by~\cite{BMS19}. 
Therefore, we can identify the two filtrations after passing to associated graded. 
The result then follows because, under the hypothesis that either $R$ is smooth over $k$ or $1/2\in k$, 
the filtration $\fil_{\textup{HKR}}^{\bullet}$ is complete. It therefore suffices to show that 
\[ 
\m{\Lambda_k^*\mathbb{L}}_{R/k}^{\textup{alg}}[*\sigma] \simeq \lim_{\Delta} \m{\Lambda_R^*\mathbb{L}^{\textup{alg}}}_{R/S^{\otimes_{k}\bullet+1}}[*\sigma]
\]
but since each side is a constant Mackey functor it suffices to check this on the underlying, where it follows as in the proof of~\cite[Theorem~5.0.2]{HRW22}. 
\end{proof}

\begin{remark}
It would be quite interesting to compare the more general construction of the HKR filtration in~\cite{Yan25} 
to our motivic filtration on Real topological Hochschild homology, but this is beyond the scope of the present paper. 
In particular, it would be desirable to remove the assumption that $1/2\in k$. 
See~\cite[\S~7.4]{Yan25} for some discussion of how this assumption affects the computations. 
\end{remark}

\section{Real Hochschild homology}\label{sec:THR}
In this section, we compute the Real topological Hochschild homology of $\BPR\langle n\rangle$ and its motivic filtration.

\subsection{Real topological Hochschild homology}\label{thr}
We will be interested in the cohomology of the $\RO(C_{2})$-graded Mackey functor-valued 
Hopf algebra\footnote{Since \(S^\rho\) is equivariantly connected, this Hopf algebroid is in fact a Hopf algebra.}
\[ 
\left(\upi_{\star}\MUR, \upi_{\star} S^{\rho}\otimes \MUR\right) \,. 
\]
We first recall some results from \cite{DMPR21} and~\cite{HHKWZ20}. 

\begin{prop}\label{prop:thrmur}
There is an isomorphism of $\upi_{\star}\MUR$-algebras 
\[ 
    \upi_{\star}\THR(\MUR)\cong \upi_{\star}\MUR\langle \bar{\lambda}_{i}^{\prime} : i\ge 1 \rangle 
\]
where $| \bar{\lambda}_{i}^{\prime}|=\rho i  +\sigma $. 
Moreover, there is an isomorphism of $\upi_{\star}\MWR$-algebras 
\[ 
\upi_{\star}\THR(\MWR)=\upi_{\star}\MWR\langle \olambda'_i,\olambda''_j : i\ge 1, j\in J \rangle
\]
where $|\olambda'_i|=\rho i+\sigma$ and $|\olambda''_j|=k_j\rho +\sigma$ where $J$ is the same indexing set as in Proposition~\ref{prop:MW}. 
\end{prop}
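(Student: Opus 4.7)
The plan is to obtain the $\MUR$ formula from the literature and then bootstrap to $\MWR$ via the Real Hochschild--May spectral sequence of Section~\ref{sec:RHMSS}.

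For $\THR(\MUR)$, the identification is essentially the main computation of Dotto--Moi--Patchkoria--Reeh~\cite{DMPR21}, so I would simply recall it. The only additional observation worth recording is that the exterior generators can be realized as $\bar{\lambda}_i' = \sigma^{\rho} \bar{b}_i$ via the equivariant suspension map of Definition~\ref{def:rho-suspension}; by the conventions of Section~\ref{sec:suspension}, this pins the degree down as $|\bar{b}_i| + \sigma = i\rho + \sigma$. This realization will be convenient both for the $\MWR$ case and for naturality arguments later in the paper.

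For $\THR(\MWR)$, I would run the Real Hochschild--May spectral sequence for the regular slice filtration of $\MWR$. Since $\MWR$ is strongly even (Proposition~\ref{prop:MW}), Remark~\ref{rem:slices} identifies the associated graded as the polynomial $H\m{\mathbb{Z}}$-algebra $\gr^{*}\MWR \simeq H\m{\mathbb{Z}}[\bar{b}_i, \bar{c}_j]$ on the generators of Proposition~\ref{prop:MW}, which all sit in $\rho$-multiple degrees. The Real HKR theorem of Hornbostel--Park~\cite{HP23} and Yang~\cite{Yan25} then computes
\[
\pi_{\star} \THR(\gr^{*}\MWR) \;\cong\; \pi_{\star}\gr^{*}\MWR \; \langle \sigma^{\rho}\bar{b}_i,\, \sigma^{\rho}\bar{c}_j \rangle,
\]
with the exterior generators in degrees $i\rho + \sigma$ and $k_j \rho + \sigma$. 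I expect the resulting spectral sequence
\[
\pi_{\star} \THR(\gr^{*}\MWR) \;\Longrightarrow\; \pi_{\star} \THR(\MWR)
\]
to collapse at $E^{2}$: the polynomial classes are permanent cycles because they lift to $\pi_{\star}\MWR$, and the exterior classes lift as $\sigma^{\rho}\bar{b}_i$ and $\sigma^{\rho}\bar{c}_j$ via Definition~\ref{def:rho-suspension} applied directly to $\MWR$. Multiplicative extensions involving the $\bar{\lambda}_i'$'s are controlled by the map $\THR(\MUR) \to \THR(\MWR)$ together with the $\MUR$ case; strong evenness of $\MWR$ reduces the remaining extension questions to the underlying non-equivariant computation, which is a classical HKR-type calculation.

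The main obstacle I foresee is the precise identification of $\pi_{\star} \THR$ of a polynomial $H\m{\mathbb{Z}}$-algebra on $\rho$-multiple generators as the stated exterior algebra---in particular, confirming the $\RO(C_2)$-bidegree of the exterior generators as $i\rho + \sigma$ rather than some other shift of $|\bar{b}_i|$ (for instance $(i+1)\rho - 1$). This amounts to matching the grading conventions of~\cite{HP23, Yan25} (formulated for Mackey-functor valued rings) with the $\sigma^{\rho}$ operator from Definition~\ref{def:rho-suspension}, controlled by Lemma~\ref{HWlemma}. Once that bookkeeping is in hand, collapse and the absence of differentials and extensions follow formally from strong evenness and comparison with the $\MUR$ case.
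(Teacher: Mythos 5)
Your reference to~\cite{DMPR21} for the $\MUR$ case is fine. For $\MWR$, however, your proposed $E^1$-page is incorrect, and the gap is not easily patched.

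The Real Hochschild--May spectral sequence for the slice filtration $P_\bullet\MWR$ has $E^1 = \upi_\star\THR(\gr^*\MWR)$, where $\THR$ is the \emph{absolute} Real topological Hochschild homology over the $C_2$-sphere. The Real HKR theorem of~\cite{HP23,Yan25}, on the other hand, computes the \emph{relative} Hochschild homology $\mathrm{HR}(\m{R}/\m{k}) = \THR(\m{R}) \otimes_{\THR(\m{k})}\m{k}$. Since $\gr^*\MWR$ is a graded $H\mZ$-algebra (its weight-zero piece is $H\mZ$), the absolute $\THR(\gr^*\MWR)$ retains the full contribution of $\THR(H\mZ)$, whose underlying homotopy $\pi_{2k-1}\THH(\bZ) \cong \bZ/k$ is all torsion. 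Your formula
\[
\pi_\star\THR(\gr^*\MWR) \cong \pi_\star\gr^*\MWR\,\langle\sigma^\rho\bar{b}_i,\, \sigma^\rho\bar{c}_j\rangle
\]
silently sets all of that to zero: it is the formula for $\mathrm{HR}(\gr^*\MWR/H\mZ)$, not for $\THR(\gr^*\MWR)$. As a consequence, the Hochschild--May spectral sequence would have to annihilate the entire torsion of $\THR(H\mZ)$ via differentials, and nothing in your proposal addresses this, nor is it clear there are suitable sources for such differentials in your proposed $E^1$-page.

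The paper's proof avoids ever evaluating $\THR$ on an Eilenberg--MacLane spectrum. It uses the Thom spectrum description $\THR(\MWR) \simeq \MWR \otimes B^\sigma\Omega^\infty\Sigma^\rho\MUR_+$ from~\cite{HHKWZ20}, then filters by $(P_\bullet\MWR) \otimes B^\sigma\Omega^\infty\Sigma^\rho\MUR_+$. This is a slice filtration of $\MWR$ tensored with a \emph{fixed} $C_2$-space --- not a Hochschild--May filtration --- so the associated graded is computed by the $H\mZ$-homology of the twisted bar construction via~\cite{HH18} and~\cite{Wil73}, which is already torsion-free, and the collapse argument is clean. A secondary issue: your identification $\gr^*\MWR \simeq H\mZ[\bar{b}_i,\bar{c}_j]$ is asserted at the level of graded $C_2$-$\bE_\infty$-algebras, which is a considerably stronger claim than the homotopy-group statement of Proposition~\ref{prop:MW} and would itself need justification if the rest of the HKR input were valid.
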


\begin{proof}
By~\cite[Corollary 7.1.3,~Lemma 7.2.1,~Lemma~7.2.2]{HHKWZ20}, we can identify $\THR(\MUR)$ and $\THR(\MWR)$ with 
\[ \MUR\otimes B^{\sigma}\Omega^{\infty}\MUR_+ 
\text{ and } \MWR\otimes B^{\sigma}\Omega^{\infty}\Sigma^{\rho}\MUR_+
\]
respectively. Note that we can identify 
\[ 
\MUR\otimes B^{\sigma}\Omega^{\infty}\MUR_+\simeq \MUR\otimes B^{\sigma}\BUR_+ \,.
\]
At the underlying level, $\THR(\MUR)^e\simeq \THH(\MU)$, so it suffices to check that the we have the correct 
$C_2$-geometric fixed points. On $C_2$-geometric fixed points, we note that 
\[
\Phi^{C_2}\THR(\MUR)\simeq \MO\otimes \OO_+
\]
and the result then follows because the rings $H_*(\OO;\bF_2)$ is exterior on classes in the relevant degrees 
by~\cite[Theorem~7.1]{Mil74}. This proves the first sentence. 

By~\cite[Theorem~2]{HH18}, we know that $H\mZ\otimes \Omega^{\infty}\Sigma^{\rho}\MUR_+$ is strongly even. 
The underlying was computed in~\cite[Theorem~3.3,~Corollary~3.4]{Wil73}. 
We conclude that $\pi_{\star}^{C_2}H\mZ\otimes \Omega^{\infty}\Sigma^{\rho}\MUR_+$ is polynomial on classes in 
$\rho$-multiple degrees. 

We claim that the computation 
\[
\pi_{\star}^{C_2}(H\mZ\otimes B^{\sigma}\Omega^{\infty}\Sigma^{\rho}\MUR_+)
\cong \pi_{\star}H\mZ\langle \overline{\lambda}'_i,\overline{\lambda}''_j : i\ge 1, j\in J \rangle
\]
follows from the $\RO(C_2)$-graded twisted bar spectral sequence~\cite[\S~6.5]{Pet24}, as we now explain. 
We follow the same strategy as \cite[Example~6.5]{Pet24}, the key point being that $\Omega^\infty\Sigma^{\rho}\MUR$ 
has $H\mZ$-free homology in the sense of~\cite{Hil22}, by~\cite[Theorem~2]{HH18}. 
The fact that the spectral sequence collapses at the $\EE_1$-page can be checked on underlying. 
In particular, on underlying we know that $H\bZ_*\Omega^{\infty}\Sigma^{2}\MU_+$ is polynomial on even degrees 
by~\cite[Theorem~3.3,~Corollary~3.4]{Wil73}. 
The bar spectral sequence computing $H\bZ_*B\Omega^{\infty}\Sigma^{2}\MU_+$ has $\EE_1$-page an exterior algebra on classes 
in degree one more that the generators of $H\bZ_*\Omega^{\infty}\Sigma^{2}\MU_+$.  
The differentials on these algebra generators land in zero groups and therefore the spectral sequence collapses 
at the $\mathrm{E}_1$-page. Any differentials in the twisted bar spectral sequence would affect the known underlying abutment, 
which would lead to a contradiction. 
The fact that there are no multiplicative extensions follows from examining the $\mathrm{RO}(C_2)$-grading. 

We then apply the spectral sequence associated to the filtration 
\[ 
(P_{\bullet}\MWR)\otimes B^{\sigma}\Omega^{\infty}\Sigma^{\rho}\MUR_+
\]
which collapses at the $E_2$-term to produce the result. 
To see that it collapses, note that the slice spectral sequence for $\MWR$ collapses by 
Proposition~\ref{prop:MW} and any differentials would contradict the known underlying computation. 
The fact that there are no hidden multiplicative extensions can be checked on the underlying. 
\end{proof}

The following corollary is immediate by base change. 

\begin{cor}
There is an isomorphism of $\pi_{\star}^{C_2}\mF_{2}$-algebras 
\[
    \pi_{\star}^{C_2}\THR(\MUR;\mF_{2})\cong 
	\pi_{\star}^{C_2}\mF_{2}\langle  \lambda_{i}^{\prime} : i\ge 1 \rangle
\]
and an isomorphism of $\pi_{\star}^{C_2}\mF_2$-algebras 
\[
    \pi_{\star}^{C_2}\THR(\MWR;\mF_{2})\cong 
	\pi_{\star}^{C_2}\mF_{2}\langle  \lambda_{i}^{\prime} ,\lambda''_j: i\ge 1 ,j\in J\rangle \,.
\]
\end{cor}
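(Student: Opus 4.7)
The plan is to derive both isomorphisms from Proposition~\ref{prop:thrmur} by base change along the canonical $C_2$-$\bE_{\infty}$-ring maps $\MUR\to H\mF_2$ and $\MWR\to H\mF_2$ (the latter supplied by Theorem~\ref{thm: examples of Real cyclotomic bases}). Concretely, I would identify
\[
\THR(\MUR;\mF_2)\simeq \THR(\MUR)\otimes_{\MUR}H\mF_2
\quad\text{and}\quad
\THR(\MWR;\mF_2)\simeq \THR(\MWR)\otimes_{\MWR}H\mF_2,
\]
and then reduce the computation of $\pi_{\star}^{C_2}$ to a purely algebraic base change.

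The key observation is that the descriptions in Proposition~\ref{prop:thrmur} exhibit $\pi_{\star}^{C_2}\THR(\MUR)$ and $\pi_{\star}^{C_2}\THR(\MWR)$ as \emph{free} modules over $\pi_{\star}^{C_2}\MUR$ and $\pi_{\star}^{C_2}\MWR$ respectively, with bases given by the monomials in the exterior generators $\bar\lambda'_i$ (resp. $\bar\lambda'_i,\bar\lambda''_j$). Freeness ensures that the relevant Tor groups vanish, so the bar/K\"unneth spectral sequence computing $\pi_{\star}^{C_2}$ of the tensor product collapses at the $E_2$-page to give
\[
\pi_{\star}^{C_2}\THR(\MUR;\mF_2)\cong \pi_{\star}^{C_2}H\mF_2\otimes_{\pi_{\star}^{C_2}\MUR}\pi_{\star}^{C_2}\THR(\MUR),
\]
and similarly for $\MWR$. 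Substituting the formulas from Proposition~\ref{prop:thrmur} and using that the map $\pi_{\star}^{C_2}\MUR\to \pi_{\star}^{C_2}H\mF_2$ kills all polynomial generators yields the stated $\pi_{\star}^{C_2}\mF_2$-algebra structure, with $\lambda'_i$ and $\lambda''_j$ defined as the images of $\bar\lambda'_i$ and $\bar\lambda''_j$.

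Since the multiplicative structure on $\pi_{\star}^{C_2}\THR(\MUR)$ and $\pi_{\star}^{C_2}\THR(\MWR)$ is already exterior on the relevant generators, no hidden multiplicative extensions can appear after base change, and there is essentially nothing further to check. The only mild subtlety, and the step I would double-check, is the identification of the coefficient decoration $(-;\mF_2)$ with $(-)\otimes_{\MUR}H\mF_2$ (respectively over $\MWR$), which is consistent with the conventions used in Proposition~\ref{prop:thrmur} and with the target of the corollary being a $\pi_{\star}^{C_2}\mF_2$-algebra rather than a $\pi_{\star}^{C_2}(\MUR\otimes H\mF_2)$-algebra.
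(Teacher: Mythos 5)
Your proposal is correct and matches the paper's argument: the paper dispenses with this corollary in one line, stating that it is ``immediate by base change'' from Proposition~\ref{prop:thrmur}, which is exactly your route. Your added justification — that freeness of $\pi_{\star}^{C_2}\THR(\MUR)$ over $\pi_{\star}^{C_2}\MUR$ (resp.\ over $\pi_{\star}^{C_2}\MWR$) collapses the K\"unneth spectral sequence, and that the coefficient notation means base change along $\MUR\to H\mF_2$ — is consistent with the conventions used elsewhere in the paper (e.g.\ the proofs of Proposition~\ref{prop:thr} and Theorem~\ref{thm:s-even}).
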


We also recall a result of \cite{DMPR21}. 

\begin{prop}[{\cite[\S~5.3]{DMPR21}}, {\cite[Corollary~7.2.3]{HHKWZ20}}]
There is an isomorphism of $\pi_{\star}^{C_2}\mF_2$-algebras
\[ 
    \pi_{\star}^{C_2}\THR(\mF_2)=\pi_{\star}^{C_2}\mF_2[\omu_{0} ]
\]
where $|\omu_{0}|=\rho$. 
\end{prop}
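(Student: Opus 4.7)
The strategy is to leverage the fact that $\THR(\mF_2)$ is strongly even, reducing the $\RO(C_2)$-graded computation to Bökstedt's classical result via the slice filtration.

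First, I would verify strong evenness. Since $\mathbb{F}_2$ is a perfect field of characteristic two, it is quasiregular semiperfectoid: the identity $\mathbb{F}_2 \to \mathbb{F}_2$ is a perfectoid surjection, and the Frobenius on $\mathbb{F}_2/2 \cong \mathbb{F}_2$ is trivially surjective. Park's theorem~\cite[Theorem~8.2]{Par23} then shows that $\THR(\mF_2; \bZ_2)$ is strongly even. Since $\THR(\mF_2)$ is a module over the unit $\mF_2$, it is $2$-torsion and hence identifies with its $2$-completion, so $\THR(\mF_2)$ itself is strongly even.

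Next, by Proposition~\ref{prop:slices} and Remark~\ref{rem:slices}, the slice associated graded of a strongly even $C_2$-spectrum $E$ is $\bigoplus_k \Sigma^{k\rho}\m{\pi_{2k}^e E}$. Combined with Bökstedt's computation $\pi_\ast \THH(\mathbb{F}_2) = \mathbb{F}_2[\mu_0]$ with $|\mu_0|=2$, the slice spectral sequence for $\THR(\mF_2)$ collapses at $\EE_2$ and yields
\[
\upi_\star \THR(\mF_2) \;\cong\; \bigoplus_{k \geq 0} \upi_\star(\Sigma^{k\rho}\mF_2)
\]
additively. Lift the Bökstedt class $\mu_0$ to a class $\omu_0 \in \pi_{\rho}^{C_2}\THR(\mF_2)$; this is possible because the projection onto the $k=1$ slice summand $\Sigma^{\rho}\mF_2$ followed by $\res_e^{C_2}$ identifies the relevant component of $\pi_{\rho}^{C_2}\THR(\mF_2)$ with $\pi_2^e \THH(\mathbb{F}_2) = \mathbb{F}_2 \cdot \mu_0$.

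Finally, $\omu_0^k$ restricts to $\mu_0^k \neq 0$ in $\pi_{2k}^e \THH(\mathbb{F}_2)$, and therefore generates the $k$-th slice summand $\Sigma^{k\rho}\mF_2$ of $\THR(\mF_2)$. The induced map $\pi_\star^{C_2}\mF_2[\omu_0] \to \upi_\star \THR(\mF_2)$ is thus an isomorphism of $\pi_\star^{C_2}\mF_2$-algebras. The main obstacle is establishing strong evenness of $\THR(\mF_2)$; if one wished to avoid citing~\cite{Par23}, strong evenness could alternatively be proved via a Real Bökstedt-type spectral sequence built from the Real dual Steenrod algebra of Hu--Kriz, as in~\cite{DMPR21} and~\cite{HKZ20}.
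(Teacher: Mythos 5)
Your strategy — establish strong evenness and then read off the answer from Bökstedt's computation via the slice filtration — is sound and genuinely different from the cited references, which compute $\pi_\star^{C_2}\THR(\mF_2)$ via a Real Bökstedt-type spectral sequence built from the Real dual Steenrod algebra. Your route is shorter but leans on heavier machinery (Park's prismatic result); the cited proofs are more self-contained and logically prior.

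There is, however, a gap in the middle. For strongly even $C_2$-spectra the slice spectral sequence does \emph{not} collapse in general: the slice spectral sequences of $\MUR$ and $\BPR\langle n\rangle$ are the canonical examples of strongly even spectra with plenty of nontrivial slice differentials, so asserting collapse at $\EE_2$ from strong evenness plus Bökstedt alone is not justified. What actually makes your argument work is that $\THR(\mF_2)$ is an $\mF_2$-algebra, so the slice spectral sequence is $\pi_\star^{C_2}\mF_2$-linear with $\pi_\star^{C_2}\mF_2$ concentrated in slice filtration zero; once the lift $\omu_0$ exists (using strong evenness to identify $\pi_\rho^{C_2}\cong \pi_2^e$), each $\omu_0^k$ is a permanent cycle detected in slice $2k$, and linearity then forces every class on the $\mathrm{E}_1$-page to be a permanent cycle. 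You essentially assemble these ingredients in your last paragraph, but you invoke the collapse a paragraph earlier, before they are available — the order should be reversed. The cleanest repair is to cite Lemma~\ref{stronglyevenlemma}, which the paper proves exactly for this purpose by producing an $\mathbb{E}_1$-$\mF_2$-algebra map from $\mF_2\otimes \mathrm{Free}_{\bE_1}(S^{\rho})$ and checking it is an equivalence on slices; given strong evenness and Bökstedt, that lemma yields the statement at once. You should also double-check that citing Park for strong evenness is not circular, since Park's proof may itself take the computation of $\THR(\mF_p)$ as an input — you flag this, and the Real Bökstedt route is the safe fallback.
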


\begin{convention}
We fix ($2$-complete) $\mathbb{E}_\infty$-forms of $\BPR\langle n\rangle$ for $-1 \leq n \leq 2$ as follows. Let $\BPR\langle -1\rangle:=\mF_2$, $\BPR\langle 0\rangle :=\mZ_2$, $\BPR\langle 1\rangle :=(\kr)_2$ and $\BPR\langle 2\rangle:=\tmf_1(3)_2$.  
\end{convention}

\begin{prop}\label{prop:thr}
Let $0 \le n\le 2$. 
Then there is a canonical isomorphism 
\[ 
	\upi_{\star}\mF_2[\bar{\mu}^{2^{n+1}}]\langle \olambda_1,\cdots ,\olambda_{n+1} \rangle 
 \cong \upi_{\star}\THR(\BPRn;\mF_2)  \,.
\]
\end{prop}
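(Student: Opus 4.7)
My plan is to combine the Real Hochschild--May spectral sequence (Section~\ref{sec:RHMSS}) with the computations of $\THR(\MUR;\mathbb{F}_2)$ and $\THR(\mathbb{F}_2)$ from the preceding propositions, following the template of the Angeltveit--Rognes computation of $\pi_\ast\THH(BP\langle n\rangle;\mathbb{F}_p)$ in the non-equivariant setting.

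First, I would apply the Real Hochschild--May spectral sequence to the regular slice filtration $P_\bullet\BPR\langle n\rangle$ smashed with $\mathbb{F}_2$. Since $\BPR\langle n\rangle$ is strongly even with $\upi_{k\rho}\BPR\langle n\rangle$ the constant Mackey functor $\underline{\mathbb{Z}_{(2)}[v_1,\ldots,v_n]_{2k}}$, Remark~\ref{rem:slices} identifies
\[\gr^\ast P_\ast\BPR\langle n\rangle\otimes\mathbb{F}_2 \simeq H\underline{\mathbb{F}_2}[\bar{v}_1,\ldots,\bar{v}_n]\]
as a graded $C_2$-$\bE_{\infty}$-ring with $|\bar{v}_i|=(2^i-1)\rho$. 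The Real Hochschild--May spectral sequence reads
\[\upi_\star\THR\bigl(H\underline{\mathbb{F}_2}[\bar{v}_1,\ldots,\bar{v}_n]\bigr)\Longrightarrow\upi_\star\THR(\BPR\langle n\rangle;\mathbb{F}_2),\]
and by the twisted bar spectral sequence argument used in the proof of Proposition~\ref{prop:thrmur}, its $E_2$-page is generated over $\upi_\star\mathbb{F}_2$ by the polynomial classes $\bar{v}_1,\ldots,\bar{v}_n$ and $\bar\mu_0$ (with $|\bar\mu_0|=\rho$) together with the exterior $\sigma^\rho$-suspensions $\sigma^\rho\bar{v}_i$ of degree $2^i\rho-1$.

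Next, I would identify permanent cycles and differentials via two comparison maps. The Real orientation $\MUR\to\BPR\langle n\rangle$ induces $\THR(\MUR;\mathbb{F}_2)\to\THR(\BPR\langle n\rangle;\mathbb{F}_2)$; combined with Proposition~\ref{prop:thrmur}, this shows that for $1\le s\le n+1$ the classes $\bar{\lambda}_s:=\lambda'_{2^s-1}$ of degree $(2^s-1)\rho+\sigma=2^s\rho-1$ are permanent cycles, yielding all the claimed exterior generators (notably including $\bar{\lambda}_{n+1}$, which is not directly in the image of the $\sigma^\rho\bar{v}_i$ on the $E_2$-page). The augmentation $\BPR\langle n\rangle\to\mathbb{F}_2$ induces a map to $\THR(\mathbb{F}_2)=\pi_\star^{C_2}\mathbb{F}_2[\bar\mu_0]$, under which the polynomial generator in the abutment must map to $\bar\mu_0^{2^{n+1}}$; this justifies the notation $\bar\mu^{2^{n+1}}$. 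Following the Angeltveit--Rognes template, I expect the differentials to produce relations of the shape $d(\bar\mu_0^{2^i})\doteq \bar{v}_{i+1}\cdot\bar{\lambda}_{i+1}$ (modulo decomposables) for $0\le i\le n$, which forces each $\bar{v}_i$ to become decomposable in the abutment and $\bar\mu^{2^{n+1}}$ to be the lowest surviving polynomial generator.

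The main obstacle is the rigorous verification of the Real Hochschild--May differentials. My strategy is to reduce, via Remark~\ref{rem:compatibility}, to the underlying non-equivariant Hochschild--May spectral sequence for $\THH(BP\langle n\rangle;\mathbb{F}_2)$, whose differentials are determined by Angeltveit--Rognes. A gap argument using the strong evenness criterion of Lemma~\ref{lem:criteria} will then rule out equivariantly exotic differentials on the strongly-even $E_2$-page, so that the Real differentials lift uniquely from the non-equivariant ones. This pins down all multiplicative extensions and yields the stated algebra structure.
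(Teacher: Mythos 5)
Your route through the Real Hochschild--May spectral sequence of Section~\ref{sec:RHMSS} is genuinely different from the paper's proof of Proposition~\ref{prop:thr}. The paper instead builds an explicit candidate model of $\mF_2[\omu^{2^{n+1}}]\langle\olambda_1,\ldots,\olambda_{n+1}\rangle$ as an iterated relative tensor product of free $\bE_1$-algebras, maps it to $\THR(\BPRn;\mF_2)$, and checks the map is an equivalence separately on underlying spectra (quoting the known non-equivariant computation) and on geometric fixed points (via a K\"unneth spectral sequence for $\Phi^{C_2}\mF_2\otimes_{\BPn}\Phi^{C_2}\BPRn$). That isotropy-separation step is not a convenience; it is exactly where the equivariantly exotic information lives, and your argument has no substitute for it.

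The concrete gap is in your final step. You propose to import the differentials from the underlying Hochschild--May spectral sequence and then rule out ``equivariantly exotic'' differentials by a gap argument using strong evenness (Lemma~\ref{lem:criteria}). But the $\EE_2$-page you describe is not strongly even: it contains the classes $\sigma^{\rho}\ol{v}_s$ (and the $\olambda_s$ in the abutment) in degrees $2^{s}\rho-1$, which are precisely the degrees strong evenness forbids. More seriously, $\pi_{\star}^{C_2}\mF_2$ contains the negative cone (the $\theta$-divisible classes in Figure~\ref{fig1}), all of which restrict to zero on the underlying spectrum; differentials supported on, or landing in, multiples of these classes --- as well as hidden $a_\sigma$-extensions and hidden transfers in the abutment --- are invisible to $\res_e^{C_2}$, so comparison with the Angeltveit--Rognes computation can neither detect nor exclude them. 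This is exactly the information the paper extracts from $\Phi^{C_2}$, where the $a_\sigma$-local part is seen. A related symptom: your argument never uses the hypothesis $n\le 2$ except implicitly through the existence of the $C_2$-$\bE_\infty$-structure, whereas the paper needs it to exclude multiplicative extensions in the geometric-fixed-point K\"unneth spectral sequence. There is also a smaller setup issue: the associated graded of $P_{\bullet}\BPRn\otimes\mF_2$ is $P_*^*\BPRn\otimes H\mF_2$, not $H\mF_2[\ol{v}_1,\ldots,\ol{v}_n]$, so your $\EE_1$-page identification requires handling the coefficients relatively (via $\THR(A;\mF_2)=\THR(A)\otimes_A\mF_2$), which the sketch elides. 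The approach could be salvaged by running the $a_\sigma$-localized Hochschild--May spectral sequence in parallel, but at that point you are reconstructing the paper's geometric-fixed-point argument.
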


\begin{proof}
We will construct a map 
\[
\mF_2[\bar{\mu}^{2^{n+1}}]\langle \olambda_1,\cdots ,\olambda_{n+1} \rangle\overset{\simeq}{\longrightarrow} \THR(\BPRn; \mF_2) 
\]
and show that it is an equivalence by checking that it is an equivalence on underlying spectra and geometric fixed points. 
Here we write 
\[
\mF_2[x]:=\mF_2\otimes \mathrm{Free}_{\mathbb{E}_1}(S^{|x|})
\] 
for the free $\mathbb{E}_1$ algebra on a class in degree $|x|$ and define
\[
\mF_2[\omu^{2^{n+1}}]\langle \olambda_1,\cdots ,\olambda_{i}\rangle 
:=\mF_2[\omu^{2^{n+1}}]\otimes_{\mF_2[\ot_1,\cdots, \ot_{n+1}]}\mF_2 
\]
where $|\bar{\mu}^{2^{n+1}}|=2^{n+1}\rho$ and $|\ot_i|=\rho (2^i-1)$.

We consider the multiplicative K{\"u}nneth spectral sequence for
\[
	\Phi^{C_2}\THR(\BPRn; \mF_2) \simeq \Phi^{C_2} \mF_2 \otimes_{\BPn} \Phi^{C_2} \BPRn \,,
\]
which has signature
\[
	E_2^{s,t} = \Tor_{s,t}^{\pi_*\BPn}(\pi_*\Phi^{C_2} \mF_2, \pi_*\Phi^{C_2}\BPRn) 
	\Rightarrow \pi_*\Phi^{C_2}\THR(\BPRn; \mF_2)\,.
\]
We have
\[
	\pi_*\Phi^{C_2}\mF_2 \cong \bF_2[x]
\]
with $|x|=1$ and
\[
	\pi_*\Phi^{C_2}\BPRn \cong \bF_2[x_{2^{n+1}}]
\]
with $|x_{2^{n+1}}|=2^{n+1}$ and so 
\[
	E_2^{**} = \Tor_{**}^{\bZ_{(2)}[v_1,\ldots,v_n]}(\bF_2[x],\bF_2)[x_{2^{n+1}}] 
	\cong \Lambda_{\bF_{2}}(\sigma v_0,\sigma v_1,\ldots, \sigma v_n) \otimes \bF_2[x, x_{2^{n+1}}] \,.
\]
The spectral sequence collapses at the $\EE_2$-page because the indecomposable algebra generators are in 
K\"unneth filtration $\le 1$. Since $n\le 2$, there is no room for  multiplicative extensions. 

We therefore have permanent cycles $\sigma v_i$, $x$, and $x_{2^{n+1}}$. We can therefore produce a map 
\[ 
\mF_2[S^{2^{n+1}\rho}]\otimes_{\mF_2[S^{2\rho}]}\mF_2\otimes_{\mF_2[S^{2^2\rho}]}\cdots 
\otimes_{\mF_2[S^{2^{n+1}\rho}]}\mF_2\to \THR(\BPR\langle n\rangle;\mF_2)
\]
which is an equivalence on geometric fixed points. 
On underlying this is an equivalence by~\cite[Proposition~2.7]{AKCH21}. 
Consequently, the map is an equivalence as desired. 
\end{proof}

\subsection{The motivic filtration on $\THR$}\label{sec:motivic-thr}
By Section~\ref{sec:descent}, seff covers are of fundamental importance for understanding the motivic filtration. 
We therefore begin by determining a seff cover of $\THR(\BPR\langle n\rangle)$. 

\begin{prop}\label{prop:seff-MWR}
The map  $\THR(\MWR)\to \MWR$ is seff. 
\end{prop}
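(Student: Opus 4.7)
The plan is to verify the seff condition directly using the explicit computation of $\upi_{\star}\THR(\MWR)$ from Proposition~\ref{prop:thrmur}. Let $C$ be a strongly even $C_{2}$-$\bE_{\infty}$-ring equipped with a map $\THR(\MWR)\to C$; we must show that $C\otimes_{\THR(\MWR)}\MWR$ is strongly even and faithfully flat over $C$. By Proposition~\ref{prop:thrmur}, $\upi_{\star}\THR(\MWR)\cong\upi_{\star}\MWR\langle\olambda'_{i},\olambda''_{j}\rangle$ is free as a $\upi_{\star}\MWR$-module on exterior generators in degrees $|\olambda'_{i}|=i\rho+\sigma=(i+1)\rho-1$ and $|\olambda''_{j}|=k_{j}\rho+\sigma=(k_{j}+1)\rho-1$. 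Since $\pi_{k\rho-1}^{C_{2}}C=0$ for every integer $k$, these generators necessarily map to zero in $\upi_{\star}C$.

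I would then compute the pushout via a Koszul-type argument. Since $\upi_{\star}\THR(\MWR)$ is exterior over $\upi_{\star}\MWR$ on the generators $\olambda'_{i},\olambda''_{j}$, the $\THR(\MWR)$-module $\MWR$ admits a presentation as an iterated cofiber killing the $\olambda$-generators. Because these generators act trivially on both $\upi_{\star}C$ and $\upi_{\star}\MWR$ (viewed as $\upi_{\star}\THR(\MWR)$-modules), the Tor spectral sequence computing $\upi_{\star}(C\otimes_{\THR(\MWR)}\MWR)$ has $\EE_{2}$-page
\[
\upi_{\star}C\otimes\Gamma\bigl[\bar{y}'_{i},\bar{y}''_{j}\bigr],
\]
where $\Gamma$ denotes a divided power algebra on generators in the $\rho$-multiple internal degrees $|\olambda'_{i}|+1=(i+1)\rho$ and $|\olambda''_{j}|+1=(k_{j}+1)\rho$.

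Because this $\EE_{2}$-page is concentrated in $\rho$-multiple internal degrees, there is no room for differentials or for hidden multiplicative extensions that could produce classes in the forbidden degrees $k\rho-1,k\rho-2,k\rho-3$. Thus $\upi_{\star}(C\otimes_{\THR(\MWR)}\MWR)$ is a free $\upi_{\star}C$-module on generators in $\rho$-multiple degrees, so $C\otimes_{\THR(\MWR)}\MWR$ is strongly even. Faithful flatness over $C$ reduces to the underlying statement that $\THH(\MW)\to\MW$ is evenly faithfully flat, which follows from~\cite[Proposition~4.2.4]{HRW22} applied to the polynomial structure of $\MW$ over $\MU$ in even degrees.

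The main obstacle will be giving a rigorous interpretation of the Koszul-type presentation of $\MWR$ over $\THR(\MWR)$ at the $\bE_{\infty}$-level and verifying the absence of hidden multiplicative extensions in the $\RO(C_{2})$-grading; both can be controlled via the degree-parity argument indicated above and by comparison with the known underlying computation.
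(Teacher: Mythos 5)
Your proposal is correct and follows essentially the same strategy as the paper's proof: start from the computation of $\upi_{\star}\THR(\MWR)$ as exterior over $\upi_{\star}\MWR$, observe that the exterior generators lie in degrees of the form $k\rho-1$ and so die in any even $C_2$-ring $C$, run the K\"unneth/Tor spectral sequence to get $\pi_{\star}^{C_2}C\otimes\Gamma$ with divided-power generators in $\rho$-multiple degrees, and then invoke the nonequivariant even faithful flatness result from Hahn--Raksit--Wilson to finish. One small but genuine point of care in your write-up: you correctly include both families $\olambda'_{i}$ and $\olambda''_{j}$, whereas the paper's proof abbreviates to only the $\olambda'_{i}$ classes (an immaterial omission, since the same argument applies). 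Two minor differences worth flagging. First, the paper concludes the stronger statement that $C\otimes_{\THR(\MWR)}\MWR$ is a wedge of $\rho$-multiple suspensions of $C$ (that is, it implicitly verifies the ``pure'' condition, which implies seff), while you argue strong evenness and faithful flatness separately; this is equally valid and arguably cleaner, since it avoids needing to rule out hidden extensions. Second, your phrase ``there is no room for differentials'' is stronger than what your argument actually establishes or needs: differentials would not introduce classes in the forbidden degrees $k\rho-i$ (the $\EE_2$-page already vanishes there), so the abutment is strongly even regardless of whether the spectral sequence collapses; the freeness claim in the following sentence is not literally justified by this observation, but since faithful flatness in the sense of Definition~\ref{def:seff} is by definition a statement about the underlying graded ring $\bigoplus_n\pi_{2n}^e$, your separate reduction to the underlying eff statement~\cite[Prop.~4.2.4]{HRW22} handles it without needing freeness.
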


\begin{proof}
Let $C\ne 0$ be a strongly even $C_2$-$\bE_{\infty}$-ring equipped with a map~$\THR(\MWR)\longrightarrow C$ 
of $C_2$-$\bE_{\infty}$-rings and consider the $C_2$-$\bE_{\infty}$-$C$-algebra 
\[
C\otimes_{\THR(\MWR)} \MWR  \,.
\]
By Proposition~\ref{prop:thrmur}, there is an isomorphism of $\pi_{\star}^{C_2}\MWR$-algebras 
\[ 
    \pi_{\star}^{C_2}\THR(\MWR)\cong \pi_{\star}^{C_2}\MWR\langle \bar{\lambda}_{i}^{\prime} : i\ge 1 \rangle 
\]
where $| \bar{\lambda}_{i}^{\prime}|=\rho i  +\sigma $.
Since $C$ is even, the exterior generators map to zero in $\pi_{\star}^{C_2}C$ 
(note that $\rho i + \sigma = \rho (i+1)-1$). 
Consequently, 
\[
\pi_{\star}^{C_2}C\otimes \Gamma (d\bar{\lambda}_i)
\]
is the associated graded of a filtration on 
\[ 
\upi_{\star}(C\otimes_{\THR(\MWR)}\MWR)
\]
where $|d\bar{\lambda}_i|$ is divisible by $\rho$. 
We know that 
\[
C\to C\otimes_{\THR(\MWR)}\MWR 
\]
is faithfully flat on underlying and as a $C$-module by~\cite[Proposition~3.2.3,~Example~4.2.3]{HRW22}. 
This implies that 
\[
C\otimes_{\THR(\MWR)}\MWR
\]
is a wedge of $\rho$-multiple suspensions of $C$, proving the  that 
\[ 
\THR(\MWR)\longrightarrow \MWR
\] 
is seff. 
\end{proof}

\begin{convention}
In the remainder of Section~\ref{sec:motivic-thr}, we implicitly work in the $2$-complete setting and omit $2$-completion from our notation. We also write $\fil_{\sev}$ and $\gr_{\sev}$ in place of $\fil_{\sev,2}$ and $\gr_{\sev,2}$ respectively. 
\end{convention}

The following result is immediate by base change along the map 
\[ 
\THR(\MWR)\longrightarrow \THR(\BPR\langle n\rangle)
\]
induced by the map from Theorem~\ref{thm: examples of Real cyclotomic bases}. 

\begin{cor}
The map 
\[ 
\THR(\BPR\langle n\rangle)\longrightarrow \THR(\BPR\langle n\rangle/\MWR)
\]
is $2$-completely seff for $-1\le n\le 2$. 
\end{cor}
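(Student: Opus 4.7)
The plan is to identify the map $\THR(\BPR\langle n\rangle)\longrightarrow \THR(\BPR\langle n\rangle/\MWR)$ as a base change of the map $\THR(\MWR)\to \MWR$ from Proposition~\ref{prop:seff-MWR} and then invoke stability of the ($2$-completely) seff property under base change.

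First, I would unwind the definition
\[
\THR(\BPR\langle n\rangle/\MWR) := \THR(\BPR\langle n\rangle)\otimes_{\THR(\MWR)}\MWR
\]
so that the canonical map in the statement is visibly the cobase change of $\THR(\MWR)\to\MWR$ along the map $\THR(\MWR)\to\THR(\BPR\langle n\rangle)$ induced by the $C_2$-$\bE_\infty$-ring map $\MWR\to\BPR\langle n\rangle$ provided by Theorem~\ref{thm: examples of Real cyclotomic bases}. Since our chosen forms of $\BPR\langle n\rangle$ for $0\le n\le 2$ are $2$-complete by convention, these pushouts are naturally interpreted in the $2$-complete setting.

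Next, I would verify the routine observation that seff maps are stable under cobase change. Concretely, if $A\to B$ is seff and $A\to A'$ is any map of $C_2$-$\bE_\infty$-rings, then $A'\to A'\otimes_A B$ is seff: given any strongly even $C_2$-$\bE_\infty$-ring $C$ with a map $A'\to C$, the composite $A\to A'\to C$ exhibits $C$ as a strongly even $A$-algebra, so by seff-ness of $A\to B$ the tensor product $C\otimes_A B\simeq C\otimes_{A'}(A'\otimes_A B)$ is strongly even and faithfully flat over $C$. The same argument runs verbatim in the $2$-complete setting (applying $2$-completion throughout and testing against $C\in \CAlg^{C_2}_{\sev,2}$), giving stability of $2$-completely seff maps under $2$-completed cobase change.

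Combining these two steps with the remark that a seff map is automatically $2$-completely seff yields the claim. The main (mild) obstacle here is just bookkeeping: ensuring that one passes to $2$-completions at the correct stage so as to stay within $\CAlg^{C_2}_{\sev,2}$ when invoking Proposition~\ref{prop:seff-MWR}, but no new equivariant input is required beyond that proposition.
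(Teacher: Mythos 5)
Your proof is correct and takes essentially the same route as the paper, which disposes of this corollary in one line: base change the seff map $\THR(\MWR)\to\MWR$ of Proposition~\ref{prop:seff-MWR} along $\THR(\MWR)\to\THR(\BPR\langle n\rangle)$ (using the $C_2$-$\bE_\infty$-ring map $\MWR\to\BPR\langle n\rangle$ from Theorem~\ref{thm: examples of Real cyclotomic bases}). You have simply spelled out the cobase-change stability that the paper calls immediate.
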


\begin{lem}\label{stronglyevenlemma}
Suppose $E$ is a strongly even $C_2$-$\bE_{\infty}$-$\mF_2$-algebra such that $\pi_{*}^eE$ is a 
polynomial algebra with set of indecomposable algebra generators $\{x_i : i\in I\}$ in degree $|x_i|=2k_i$. Then 
\[
\pi_{\star}^{C_2}E\cong \pi_{\star}^{C_2}\mF_2[\ol{x}_i : i\in I]
\]
where $|\overline{x}_i|=\rho k_i$. If $F$ is a strongly even $C_2$-$\bE_{\infty}$-$\mF_2$-algebra and 
\[ 
\pi_*^eF\cong \bF_2[x_i : i\in I][u,u^{-1}]
\]
where $|x_i|=2k_i$ and $|u|=2m$, then 
\[
\pi_{\star}^{C_2}F\cong \pi_{\star}^{C_2}\mF_2[\ol{x}_i : i\in I][\ol{u},\ol{u}^{-1}]
\]
where $|\ol{x}_i|=\rho  k_i$ and $|\ol{u}|=\rho m$.
\end{lem}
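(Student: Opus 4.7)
The plan is to lift the underlying polynomial generators to $\RO(C_2)$-graded degree $\rho k_i$ using strong evenness, and then use a splitting argument for $H\mF_2$-modules to identify $\pi_\star^{C_2} E$ with the polynomial algebra on those lifts.

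First, since $E$ is strongly even, the restriction $\res_e^{C_2}\colon \pi_{k_i\rho}^{C_2} E \to \pi_{2k_i}^e E$ is an isomorphism, so we can choose lifts $\ol{x}_i \in \pi_{k_i\rho}^{C_2} E$ of each generator $x_i$. For each monomial $\alpha = (\alpha_i)_i$ in the generators, let $n(\alpha) = \sum_i k_i \alpha_i$ and let $\ol{x}^\alpha \in \pi_{n(\alpha)\rho}^{C_2} E$ denote the corresponding product, using the $C_2$-$\bE_\infty$-multiplication on $E$.

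Next, since $E$ is an $\mF_2$-algebra, these classes assemble into a map of $\mF_2$-module $C_2$-spectra
\[
\Phi \colon \bigoplus_\alpha \Sigma^{n(\alpha)\rho} \mF_2 \longrightarrow E,
\]
where the sum ranges over all monomials $\alpha$ in the indeterminates $\{x_i\}_{i \in I}$. On underlying, $\Phi^e$ is the canonical equivalence $\bigoplus_\alpha \Sigma^{2n(\alpha)} H\bF_2 \xrightarrow{\sim} E^e$ expressing the fact that the monomials $x^\alpha$ form a $\bF_2$-basis of the polynomial algebra $\pi_*^e E$.

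The key step is to upgrade $\Phi$ from an underlying equivalence to a $C_2$-equivariant equivalence. Both the source and target of $\Phi$ are strongly even: the target by assumption, and the source because each wedge summand $\Sigma^{k\rho} \mF_2$ is strongly even and strong evenness is closed under arbitrary wedges. For any map $f\colon X \to Y$ of strongly even $C_2$-spectra whose underlying map $f^e$ is an equivalence, Proposition~\ref{prop:slices} and Remark~\ref{rem:slices} identify $P_*^*(f)$ with the induced map on wedges $\bigoplus_k \Sigma^{k\rho} H\underline{\pi_{2k}^e X} \to \bigoplus_k \Sigma^{k\rho} H\underline{\pi_{2k}^e Y}$ of constant Mackey functors; since a map between constant Mackey functors is an isomorphism iff its underlying is, $P_*^*(f)$ is an equivalence, hence so is $f$ by completeness of the regular slice filtration. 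Applying this to $\Phi$ shows that $\Phi$ is an equivalence of $C_2$-spectra, and taking $\pi_\star^{C_2}$ of both sides gives the desired identification of $\pi_\star^{C_2} E$ as a free $\pi_\star^{C_2}\mF_2$-module on the monomials $\ol{x}^\alpha$. The multiplicative structure matches by construction, so $\pi_\star^{C_2} E \cong \pi_\star^{C_2}\mF_2[\ol{x}_i : i\in I]$.

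The case of $F$ is essentially identical: lift $u$ to $\ol{u} \in \pi_{m\rho}^{C_2} F$ and lift each $x_i$ to $\ol{x}_i$, and apply the same wedge-splitting argument, now indexed by monomials in the $\ol{x}_i$'s and both positive and negative powers of $\ol{u}$. The main obstacle is the verification that underlying equivalences between strongly even $C_2$-spectra are equivalences; this is handled as above using the explicit description of the slice associated graded for strongly even $C_2$-spectra.
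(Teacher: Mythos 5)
Your proof is correct, and it takes a genuinely different (more module-theoretic) route from the paper. The paper lifts each generator $x_i$ to $\pi_{\rho k_i}^{C_2}E$ and then uses the universal property of free $\bE_1$-$\mF_2$-algebras to build a single \emph{ring} map
\[
\mF_2\otimes\Bigl(\bigotimes_{i\in I}\mathrm{Free}_{\bE_1}(S^{\rho k_i})\Bigr)\longrightarrow E\,,
\]
which it then shows is an equivalence by comparing slices; multiplicativity is then automatic, and $\pi_\star^{C_2}$ is read off from the collapsing slice spectral sequence of the source. You instead construct an additive splitting, building a map of $\mF_2$-modules out of the wedge $\bigoplus_\alpha\Sigma^{n(\alpha)\rho}\mF_2$ indexed by monomials and using the slice criterion (Proposition~\ref{prop:slices}, Remark~\ref{rem:slices}) to upgrade the underlying equivalence to a $C_2$-equivalence, then arguing multiplicativity separately: since the monomials $\ol{x}^\alpha$ are literally defined as products of the $\ol{x}_i$ in $\pi_\star^{C_2}E$ and your splitting exhibits them as a $\pi_\star^{C_2}\mF_2$-basis, the evident ring map $\pi_\star^{C_2}\mF_2[\ol{x}_i]\to\pi_\star^{C_2}E$ is a bijection. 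Both arguments are sound; the paper's is tidier in that the map is a ring map from the start, while yours avoids the free-algebra machinery at the cost of a (correct but terse) separate verification that the ring structure matches. You also explicitly handle the Laurent case for $F$, which the paper's proof leaves to the reader. Two minor nits: when you invoke completeness of the slice filtration to promote an equivalence on associated graded to an equivalence, you should also invoke exhaustiveness (both are established in the paper); and it would be worth one sentence confirming that $\ol{u}$ is invertible in $\pi_\star^{C_2}F$ (which follows since $\res_e^{C_2}$ is a ring isomorphism in $\rho$-multiple degrees).
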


\begin{proof}
Let $E$ be a strongly even $C_2$-$\bE_{\infty}$-$\mF_2$-algebra such that $\pi_*^eE=\bF_2[x_i : i\in I]$. 
Then since $x_i\in \pi_{2k_i}E \cong \pi_{\rho k_i}^{C_2}E$ there exists an associated $C_2$-equivariant 
map of $\bE_1$-$\mF_2$-algebras
\[ 
\mF_2\otimes \mathrm{Free}_{\bE_1}(S^{\rho k_i})\longrightarrow E 
\]
for each $x_i$ and consequently a map 
\[
\mF_2\otimes (\bigotimes_{i\in I}\mathrm{Free}_{\bE_1}(S^{\rho k_i}))\longrightarrow E 
\]
inducing an isomorphism
\[
P_*^*(\mF_2\otimes (\bigotimes_{i\in I}\mathrm{Free}_{\bE_1}(S^{\rho k_i})))\longrightarrow P_*^*E .
\]
Consequently, there is an equivalence 
\[
\mF_2\otimes (\bigotimes_{i\in I}\mathrm{Free}_{\bE_1}(S^{\rho k_i}))\simeq E  
\]
and since the source slice spectral sequence collapses, the result follows. 
\end{proof}

\begin{thm}\label{thm:s-even}
Let $-1\le n\le 2$. 
Then there is an isomorphism of $\pi_{\star}^{C_2}H\mF_2$-algebras  
\[ 
	\pi_{\star}^{C_2}\THR(\BPR\langle n\rangle/\MWR;\mF_2)\cong \pi_{\star}^{C_2}\mF_2[\bar{\mu}^{2^{n+1}}]\otimes P
\] 
where $P$ is generated by underlying elements in degrees divisible by $\rho$. 
Moreover, we have that the $C_2$-spectrum $\THR(\BPR\langle n\rangle/\MWR)$ is strongly even. 
\end{thm}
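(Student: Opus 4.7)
The plan is to base change along the Real cyclotomic base map $\MWR \to \BPR\langle n\rangle$ and run a K\"unneth spectral sequence. The starting point is the identification
\[
\THR(\BPR\langle n\rangle/\MWR;\mF_2) \simeq \THR(\BPR\langle n\rangle;\mF_2) \otimes_{\THR(\MWR;\mF_2)} (\MWR \otimes \mF_2),
\]
which follows from the definition $\THR(B/A) = \THR(B)\otimes_{\THR(A)} A$ upon observing that $-\otimes\mF_2$ commutes with the relative tensor product. The three inputs are the computations $\pi_\star^{C_2}\THR(\MWR;\mF_2) \cong \pi_\star^{C_2}\mF_2\langle\olambda'_i,\olambda''_j\rangle$ and $\pi_\star^{C_2}\THR(\BPR\langle n\rangle;\mF_2) \cong \pi_\star^{C_2}\mF_2[\bar{\mu}^{2^{n+1}}]\langle\olambda_1,\ldots,\olambda_{n+1}\rangle$ from Propositions~\ref{prop:thrmur} and~\ref{prop:thr}, together with the identification $\pi_\star^{C_2}(\MWR\otimes\mF_2) \cong \pi_\star^{C_2}\mF_2[\bar{c}_j : j\in J]$ on generators in $\rho$-multiple degrees. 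The latter holds because the unit map $\bS\to\MWR$ is strongly evenly pure (Theorem~\ref{thm: examples of Real cyclotomic bases}), so $\MWR\otimes\mF_2$ is a wedge of $\rho$-multiple suspensions of $\mF_2$; Lemma~\ref{stronglyevenlemma} then identifies its $\RO(C_2)$-graded homotopy with this polynomial algebra.

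First I would observe that the map $\pi_\star^{C_2}\THR(\MWR;\mF_2)\to \pi_\star^{C_2}(\MWR\otimes\mF_2)$ sends every exterior generator to zero: each $\olambda'_i$ and $\olambda''_j$ lives in a degree of the form $k\rho-1$, and the target is strongly even. Hence $\pi_\star^{C_2}(\MWR\otimes\mF_2)$ admits a Koszul-type resolution as a $\pi_\star^{C_2}\mF_2\langle\olambda'_i,\olambda''_j\rangle$-module, and the $E_2$-page of the resulting K\"unneth spectral sequence is
\[
E_2 \cong \left(\pi_\star^{C_2}\THR(\BPR\langle n\rangle;\mF_2) \otimes_{\pi_\star^{C_2}\mF_2\langle\olambda'_i,\olambda''_j\rangle} \pi_\star^{C_2}(\MWR\otimes\mF_2)\right) \otimes_{\pi_\star^{C_2}\mF_2} \Gamma(d\olambda'_i, d\olambda''_j),
\]
where $\Gamma$ denotes the divided power algebra over $\pi_\star^{C_2}\mF_2$ with $|d\olambda'_i|=(i+1)\rho$ and $|d\olambda''_j|=(k_j+1)\rho$. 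The map $\pi_\star^{C_2}\mF_2\langle\olambda'_i,\olambda''_j\rangle\to\pi_\star^{C_2}\THR(\BPR\langle n\rangle;\mF_2)$ sends those $\olambda'_i$ corresponding to a $v$-generator of $\BPR\langle n\rangle$ onto the exterior generators $\olambda_s$ and sends all other $\olambda'_i$ and $\olambda''_j$ to zero. Tensoring across this map kills $\olambda_1,\ldots,\olambda_{n+1}$ and leaves $\pi_\star^{C_2}\mF_2[\bar{\mu}^{2^{n+1}}]$ alongside the $\bar{c}_j$'s and the divided power classes $d\olambda'_i, d\olambda''_j$ on the remaining indices. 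All of these generators lie in $\rho$-multiple degrees, so $E_2$ is strongly even and has the claimed form.

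Collapse of the spectral sequence then follows as in the proofs of Propositions~\ref{prop:thrmur} and~\ref{prop:thr}: any non-trivial differential would land in a group of the form $\pi_{k\rho-i}^{C_2}$ with $i\in\{1,2,3\}$, which is zero on $E_2$ by the strong evenness established above. Strong evenness of $\THR(\BPR\langle n\rangle/\MWR)$ (after the implicit $2$-completion) follows from the vanishing of $\pi_{k\rho-i}^{C_2}\THR(\BPR\langle n\rangle/\MWR;\mF_2)$ for $i\in\{1,2,3\}$, together with the long exact sequence associated to the cofiber sequence $X\xrightarrow{2}X\to X\otimes\mF_2$. The identification of the generators of $P$ with underlying elements in $\rho$-multiple degrees is then completed by comparison with the underlying computation of $\THH(\BP\langle n\rangle/\MW;\bF_2)$ from~\cite[Proposition~4.3.2]{HRW22} together with Lemma~\ref{stronglyevenlemma}.

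The main obstacle will be ruling out hidden multiplicative extensions in the K\"unneth spectral sequence. In characteristic $2$, divided power and polynomial algebras differ, and $C_2$-$\bE_\infty$ power operations may alter the algebra structure of $P$ away from the divided power structure visible on $E_2$. Fortunately, the theorem statement only records that $P$ is generated by underlying elements in $\rho$-multiple degrees rather than a precise algebra presentation, so these extensions do not affect the essential claim of strong evenness, which is what is needed in the subsequent applications to the motivic filtration.
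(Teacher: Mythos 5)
Your strategy is the paper's: run the K\"unneth spectral sequence for the relative tensor product over $\THR(\MWR;\mF_2)$, observe that every exterior generator $\olambda'_i,\olambda''_j$ lies in degree $i\rho+\sigma=(i+1)\rho-1$ and hence maps to zero in any even target, and read off that the $E_2$-page is free over $\pi_\star^{C_2}\mF_2$ on generators in $\rho$-multiple degrees. Two steps, however, are wrong as written.

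First, the base-change identification. In this paper $\THR(R;\mF_2)$ means $\THR(R)\otimes_R\mF_2$ --- that is why Proposition~\ref{prop:thrmur} yields $\pi_\star^{C_2}\THR(\MWR;\mF_2)\cong\pi_\star^{C_2}\mF_2\langle\olambda'_i,\olambda''_j\rangle$ with no mod $2$ homology of $\MWR$ appearing. With that convention the correct formula is $\THR(\BPR\langle n\rangle/\MWR;\mF_2)\simeq\THR(\BPR\langle n\rangle;\mF_2)\otimes_{\THR(\MWR;\mF_2)}\mF_2$, with $\mF_2=\MWR\otimes_{\MWR}\mF_2$ in the last slot rather than $\MWR\otimes\mF_2$. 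Your version pairs the relative convention in the base ring with the absolute smash in the module, so your $E_2$-page acquires a spurious polynomial factor on the $\bar{c}_j$ (and, were one consistent about the absolute smash, the base would also contain the mod $2$ homology of $\MWR$ and these factors would cancel). This happens to be invisible to the deliberately imprecise statement about $P$, but it computes a different spectrum, and the precise form of $\pi_\star^{C_2}\THR(\BPR\langle n\rangle/\MWR;\mF_2)$ is exactly what feeds the Cotor/May--Ravenel computation of Theorem~\ref{thm:thr}; the correct answer is $P=\Gamma(d\olambda'_i,d\olambda''_j)$ on the indices $i$ not of the form $2^k-1$, $1\le k\le n+1$. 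Relatedly, your collapse argument ("any differential lands in $\pi^{C_2}_{k\rho-i}$") is only automatic for the $\pi_\star^{C_2}\mF_2$-module generators, since a class in the positive or negative cone of $\pi_\star^{C_2}\mF_2$ times a generator can a priori support a differential into a nonzero group; you need $\pi_\star^{C_2}\mF_2$-linearity of the differentials, or the paper's comparison with the collapsing underlying spectral sequence via restriction.

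Second, and more seriously for the ``Moreover'' clause: $\THR(\BPR\langle n\rangle/\MWR;\mF_2)$ is $\THR(\BPR\langle n\rangle/\MWR)\otimes_{\BPR\langle n\rangle}\mF_2$, the quotient by the whole regular sequence $(\bar{v}_0,\ldots,\bar{v}_n)$; it is not the cofiber of multiplication by $2$, so the long exact sequence you invoke for $X\xrightarrow{2}X\to X\otimes\mF_2$ does not exist. To deduce strong evenness of $\THR(\BPR\langle n\rangle/\MWR)$ itself you must peel off $\bar{v}_n,\ldots,\bar{v}_1,\bar{v}_0$ one at a time via the corresponding Bockstein spectral sequences: each $\bar{v}_i$ lives in a $\rho$-multiple degree, so the gap $\pi^{C_2}_{k\rho-i}=0$ for $i=1,2,3$ propagates up the tower, and connectivity rules out infinitely $\bar{v}_i$-divisible elements. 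This is the one-sentence argument the paper gives, but it genuinely requires the correct cofiber sequences.
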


\begin{proof}
It suffices to compute the K\"unneth spectral sequence~\cite{LM07}:
\[ 
\Tor_{*,*}^{\pi_{\star}^{C_2}\THR(\MWR;\mF_2)}(\pi_{\star}^{C_2}\THR(\BPR\langle n\rangle;\mF_{2}),\pi_{\star}^{C_2}\mF_{2})
\implies \pi_{\star}^{C_2}\THR(\BPR\langle n\rangle/\MWR;\mF_2) \,.
\]
We know that 
\[
\pi_{\star}^{C_2}\THR(\MWR;\mF_{2})\cong \pi_{\star}^{C_2}\mF_{2}\langle \lambda'_i , \lambda''_j :i\ge 1 ,j\in J\rangle . 
\]
By Theorem~\ref{prop:thr}, we also know that 
\[
\pi_{\star}^{C_2}\THR (\BPR\langle n\rangle;\mF_{2})
\cong \pi_{\star}^{C_2}\mF_{2}[\bar{\mu}^{2^{n+1}}]\langle \olambda_1,\cdots ,\olambda_{n+1}\rangle \,,
\]
and the unit map 
\[
\pi_{\star}^{C_2}\THR(\MWR;\mF_{2})\to \pi_{\star}^{C_2}\THR(\BPR\langle n\rangle;\mF_{2})
\]
sends $\ol{\lambda}'_{2^{i}-1}$ to $\ol{\lambda}_{i}$ for $1\le i\le n+1$ and the remaining exterior 
generators are sent to zero. The input of the spectral sequence is therefore 
\[ 
	\Gamma (d \ol{\lambda}'_{i},d\ol{\lambda}''_j : i\ne 2^k-1, 1\le k\le n+1,j\in J)
	\otimes \pi_{\star}^{C_2}\mF_{2}[\overline{\mu}^{2^{n+1}}]\,.
\] 
Here $|d\ol{\lambda}'_i|=|\ol{\lambda}_i|+1$ and $|d\ol{\lambda}''_i|=|\ol{\lambda}_j|+1$ 
are in $\rho$-multiple degrees. Already, this implies that the abutment is strongly even for bidegree reasons.  
We can further determine that the spectral sequence collapses at the $\EE_2$-page using Mackey functor structure. 
The restriction map $\mathrm{res}_e^{C_2}$ produces a map to the K\"unneth spectral sequence of underlying spectra
\[ 
\Tor_{\ast}^{\pi_{\ast}\THH(\MW;\bF_2)}(\pi_{\ast}\THH(\BP\langle n\rangle;\bF_{2}),\bF_{2})
\implies \pi_{\ast}\THH(\BP\langle n\rangle/\MW;\bF_2)
\]
whose $E_2$-term can be identified with 
\[ 
	\Gamma (d \lambda'_{i},d\lambda''_j : i\ne 2^k-1, 1\le k\le n+1,j\in J)\otimes \bF_{2}[\mu^{2^{n+1}}]\,,
\] 
which is concentrated in even degrees (cf.~\cite[Theorem~2.5.4]{HW22}). 
Therefore, the latter K\"unneth spectral sequence collapses at the $\mathrm{E}_2$-page. 
The restriction map is an isomorphism between degrees $\rho k$ in the source and degrees $2k$ in the target so any 
differentials on algebra generators would contradict the known underlying computation. 

Since we already deduced that the spectrum is strongly even, we can also resolve  multiplicative extensions 
using this underlying computation and Lemma~\ref{stronglyevenlemma} (cf.~\cite[Theorem~2.5.4]{HW22}). 
This gives the desired isomorphism
\[  
\pi_{\star}^{C_2}\THR(\BPR\langle n\rangle/\MWR;\mF_2)\cong \pi_{\star}^{C_2}\mF_{2}[\overline{\mu}^{2^{n+1}}]\otimes P\,.
\]

The last claim follows from the fact that $\BPR\langle n\rangle$ is strongly even using the Bockstein spectral sequences 
for $\bar{v}_0, \ \bar{v}_1, \ \ldots , \ \bar{v}_n$. 
\end{proof}

Fix $-1\le n\le 2$ and set $A=\pi_{\star}^{C_2}\THR(\BPR\langle n\rangle ;\mF_2)$ and 
$B=\pi_{\star}^{C_2}\THR(\BPR\langle n\rangle/\MWR;\mF_{2})$. 
Then there is a K\"unneth spectral sequence
\[ 
\Tor_{*}^{A}(B,B)\implies \pi_{\star}^{C_2}\THR(\BPR\langle n\rangle/\MWR^{\otimes 2};\mF_{2})
\]
that collapses at the $\EE_{2}$-page. 
To see this note that the map 
\[\pi_{\star}^{C_2}\THH(\BPR\langle n\rangle ;\mF_2)\longrightarrow \pi_{\star}^{C_2}\THR(\BPR\langle n\rangle/\MUR;\mF_2)\]
sends $\ol{\mu}^{2^n}$ to $\ol{\mu}^{2^n}$ and the remaining algebra generators besides those in 
$\pi_{\star}^{C_2}\mF_2$ to zero. 
We can therefore identify the $\EE_2$-page of the K\"unneth spectral sequence with 
\[
\Gamma(d\ol{\lambda}_1,\cdots ,d\ol{\lambda}_{n+1} )\otimes \pi_{\star}^{C_2}\mF_2[\ol{\mu}^{2^{n+1}}]\otimes \Gamma (d \ol{\lambda}'_i ,d \ol{\lambda}''_j : i\ge 1 ,j \in J )^{\otimes 2}\otimes P^{\otimes 2} \,.
\]
and since $\THH(\BPR\langle n\rangle/\MUR^{\otimes 2};\mF_2)$ is strongly even, 
it suffices note that the underlying spectral sequence collapses at the $\EE_2$-page (cf.~\cite[Proposition~6.1.6]{HW22}) 
and therefore any differentials in the K\"unneth spectral sequence would contradict the underlying computation. 

Let $\ol{\Sigma}= \Tor_{\star}^{A}(B,B)$ and $\Sigma =\pi_{\star}^{C_2}\THR(\BPR\langle n\rangle/\MWR^{\otimes 2};\mF_{2})$. 
We can therefore identify the cobar complex for the Hopf algebroid $(B,\ol{\Sigma})$ with the associated graded of a filtration 
on the cobar complex for the Hopf algebroid $(B,\Sigma)$. 
This produces a spectral sequence 
\[ 
\Cotor_{(B,\ol{\Sigma})}^{\ast,\ast,\star}(B,B)\implies \Cotor_{(B,\Sigma)}^{\ast,\star}(B,B)
\]
that we call the \emph{May--Ravenel spectral sequence} after~\cite{May66} and~\cite{Rav86}. 

We also note that there is a motivic spectral sequence 
\[
	\pi_{\star}^{C_2}\gr_{\mot}^*\THR(\BPR\langle n\rangle ;\mF_{2})\implies \pi_{\star}^{C_2}\THR(\BPR\langle n\rangle ;\mF_{2}) 
\]
and we can identify 
\[ 
\pi_{\star}^{C_2}\gr_{\mot}^*\THR(\BPR\langle n\rangle ;\mF_{2})=\Cotor_{(B,\Sigma)}^{\ast,\star}(B,B)\,.
\]

\begin{thm}\label{thm:thr}
Let $-1\le n\le 2$. The May--Ravenel spectral sequence and the motivic spectral sequence each collapse at the $\mathrm{E}_2$-page. 
Consequently, we have an isomorphism
\[ 
\pi_{\star}^{C_2}\grmot^{*}\THR(\BPR\langle n\rangle ;\mF_{2})
= \pi_{\star}^{C_2}\mF_{2}[\omu^{2^{n+1}}]\langle \olambda_1,\cdots,\olambda_{n+1}\rangle 
\]
where the generators $\bar{\lambda}_{1},\ldots ,\bar{\lambda}_{n+1}$ have 
Adams weight $1$ and $\bar{\mu}^{2^{n+1}}$ has Adams weight $0$.  
Here $|\bar{\lambda}_{i}|=2^i\rho-1$ and $|\bar{\mu}^{2^{n+1}}|=2^{n+1}\rho$.
\end{thm}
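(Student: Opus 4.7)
The plan is to deduce the two collapse statements by comparison with the analogous non-equivariant May--Ravenel and motivic spectral sequences of Hahn--Wilson~\cite{HW22,HRW22}, and then to read off the bigraded description on the $\mathrm{E}_\infty$-page.

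First I would observe that $\mathrm{res}_e^{C_2}$ induces a morphism from the equivariant May--Ravenel spectral sequence to its underlying counterpart, which computes $\Cotor_{(B^e,\Sigma^e)}(B^e,B^e)$. By Proposition~\ref{prop:thr} and Theorem~\ref{thm:s-even}, the rings $A$, $B$, and $\Sigma$ are concentrated in $\mathrm{RO}(C_2)$-degrees of the form $k\rho + \epsilon$ with $\epsilon \in \{0,1\}$, so the $\mathrm{E}_2$-page of the equivariant May--Ravenel spectral sequence is supported there as well, and the restriction map is an isomorphism between classes in degree $k\rho$ in the source and degree $2k$ in the target. Since the underlying May--Ravenel spectral sequence is concentrated in even total degree when $n \le 2$ (see~\cite[Proposition~6.1.6]{HW22}) and hence collapses at $\mathrm{E}_2$, any nonzero differential on an algebra generator in the equivariant spectral sequence would contradict collapse after restriction; therefore the equivariant May--Ravenel spectral sequence also collapses at $\mathrm{E}_2$.

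To identify the $\mathrm{E}_\infty$-page, I would compute $\Cotor_{(B,\ol{\Sigma})}(B,B)$. The $A$-module structure map sends $\olambda_i$ to $0$, so $\ol{\Sigma}$ contains a divided power factor $\Gamma(d\olambda_1,\ldots,d\olambda_{n+1})$, while the additional divided power classes coming from the generators of $P^{\otimes 2}$ in the K\"unneth computation for $\Sigma$ cancel against the two factors of $P$ present in $B$ on each side of the cotensor product in the cobar complex. The net effect is an identification
\[
\Cotor_{(B,\ol{\Sigma})}(B,B) \cong \pi_\star^{C_2}\mF_2[\omu^{2^{n+1}}]\langle \olambda_1,\ldots,\olambda_{n+1}\rangle,
\]
with $\omu^{2^{n+1}}$ in Adams weight $0$ because it survives from $B$, and each $\olambda_i$ in Adams weight $1$ because it is dual to the primitive $d\olambda_i$. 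By Proposition~\ref{prop:thr}, the abutment $\pi_\star^{C_2}\THR(\BPR\langle n\rangle;\mF_2)$ of the motivic spectral sequence has the same $\pi_\star^{C_2}\mF_2$-rank in every $\mathrm{RO}(C_2)$-degree as this $\mathrm{E}_2$-page, so there is no room for nonzero differentials and the motivic spectral sequence collapses as well.

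Finally, multiplicative extensions are resolved as in the proof of Theorem~\ref{thm:s-even}: each graded piece is a strongly even constant Mackey functor spectrum, so by Lemma~\ref{stronglyevenlemma} its $\mathrm{RO}(C_2)$-graded homotopy is determined by the underlying ring, and the identification of the underlying as an exterior algebra over $\bF_2[\mu^{2^{n+1}}]$ in~\cite[Theorem~2.5.4]{HW22} transfers to our setting. The main obstacle is the bookkeeping of the cancellation between the $P$-contributions to $B$ and the corresponding divided power classes in $\ol{\Sigma}$ in the cobar complex; this is already handled underlyingly by Hahn--Wilson, and strong evenness propagates it to the equivariant computation without any new input.
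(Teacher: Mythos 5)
Your proposal is correct and follows essentially the same route as the paper: identify $\Cotor_{(B,\ol{\Sigma})}(B,B)$ with $\pi_{\star}^{C_2}\mF_2[\omu^{2^{n+1}}]\langle \olambda_1,\ldots,\olambda_{n+1}\rangle$ via the tensor decomposition of the Hopf algebroid, and observe that this already agrees with the known abutment from Proposition~\ref{prop:thr}, leaving no room for differentials or extensions in either spectral sequence. Your opening restriction-to-underlying argument for the May--Ravenel collapse is a harmless redundancy (the counting argument in your second paragraph already forces it) and is the same style of comparison the paper uses elsewhere, e.g.\ in the proof of Theorem~\ref{thm:s-even}.
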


\begin{proof}
Note that $(B,\ol{\Sigma})$ is a tensor product of the Hopf algebroids $(P,P\otimes P)$,
\[
(\Gamma(d \ol{\lambda}_i ,d \ol{\lambda}_j : i\ge 1,j\in J),\Gamma(d \ol{\lambda}_i ,d \ol{\lambda}_j : i\ge 1,j\in J)
\otimes \Gamma(d \ol{\lambda}_i ,d \ol{\lambda}_j : i\ge 1,j\in J))\,, 
\]
$(\pi_{\star}^{C_2}\bF_2,\Gamma (d\ol{\lambda}_1,\cdots , d\ol{\lambda}_{n+1}))$ 
and $(\pi_{\star}^{C_2}\bF_2[\ol{\mu}^{2^{n+1}}],\pi_{\star}^{C_2}\bF_2[\ol{\mu}^{2^{n+1}}])$. 
The first two Hopf algebroids have trivial cohomology, so we compute directly that
\[
\Cotor_{(B,\overline{\Sigma})}^{\ast,\ast,\star}(B,B)\cong \pi_{\star}^{C_2}\mF_2[\bar{\mu}^{2^{n+1}}]\langle \bar{\lambda}_1,\cdots,\bar{\lambda}_{n+1}\rangle  
\]
and therefore both the May--Ravenel spectral sequence and the motivic spectral sequence collapse without extensions 
since we know the abutment by Proposition~\ref{prop:thr}. 
\end{proof}

In order to preserve equivariance in addition to multiplicative structure, 
we make a further identification.

\begin{lem}
Let $-1\le n\le 2$. There is an equivalence of $\bE_{\infty}$-$\gr_{\sev}^*\bS$-algebras 
\[ 
\grmot^*\THR(\BPR\langle n\rangle ;\mF_2)
\simeq \left ( \grmot^*\THR(\BPR\langle n\rangle ) \right /(\ol{v}_0,\cdots ,\ol{v}_n)
\]
in the $\infty$-category $\mathrm{Gr}({\Sp}^{C_2})$. 
\end{lem}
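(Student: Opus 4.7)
The plan is to realize both sides as base changes of the motivic filtration on $\THR(\BPR\langle n\rangle)$ along a common map, taking advantage of two things: first, that $\mF_2$ arises as a Smith--Toda-type quotient of $\BPR\langle n\rangle$ after passing to the associated graded; and second, that $\gr_{\mot}^{*}$ is symmetric monoidal, so it commutes with the relevant base change.

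First, I would establish an equivalence of $\mathbb{E}_{\infty}$-$\gr_{\sev}^{*}\BPR\langle n\rangle$-algebras
\[
\grsev^{*}\mF_{2} \simeq \grsev^{*}\BPR\langle n\rangle \otimes_{\gr_{\sev}^{*}\bS}\left (\gr_{\sev}^{*}\bS/(\ol{v}_0,\dots,\ol{v}_n)\right ).
\]
Since $\BPR\langle n\rangle$ is strongly even with $\pi_{*\rho}^{C_2}\BPR\langle n\rangle \cong \bZ_{(2)}[\ol{v}_1,\dots,\ol{v}_n]$, we have $\grsev^{*}\BPR\langle n\rangle = P_{2*}^{2*}\BPR\langle n\rangle = \Sigma^{*\rho}H\m{\pi_{2*}^{e}\BPR\langle n\rangle}$. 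Under the Burklund--Hahn--Senger equivalence from Theorem~\ref{thm:B} (and the identification of $\gr_{\sev}^{*}\bS$-algebras with a suitable subcategory of $\mathrm{IndCoh}(\cM_{\fg})$-valued objects), the quotient $\gr_{\sev}^{*}\bS/(\ol{v}_0,\dots,\ol{v}_n)$ corresponds to $\MU_{*}/(2,v_{1},\dots,v_{n})\otimes \mZ$; this comodule algebra has an honest $\mathbb{E}_{\infty}$-structure since the quotient is already commutative at the abelian-group level. Applying Betti realization (again from Theorem~\ref{thm:B}) and comparing associated gradeds then identifies the right-hand side with $\grsev^{*}\mF_{2}$ as an $\mathbb{E}_{\infty}$-$\grsev^{*}\BPR\langle n\rangle$-algebra.

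Second, I would invoke the (lax) symmetric monoidal structure on the motivic filtration functor $\fil_{\mot}^{*}$, which arises from the parametrized Day convolution structure on filtered $C_{2}$-spectra and is preserved by the strongly evenly faithfully flat descent of Section~\ref{sec:motivic-filtrationsB}. Passing to associated graded, $\gr_{\mot}^{*}$ is symmetric monoidal. Combined with the standard identification
\[
\THR(\BPR\langle n\rangle;\mF_{2}) \simeq \THR(\BPR\langle n\rangle)\otimes_{\BPR\langle n\rangle}\mF_{2}
\]
as $\mathbb{E}_{\infty}$-$\THR(\BPR\langle n\rangle)$-algebras (noting that $\BPR\langle n\rangle\to H\m{\bZ}_{(2)}\to \mF_{2}$ endows $\mF_{2}$ with the required $\BPR\langle n\rangle$-algebra structure), this yields
\[
\grmot^{*}\THR(\BPR\langle n\rangle;\mF_{2}) \simeq \grmot^{*}\THR(\BPR\langle n\rangle)\otimes_{\grmot^{*}\BPR\langle n\rangle}\grmot^{*}\mF_{2}.
\]
Substituting the identification from the previous paragraph and simplifying the iterated tensor product over $\grsev^{*}\BPR\langle n\rangle$ then over $\grsev^{*}\bS$ produces the desired equivalence.

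The main obstacle will be keeping track of the $\mathbb{E}_{\infty}$-structures throughout: classical Smith--Toda complexes $\bS/(p,v_{1},\dots,v_{n})$ typically fail to admit $\mathbb{E}_{\infty}$-structures, so it is essential that one only works with their images under the equivalence of Theorem~\ref{thm:B}, where the obstruction disappears because the corresponding comodule algebras in the $\MU_{*}\MU$-comodule category are genuinely commutative. A secondary subtlety is verifying that the monoidality of $\gr_{\mot}^{*}$ interacts correctly with the base change $-\otimes_{\BPR\langle n\rangle}\mF_{2}$, which reduces to observing that both $\BPR\langle n\rangle$ and $\mF_{2}$ (for $-1\le n\le 2$) admit $C_{2}$-$\mathbb{E}_{\infty}$ structures fitting into compatible seff covers, so that the descent identifications of Corollary~\ref{thm:motivic-descent} respect the base change.
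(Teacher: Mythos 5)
Your outline locates the lemma in the right neighborhood of ideas, but it has a genuine gap at the step you call a ``secondary subtlety.'' The assertion
\[
\grmot^{*}\THR(\BPR\langle n\rangle;\mF_{2}) \simeq \grmot^{*}\THR(\BPR\langle n\rangle)\otimes_{\grmot^{*}\BPR\langle n\rangle}\grmot^{*}\mF_{2}
\]
is not a formal consequence of $\gr_{\mot}^{*}$ being symmetric monoidal. A relative tensor product is a geometric realization (a sifted colimit), while the motivic filtration is built as a right Kan extension --- a limit over strongly even covers. Even a genuinely symmetric monoidal functor can fail to preserve such colimits, so the base change step is exactly where the lemma's content lives, not a bookkeeping matter to be dispatched by appeal to ``compatible seff covers.''

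The paper's proof makes precisely the check your argument elides. After descending along the seff map $\THR(\MWR)\to\MWR$ (Proposition~\ref{prop:seff-MWR}), the motivic filtration of $\THR(\BPR\langle n\rangle)$ is identified with $\lim_{\Delta}P_{2*}\THR(\BPR\langle n\rangle/\MWR^{\otimes\bullet+1})$. For the quotient by $(\ol{v}_0,\dots,\ol{v}_n)$ to commute with this construction --- equivalently, for your base change identity to hold --- one needs $\upi_{\star}\THR(\BPR\langle n\rangle/\MWR)$ (and the higher cosimplicial levels) to be $(2,\bar{v}_1,\dots,\bar{v}_n)$-torsion free, which the paper extracts from Theorem~\ref{thm:s-even} and cites via the analogous nonequivariant criterion \cite[Theorem~6.18(4)]{BHS22}. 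Torsion-freeness is what guarantees that the cofiber sequences defining the Smith--Toda quotient remain short exact after applying $P_{2*}$ levelwise, so that the quotient passes through the limit. Your proposal never produces this input; absent it, the claimed base change identity is unsupported. Your first paragraph, identifying $\grsev^{*}\mF_{2}$ as a base change of $\grsev^{*}\BPR\langle n\rangle$, is correct but is also not how the paper organizes the reduction --- the paper works directly with $\grsev^{*}\bS/(\ol{v}_0,\dots,\ol{v}_n)$ and does not need the intermediate algebra $\grsev^{*}\BPR\langle n\rangle$ --- so that step is harmless but adds indirection without closing the real gap.
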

\begin{proof}
Since $\THR(\MWR)\to \MWR$ is seff by Proposition~\ref{prop:seff-MWR}, it suffices to note that  
\[ \upi_{\star}\THR(\BPR\langle n\rangle/\MWR)
\]
is $(2,\bar{v}_1,\cdots ,\bar{v}_n)$ -torsion free (cf.~\cite[Theorem 6.18(4)]{BHS22}), which follows from 
Theorem~\ref{thm:s-even}. 
\end{proof}

\begin{defin}\label{def:varepsilon}
Fix $-1\le n\le 2$. Let 
\[ 
\bar{v}_{n+1} : \Sigma^{\rho(2^{n+1}-1),0}\grsev^*\bS/(\ol{v}_0,\cdots ,\ol{v}_n)
\to \grsev^*\bS/(\ol{v}_0,\cdots ,\ol{v}_n)
\]
denote the $\grsev^*\mathbb{S}$-module map corresponding to the $\upi_{\star}\MUR\otimes \MUR$-comodule map 
\[
\Sigma^{\rho(2^{n+1}-1)}\upi_{\star}\MUR\to \upi_{\star}\MUR
\] 
so that there is a cofiber sequence 
\[
\Sigma^{\rho (2^{n+1}-1),0}\grsev^*\bS/(\ol{v}_0,\cdots ,\ol{v}_n)\overset{\bar{v}_{n+1}}{\longrightarrow} \grsev^*\bS/(\ol{v}_0,\cdots ,\ol{v}_n) \overset{i_{n+1}}{\longrightarrow} \grsev^*\bS/(\ol{v}_0,\cdots ,\ol{v}_{n+1})
\]
of $\grsev^*\mathbb{S}$-modules with boundary map
\[
\grsev^*\bS/(\ol{v}_0,\cdots ,\ol{v}_{n+1})\overset{j_{n+1}}{\longrightarrow} 
\Sigma^{\rho (2^n-1)+1,-1}\grsev^*\bS/(\ol{v}_0,\cdots ,\ol{v}_n)\,,
\] 
where the suspension is defined as in Notation~\ref{not:deg-Aw-suspension}. 
The induced map 
\[
\ol{v}_{n+1}\otimes \id :\Sigma^{\rho (2^{n+1}-1),0}\grsev^*\BPR\langle n\rangle/(\ol{v}_0,\cdots ,\ol{v}_n) 
\to \grsev^*\BPR\langle n\rangle/(\ol{v}_0,\cdots ,\ol{v}_n)
\]
is nullhomotopic. We define a class 
\[
\overline{\varepsilon}_{n+1}\in \pi_{\star}^{C_2}\grsev^*\BPR\langle n\rangle/(\ol{v}_0,\cdots ,\ol{v}_{n+1}) 
\]
to be the unique class in bidegree 
\[
\| \bar{\varepsilon}_{n+1} \|=(\rho (2^{n+1}-1)+1,-1)
\]
satisfying 
\[
j_{n+1}(\bar{\varepsilon}_{n+1})=1\,.
\]
In particular, the first $\ol{v}_{n+1}$-Bockstein $\ol{\beta}_{1,n+1}$ satisfies 
\begin{align}\label{Bocsktein}
\ol{\beta}_{1,n+1}(\ovarepsilon_{n+1})=1\,.
\end{align}
We have $\ovarepsilon_{n+1}^2=0$ since this bidegree of 
\[
\pi_{\star}^{C_2}\grsev^*\BPR\langle n\rangle/(\ol{v}_0,\cdots ,\ol{v}_{n+1})
\] 
is trivial. Consequently, for any $\grsev^*\BPR\langle n\rangle$-module $M$ we have 
\[
\pi_{\star}^{C_2}M/(\ol{v}_0,\cdots ,\ol{v}_{n+1})
\cong \pi_{\star}^{C_2}M/(\ol{v}_0,\cdots ,\ol{v}_n)\langle \varepsilon_{n+1}\rangle\,.
\]
\end{defin}

In light of this, we have the following corollary of the work in this section. 

\begin{cor}\label{cor:vn-thr}
Let $-1 \le n\le 2$. 
Then there is a preferred isomorphism
\[ 
\pi_{\star}^{C_2}\grmot^*\THR(\BPR\langle n\rangle)/(\ol{v}_0,\cdots ,\ol{v}_{n+1})\cong \pi_{\star}^{C_2}\mF_2[\omu^{2^{n+1}}]\langle \olambda_1,\cdots ,\olambda_{n+1}, \ovarepsilon_{n+1} \rangle 
\]
of $\pi_{\star}^{C_2}\mF_2\langle \overline{\lambda}_1,\overline{\lambda}_2,\cdots \rangle$-algebras
where $\bar{\varepsilon}_{n+1}$ has stem $\rho (2^{n+1}-1)+1$ and Adams weight $-1$. 
\end{cor}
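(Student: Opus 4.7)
The plan is to derive this corollary as an almost immediate consequence of Theorem~\ref{thm:thr} combined with the $\bar v_{n+1}$-Bockstein formalism of Definition~\ref{def:varepsilon}.

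First, I would combine the lemma immediately preceding Definition~\ref{def:varepsilon}, which identifies $\grmot^*\THR(\BPR\langle n\rangle;\mF_2)$ with $\grmot^*\THR(\BPR\langle n\rangle)/(\ol{v}_0,\cdots,\ol{v}_n)$ as $\bE_\infty$-$\grsev^*\bS$-algebras, with Theorem~\ref{thm:thr} to obtain
\[
\pi_{\star}^{C_2}\grmot^*\THR(\BPR\langle n\rangle)/(\ol{v}_0,\cdots,\ol{v}_n) \cong \pi_{\star}^{C_2}\mF_2[\omu^{2^{n+1}}]\langle\olambda_1,\cdots,\olambda_{n+1}\rangle
\]
as $\pi_{\star}^{C_2}\mF_2\langle\olambda_1,\olambda_2,\cdots\rangle$-algebras, with the stems and Adams weights recorded there.

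Second, I would observe that $\grmot^*\THR(\BPR\langle n\rangle)$ is naturally a $\grsev^*\BPR\langle n\rangle$-module, which is automatic because $\THR(\BPR\langle n\rangle)$ is an algebra over $\BPR\langle n\rangle$ and the motivic filtration is compatible with the strongly even filtration under the identifications set up in Section~\ref{sec:motivic-thr}. I can therefore apply the general formula at the end of Definition~\ref{def:varepsilon} with $M=\grmot^*\THR(\BPR\langle n\rangle)$, yielding
\[
\pi_{\star}^{C_2}\grmot^*\THR(\BPR\langle n\rangle)/(\ol{v}_0,\cdots,\ol{v}_{n+1}) \cong \pi_{\star}^{C_2}\grmot^*\THR(\BPR\langle n\rangle)/(\ol{v}_0,\cdots,\ol{v}_n)\langle\ovarepsilon_{n+1}\rangle,
\]
with $\ovarepsilon_{n+1}$ the Bockstein class in bidegree $(\rho(2^{n+1}-1)+1,-1)$, characterized (up to the usual normalization) by equation~\eqref{Bocsktein}. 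Combining these two descriptions produces exactly the claimed answer.

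The only mild obstacle is ensuring that this is genuinely an exterior-algebra extension, i.e.\ that there are no hidden multiplicative relations involving $\ovarepsilon_{n+1}$. This is handled by the bidegree argument already invoked in Definition~\ref{def:varepsilon}: the bidegree $(2\rho(2^{n+1}-1)+2,-2)$ of a putative $\ovarepsilon_{n+1}^2$ is trivial in the explicit description of $\pi_{\star}^{C_2}\grmot^*\THR(\BPR\langle n\rangle)/(\ol v_0,\cdots,\ol v_{n+1})$ furnished by the two displays above (the underlying group is concentrated in Adams weights $\geq 0$ for each polynomial factor), so $\ovarepsilon_{n+1}^2 = 0$ for free and the claimed presentation as $\pi_{\star}^{C_2}\mF_2\langle\olambda_1,\olambda_2,\cdots\rangle$-algebras follows.
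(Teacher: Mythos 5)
Your proposal matches the paper's intended argument essentially exactly: combine the preceding lemma that identifies $\grmot^*\THR(\BPR\langle n\rangle;\mF_2)$ with $\grmot^*\THR(\BPR\langle n\rangle)/(\ol{v}_0,\cdots,\ol{v}_n)$, plug in the $\pi_{\star}^{C_2}\mF_2$-algebra computation of Theorem~\ref{thm:thr}, and then invoke the $\ov_{n+1}$-Bockstein formalism from Definition~\ref{def:varepsilon} to adjoin $\ovarepsilon_{n+1}$. This is exactly why the paper labels the result a corollary and gives it no separate proof.

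One small point of precision in your last paragraph: the paper establishes $\ovarepsilon_{n+1}^2 = 0$ once and for all in $\pi_{\star}^{C_2}\grsev^*\BPR\langle n\rangle/(\ol{v}_0,\cdots,\ol{v}_{n+1})$, where $\ovarepsilon_{n+1}$ actually lives, and then the exterior-algebra description for any $\grsev^*\BPR\langle n\rangle$-module $M$ follows formally. Your version runs the bidegree check inside $\pi_{\star}^{C_2}\grmot^*\THR(\BPR\langle n\rangle)/(\ol{v}_0,\cdots,\ol{v}_{n+1})$ instead, which looks at first glance circular (using the conclusion to establish itself). It can be salvaged — the \emph{additive} splitting
\[
\pi_{\star}^{C_2}M/(\ol{v}_0,\cdots,\ol{v}_{n+1}) \cong \pi_{\star}^{C_2}M/(\ol{v}_0,\cdots,\ol{v}_n)\oplus \ovarepsilon_{n+1}\cdot\pi_{\star}^{C_2}M/(\ol{v}_0,\cdots,\ol{v}_n)
\]
comes from the long exact sequence of the defining cofiber sequence independently of any multiplicative claim, and since the first summand is concentrated in Adams weight $\geq 0$, the entire sum sits in Adams weight $\geq -1$ and so has nothing in the Adams weight $-2$ where $\ovarepsilon_{n+1}^2$ would live — but it would be cleaner to just cite the vanishing that Definition~\ref{def:varepsilon} has already established in the universal case. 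The mathematics is sound; the phrasing just risks leaving the impression that the self-referential check is load-bearing when the paper arranges it so that it is not.
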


\section{A toolkit}\label{sec:tool}
In this section, we collect various tools that will be useful for our computations of Real syntomic cohomology. 

\subsection{The zero-slice of Real topological cyclic homology}
The following lemma will be useful in several of our computations. 
\begin{lem}\label{lem:pi0tcr}
Suppose $R$ is a connective $C_2$-$\bE_{\infty \rho}$-ring such that $\upi_0R \in \{\mZ_2,\mF_2\}$. Then 
\[
    P_0^0\TCR(R)=\mZ_2\,.
\]
\end{lem}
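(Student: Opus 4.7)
The plan is to proceed in three steps: reduce to computing the underlying homotopy Mackey functor $\underline{\pi}_0\TCR(R)$, reduce the hypothesis on $R$ to the terminal cases $R = H\underline{\mathbb{F}}_2$ or $H\underline{\mathbb{Z}}_2$, and then appeal to known computations.

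First, by Proposition~\ref{prop:slices} applied with $k=0$, we have $P_0^0(\TCR(R)) \simeq H\underline{\pi}_0 \TCR(R)$. Hence it suffices to show that $\underline{\pi}_0 \TCR(R) \cong \underline{\mathbb{Z}}_2$ as a Mackey functor. Note that since $\upi_0 R \in \{\mZ_2, \mF_2\}$ is constant, the canonical truncation map $R \to H\upi_0 R$ is a map of connective $C_2$-$\mathbb{E}_{\infty\rho}$-rings whose fiber $F$ satisfies $P_0 F \simeq 0$ (because $R$ is connective and its zeroth slice already matches that of $H\upi_0 R$).

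Second, I would argue that the induced map $\upi_0 \TCR(R) \to \upi_0 \TCR(H\upi_0 R)$ is an isomorphism. The key input is that $\THR$ preserves the positivity of $F$: the Real Hochschild--May spectral sequence of Section~\ref{sec:RHMSS} applied to the slice filtration of $R$, together with the fact that $\upi_0 \THR(R) \cong \upi_0 R$, reduces to showing that passing through the equalizer defining $\TCR$ does not disturb $\upi_0$. For this I would use the long exact sequence coming from the equalizer $\TCR \simeq \mathrm{eq}(\can,\varphi : \TCR^- \to \TPR)$, together with connectivity bounds for $\TCR^-(F)$ and $\TPR(F)$ obtained from the parametrized homotopy fixed point and Tate spectral sequences of Section~\ref{sec:trace-Tate}. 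Since $F$ has vanishing $0$-slice, its $\THR$ has vanishing $\underline{\pi}_{k\rho}$ in degree $0$, and the Tate spectral sequence can only contribute to $\underline{\pi}_0 \TPR(F)$ through differentials of bounded range, allowing one to conclude that $\upi_0 \TCR(F) \simeq 0$.

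Third, having reduced to the cases $R = H\underline{\mathbb{F}}_2$ or $R = H\underline{\mathbb{Z}}_2$, I would cite the computations of $\pi_{\star}^{C_2}\TCR(\bF_2)$ from~\cite{QS21a,DMP24} and of $\pi_{\star}^{C_2}\TCR(\bZ_2)/2$ from~\cite{HLN21,DMP24}, both of which give $\upi_0 \TCR \cong \underline{\mathbb{Z}}_2$ after observing that $\pi_0$ is already $2$-complete and that the restriction map is an isomorphism on $\pi_0$. The latter can be verified by tracing through the cyclotomic trace construction: on $\pi_0^{C_2}$ it detects the class of the identity as $1 \in \mathbb{Z}_2$, matching the underlying value.

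The main obstacle will be the connectivity analysis in the second step, specifically the control of $\underline{\pi}_0 \TPR(F)$ when $F$ is only ``slice-positive'' rather than bounded below in a traditional sense. Since the Tate construction $(-)^{t_{C_2}S^1}$ is unbounded in general, one must argue carefully via the parametrized Tate spectral sequence that the only classes contributing to $\underline{\pi}_0 \TPR(F)$ arise from positive slices of $\THR(F)$, and hence vanish. Once this bounded-range argument is in place, the rest of the proof is a straightforward diagram chase through the equalizer.
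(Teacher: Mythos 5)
Your proposal follows the same three-step skeleton as the paper's proof (identify $P_0^0$ with $\underline{\pi}_0$; reduce $R$ to its $\pi_0$; cite a computation), but the two middle steps have genuine gaps that the paper sidesteps by citing known results.

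For the reduction to the discrete case, the paper invokes \cite[Theorem~7.12]{AKGH21}, an equivariant analogue of the Dundas--Goodwillie--McCarthy statement giving $\underline{\pi}_0\TCR(R)\cong\underline{\pi}_0\TCR(\underline{\pi}_0R)$ directly for connective $R$. You instead try to manufacture this from scratch through a connectivity analysis of $\TCR^{-}(F)$ and $\TPR(F)$ for the ``slice-positive'' fiber $F$. You yourself flag the control of $\underline{\pi}_0\TPR(F)$ as the main obstacle, and the proposal does not resolve it: the parametrized Tate construction is not bounded above, and the spectral-sequence convergence required to conclude $\underline{\pi}_0\TCR(F)=0$ from slice-positivity of $F$ is precisely the hard content of the cited theorem. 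As written, this step is a hope rather than a proof.

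For the base case you cite $\pi_{\star}^{C_2}\TCR(\underline{\mathbb{F}}_2)$ from \cite{QS21a,DMP24}, which is fine, but for $\underline{\mathbb{Z}}_2$ you cite $\pi_{\star}^{C_2}\TCR(\underline{\mathbb{Z}}_2)/2$. A mod~$2$ computation does not determine the integral $\underline{\pi}_0$; there is an extension problem (and in principle a $2$-divisible contribution) that a mod~$2$ statement cannot rule out. The paper instead cites \cite[Theorem~3.7]{DMP22}, which identifies $\underline{\pi}_0\TCR(\underline{\mathbb{Z}}_2)$ integrally as a kernel of $1-F$ on Witt vector Mackey functors, which is what actually pins down $\underline{\mathbb{Z}}_2$. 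You would need this, or some comparable integral input, to close the argument.
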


\begin{proof}
By~\cite[Theorem~7.12]{AKGH25} for example, we know that 
\[
\upi_0\TCR(R)\cong \upi_0\TCR(\upi_{0}R)\,. 
\]
Since $\mZ_2$ and $\mF_2$ are cohomological, we know from ~\cite[Theorem~3.7]{DMP22} that  
\[
\upi_0\TCR(\mZ_2)=\ker \left ((1-F):\underline{W(\mathbb{Z})}\to \underline{W(\mathbb{Z}_2)} \right )=\mZ_2
\] 
(cf.~\cite[Corollary~8.26]{AKCH21}) and 
\[\upi_0\TCR(\mF_2)=\mZ_2=\ker \left ((1-F):\underline{W(\mathbb{F}_2)}\to \underline{W(\mathbb{F}_2)} \right )\,.
\]
The result follows by combining these results with the general fact that 
\[ 
P_0^0\TCR(R)\simeq \underline{\pi}_0\TCR(R)
\]
from~\cite[Corollary~8.9]{Ull-thesis}. 
\end{proof}

\subsection{Power operations}\label{sec:toolkit-power}
Given a $\bE_{\rho}$ $\mF_{2}$-algebra $A$~\cite{GM17}, then we have a map 
\[ 
(S(\rho)_+\otimes \mF_{2})\otimes_{\mF_{2}}A^{\otimes_{\mF_{2}}2}\to A\,,
\]
where $S(\rho)$ denotes the unit sphere in $\rho$. 
There is a $\Sigma_2\times C_2$-equivariant map $(\Sigma_2)_+\to S(\rho)_+$ given by 
inclusion of $\{-1,1\}$ into $S(\bC)=S(\rho)$; i.e. the inclusion of the $C_2$-fixed points. 
The following result appears in~\cite{Wil16}, but we include a proof for completeness. 

\begin{lem}[{\cite[Lemma~4.8]{Wil16}}]\label{lem:squaring}
Let $A$ be an $\bE_{\rho}$ $\mF_2$-algebra and let $x\in \pi_{k\rho}A$. 
Then there is a squaring operation $Q^{k\rho}$ satisfying $Q^{k\rho}(x)=x^2$. 
\end{lem}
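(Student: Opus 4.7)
The plan is to construct $Q^{k\rho}(x)$ as the evaluation on $x$ of the extended power construction associated to the $\bE_\rho$-multiplication, restricted along the $\Sigma_2 \times C_2$-equivariant inclusion $(\Sigma_2)_+ \hookrightarrow S(\rho)_+$ described just above the lemma.

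First I would observe that the $\bE_\rho$-structure map
\[
S(\rho)_+ \otimes A^{\otimes_{\mF_2} 2} \longrightarrow A
\]
recalled above is $\Sigma_2 \times C_2$-equivariant, where $\Sigma_2$ acts by swapping the two tensor factors of $A^{\otimes 2}$ and antipodally on $S(\rho) = S(\bC)$, while $C_2$ acts on both factors as part of the underlying equivariant structure. This descends to a $C_2$-equivariant map
\[
\mu \colon S(\rho)_+ \wedge_{\Sigma_2} A^{\wedge_{\mF_2} 2} \longrightarrow A.
\]

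Next, given a class $x \colon S^{k\rho} \to A$, I would form the $\Sigma_2$-equivariant external square $x^{\wedge 2} \colon S^{2k\rho} \to A^{\wedge 2}$, with trivial $\Sigma_2$-action on $S^{2k\rho}$, and then the composite
\[
D_2(x) \colon S(\rho)_+ \wedge_{\Sigma_2} S^{2k\rho} \xrightarrow{\mathrm{id} \wedge x^{\wedge 2}} S(\rho)_+ \wedge_{\Sigma_2} A^{\wedge_{\mF_2} 2} \xrightarrow{\mu} A.
\]
I would then use the $\Sigma_2 \times C_2$-equivariant inclusion $(\Sigma_2)_+ \hookrightarrow S(\rho)_+$ to obtain a map
\[
S^{2k\rho} \cong (\Sigma_2)_+ \wedge_{\Sigma_2} S^{2k\rho} \longrightarrow S(\rho)_+ \wedge_{\Sigma_2} S^{2k\rho},
\]
and define $Q^{k\rho}(x) \in \pi_{2k\rho}^{C_2} A$ to be the composite of this map with $D_2(x)$.

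Finally, to identify $Q^{k\rho}(x)$ with $x^2$, I would verify that the restriction of $\mu$ along $(\Sigma_2)_+ \hookrightarrow S(\rho)_+$ factors as the binary product $A \wedge_{\mF_2} A \to A$ underlying the $\bE_\rho$-structure. This is essentially the statement that the $\bE_\rho$-operad restricts on its free $\Sigma_2$-orbit of operations in arity two to the associative multiplication. Granted this, the composite defining $Q^{k\rho}(x)$ factors through $x^{\wedge 2}$ followed by multiplication, yielding $x^2$. The main technical obstacle will be tracking the $C_2$-equivariance carefully throughout; the key point is that the inclusion $\{-1, 1\} \hookrightarrow S(\bC)$ is $C_2$-equivariant because complex conjugation fixes each of $\pm 1$, so the composite is well-defined as a map of $\Sigma_2 \times C_2$-spaces and the subsequent identifications respect the $C_2$-action.
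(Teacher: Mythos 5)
Your proposal is correct and follows essentially the same route as the paper's proof: both form the total power operation $S(\rho)_+\otimes_{\Sigma_2}S^{2k\rho}\to A$ from the $\bE_\rho$-structure map and the external square of $x$, then restrict along the $\Sigma_2\times C_2$-equivariant bottom-cell inclusion $(\Sigma_2)_+\to S(\rho)_+$ to identify $Q^{k\rho}(x)$ with the binary product $x\cdot x$. The only cosmetic difference is that the paper phrases this as commutativity of a square of homotopy groups after tensoring with $\mF_2$, whereas you phrase it as a factorization of maps of spectra.
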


\begin{proof}
As in~\cite[pp.~6-8]{BW17}, we can construct a power operation 
\[ 
\cP(x) :\pi_{2k\rho}^{C_2}\mF_2\otimes \left( S(\rho)_+\otimes_{\Sigma_2} S^{2k\rho}\right )\to \pi_{2k\rho}A\,.
\]
In more detail, $\cP(x)$ is obtained by applying $\pi_{2k\rho}(-)$ to the composite
\[ 
\mF_2 \otimes S(\rho)_+ \otimes_{\Sigma_2} S^{2k\rho} \xrightarrow{1 \otimes 1 \otimes x \otimes x} \mF_2 
\otimes S(\rho)_+ \otimes_{\Sigma_2} A^{\otimes 2} \to \mF_2 \otimes A \to A \,,
\]
where the second map is the $\bE_\rho$-algebra structure map and the last map is the action of $\mF_2$ on $A$. 

We then consider the inclusion $(\Sigma_2)_+\to S(\rho)_+$ and the induced diagram 
\[
\begin{tikzcd}
\pi_{2k\rho}^{C_2} \mF_2\otimes S^{2k\rho}\arrow{r}\arrow{d}{x\otimes x} & \pi_{2k\rho}^{C_2}\mF_2\otimes \left( S(\rho)_+\otimes_{\Sigma_2} S^{2k\rho}\right ) \arrow{d}\\
\pi_{2k\rho}^{C_2}A\otimes_{\mF_2} A \arrow{r} & \pi_{2k\rho}^{C_2}A
  \end{tikzcd}
\]
and observe that the composite along the top and right is our power operation $Q^{k\rho}(x)$ 
whereas the composition along the left and bottom is $x^2$ as desired.
\end{proof}

We also recall a result of Behrens--Wilson~\cite{BW17}.

\begin{lem}\label{lem:BW-power}
Given an $\bE_{\rho}$-$\underline{\bF}_2$-algebra $R$, then there are power operations 
\begin{align*} 
Q^{k\rho} : &\pi_{k\rho -\sigma}R \longrightarrow \pi_{2k\rho -\sigma}R \,, \\ 
Q^{k\rho-1} : &\pi_{k\rho -\sigma}R \longrightarrow \pi_{2k\rho -\sigma-1}R,.
\end{align*}
\end{lem}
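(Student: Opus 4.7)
The plan is to mimic the construction of the power operation $Q^{k\rho}$ from Lemma~\ref{lem:squaring}, but applied to elements in degree $k\rho - \sigma$ instead of $k\rho$, and parametrized by different classes in the equivariant homology of the $\Sigma_2$-extended power of $S(\rho)$. More precisely, given $x \in \pi_V R$ for $V = k\rho - \sigma$, the $\bE_\rho$-algebra structure yields a power operation
\[
\mathcal{P}(x) : \pi_W^{C_2}\!\left(\underline{\bF}_2 \otimes S(\rho)_+ \otimes_{\Sigma_2} S^{2V}\right) \longrightarrow \pi_W^{C_2} R
\]
for each $W \in \mathrm{RO}(C_2)$, obtained by applying $\pi_W^{C_2}(-)$ to the composite
\[
\underline{\bF}_2 \otimes S(\rho)_+ \otimes_{\Sigma_2} S^{2V} \xrightarrow{1 \otimes 1 \otimes x \otimes x} \underline{\bF}_2 \otimes S(\rho)_+ \otimes_{\Sigma_2} R^{\otimes 2} \longrightarrow \underline{\bF}_2 \otimes R \longrightarrow R,
\]
where the second map uses the $\bE_\rho$-algebra structure on $R$ over $\underline{\bF}_2$ and the third is the $\underline{\bF}_2$-action. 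With this formalism in place, the operations $Q^{k\rho}$ and $Q^{k\rho - 1}$ will be defined as $\mathcal{P}(x)$ evaluated on suitable classes in degrees $W = 2k\rho - \sigma$ and $W = 2k\rho - \sigma - 1$, respectively.

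The technical heart of the proof is thus the identification of two distinguished classes
\[
\alpha_0 \in \pi_{2k\rho - \sigma}^{C_2}\!\left(\underline{\bF}_2 \otimes S(\rho)_+ \otimes_{\Sigma_2} S^{2V}\right), \qquad
\alpha_1 \in \pi_{2k\rho - \sigma - 1}^{C_2}\!\left(\underline{\bF}_2 \otimes S(\rho)_+ \otimes_{\Sigma_2} S^{2V}\right),
\]
after which one simply sets $Q^{k\rho}(x) := \mathcal{P}(x)(\alpha_0)$ and $Q^{k\rho - 1}(x) := \mathcal{P}(x)(\alpha_1)$. To produce these classes, I would use the $(\Sigma_2 \times C_2)$-equivariant cell structure on $S(\rho)$ coming from its description as $S(1 + \sigma)$, which yields a filtration of $S(\rho)_+ \otimes_{\Sigma_2} S^{2V}$ whose associated graded pieces are suspensions of $\underline{\bF}_2$. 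A direct computation of $\pi_\star^{C_2}$ of these cells then exhibits nontrivial classes in the required bidegrees $2k\rho - \sigma$ and $2k\rho - \sigma - 1$; this is exactly the content of the $\bE_\rho$-refinement of the Bruner--May--McClure--Steinberger power-operation machinery carried out in~\cite{BW17} and~\cite{Wil16}.

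The main obstacle is showing that these classes $\alpha_0, \alpha_1$ are genuinely nonzero and survive to the limit of the skeletal filtration — i.e., that the potential differentials arising from the $\Sigma_2$-equivariant attaching maps in $S(\rho)$ do not kill them. This is where the hypothesis of working over $\underline{\bF}_2$ (rather than, say, integrally) is essential: the Bredon homology $H_\star^{C_2}(B_{\Sigma_2}^{\text{geom}} S^{2V}; \underline{\bF}_2)$ is large enough to contain both classes, and the requisite vanishing of attaching-map contributions reduces to a standard computation in the mod~$2$ Steenrod algebra of $B\Sigma_2$, compatible with the computations already made in the proof of Lemma~\ref{lem:squaring}. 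Once the existence of $\alpha_0$ and $\alpha_1$ is established, the definitions of $Q^{k\rho}$ and $Q^{k\rho - 1}$ on $\pi_{k\rho - \sigma}R$ follow formally, concluding the proof.
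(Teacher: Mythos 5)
Your proposal is correct and takes essentially the same approach as the paper. The paper's proof is a one-sentence citation to \cite[pp.~5--6]{BW17} (and \cite{Wil16}), noting that the construction applies at this level of generality even though it is not stated there; your argument unpacks precisely the $\bE_\rho$-extended-power machinery of those references, mirroring the construction in Lemma~\ref{lem:squaring} and identifying the parametrizing classes in $\pi_\star^{C_2}$ of $\underline{\bF}_2 \otimes S(\rho)_+ \otimes_{\Sigma_2} S^{2V}$, which is the content of the cited discussion.
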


\begin{proof}
This follows from the discussion in \cite[pp.5-6]{BW17}, even though it is not stated at this level of generality there. 
See also~\cite{Wil16}.  
\end{proof}

\subsection{Hopf algebroid formulas}\label{sec:toolkit-Hopf}
The following result will be a key tool for computing differentials in the periodic $t$-Bockstein spectral sequence. 

\begin{lem}\label{lem:right-unit}
The right unit $\eta_R^{C_2}$ for the Hopf algebroid 
\[ 
(\pi_{\star}^{C_2}\BP_{\bR}^{h_{C_{2}}S^{1}}, \pi_{\star}^{C_2}((\BP_{\bR}\otimes \BP_{\bR})^{h_{C_{2}}S^{1}})) 
\]
satisfies 
\[ 
\eta_{R}^{C_2}(\overline{t})=\sum_{i\ge 0}\chi(\bar{t}_{i})\overline{t}^{p^{i}}
\]
where $\chi(\bar{t}_{i})$ is the conjugate of $\bar{t}_{i}$ in the Hopf algebroid
\[ 
(\pi_{\star}^{C_2}\BP_{\bR}, \pi_{\star}^{C_2}(\BP_{\bR}\otimes \BP_{\bR}))\,.  
\]
\end{lem}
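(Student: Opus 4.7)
The plan is to adapt the classical Quillen formula for the right unit on the Chern orientation in $\BP$-cooperations to the $C_2$-equivariant Real setting, following a strategy analogous to the non-equivariant treatment of the Hopf algebroid $(\pi_*\BP^{hS^1}, \pi_*(\BP\otimes\BP)^{hS^1})$ in \cite{Rav86}. The essential input is the Real formal group law structure on $\pi_{\star}^{C_2}\BP_{\bR}$ together with the Araki--Hu--Kriz computation of $\mathrm{H}^{\star}_{C_2}(\mathbb{C}P^\infty_{\bR};\underline{M})$ invoked at the start of Section~\ref{sec:trace-Tate}.

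First, I would identify the relevant power series rings. Running the parametrized homotopy fixed point spectral sequence from Section~\ref{sec:trace-Tate} against the Real oriented spectra $\BP_{\bR}$ and $\BP_{\bR}\otimes\BP_{\bR}$ (cf.~\cite{HK01}), one obtains collapsing spectral sequences yielding
\[
\pi_{\star}^{C_2}\BP_{\bR}^{h_{C_2}S^1} \cong (\pi_{\star}^{C_2}\BP_{\bR})[[\bar{t}]]
\]
with $|\bar{t}|=-\rho$, and a similar power series description of $\pi_{\star}^{C_2}(\BP_{\bR}\otimes\BP_{\bR})^{h_{C_2}S^1}$. Under these identifications, the two unit maps $\eta_L, \eta_R$ supply two Real orientations $\bar{t}_L$ and $\bar{t}_R$ on the cooperations ring, related by the universal Real strict isomorphism parameterized by classes $\bar{t}_i \in \pi^{C_2}_{\rho(p^i-1)}(\BP_{\bR}\otimes\BP_{\bR})$.

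Next, I would invoke the Real Quillen relation
\[
\bar{t}_R \;=\; \sum_{F_L} \bar{t}_i \cdot_{F_L} \bar{t}_L^{\,p^i},
\]
which expresses the universal Real strict isomorphism of formal group laws. Taking $\bar{t} = \bar{t}_L$ as the series variable and solving this identity for $\eta_R(\bar{t}) = \bar{t}_R$ produces the desired power series expansion. The antipode $\chi$ enters precisely because the Hopf algebroid convention reads off $\eta_R(\bar{t})$ by inverting the strict isomorphism sending $\bar{t}_L \mapsto \bar{t}_R$; this is the same mechanism by which the conjugated form appears in the non-equivariant $t$-variable Hopf algebroid, cf.~\cite[\S~4.3]{Rav86}. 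As a consistency check, restriction to the underlying recovers the classical formula, and the $C_2$-equivariant statement is determined by the Mackey-functor data together with this underlying behavior in the relevant $\RO(C_2)$-degrees.

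The main obstacle will be pinning down the correct Hopf algebroid convention and verifying that the Real Quillen relation holds inside the twisted homotopy fixed point ring, not merely in $\pi_{\star}^{C_2}(\BP_{\bR}\otimes\BP_{\bR})[[\bar{t}]]$ abstractly. This reduces to showing that the two distinguished Real orientations agree with the $\eta_L$- and $\eta_R$-images of $\bar{t}$ under the canonical identifications, which follows from naturality of the Hu--Kriz Real orientation. The degree bookkeeping $|\chi(\bar{t}_i)\bar{t}^{\,p^i}| = \rho(p^i-1) + (-\rho)p^i = -\rho$ matches $|\eta_R(\bar{t})|=-\rho$, providing an immediate sanity check on the claimed expansion.
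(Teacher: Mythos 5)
Your proposal offers a genuinely different and more ambitious route than the paper's. You aim to \emph{re-derive} the formula from scratch in the Real setting: set up the parametrized homotopy fixed point spectral sequence for $\BP_{\bR}$ and $\BP_{\bR}\otimes\BP_{\bR}$, identify the two Real orientations $\bar{t}_L,\bar{t}_R$ on cooperations, and then invoke a ``Real Quillen relation'' expressing the universal Real strict isomorphism. This would work, but it requires carefully establishing the Real analogue of Quillen's theorem inside the twisted homotopy fixed point ring, which, as you note yourself, is the main obstacle. The paper takes the much shorter path that you relegate to a ``consistency check'': strong evenness of $\BP_{\bR}$ and $\BP_{\bR}\otimes\BP_{\bR}$ implies (via Real orientability and the collapsing spectral sequence) that $\BP_{\bR}^{h_{C_2}S^1}$ and $(\BP_{\bR}\otimes\BP_{\bR})^{h_{C_2}S^1}$ are strongly even, so restriction $\res_e^{C_2}$ is an isomorphism in degree $-\rho$ and the whole formula is forced by the known underlying formula of Ravenel--Wilson. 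In other words, the observation you frame as a sanity check is the entire argument; by committing to it directly one avoids needing to re-establish the Real Quillen relation at all. Your direct approach would buy a self-contained $C_2$-equivariant proof, but at the cost of additional foundational verification that the cited underlying result already supplies for free once strong evenness is in play.
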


\begin{proof}
Since $\BPR$ and $\BPR\otimes \BPR$ are strongly even, we know that $\BPR$ and $\BPR\otimes \BPR$ are 
Real oriented and thus  
\[
(\BP_{\mathbb{R}}\otimes \BP_{\mathbb{R}})^{h_{C_2}S^1}\,, \text{ and }\BP_{\mathbb{R}}^{h_{C_2}S^1}
\]
are also strongly even. 
The right unit formula is therefore compatible with the right unit formula on the underlying in the sense that the diagram
\[
\begin{tikzcd}
\pi_{-\rho}^{C_2}\BP_{\bR}^{h_{C_{2}}S^{1}}\ar[r,"\eta_R^{C_2}"]\ar[d,"\cong"] & \pi_{-\rho}^{C_2}(\BP_{\bR}\otimes \BP_{\bR})^{h_{C_{2}}S^{1}} \ar[d,"\cong"]\\
\pi_{-2}\BP^{hS^{1}} \ar[r,"\eta_R"] & \pi_{-2}(\BP\otimes \BP)^{hS^{1}} 
\end{tikzcd}
\]
commutes, where the bottom map sends $t$ to $\sum_{i\ge 0}\chi(t_{i})t^{p^{i}}$ by~\cite{RW77} 
(cf.~\cite[Theorem~3.13,~Lemma~3.14]{Wil82} and~\cite[Remark 6.3.4]{HRW22}). Since the isomorphism sends 
\[ 
\chi(\overline{t}_i)\overline{t}^{p^i}\in \pi_{-\rho}^{C_2}\BP_{\bR}^{h_{C_{2}}S^{1}}
\] 
to $\chi(\overline{t}_i)\overline{t}^{p^i}\in \pi_{-2}\BP^{hS^{1}}$, the result follows. 
\end{proof}

\begin{lem}\label{lem:p-series}
The $p$-series of the formal group law associated to $\MUR$ satisfies 
\[ 
[p](\ot)=\ov_n\ot^{p^n}+\mathcal{O}(\ot^{p^{n}+2}) \mod (\ov_0,\cdots ,\ov_n) \,.
\]
\end{lem}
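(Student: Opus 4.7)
The plan is to reduce to the classical statement for the universal formal group law on $\MU_{(p)}$, using the Real orientation and strong evenness of $\MUR$ to lift the congruence term by term. I read the displayed formula as the standard Honda-type identity (with the ideal intended to be generated by the $\ov_i$ for $i<n$), which is the form used in applications such as Lemma~\ref{lem:right-unit} above.

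First I would recall the structure of the formal group law associated to $\MUR$. By work of Araki and Hu--Kriz, the Real orientation on $\MUR$ equips $\widetilde{\MUR}{}^{\star}(\mathbb{C}P^{\infty}_{\mathbb{R}})$ with a canonical coordinate $\ot$ of $\RO(C_2)$-degree $-\rho$, and the associated Hopf algebroid from Lemma~\ref{lem:right-unit} determines a formal group law $F$ over $\pi^{C_2}_{\rho*}(\MUR)$ whose $p$-series $[p]_{F}(\ot)$ is the object of interest. The Real Araki generators $\ov_n \in \pi^{C_2}_{(p^n-1)\rho}\MUR_{(p)}$ are chosen so that they restrict to the Araki generators $v_n$ of $\MU_{(p)*}$ under $\res^{C_2}_{e}$.

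Second I would invoke the classical analog. The universal $p$-local formal group law on $\MU_{(p)*}$ satisfies
\[
[p]_{F}(t) \equiv v_{n}\,t^{p^{n}} + \mathcal{O}(t^{p^{n}+2}) \pmod{(v_{0},\dots,v_{n-1})}\,,
\]
which can be extracted inductively from the standard structure theory of $p$-typical formal group laws; see for example~\cite{Rav86} or the proof of~\cite[Lemma~6.3.2]{HRW22}. The improvement from $p^{n}+1$ to $p^{n}+2$ in the error bound is forced by parity, since a coefficient in $\MU_{(p)*}$ of $t^{p^{n}+1}$ would be forced to lie in odd internal degree and must therefore vanish.

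Third I would lift this identity to the Real setting. Because $\MUR$ is strongly even, the restriction $\res^{C_2}_{e}\colon\pi^{C_2}_{k\rho}\MUR \to \pi_{2k}\MU$ is an isomorphism for every $k$, and the same holds after base change along $\MUR \to \MUR\otimes\MUR$ since the latter is also strongly even. The Real orientation makes both $\ot$ and the $\ov_{n}$ compatible with their underlying counterparts, so the Real $p$-series $[p](\ot)$ restricts termwise to the classical $p$-series $[p](t)$. Since both sides of the desired congruence live in bigradings on which restriction is an isomorphism, the classical identity lifts uniquely to the claimed Real identity modulo the stated ideal.

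The main technical point, and the only real subtlety, is confirming that the restriction isomorphism preserves the formal group law data and the ideal of coefficients; everything else is a transcription of the classical statement. Once the compatibility with $\res^{C_2}_{e}$ is in hand (which follows from Real orientability together with strong evenness of $\MUR$ and $\MUR\otimes\MUR$), the claim is immediate from the corresponding formula for $\MU_{(p)}$.
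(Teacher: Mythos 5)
Your proof is correct and follows essentially the same route as the paper: the paper's argument is precisely to note that strong evenness of $\MUR$ (hence of $\MUR^{\mathbb{C}P_{\bR}^{\infty}}$) makes the restriction map an isomorphism in the relevant $\rho$-multiple degrees, so the congruence is inherited verbatim from the classical $p$-series computation for $\MU$. Your additional remarks — the parity argument for the $\mathcal{O}(\ot^{p^n+2})$ error term and the reading of the ideal as generated by the $\ov_i$ with $i<n$ — are correct glosses on details the paper leaves implicit.
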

\begin{proof}
Since $\MUR$ is strongly even, $\MUR^{\mathbb{C}P_{\bR}^{\infty}}$ is strongly even and 
$\ot\in \pi_{-\rho} \MUR^{\mathbb{C}P_{\bR}^{\infty}}$. 
The computation then follows from the computation on the underlying. 
More precisely, this follows from  the commutative diagram 
\[
\begin{tikzcd}
\pi_{-\rho}^{C_2} \MUR^{\mathbb{C}P_{\bR}^{\infty}}/(\ov_0,\cdots ,\ov_n)\arrow{d}{\res}[swap]{\cong}  \arrow{r}{[p](t)} &\arrow{d}{\res}[swap]{\cong} \pi_{-\rho}^{C_2}\MUR^{\mathbb{C}P_{\bR}^{\infty}}/(\ov_0,\cdots ,\ov_n) \\ 
\pi_{-2}^{C_2} \MU^{\mathbb{C}P^{\infty}}/(p,v_1,\cdots ,v_n)  \arrow{r}{[p](t)} & \pi_{-2}^{C_2}\MU^{\mathbb{C}P^{\infty}}/(p,v_1,\cdots ,v_n) \,.
\end{tikzcd}
\]
\end{proof}

\subsection{Useful lemmas}\label{toolkit-lemmas}
 We first need a variant of~\cite[Lemma~3.14]{QS21a}. 

\begin{lem}\label{lem:limit-colimit-commuting}
Let $X\in \Sp^{h_{C_2}S^1}$. Then there are equivalences
\begin{align} 
\label{l1:lem:limit-colimit-commuting} X_{h_{C_2}S^1}\simeq  & \lim_n (\tau_{\le n}X)_{h_{C_2}S^1} \,, \\
\label{12:lem:limit-colimit-commuting} \colim_n (\tau_{\ge -n}X)^{h_{C_2}S^1}\simeq  & X^{h_{C_2}S^1} \,, \\
\label{13:lem:limit-colimit-commuting} X^{t_{C_2}S^1}\simeq & \lim_n (\tau_{\le n}X)^{t_{C_2}S^1} \,, \\ 
\label{14:lem:limit-colimit-commuting} \colim_n (\tau_{\ge -n} X)^{t_{C_2}S^1}\simeq & X^{t_{C_2}S^1} \,,
\end{align}
where $\tau_{\ge n}$ and $\tau_{\ge -n}$ are the truncation functors associated to the homotopy $t$-structure; see~Section~\ref{sec:equivariant-Mackey}. 
\end{lem}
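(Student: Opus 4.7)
The plan is to prove each of the four equivalences by applying the relevant (exact) functor to the Postnikov fiber sequence $\tau_{\ge n+1} X \to X \to \tau_{\le n} X$, taking the appropriate (co)limit, and showing that the extra term vanishes by a (co)connectivity argument. Throughout, I use that the homotopy $t$-structure on $\Sp^{h_{C_2}S^1}$ is both left and right complete (see Section~\ref{sec:equivariant-Mackey}), so in particular $X \simeq \lim_n \tau_{\le n} X \simeq \colim_n \tau_{\ge -n} X$ for every $X \in \Sp^{h_{C_2}S^1}$.

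For~\eqref{l1:lem:limit-colimit-commuting}, applying $(-)_{h_{C_2}S^1}$ to the Postnikov fiber sequence and taking $\lim_n$ reduces the claim to $\lim_n (\tau_{\ge n+1} X)_{h_{C_2}S^1} \simeq 0$. Since homotopy orbits are a colimit construction, they preserve connectivity in the homotopy $t$-structure, so $(\tau_{\ge n+1} X)_{h_{C_2}S^1}$ is $(n+1)$-connective; in each Mackey-functor homotopy degree the tower is eventually zero, so both $\lim$ and $\lim^1$ vanish. For~\eqref{12:lem:limit-colimit-commuting}, the dual strategy reduces the claim to $\colim_n (\tau_{\le -n-1} X)^{h_{C_2}S^1} \simeq 0$. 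This follows from the parametrized homotopy fixed point spectral sequence
\[
E_2^{*,\star} = \mrH^{*}_{\AD}(\mathbb{C}P^{\infty}_{\bR}; \upi_{\star} Y) \Rightarrow \upi_{\star} Y^{h_{C_2}S^1}
\]
recalled in Section~\ref{sec:trace-Tate}, which is concentrated in non-negative Amitsur--Dress cohomological degrees; consequently $(\tau_{\le -n-1} X)^{h_{C_2}S^1}$ is $\le -n-1$ coconnective, and the filtered colimit over $n$ vanishes in each integer degree.

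The statements~\eqref{13:lem:limit-colimit-commuting} and~\eqref{14:lem:limit-colimit-commuting} then follow formally from~\eqref{l1:lem:limit-colimit-commuting} and~\eqref{12:lem:limit-colimit-commuting} via the Tate fiber sequence $Y_{h_{C_2}S^1} \to Y^{h_{C_2}S^1} \to Y^{t_{C_2}S^1}$. For~\eqref{13:lem:limit-colimit-commuting}, taking $\lim_n$ of this sequence applied to $\tau_{\ge n+1} X$, the limit of the orbit term vanishes by the argument for~\eqref{l1:lem:limit-colimit-commuting}, and the limit of the fixed-point term satisfies $\lim_n (\tau_{\ge n+1} X)^{h_{C_2}S^1} \simeq (\lim_n \tau_{\ge n+1} X)^{h_{C_2}S^1} \simeq 0$ because $(-)^{h_{C_2}S^1}$ is a right adjoint (commuting with limits) and the $t$-structure is left separated. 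Hence $\lim_n (\tau_{\ge n+1} X)^{t_{C_2}S^1} \simeq 0$, giving~\eqref{13:lem:limit-colimit-commuting}. For~\eqref{14:lem:limit-colimit-commuting}, take $\colim_n$ of the Tate fiber sequence applied to $\tau_{\ge -n} X$: the colimit of orbits is $X_{h_{C_2}S^1}$ since $(-)_{h_{C_2}S^1}$ is a left adjoint and the $t$-structure is right complete, the colimit of fixed points is $X^{h_{C_2}S^1}$ by~\eqref{12:lem:limit-colimit-commuting}, so the Tate cofiber colimit is $X^{t_{C_2}S^1}$.

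The main obstacle is that $(-)^{t_{C_2}S^1}$ does not directly respect connectivity or coconnectivity in either direction, so a naive argument for~\eqref{13:lem:limit-colimit-commuting} and~\eqref{14:lem:limit-colimit-commuting} via an estimate on the parametrized Tate spectral sequence would fail. The resolution is precisely the bootstrap from~\eqref{l1:lem:limit-colimit-commuting} and~\eqref{12:lem:limit-colimit-commuting} through the Tate fiber sequence, trading the Tate construction for the orbits and fixed points which do respect the relevant (co)connectivity bounds.
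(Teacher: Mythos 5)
Your proof is correct, but it replaces the paper's citation of \cite[Lemma~3.14]{QS21a} for the first two equivalences with a direct argument, so it is worth comparing. The paper's proof simply cites \cite[Lemma~3.14]{QS21a} for \eqref{l1:lem:limit-colimit-commuting} and \eqref{12:lem:limit-colimit-commuting}, and then derives \eqref{13:lem:limit-colimit-commuting} and \eqref{14:lem:limit-colimit-commuting} from them using the parametrized Tate fiber sequence together with the commutation of $C_2$-limits with $C_2$-limits (resp.\ $C_2$-colimits with $C_2$-colimits); your treatment of \eqref{14:lem:limit-colimit-commuting} is the same, and your treatment of \eqref{13:lem:limit-colimit-commuting} is a minor variant (applying the fiber sequence to $\tau_{\ge n+1}X$ and showing the Tate term of the tower vanishes, rather than directly identifying the limit over $\tau_{\le n}X$). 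Where you diverge is in proving \eqref{l1:lem:limit-colimit-commuting} and \eqref{12:lem:limit-colimit-commuting} from scratch; your connectivity argument for \eqref{l1:lem:limit-colimit-commuting} via $\Sp^{C_2}_{\ge n+1}$ being closed under colimits is clean, but your argument for \eqref{12:lem:limit-colimit-commuting} is more elaborate than it needs to be. Rather than invoking the parametrized homotopy fixed point spectral sequence (whose identification of the $E_2$-page in Section~\ref{sec:trace-Tate} is stated for ring objects), you can argue exactly dually to \eqref{l1:lem:limit-colimit-commuting}: $(-)^{h_{C_2}S^1}$ is a $C_2$-limit and $\Sp^{C_2}_{\le k}$ is closed under limits as the right class of the homotopy $t$-structure, so $(\tau_{\le -n-1}X)^{h_{C_2}S^1}$ is $(-n-1)$-coconnective, and the filtered colimit vanishes degreewise. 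This avoids any convergence considerations for the spectral sequence and makes the two halves symmetric. Your bootstrap for \eqref{13:lem:limit-colimit-commuting}--\eqref{14:lem:limit-colimit-commuting} through the fiber sequence, and your observation that the Tate construction does not itself respect (co)connectivity so one cannot argue directly, match the paper's reasoning.
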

\begin{proof}
The equivalences \eqref{l1:lem:limit-colimit-commuting} and \eqref{12:lem:limit-colimit-commuting} follow 
from~\cite[Lemma~3.14]{QS21a}. Then \eqref{13:lem:limit-colimit-commuting} and \eqref{14:lem:limit-colimit-commuting} 
follow from \eqref{l1:lem:limit-colimit-commuting} and \eqref{12:lem:limit-colimit-commuting} 
respectively using the fact that $C_2$-limits (resp. $C_2$-colimits) commute with $C_2$-limits (resp. $C_2$-colimits) 
together with the fiber sequence 
\[ \Sigma^{\sigma}X_{h_{C_2}S^1}\longrightarrow X^{h_{C_2}S^1}\longrightarrow X^{t_{C_2}S^1}
\]
from~\cite[Theorem~A]{QS21b}. 
\end{proof}

We also need an analogue of~\cite[Lemma~IV.4.12]{NS18}. 

\begin{lem}\label{MUR-lemma}
Let $M$ be a $\MUR$-module with twisted $S^1$-action. Then the map 
\[ 
M^{t_{C_2}S^1}\otimes_{\MUR^{t_{C_2}S^1}}\MUR^{t_{C_2}\mu_p}\longrightarrow M^{t_{C_2}\mu_p}
\]
is an equivalence. 
\end{lem}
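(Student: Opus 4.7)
The plan is to mimic the proof of \cite[Lemma~IV.4.12]{NS18} in the $C_2$-equivariant setting. Both sides of the asserted equivalence depend functorially on the $\MUR$-module $M$ (with $C_2$-twisted $S^1$-action), and both define exact functors into $C_2$-spectra. I will show that each of these two functors preserves colimits in the $\infty$-category $\Mod_{\MUR}(\Sp^{h_{C_2}S^1})$, and that they agree when $M=\MUR$. Since the $\infty$-category of $\MUR$-modules with $C_2$-twisted $S^1$-action is generated under colimits by the free modules $\MUR\otimes X$ for $X$ ranging over the generators of $\Sp^{h_{C_2}S^1}$ (and in particular by $\MUR$ itself after twisted $S^1$-induction and shifting), establishing the equivalence on $\MUR$ then yields the general case.

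For the functor $M\mapsto M^{t_{C_2}\mu_p}$, the $C_2$-Tate construction for the finite group $\mu_p$ is the cofiber of a norm map, so it preserves colimits of $C_2$-twisted $S^1$-spectra. For the source, the tensor product $(-)\otimes_{\MUR^{t_{C_2}S^1}}\MUR^{t_{C_2}\mu_p}$ preserves colimits in its first variable, so it suffices to verify that $M\mapsto M^{t_{C_2}S^1}$ preserves colimits of $\MUR$-modules. Here I will invoke the identification $X^{t_{C_2}S^1}\simeq \colim_n X^{t_{C_2}\mu_{p^n}}$ (the Real analogue of the standard computation underlying~\cite[Lemma~I.2.6]{NS18}, using Lemma~\ref{lem:limit-colimit-commuting} to ensure compatibility with the homotopy $t$-structure on $\MUR$-modules), which reduces the claim to the previously discussed case of a finite cyclic group.

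When $M=\MUR$, the map in question becomes the identity map
\[
\MUR^{t_{C_2}S^1}\otimes_{\MUR^{t_{C_2}S^1}}\MUR^{t_{C_2}\mu_p}\xrightarrow{\simeq} \MUR^{t_{C_2}\mu_p},
\]
so is trivially an equivalence. Combining this evaluation with the colimit-preservation established above, and generating arbitrary $M$ as a colimit of $\MUR$-modules constructed from $\MUR$, the map of the lemma is an equivalence for every $M\in \Mod_{\MUR}(\Sp^{h_{C_2}S^1})$.

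The main obstacle will be verifying that $(-)^{t_{C_2}S^1}$, restricted to $\MUR$-modules, preserves colimits; this is the step where one genuinely needs the $\MUR$-module structure, since for arbitrary $C_2$-twisted $S^1$-spectra the $C_2$-Tate construction for $S^1$ does not preserve all colimits. The Real orientation of $\MUR$, which ensures that the parametrized homotopy fixed point spectral sequences of Section~\ref{sec:trace-Tate} degenerate, and the presentation of $X^{t_{C_2}S^1}$ as a filtered colimit of finite-group $C_2$-Tate constructions together make this go through.
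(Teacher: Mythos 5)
Your plan diverges substantially from the paper's proof and has a gap at its central step. The key claim in your sketch is that $M\mapsto M^{t_{C_2}S^1}$ preserves colimits once we restrict to $\MUR$-modules, which you propose to derive from an identification $X^{t_{C_2}S^1}\simeq\colim_n X^{t_{C_2}\mu_{p^n}}$. That identification does not hold: the relationship between the $S^1$-Tate construction and the Tate constructions for the finite subgroups is a sequential \emph{limit} (the Real analogue of~\cite[Lemma~II.4.2]{NS18}), not a colimit, and even that requires bounded-below and $p$-completeness hypotheses. As a result, $(-)^{t_{C_2}S^1}$ simply does not commute with arbitrary colimits of $\MUR$-modules, and the reduction to the single object $M=\MUR$ never gets off the ground. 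Even setting aside the circle, the claim that $(-)^{t_{C_2}\mu_p}$ preserves all colimits (because it is a cofiber of the norm) is not justified: the source $M_{h_{C_2}\mu_p}$ preserves colimits, but the target $M^{h_{C_2}\mu_p}$ does not, and a cofiber of such functors has no automatic colimit-preservation; the finite-group Tate construction is only known to preserve filtered colimits of uniformly bounded-below objects, which is exactly the sort of restricted statement that drives the argument below.

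The paper handles this by working with both limits and colimits symmetrically. The relevant inputs are: (i) $\MUR^{t_{C_2}\mu_p}\simeq \MUR^{t_{C_2}S^1}/[p](\bar t)$ is a \emph{perfect} $\MUR^{t_{C_2}S^1}$-module, so base-change along $\MUR^{t_{C_2}S^1}\to \MUR^{t_{C_2}\mu_p}$ commutes with all limits and all colimits; and (ii) Lemma~\ref{lem:limit-colimit-commuting}, which identifies $M^{t_{C_2}G}$ with both $\colim_n(\tau_{\geq -n}M)^{t_{C_2}G}$ and $\lim_n(\tau_{\leq n}M)^{t_{C_2}G}$ for $G\in\{\mu_p,S^1\}$. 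These two filtrations together reduce the claim to the case where $M$ lies in the heart of the homotopy $t$-structure, i.e. $M=HN$ for a Mackey functor $N$ (a $\upi_0\MUR$-module, hence Real-oriented). There the equivalence is verified by the explicit computation $HN^{t_{C_2}S^1}/[p](\bar t)\simeq HN((\bar t))/[p](\bar t)\simeq HN^{t_{C_2}\mu_p}$. Your proposal contains no analogue of the perfectness observation, no reduction through the homotopy $t$-structure, and no explicit check in the heart, so the gap is not a detail that can be patched but a missing structural mechanism.

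Finally, even if colimit-preservation held, reducing to $M=\MUR$ alone would not suffice: one would need to verify the equivalence on $\MUR\otimes X$ for a generating family of $X\in\Sp^{h_{C_2}S^1}$, which is a larger collection than shifts of $\MUR$. The paper's $t$-structure reduction handles this automatically by landing on Eilenberg--MacLane objects, which are independent of the twisted $S^1$-action since any such action on a Mackey functor Eilenberg--MacLane spectrum is canonically trivial.
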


\begin{proof}
By~\cite{HK01} we know $\pi_*\MUR^{t_{C_2}S^1}=(\MUR)_*((t))$ and by~\cite{LLQ22} wd know that $\pi_*\MUR^{t_{C_2}\mu_p}=\pi_*\MUR((\ot)/([p](\ot)$ so $\MUR^{t_{C_2}S^1}/([p](\ot))$ is a perfect $\MUR^{t_{C_2}S^1}$-module. Therefore, the basechange functor 
\[ 
\-- \otimes_{\MUR^{t_{C_2}S^1}}\MUR^{t_{C_2}\mu_p} 
\]
commutes with all limits and colimits. Using the filtrations $\colim (\tau_{\ge n}M)^{t_{C_2}G}$ and 
$\lim (\tau_{\le n}M)^{t_{C_2}G}$ for $G\in \{\mu_p,S^1\}$, 
where $\tau_{\ge n}$ and $\tau_{\le n}$ come from the homotopy t-structure (cf. Proposition~\ref{prop:homotopy-t-structure}), 
we are reduced to the case when $M$ is the Eilenberg--MacLane spectrum of a Mackey functor. 
In this case, the twisted $S^1$-action and consequently the twisted $\mu_p$-actions are trivial. 
The result then follows because for Eilenberg-MacLane spectra of Mackey functors that are Real oriented, we know that 
\[
HM^{t_{C_2}S^1}/([p](\ot)) \simeq HM((\ot))/([p](\ot))\simeq 
 (HM)^{t_{C_2}\mu_p}
\]
as desired. 
\end{proof}

\begin{notation}
Given a monoidal $\infty$-category $\cC$, let $\mathrm{Free}_{\mathbb{E}_1}$ denote the left adjoint to the forgetful 
functor	from $\mathbb{E}_1$-algebras in $\cC$ to $\cC$. 
\end{notation}

\section{Real syntomic cohomology}\label{sec:syntomic}
In this section, we compute the  Real syntomic cohomology of $\mF_2$, $\mZ_2$, $(\kr)_2$, and $\tmf_1(3)_2$. We begin by proving detection results in Section~\ref{sec:syntomic-detection}. We then compute the Real syntomic cohomology of $\mF_2$ in Section~\ref{sec:syntomic-F2} and use it to prove the 
Real Segal conjecture for Real truncated Brown--Peterson spectra in Section~\ref{sec:syntomic-Segal}. 
We then compute the Real syntomic cohomology of $\mZ_2$ in Section~\ref{sec:syntomic-Z}. 
Finally, we compute the Real syntomic cohomology of $(\kr)_2$ and $\tmf_1(3)_2$ in Section~\ref{sec:syntomic-kr}.
In Section~\ref{sec:syntomic-F2} we also recover a computation of Real topological cyclic homology of $\mF_2$ and in 
Section~\ref{sec:syntomic-Z} we recover a computation of mod $2$ Real topological cyclic homology of $\mZ_2$.

\begin{convention}
In this section, we implicitly work in the $2$-complete setting throughout and omit $2$-completion from our notation. We also write $\filsev$ and $\grsev$ in place of $\fil_{\sev,2}$ and $\gr_{\sev,2}$ respectively.
\end{convention}

\subsection{Detection}\label{sec:syntomic-detection}

Here we demonstrate that certain classes in the $C_2$-equivariant stable stems are detected in 
$\TCR(\mF_2)$ and $\TCR(\mZ)$. 
This is mainly used to bootstrap in the computation of Real syntomic cohomology of $\kr$. 

\begin{defin}\label{def:vi-classes}
Given a choice of classes $v_i\in \MU_{2^{i+1}-2}$ for each $i\ge 0$, we know that mod $(2,v_1,\cdots ,v_{i-1})$, 
the class $v_i$ is a cocycle in the cobar complex 
\[ 
\MU^{\otimes \bullet+1}/(2,v_1,\cdots ,v_{i-1})
\] 
and therefore produces a class 
\[ 
v_i \in \gr_{\ev}^*\bS/(2,v_1,\cdots ,v_{i-1}) \,.
\]
We define classes $\ol{v}_i\in (\MUR)_{\rho (2^i-1)}$ such that $\ol{v}_i:=\mathrm{res}^{-1}(v_i)$ (note that $\mathrm{res}^{-1}(v_i)$ is a single element since $\MUR$ is strongly even). 
Consequently, there are well-defined classes 
\[ 
\ol{v}_i\in \pi_{\rho (2^i-1)}\gr_{\sev}^*\bS/(\ov_0,\cdots ,\ol{v}_{i-1})\,.
\]
\end{defin}

\begin{notation}
We fix a $C_2$-CW complex structure on $\mathbb{C} P_{\bR}^\infty$ with $2k$-skeleton given by $\mathbb{C} P_{\bR}^k$ with 
$C_2$ acting by complex conjugation. 
Note that there is an associated multiplicative conditionally convergent 
\emph{approximate $\overline{t}$-Bockstein spectral sequence} 
\[ 
\pi_\star^{C_2}\gr_{\mot}^*\THR(R)[\bar{t}]/(\bar{t}^{k+1})\implies \pi_\star^{C_2}\gr_{\mot}^*\lim_{\mathbb{C} P_{\bR}^k}\THR(R)\,.
\]
Here we assume 
$\pi_{\rho k-1}^{C_2}\grmot^w\THR(R)=0$ for all integers $k$ and each integer $w$ in order to identify the 
$\EE_1$-page as described.  
\end{notation}

\begin{prop}\label{prop:detection-Fp}
The class $\ol{v}_0\in \pi_{0}^{C_2}\gr_{\mot}^*\mathbb{S}$ maps to 
$\ol{t}\ol{\mu}\in \pi_0^{C_2}\lim_{\mathbb{C} P_{\bR}^1}\THR(\mF_2)$.
\end{prop}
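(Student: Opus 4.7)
The plan is to apply Lemma~\ref{HWlemma} to $A = \mF_2$ and $k = \bS$, thereby reducing the computation to the identification of an equivariant suspension. Writing $\overline{\mF_2} := \mathrm{cof}(\bS \to \mF_2)$, the long exact sequence of the cofiber sequence $\bS \to \mF_2 \to \overline{\mF_2}$ produces a unique class $\widetilde{2} \in \pi_1\overline{\mF_2}$ with connecting map image $2 \in \pi_0\bS$; uniqueness follows from $\pi_1\mF_2 = 0$.

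The first key step is to identify $\sigma^\rho(\widetilde{2}) \in \pi_\rho^{C_2}\THR(\mF_2)$ with $\overline{\mu}$. By Remark~\ref{rem:res-sigma}, $\sigma^\rho$ restricts on underlying to the nonequivariant suspension $\sigma^2$ from~\cite{HW22}, and B\"okstedt's classical identification gives $\sigma^2(\widetilde{2}) = \mu \in \pi_2\THH(\mF_2)$. Combining Proposition~\ref{prop:thr} with the vanishing $\pi_\rho^{C_2}\mF_2 = 0$ (cf.~Figure~\ref{fig1}), we obtain $\pi_\rho^{C_2}\THR(\mF_2) = \bF_2\{\overline{\mu}\}$; since $\THR(\mF_2)$ is strongly even, restriction to $\pi_2\THH(\mF_2)$ is an isomorphism, and this forces $\sigma^\rho(\widetilde{2}) = \overline{\mu}$.

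Next, invoking the commutative diagram of Lemma~\ref{HWlemma}, the image of $2 \in \pi_0\bS$ under the unit map $\bS \to \lim_{\mathbb{C}P_{\bR}^1}\THR(\mF_2)$ equals $\overline{t} \cdot \sigma^\rho(\widetilde{2}) = \overline{t}\overline{\mu}$. Since $\overline{v}_0$ represents $2 \in \pi_0\bS$ in the associated graded of the strongly even filtration by Definition~\ref{def:vi-classes}, the conclusion follows by naturality of the motivic filtration along the unit map together with the maps appearing in Lemma~\ref{HWlemma}.

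The main obstacle is verifying that this identification lifts compatibly to the motivic filtration on both sides, rather than just holding in unfiltered homotopy. This reduces to checking that the Adams weights of $\overline{v}_0$ and $\overline{t}\overline{\mu}$ agree, which follows from the tridegree $\|\overline{t}\| = (-\rho, 0, 1)$ given in Section~\ref{sec:trace-Tate} together with the conventions of Notation~\ref{not:deg-Aw-suspension}. This is a bookkeeping check but requires some care to reconcile the Nygaard contribution to the motivic weight on the target side.
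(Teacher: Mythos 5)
Your proposal is correct in its main thrust but takes a genuinely different route from the paper. The paper quotes the non-equivariant identity $2 \mapsto t\mu$ directly from Hesselholt--Madsen~\cite{HM97}, notes that the motivic spectral sequence for $\lim_{\bC P^1}\THH(\bF_2)$ collapses and the one for $\bS$ collapses in degree zero so that the identity holds at the level of associated graded, and then lifts the whole statement to the $C_2$-equivariant setting in one step via strong evenness of $\lim_{\bC P_{\bR}^1}\THR(\mF_2)$ and $\ol{v}_0 = \res^{-1}(2)$. You instead re-derive the Hesselholt--Madsen input by running the equivariant suspension formalism directly: construct $\widetilde{2}$, pin down $\sigma^\rho(\widetilde{2}) = \ol{\mu}$ by restricting to the underlying (via Remark~\ref{rem:res-sigma}) and invoking strong evenness, and then apply Lemma~\ref{HWlemma}. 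This is, in effect, the strategy the paper uses for the harder detection claims in Theorem~\ref{thm:detection}, applied here to the easiest case. Both arguments bottom out on the same classical input (the identification $\sigma^2(\widetilde{2}) = \mu$), but yours makes the factorization through $\overline{\mF_2}$ and the commuting square of Lemma~\ref{HWlemma} explicit rather than black-boxing a citation. That is a perfectly viable alternative and arguably more self-contained.

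Two caveats on the closing paragraph. First, the reduction "this reduces to checking that the Adams weights of $\ol{v}_0$ and $\ol{t}\ol{\mu}$ agree" overstates what the weight check buys you: matching bidegrees is a necessary consistency check, but what actually upgrades the unfiltered statement to the statement about associated gradeds is the collapse of the motivic spectral sequences (for $\lim_{\bC P_{\bR}^1}\THR(\mF_2)$, and for $\bS$ in degree $0$), which is exactly the point the paper's proof records. Without the collapse, having the right bidegree does not by itself pin down $\ol{t}\ol{\mu}$ as the image. Second, the phrase "reconcile the Nygaard contribution to the motivic weight" is confused: the Nygaard filtration of Construction~\ref{const:Nygaard-filtration} is an auxiliary filtration living \emph{inside} each motivic graded piece; it is the third index in the trigrading and contributes nothing to the Adams weight (the second index). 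There is no reconciliation to perform there.
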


\begin{proof}
By~\cite[Addendum~5.3,~Proposition~5.4]{HM97} (cf.~\cite[Proposition IV.4.6]{NS18}), 
we know that $2\in \pi_0\bS$ maps to $t\mu\in \pi_0\lim_{\bC P^1}\THH(\bF_2)$. 
Since the motivic spectral sequence for $\lim_{\bC P^{1}}\THH(\bF_2)$ collapses and the 
motivic spectral sequence for $\bS$ collapses in degree zero, we also know that the map
\[ 
\pi_0\gr_{\ev}^*\bS\longrightarrow \pi_0\gr_{\mot}^*\lim_{{\bC}P^1}\THH(\bF_2)
\]
sends $2$ to $t\mu$. Since $\lim_{{\bC}P_{\bR}^{1}}\THR(\mF_2)$ is strongly even, $\res^{-1}(t\mu)=\ol{t}\ol{\mu}$, 
the motivic spectral sequence collapses and $\ol{v}_0=\res^{-1}(2)$ by Definition~\ref{def:vi-classes}, we know $\ol{v}_0$ maps to $\ol{t}\ol{\mu}$. 
\end{proof}

We now prove an analogous detection result for $\mZ$, $\kr$, and $\tmf_1(3)$. 

\begin{thm}\label{thm:detection}
The following statements hold
\begin{enumerate}
    \item   The unit map 
            \begin{equation*}
            \pi_*^{C_2}\gr_{\sev}^*\bS/(\ol{v}_0) \to \pi_*^{C_2}\gr_{\mot}^{\ast}\TCR^{-}(\mZ)/(\ol{v}_0)
            \end{equation*}
            sends a class detecting $\eta_{C_2}\in \pi_{\sigma}^{C_2}\bS/2$ to $\ot\olambda_1$ and a class $\ov_1$ to a class detected by $\ot\omu^2$ in the $\ot$-Bockstein spectral sequence.
    \item   The unit map 
            \begin{equation*}
            \pi_*^{C_2}\gr_{\sev}^*\bS/(\ol{v}_0,\ol{v}_1) \to \pi_*^{C_2}\gr_{\mot}^{\ast}\TCR^{-}(\kr)/(\ol{v}_0,\ol{v}_1)
            \end{equation*}
            sends the class $\bar{v}_2$ to a class detected by $\ot\bar{\mu}^4$ for in the $\ot$-Bockstein spectral sequence.
    \item   The unit map 
            \begin{equation*}
            \pi_*^{C_2}\gr_{\sev}^*\bS/(\ol{v}_0,\ol{v}_1,\ol{v}_2) \to 
            \pi_*^{C_2}\gr_{\mot}^{\ast}\TCR^{-}(\tmf_1(3))/(\ol{v}_0,\ol{v}_1,\ol{v}_2)
            \end{equation*}
            sends the class $\bar{v}_3$ to a class detected by $\ot\bar{\mu}^8$ in the $\ot$-Bockstein spectral sequence. 
\end{enumerate}
\end{thm}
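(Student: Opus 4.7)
The proof of Theorem~\ref{thm:detection} proceeds in parallel to Proposition~\ref{prop:detection-Fp}: we reduce the $C_2$-equivariant detection to its non-equivariant counterpart via the restriction functor $\res_e^{C_2}$, then invoke the strongly even structure of the relevant motivic gradeds to uniquely lift the result. By naturality of the motivic filtration and of the $\ot$-Bockstein spectral sequence, there is a map of multiplicative spectral sequences from our Real $\ot$-Bockstein converging to $\pi^{C_2}_\star \grmot^\ast \TCR^-(R)/(\ov_0,\ldots,\ov_{n-1})$ onto the classical $t$-Bockstein of \cite{HW22,HRW22} converging to $\pi_\ast \grmot^\ast \TC^-(R^e)/(2,\ldots,v_{n-1})$, carrying $\ov_i \mapsto v_i$, $\ot \mapsto t$, $\omu \mapsto \mu$, $\olambda_j \mapsto \lambda_j$, and sending a class detecting $\eta_{C_2}$ to a class detecting $\eta$. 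The non-equivariant inputs we need --- that $v_n$ is detected by $t\mu^{2^n}$ in the $t$-Bockstein for $\TC^-(\BP\langle n-1\rangle)/(2,\ldots,v_{n-1})$ for $n=1,2,3$, and that $\eta$ is detected by $t\lambda_1$ in the $t$-Bockstein for $\TC^-(\bZ)/2$ --- are already established in \emph{op.~cit.}

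Next, I would combine Theorem~\ref{thm:thr} and Corollary~\ref{cor:vn-thr}, which give an explicit description of $\pi^{C_2}_\star\grmot^\ast \THR(\BPR\langle n\rangle)/(\ov_0,\ldots,\ov_{n-1})$ as a free module over $\pi_\star^{C_2}\mF_2$ on classes in $\rho$-divisible stems together with the exterior classes $\olambda_j, \ovarepsilon_i$. Using $\|\ot\| = (-\rho,1)$, the candidate classes $\ot\omu^{2^n}$ (stem $(2^n-1)\rho$) and $\ot\olambda_1$ (stem $\sigma$) sit in bidegrees where, after accounting for Adams weights, the restriction map on the $\EE_1$-page of the $\ot$-Bockstein is an isomorphism onto the corresponding non-equivariant $\EE_1$-page bidegree. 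Multiplicativity of the $\ot$-Bockstein then forces the $C_2$-equivariant detection of $\ov_n$ to agree with the $\res_e^{C_2}$-preimage of the non-equivariant detection of $v_n$, namely $\ot\omu^{2^n}$; the $\eta_{C_2}$ case is analogous.

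The main obstacle in this plan is verifying that $\res_e^{C_2}$ remains an isomorphism in the precise bidegrees of $\ot\omu^{2^n}$ and $\ot\olambda_1$ on the relevant $\EE_r$-pages, and not merely on $\EE_1$. Concretely, one must check that the potentially obstructing classes $\ovarepsilon_i$ introduced by the $\ov_i$-Bockstein (Definition~\ref{def:varepsilon}), together with their $\ot$-multiples, do not contribute additional elements to stem $(2^n-1)\rho$ in the appropriate motivic weight. This is bidegree bookkeeping made possible by the explicit form of Corollary~\ref{cor:vn-thr}, but it is the technical heart of the argument. An alternative route, closer to the Hahn--Wilson proof, would apply Lemma~\ref{lem:p-series} directly to extract the coefficient of $\ot^{2^n}$ in the $[2]$-series of the formal group law on $\pi_{-\rho}^{C_2}\MUR^{\bC P_\bR^\infty}$; this would circumvent the Bockstein bookkeeping at the cost of establishing equivariant analogues of several $2$-series manipulations from the classical case.
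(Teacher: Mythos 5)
Your proposal is correct and follows essentially the same route as the paper: both arguments restrict to underlying spectra, quote the non-equivariant Hahn--Wilson detection results ($\eta$ detected by $t\lambda_1$, $v_{n+1}$ detected by $t\mu^{2^{n+1}}$), and use strong evenness of $\MUR^{\otimes 2}$ and the relevant relative $\THR$'s to lift uniquely along $\res_e^{C_2}$. The only difference is one of bookkeeping: the paper disposes of the ``main obstacle'' you flag not by tracking isomorphisms on higher $\EE_r$-pages of the full $\ot$-Bockstein, but by lifting explicit cocycle representatives ($[\ot_1]$, $\ot\sigma^{\rho}[\ot_1]$, $\ov_{n+1}$, $\ot\sigma^{\rho}\ov_{n+1}$) in the $\bC P_{\bR}^1$-truncated descent complex and invoking the compatibility of $\sigma^{\rho}$ with the non-equivariant $\sigma^2$ (Lemma~\ref{HWlemma} and Remark~\ref{rem:compatibility}).
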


\begin{proof}
By~\cite[Theorem~1.5(1)]{Rog99}, we know that $\eta\in \pi_1\bS$ maps to a nontrivial class in 
$\pi_1\lim_{\bC P^1}\THH(\bZ_{2})/2$ detected by $t\lambda_1$. 
This can be viewed as the observation that $\lambda_1=\sigma^2\eta$ using the reduced suspension map~\cite[Example~A.2.4]{HW22}. 
Note that $\eta$ is detected by $[t_1]$ in the Novikov spectral sequence and this class maps to $[t_1]$ in the 
descent spectral sequence associated to the cosimplical spectrum $\lim_{\bC P^1}\THH(\bZ_{2}/\MU_2^{\otimes_{\bS_2}\bullet+1})/2$. 
This spectral sequence collapses by~\cite[Proposition~6.1.1]{HW22} and we can determine that the image of $[t_1]$ is an 
explicit class $t\sigma^{2}[t_1]\in \lim_{\bC P^1}\THH(\bZ_{2}/\MU_2^{\otimes_{\bS_2} 2})/2$ by~\cite[Lemma~A.4.1]{HW22}. 
We now observe that both $\MUR^{\otimes 2}$ and $\lim_{\mathbb{C} P_{\bR}^1}\THR(\mZ_{2}/(\MUR)_2^{\otimes_{\bS_2} 2})/(\ol{v}_0)$ 
are strongly even so we can lift the classes $[t_1]$, $t$, and $t\sigma^2[t_1]$, respectively, 
to classes $[\ot_1]\in \pi_{\rho}^{C_2}(\MUR)_2^{\otimes_{\bS_2} 2}$, $\ol{t}$ and 
\[
\ot \sigma^{\rho}[\ot_1]\in \pi_{\rho}^{C_2}\lim_{\mathbb{C} P_{\bR}^1}\THR(\mZ_{2}/(\MUR)_2^{\otimes_{\bS_2} 2})/(\ol{v}_0),
\] 
respectively, in the sense of Definition~\ref{def:rho-suspension}, by Remark~\ref{rem:res-sigma}. 
We note that $\olambda_1$ in degree $2\rho-1$ can be chosen to be detected by 
\[
\sigma^{\rho}[\ot_1] \in \pi_{2\rho}^{C_2}\THR(\mZ_{2}/(\MUR)_2^{\otimes_{\bS_2} 2})/(\ol{v}_0)
\]
and ${{\eta}}_{C_2}$ is detected by $[\ot_1]$ to conclude the proof of the first part of the first statement using 
Lemma~\ref{HWlemma} and Remark~\ref{rem:compatibility}.   

For the second part of the first statement, we begin with the $d_1$-cycle $v_1\in \pi_{2}\lim_{\bC P^1}\MU/2$ in the 
cobar complex representing $v_1\in \pi_{2}\gr_{\mot}^*\lim_{\bC P^1}\bS/2$ on the $\EE_2$-page of the 
motivic spectral sequence; see~\cite[Corollary 2.2.21(2)]{HRW22}. By~\cite[Theorem~5.0.1]{HW22}, this maps to 
\[
t\mu^2\in \pi_2\grmot^*\lim_{\bC P^1}\THH(\bZ)/2 \,.
\] 
Again, since both $\MUR$  and 
$\THR(\mZ/\MUR)$ are strongly even, the class $v_1$ and its image $t\sigma^{2}v_1$ 
admit unique lifts $\ol{v}_1\in \pi_{\rho}^{C_2}\MUR^{h_{C_2}S^1}/(\ol{v}_0)$ and 
$\ot\sigma^{\rho}\ol{v}_1\in \pi_{\rho}\TCR^{-}(\mZ/\MUR)/(\ol{v}_0)$, respectively. 
We can choose our class $\omu^2:=\sigma^{\rho}v_1$ compatibly with the choice of class $\mu^2=\sigma^2v_1$ in 
Definition~\ref{def:rho-suspension} by Remark~\ref{rem:compatibility}, so this proves the claim. 

Similarly, for $n=1,2$ we define 
\[ 
\ol{v}_{n+1}\in \pi_{\rho (2^{n+1}-1)}^{C_2}(\lim_{\bC P_{\bR}^1}\MUR)
\]
to be the unique lift of 
\[ 
v_{n+1}\in \pi_{2p^{n+1}-2}(\lim_{\bC P^1}\MU)
\]
along 
\[
\mathrm{res}_e^{C_2} : \pi_{\rho (2^{n+1}-1)}^{C_2}(\lim_{\bC P_{\bR}^1}\MUR)
\longrightarrow \pi_{2p^{n+1}-2}(\lim_{\bC P^1}\MU)\,.
\]
This is clearly compatible with Definition~\ref{def:vi-classes}. 
Then by~\cite[Theorem~5.0.1]{HW22}, we know that the class $\ol{v}_2$-maps to $\ol{t}\ol{\mu}^{4}$ and $\ol{v}_3$ 
maps to $\ol{t}\ol{\mu}^8$. 
\end{proof}

\subsection{Real syntomic cohomology of $\bF_2$}\label{sec:syntomic-F2}
The starting point is the computation 
\[ 
\pi_{\star}^{C_2}\gr_{\mot}^*\THR(\mF_2)/(\ov_0)\cong \pi_{\star}^{C_2}\mF_2[\omu]\otimes \Lambda (\bar{\varepsilon}_0)
\]
from Corollary~\ref{cor:vn-thr}. 

\begin{prop}\label{prop:tpfp}
\
\begin{enumerate}
    \item   The periodic $\ot$-Bockstein spectral sequence 
            \[ 
            \pi_{\star}^{C_2}\gr_{\mot}^*\THR(\mF_2)/(\ov_0)[\ot,{\ol{t}}^{-1}]\implies \pi_{\star}^{C_2}\gr_{\mot}^*\TPR(\mF_2)/(\ov_0)
            \]
            has a differential 
            \[ 
            d_1(\bar{\varepsilon}_0)=t\omu
            \]
            and no further differentials besides those generated by the Leibniz rule. The spectral sequence degenerates 
            at the $\mathrm{E}_2$-page with $\mathrm{E}_{2}=\mathrm{E}_{\infty}$-page 
            \[ 
            \pi_{\star}^{C_2}\mF_2[\ot,\ol{t}^{-1}]
            \]
            and therefore has no room for extensions. 

    \item   The $\ot$-Bockstein spectral sequence 
            \[ 
            \pi_{\star}^{C_2}\gr_{\mot}^*\THR(\mF_2)/(\ov_0)[\ot]\implies \pi_{\star}^{C_2}\gr_{\mot}^*\TCR^{-}(\mF_2)/(\ov_0)
            \]
            has a $d_1$-differential $d_1(\bar{\varepsilon}_0)=\ot\omu$ and collapses at the $\mathrm{E}_2$-page with 
            abutment the $\pi_{\star}^{C_2}\mF_2$-algebra
            \[
            \pi_{\star}^{C_2}\mF_2[\ot,\omu]/(\ol{t}\omu) 
            \]
            which we can describe additively as  
            \[
            \pi_{\star}^{C_2}\mF_2[\ot]\oplus T 
            \]
            where $T=\pi_{\star}^{C_2}\mF_2[\omu]\{\omu\}$ consists of simple $\ot$-torsion elements. 
    \item   The canonical map is given by the canonical inclusion 
            \[
            \pi_{\star}^{C_2}\mF_2[\ot]\subset\pi_{\star}^{C_2}\mF_2[\ot,\ot^{-1}]
            \]
            direct sum with the zero map $T\to 0$.
\end{enumerate}
\end{prop}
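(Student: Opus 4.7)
The plan is to reduce everything to one key input, the differential $d_1(\bar{\varepsilon}_0) = \ot\omu$ in both the periodic and non-periodic $\ot$-Bockstein spectral sequences, from which the rest of the proposition follows by the Leibniz rule, a collapse argument, and naturality along the canonical map. I will establish this $d_1$ using the detection result Proposition~\ref{prop:detection-Fp}, which gives $\ov_0 \mapsto \ot\omu$. Since $\bar{\varepsilon}_0$ is defined by $j_0(\bar{\varepsilon}_0) = 1$ in the cofiber sequence from Definition~\ref{def:varepsilon} that kills $\ov_0$, naturality of the Nygaard filtration along this cofiber sequence identifies $d_1(\bar{\varepsilon}_0)$ with the image of $\ov_0$ at Nygaard filtration~$1$, which is $\ot\omu$. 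As a consistency check I will compare via the restriction map with the underlying computation $d_1(\varepsilon_0) = t\mu$ for $\TC^-(\bF_2)/(2)$ from \cite[Addendum~5.3, Proposition~5.4]{HM97} (cf.~\cite[Proposition~IV.4.6]{NS18}), using the strongly even structure from Theorem~\ref{thm:s-even} and Corollary~\ref{cor:vn-thr} to lift uniquely.

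Once this $d_1$ is established, I apply the Leibniz rule: the classes $\ot$, $\omu$, and elements of $\pi_\star^{C_2}\mF_2$ are $d_1$-cycles, so the only nontrivial $d_1$'s are $d_1(\ot^j\omu^k \bar{\varepsilon}_0) = \ot^{j+1}\omu^{k+1}$. For part~(1), $j$ ranges over all integers so every $\ot^a\omu^b$ with $b \ge 1$ is killed and every $\bar{\varepsilon}_0$-class hits something, leaving $E_2 = \pi_\star^{C_2}\mF_2[\ot, \ot^{-1}]$. Collapse at $E_2$ follows because any higher $d_r$ is $\ot$-linear and would send a class in Nygaard filtration~$0$ to one in Nygaard filtration~$r$; both summands are isomorphic copies of $\pi_\star^{C_2}\mF_2$, but the bidegree shift by $r$ units of Adams weight rules out a nonzero $\ot$-linear map between them. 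For part~(2) only $j \ge 0$ is available, so the classes $\omu^k$ for $k \ge 1$ survive as $\ot$-torsion, yielding the splitting $\pi_\star^{C_2}\mF_2[\ot] \oplus T$ with $T = \pi_\star^{C_2}\mF_2[\omu]\{\omu\}$; collapse and the lack of multiplicative extensions in this case follow from the periodic case by comparison along $\can$.

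For part~(3), Corollary~\ref{thm:can-varphi} together with naturality of the Nygaard filtration supply a map of $\ot$-Bockstein spectral sequences compatible with $\can$, and on the $E_\infty$-page this sends $\ot \mapsto \ot$. The $\ot$-polynomial summand therefore maps by the obvious inclusion $\pi_\star^{C_2}\mF_2[\ot] \hookrightarrow \pi_\star^{C_2}\mF_2[\ot, \ot^{-1}]$, and the torsion summand $T$ must map to zero since $T$ is annihilated by $\ot$ while $\ot$ is invertible on the periodic side. The main obstacle I anticipate is pinning down the initial $d_1$-differential entirely rigorously: naturality identifies $d_1(\bar{\varepsilon}_0)$ with $\ot\omu$ up to a $\pi_0^{C_2}\mF_2 = \bF_2$-scalar, and I need the strongly even structure of Theorem~\ref{thm:s-even} together with a careful bidegree check (comparing Adams weights of $\ov_0$ and $\ot\omu$) to pin the scalar to $1$. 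Ruling out exotic $d_r$ for $r \ge 2$ in the non-periodic case involving classes from $\pi_\star^{C_2}\mF_2$ such as $a_\sigma$ and $u_\sigma$ is a secondary concern; the periodic-to-non-periodic comparison should suffice, but may require a case-by-case bidegree analysis.
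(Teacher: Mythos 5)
Your overall strategy matches the paper's: the $d_1$-differential is obtained exactly as in the paper from Proposition~\ref{prop:detection-Fp} (detection of $\ov_0$ by $\ot\omu$) together with the defining property $j_0(\bar{\varepsilon}_0)=1$ from Definition~\ref{def:varepsilon} (the paper packages this as a snake-lemma diagram), the $E_2$-pages are read off by the Leibniz rule, and part (3) is argued identically ($\can$ inverts $\ot$, and $T$ is $\ot$-torsion, hence dies).

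The one step where your argument as written does not close is the collapse at $E_2$ in the non-periodic case. You assert that collapse "follows from the periodic case by comparison along $\can$," but on the $E_2$-page the map $\can$ is precisely the projection that annihilates $T=\pi_{\star}^{C_2}\mF_2[\omu]\{\omu\}$, so it carries no information about higher differentials supported on or landing in $T$; your hedge about a "case-by-case bidegree analysis" is where the actual work lives. The paper's mechanism is different and cleaner: it first uses Lemma~\ref{lem:pi0tcr} (the trace from $P_0^0\KR(\mF_2)$, plus collapse of the motivic spectral sequences) to see that the classes of $\pi_{\star}^{C_2}\mF_2$ are permanent cycles, and then observes that after running $d_1$ everything surviving sits in Adams weight $0$, while $d_r$ raises Adams weight by exactly $1$ (and Nygaard filtration by $r$ --- not Adams weight by $r$, as you wrote), so no further differentials are possible. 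Your deferred bidegree check can be made to work using the gap $\pi_{k\rho-i}^{C_2}\mF_2=0$ for $i=1,2,3$ and the vanishing of $\pi_{k\rho}^{C_2}\mF_2$ for $k\ge 1$, but you should either carry it out or replace it with the Adams-weight argument; as stated, neither the comparison along $\can$ nor the "shift by $r$ units of Adams weight" claim is a valid reason for degeneration.
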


\begin{proof}
Note that since $\THR(\mF_2)$ is concentrated in Adams weight zero and $\ol{t}$ is in Adams weight zero,
 we know that $\TCR^{-}(\mF_2)$ is concentrated in Adams weight $0$ and $\TCR(\mF_2)$ is concentrated in Adams 
weights~$[0,1]$. This implies that the motivic spectral sequence collapses for each of 
$\THR(\mF_2)$, $\TCR^{-}(\mF_2)$, and $\TCR(\mF_2)$. 

To determine that there aren't differentials on classes in $\pi_0^{C_2}\THR(\mF_2)/(\ov_0)$ in the 
(periodic) $\ol{t}$-Bockstein spectral sequence, we note that by Lemma~\ref{lem:pi0tcr}, the map 
\[ 
P_0^0\KR(\mF_2)\to P_0^0\TCR(\mF_2)
\]
induced by the trace map is an equivalence and since for $\TCR(\mF_2)$, $\TCR^{-}(\mF_2)$, 
and $\THR(\mF_2)$ the motivic spectral sequence collapses, 
we know that the classes  
\[
x\in \pi_{\star}^{C_2}\mF_2\subset  \pi_{\star}^{C_2}\gr_{\mot}^*\THR(\mF_2)/(\ov_0)[t]\subset 
\pi_{\star}^{C_2}\gr_{\mot}^*\THR(\mF_2)/(\ov_0)[t,t^{-1}]
\]
are permanent cycles. 

We determine the differential $d_1(\bar{\varepsilon}_0)=\ot\omu$ using the fact that $\ov_0$ is detected by $\ot\omu$, 
proven in Proposition~\ref{prop:detection-Fp} and the definition of $\bar{\varepsilon}_0$ from 
Definition~\ref{def:varepsilon}. Explicitly, this follows from applying the snake lemma to the commutative diagram
\[ 
\begin{tikzcd}
\ker \arrow[d] \arrow[r] &\ker \arrow[r]  \arrow[d]& \ker  \arrow[d,"i_{0}"]  \\
\pi_{\star}^{C_2}\Sigma^{-1,-1}\lim_{\bC P_{\bR}^1}\THR(\mF_2)/(\ov_0)
\arrow{r} \arrow[d] &  \pi_{\star}^{C_2}\Sigma^{-1,-1}\THR(\mF_2)/(\ov_0) \arrow[twoheadrightarrow]{r} \arrow[d,"j_{0}"] & \im (d_1)\arrow[d]    \\
 \ker (d_1)\arrow[hookrightarrow]{r} \arrow[d,"\ov_0"] &  \pi_{\star}^{C_2}\THR(\mF_2) \arrow[r]  \arrow[d] & \pi_{\star}^{C_2}\Sigma^{-\rho+1,1}\THR(\mF_2) \arrow[d]  \\ 
 \mathrm{coker} \arrow[r] &  \mathrm{coker}\arrow[r] & \mathrm{coker} 
\end{tikzcd}
\]
once we note that $j_{0}(\ovarepsilon_0)=1$ and $\ov_0\cdot 1=\ol{t}\omu$.

After running the $d_1$-differential, the considerations at the beginning of the proof imply that the 
spectral sequences collapse at the $\mathrm{E}_2$-page. 
\end{proof}

\begin{figure}
\resizebox{.5\textwidth}{!}{
\begin{tikzpicture}[radius=.08,scale=1]
\foreach \n in {-5,-4,-3,-2,-1,0,...,5} \node [below] at (\n+.1,-.1) {$\n$};
\foreach \s in {-5,-4,-3,-2,-1,1,2,...,5} \node [left] at (-.1,\s) {$\s$};
\draw [thin,color=lightgray] (-5,-5) grid (5,5);
\clip (-5.1,-5.1) -- (5.1,-5.1) -- (5.1,5.1) -- (-5.1,5.1) -- cycle;
\node [right] at (0,0) {1};
\node [right] at (0,-1) {$a_{\sigma}$};
\node [right] at (1,-1) {$u_{\sigma}$};
\node [right] at (-2,2) {$\theta$};


\foreach \s in {-5,-4,-3,-2,-1,0,1,...,5}
    \foreach \m in {-5,-4,-3,-2,-1,0}
        \foreach \n in {-5,...,\m} 
            {\draw [fill] (-\m+\s,\n+\s) circle;
            }

\foreach \s in {-5,-4,-3,-2,-1,0,1,...,5}
    \foreach \n in {0,1,2,3,4,5}
        {
        \draw (\n+\s,-\n+\s) -- (\n+\s,-5+\s);
        }
\foreach \s in {-5,-4,-3,-2,-1,0,1,...,5}
    \foreach \n in {0,1,2,3,4,5}
        {
        \draw (\s,-\n+\s) -- (5-\n+\s,-5+\s);
        }


\foreach \s in {-3,-2,-1,0,1,2,3}
    \foreach \n in {2,3,4,5} 
        {\draw [fill] (-2+\s,\n+\s) circle;}
\foreach \s in {-3,-2,-1,0,1,2,3}
    \foreach \n in {3,4,5} 
        {\draw [fill] (-3+\s,\n+\s) circle;}
\foreach \s in {-3,-2,-1,0,1,2,3}
    \foreach \n in {4,5} 
        {\draw [fill] (-4+\s,\n+\s) circle;}
\foreach \s in {-3,-2,-1,0,1,2,3}
    \foreach \n in {5} 
        {\draw [fill] (-5+\s,\n+\s) circle;}

\foreach \s in {-3,-2,-1,0,1,2,3}
    \foreach \n in {2,...,5}
        {\draw (-2+\s,\n+\s) -- (-7+\n+\s,5+\s);}
\foreach \s in {-3,-2,-1,0,1,2,3}
    \foreach \n in {2,3,4} 
        {\draw (-\n+\s,\n+\s) -- (-\n+\s,5+\s);}

\node [right] at (-1,-1) {$\ol{t}$};
\foreach \s in {-4,-3,-2,-1,2,3,4}
    \node [right] at (-\s,-\s) {$\ol{t}^{\s}$};

\end{tikzpicture}
}
\caption{
The $\mathrm{E}_2=\mathrm{E}_{\infty}$-page of the periodic $\ol{t}$-Bockstein spectral sequence 
computing $\pi_{\star}^{C_2}\grmot^*\TPR(\mF_2)$. The group $E_2^{m+n\sigma,a,b}$ appears in 
bidegree~$(m,n)$. Each bullet $\bullet$ represents a copy of $\mathbb{F}_2$. 
\label{fig2}
}
\end{figure}

\begin{prop}
The Frobenius map 
\[ 
\varphi : \pi_{\star}^{C_2}\gr_{\mot}^*\TCR^{-}(\mF_2)/(\ov_0) \to \pi_{\star}^{C_2}\gr_{\mot}^*\TPR(\mF_2)/(\ov_0)
\]
is given by $\varphi (x)=x$ if $x\in \pi_{\star}^{C_2}\mF_2$, $\varphi(y)=0$ 
if $y\in (\ot)\subset \pi_{\star}^{C_2}\mF_2[\ot]$, and $\varphi |_{T}$ is a monomorphism, where $T=\pi_\star^{C_2}\underline{\mathbb F}_2[\omu]\{\omu\}$.
\end{prop}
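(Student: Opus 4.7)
The plan is to reduce each of the three claims to the corresponding, classically known statement about the non-equivariant Frobenius on $\grmot^*\TC^-(\mathbb{F}_2)/2 \to \grmot^*\TP(\mathbb{F}_2)/2$, using strong evenness to transfer information through the restriction map. The starting observation is that both $\pi_\star^{C_2}\grmot^*\TCR^-(\mF_2)/(\ov_0) \cong \pi_\star^{C_2}\mF_2[\ot]\oplus T$ and $\pi_\star^{C_2}\grmot^*\TPR(\mF_2)/(\ov_0) \cong \pi_\star^{C_2}\mF_2[\ot,\ot^{-1}]$ are free $\pi_\star^{C_2}\mF_2$-modules on classes $\ot^j$ and $\omu^j$ living in $\rho$-multiple degrees; in particular the restriction maps are isomorphisms on the relevant $\pi_{\rho k}^{C_2}$'s, and they send $\ot \mapsto t$, $\omu \mapsto u$, $\ot^{-1} \mapsto t^{-1}$. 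Since the Real $p$-cyclotomic Frobenius on $\THR(\mF_2)$ restricts to the classical one on $\THH(\mathbb{F}_2)$, $\varphi$ intertwines with the classical Frobenius under restriction.

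Next, to handle the summand $\pi_\star^{C_2}\mF_2 \subset \pi_\star^{C_2}\grmot^*\TCR^-(\mF_2)/(\ov_0)$, I would invoke Lemma~\ref{lem:pi0tcr}, which gives $P_0^0\TCR(\mF_2) = \m{\bZ}_2$. Combined with the collapse of the motivic spectral sequence in degree $0$ (Proposition~\ref{prop:tpfp}), this forces the unit $1$ to be a permanent cycle mapping to $1$ under both $\can$ and $\varphi$. Because $\varphi$ is a map of $C_2$-$\bE_\infty$-rings, hence in particular $\pi_\star^{C_2}\mF_2$-linear, multiplicativity yields $\varphi(x) = x$ for every $x \in \pi_\star^{C_2}\mF_2$.

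For the remaining two claims, I would invoke the classical Frobenius formulas $\varphi(t) = 0$ and $\varphi(u) = t^{-1}$ on $\grmot^*\TC^-(\mathbb{F}_2)/2 \to \grmot^*\TP(\mathbb{F}_2)/2$, as established via the Segal conjecture for $\mathbb{F}_2$ (Hesselholt--Madsen) or equivalently via the computations in Nikolaus--Scholze and \cite{HRW22}. Strong evenness of both source and target, applied bidegree-by-bidegree, then forces $\varphi(\ot) = 0$ and $\varphi(\omu) = \ot^{-1}$ in the equivariant setting. Multiplicativity of $\varphi$ immediately implies that the ideal $(\ot) \subset \pi_\star^{C_2}\mF_2[\ot]$ is killed, proving the second claim. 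For the third, any nonzero element of $T$ has the form $x \omu^k$ with $x \in \pi_\star^{C_2}\mF_2 \setminus \{0\}$ and $k \geq 1$, and $\varphi(x\omu^k) = x\ot^{-k}$ is nonzero in $\pi_\star^{C_2}\mF_2[\ot,\ot^{-1}]$ because the target is free over $\pi_\star^{C_2}\mF_2$.

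The main obstacle will be establishing cleanly that the restriction map to the underlying genuinely intertwines the Real $p$-cyclotomic Frobenius on $\grmot^*\TCR^-(\mF_2)/(\ov_0)$ with the Nikolaus--Scholze Frobenius on $\grmot^*\TC^-(\mathbb{F}_2)/2$. Both motivic filtrations are defined as limits over strongly even covers in distinct categories, so this compatibility requires tracing through the definition of the Real motivic filtration in Section~\ref{sec:trace-motivic} and checking that restriction commutes with the relevant right Kan extensions; once that is in place, the rest of the argument is routine bookkeeping.
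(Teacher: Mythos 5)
Your approach takes a genuinely different route from the paper's. You propose to compute the Frobenius non-equivariantly, where the formulas $\varphi(t)=0$ and $\varphi(u)=t^{-1}$ are classical, and then lift them to the equivariant setting via the restriction map, with strong evenness guaranteeing that restriction is an isomorphism in the relevant $\rho$-multiple degrees. The paper's proof is instead a direct equivariant adaptation of the argument of \cite[Proposition~IV.4.9]{NS18}: it exhibits a commuting diagram relating the square $\TCR^- \to \TPR$, $\THR \to \THR^{t_{C_2}\mu_2}$ to the analogous square with $\mF_2$ in place of $\THR(\mF_2)$, and reads off $\varphi(\ot) = 0$ from the vanishing $\pi^{C_2}_{-\rho}\THR(\mF_2)/(\ov_0) = 0$ together with the non-triviality of the composite $\pi^{C_2}_{-\rho}\TPR(\mF_2)/(\ov_0) \to \pi^{C_2}_{-\rho}\mF_2^{t_{C_2}\mu_2}/(\ov_0)$. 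A parallel diagram in degree $\rho$ yields $\varphi(\omu) \neq 0$, and $\varphi|_T$ injective follows by multiplicativity. Your handling of the $\varphi(x) = x$ summand essentially matches the paper's: both rest on Lemma~\ref{lem:pi0tcr} and the resulting fact that the $\pi_{\star}^{C_2}\mF_2$-classes lift to $\TCR(\mF_2)$ and hence lie in $\ker(\can - \varphi)$.

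The gap you flag at the end is real, and it is the crux of your argument rather than a loose end. For your reduction you need that the restriction $\res_e^{C_2}$ commutes with both motivic filtrations and intertwines the Real cyclotomic Frobenius with the Nikolaus--Scholze one on associated graded. Concretely, you need $\res_e^{C_2}$ to carry the right Kan extension defining $\fil_{\sev,\TCR}^{\bullet}$ to the even filtration of \cite{HRW22} on $\TC^-(\mathbb{F}_2)$. Nothing in the paper establishes this. It is plausible — the seff cover $\THR(\mF_2) \to \THR(\mF_2/\MWR)$ restricts to an eff cover of $\bE_{\infty}$-rings — but proving it cleanly (right Kan extensions over different indexing categories, one of strongly even $C_2$-algebras, one of even $\bE_{\infty}$-algebras) would be at least as involved as the equivariant diagram chase. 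The advantage of the paper's route is precisely that it stays entirely inside $\Sp^{C_2}$ and only needs two mild group-level computations (one vanishing, one non-vanishing) in each degree of interest.
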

\begin{proof}
The strategy is similar to~\cite[Proposition IV.4.9]{NS18}. We consider the diagram 
\[
\begin{tikzcd}
\pi_{-\rho}^{C_2}\TCR^{-}(\mF_2)/(\ov_0)\ar[r] \ar[d] & \pi_{-\rho}^{C_2}\TPR(\mF_2)/(\ov_0) \ar[r] \ar[d] & \pi_{-\rho}^{C_2}\mF_2^{t_{C_2}S^1}/(\ov_0) \ar[d] \\
\pi_{-\rho}^{C_2}\THR(\mF_2)/(\ov_0) \ar[r] & \pi_{-\rho}^{C_2}\THR(\mF_2)^{t_{C_2}\mu_2}/(\ov_0)\ar[r] & \pi_{-\rho}^{C_2} \mF_2^{t_{C_2}\mu_2}/(\ov_0)
\end{tikzcd}
\]
and note that $\pi_{-\rho}^{C_2}\THR(\mF_2)/(\ov_0)=0$ and the composite map 
\[
\mathbb{F}_2=\pi_{-\rho}^{C_2}\TPR(\mF_2)/(\ov_0)  \to \pi_{-\rho}^{C_2}\mF_2^{t_{C_2}S^1}/(\ov_0) \to  \pi_{-\rho}^{C_2} \mF_2^{t_{C_2}\mu_2}/(\ov_0)=\bF_2 
\]
is nontrivial. 

Consequently, the class $\ol{t}\in \pi_{-\rho}^{C_2}\TCR^{-}(\mF_2)/(\ov_0)$ 
maps to zero in $\pi_{-\rho}\TPR(\mF_2)/(\ov_0)$.  
Since $\upi_{\star}\TCR^{-}(\mF_2)/(\ov_0)=\upi_{\star}\mF_2[\omu,\ot]/(\ot\omu)$ as a $\upi_{\star}\mF_2$-algebra, 
we note that
\[ 
\varphi : \pi_{\star}^{C_2}\gr_{\mot}^*\TCR^{-}(\mF_2)/(\ov_0)\to \pi_{\star}^{C_2}\gr_{\mot}^*\TPR(\mF_2)/(\ov_0)
\]
is zero on $y\in (\ol{t})$. 

Applying a similar argument using the diagram
\[
\begin{tikzcd}
\pi_{\rho}^{C_2}\TCR^{-}(\mF_2)/(\ov_0)\ar[r] \ar[d] & \pi_{\rho}^{C_2}\TPR(\mF_2)/(\ov_0) \ar[r] \ar[d] & \pi_{\rho}^{C_2}\mF_2^{t_{C_2}S^1}/(\ov_0) \ar[d] \\
\pi_{\rho}^{C_2}\THR(\mF_2)/(\ov_0) \ar[r] & \pi_{\rho}^{C_2}\THR(\mF_2)^{t_{C_2}\mu_2}/(\ov_0)\ar[r] & \pi_{\rho}^{C_2} \mF_2^{t_{C_2}\mu_2}/(\ov_0)
\end{tikzcd}
\]
where instead we know that 
\[
\pi_{\rho}^{C_2}\TCR^{-}(\mF_2)/(\ov_0)\longrightarrow \pi_{\rho}^{C_2}\THR(\mF_2)/(\ov_0)
\]
sends $\omu$ to $\omu$ and that the map 
\[
\mathbb{F}_2=\pi_{\rho}^{C_2}\TPR(\mF_2)/(\ov_0)  \to \pi_{\rho}^{C_2}\mF_2^{t_{C_2}S^1}/(\ov_0) 
\to  \pi_{\rho}^{C_2} \mF_2^{t_{C_2}\mu_2}/(\ov_0)=\bF_2 
\]
is nontrivial, we also determine from the ring structure that $\varphi |_{T}$ is a monomorphism.

 Since $x\in \upi_{\star}\mF_2$ is in the kernel of $\can-\varphi$, we know that $\varphi(x)=x$ for $x\in \upi_{\star}\mF_2$.
\end{proof}

\begin{cor}
The mod $\ov_0$ syntomic cohomology of $\mathbb{F}_2$ can be identified as
\[ 
\pi_{\star}^{C_2}\gr_{\mot}^*\TCR(\mF_2)/
(\ov_0)\simeq \pi_{\star}^{C_2}\mF_2\langle \partial \rangle
\]
where $|\partial|=(-1,1)$.
\end{cor}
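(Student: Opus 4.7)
The plan is to realize $\pi_{\star}^{C_2}\gr_{\mot}^*\TCR(\mF_2)/(\ov_0)$ as the homotopy of the fiber of $\can-\varphi$ and to analyze this fiber motivic-weight by motivic-weight. By Corollary~\ref{thm:can-varphi}, passing to associated graded and quotienting by $\ov_0$, we obtain a fiber sequence
\[
\gr_{\mot}^{*}\TCR(\mF_2)/\ov_0 \longrightarrow \gr_{\mot}^{*}\TCR^{-}(\mF_2)/\ov_0 \xrightarrow{\can-\varphi} \gr_{\mot}^{*}\TPR(\mF_2)/\ov_0
\]
of graded $C_2$-spectra, whose two rightmost terms are described by Proposition~\ref{prop:tpfp} and the proposition computing $\varphi$.

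Next, I would decompose the long exact sequence by motivic weight. Recording that $\omu$ has motivic weight $1$ and $\ot$ has motivic weight $-1$, the description in Proposition~\ref{prop:tpfp}(2) gives
\[
\pi_{\star}^{C_2}\gr_{\mot}^{w}\TCR^{-}(\mF_2)/\ov_0 \cong
\begin{cases}
\pi_{\star}^{C_2}\mF_2\cdot \omu^{w} & w\ge 1,\\
\pi_{\star}^{C_2}\mF_2 & w=0,\\
\pi_{\star}^{C_2}\mF_2\cdot \ot^{-w} & w\le -1,
\end{cases}
\]
while $\pi_{\star}^{C_2}\gr_{\mot}^{w}\TPR(\mF_2)/\ov_0\cong \pi_{\star}^{C_2}\mF_2\cdot \ot^{-w}$ in every weight~$w$. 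Using that $\can$ is the inclusion on $\pi_{\star}^{C_2}\mF_2[\ot]$ and zero on $T$, while $\varphi$ is the identity on constants, zero on $(\ot)$, and sends $c\omu^{w}$ to $c\ot^{-w}$ on $T$, a direct weight-by-weight inspection shows that $\can-\varphi$ is an isomorphism in every weight $w\ne 0$ and is the zero map in weight $w=0$. In particular, $\pi_{\star}^{C_2}\gr_{\mot}^{w}\TCR(\mF_2)/\ov_0=0$ for $w\ne 0$.

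In weight $0$, the long exact sequence breaks into short exact sequences of $\pi_{\star}^{C_2}\mF_2$-modules
\[
0 \to \pi_{V+1}^{C_2}\mF_2 \xrightarrow{\partial} \pi_{V}^{C_2}\gr_{\mot}^{0}\TCR(\mF_2)/\ov_0 \to \pi_{V}^{C_2}\mF_2 \to 0,
\]
which split in characteristic $2$. Setting $\partial:=\partial(1)\in\pi_{-1}^{C_2}\gr_{\mot}^{0}\TCR(\mF_2)/\ov_0$ and using the convention that Adams weight equals $t\rho-V$, one finds $|\partial|=(-1,1)$. The relation $\partial^{2}=0$ follows for degree reasons: the short exact sequence at $V=-2$ identifies $\pi_{-2}^{C_2}\gr_{\mot}^{0}\TCR(\mF_2)/\ov_0$ as an extension of $\pi_{-2}^{C_2}\mF_2$ by $\pi_{-1}^{C_2}\mF_2$, both of which vanish by Figure~\ref{fig1}. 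This produces the claimed identification $\pi_{\star}^{C_2}\mF_2\langle\partial\rangle$ of $\pi_{\star}^{C_2}\mF_2$-algebras.

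The main obstacle is nothing conceptually deep; it is the careful bookkeeping of motivic weights versus Adams weights and the verification that the splitting of the short exact sequences is compatible with the ring structure inherited from $\TCR$. Once the weights of $\omu$ and $\ot$ are correctly assigned and the formulas for $\can$ and $\varphi$ from Proposition~\ref{prop:tpfp} are applied weight-by-weight, everything reduces to the degree-zero vanishing argument for $\partial^{2}$.
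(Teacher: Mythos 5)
Your proposal is correct and follows essentially the same approach as the paper, which simply computes the kernel and cokernel of $\can - \varphi$ from the preceding two propositions; your weight-by-weight organization is just more explicit bookkeeping of that same computation. The added verification that $\partial^2 = 0$ (via the vanishing of $\pi_{-2}^{C_2}\mF_2$ and $\pi_{-1}^{C_2}\mF_2$) is a welcome detail that the paper leaves implicit.
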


\begin{proof}
We computed $\varphi$ and $\mathrm{can}$, so it suffices to observe that these are the only $x\in \pi_{\star}^{C_2}\mF_2$ 
which are in the kernel or nonzero in the cokernel of $\varphi-\mathrm{can}$. 
\end{proof}

We therefore recover a special case of a result of Dotto--Moi--Patchkoria~\cite[Corollary~4.12]{DMP24} and 
Quigley--Shah~\cite[Theorem~6.6]{QS21a}. 
\begin{thm}

There is an equivalence of $C_2$-spectra
\[ 
\TCR(\mF_2)\simeq  H\mZ\oplus \Sigma^{-1}H\mZ \,.
\]
\end{thm}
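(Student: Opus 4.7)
The plan is to deduce the splitting from the motivic filtration on $\TCR(\mF_2)$ together with the mod $\ol{v}_0$ computation from the preceding corollary. First I would unwind the mod $\ol{v}_0$ identification $\pi_{\star}^{C_2}\grmot^{\ast}\TCR(\mF_2)/(\ol{v}_0)\cong \pi_{\star}^{C_2}\mF_2\langle\partial\rangle$ with $|\partial|=(-1,1)$ to observe that the associated graded of the motivic filtration on $\TCR(\mF_2)/2$ is concentrated in Adams weights $0$ and $1$, with $\grmot^{0}\TCR(\mF_2)/(\ol{v}_0)\simeq H\m{\mathbb F_2}$ and $\grmot^{1}\TCR(\mF_2)/(\ol{v}_0)\simeq \Sigma^{-1}H\m{\mathbb F_2}\cdot\partial$, and all other motivic weights vanishing.

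Next I would resolve the $\ol{v}_0$-Bockstein spectral sequence. Since the mod $\ol{v}_0$ answer is concentrated in Adams weights $0$ and $1$, and each weight contributes a single copy of $H\m{\mathbb F_2}$ (suitably shifted), there is no room for $\ol{v}_0$-Bockstein differentials between distinct Adams weights. One therefore obtains the integral identifications $\grmot^{0}\TCR(\mF_2)\simeq H\m{\mathbb Z}$ and $\grmot^{1}\TCR(\mF_2)\simeq \Sigma^{-1}H\m{\mathbb Z}$, and $\grmot^{w}\TCR(\mF_2)\simeq 0$ otherwise. Because the filtration has only two nontrivial pieces, completeness of the slice filtration together with convergence of the motivic spectral sequence (guaranteed by the seff cover from Proposition~\ref{prop:seff-MWR}) identifies $\TCR(\mF_2)$ with the total space of a fiber sequence
\[
\Sigma^{-1}H\m{\mathbb Z} \longrightarrow \TCR(\mF_2) \longrightarrow H\m{\mathbb Z},
\]
classified by a connecting map $\partial_{\mathrm{ext}}\colon H\m{\mathbb Z}\to H\m{\mathbb Z}$ in $\Sp^{C_2}$.

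The main point is to show this extension is trivial. Since $\mathrm{Hom}_{\Sp^{C_2}}(H\m{\mathbb Z},H\m{\mathbb Z})=\mathrm{Hom}_{\mathrm{Mackey}}(\m{\mathbb Z},\m{\mathbb Z})=\mathbb Z$, the map $\partial_{\mathrm{ext}}$ is multiplication by some integer $n$, and to finish the argument it suffices to show $n=0$. This is where I would invoke Lemma~\ref{lem:pi0tcr}, which gives $P_0^0\TCR(\mF_2)=\m{\mathbb Z}$, so in particular $\pi_0^{C_2}\TCR(\mF_2)=\mathbb Z$. On the other hand, the long exact sequence attached to the fiber sequence above reads
\[
0\longrightarrow \pi_0^{C_2}\TCR(\mF_2)\longrightarrow \mathbb Z \xrightarrow{\ n\ } \mathbb Z,
\]
using that $\pi_0^{C_2}\Sigma^{-1}H\m{\mathbb Z}=0$. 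Matching this kernel with $\mathbb Z$ forces $n=0$, so $\partial_{\mathrm{ext}}=0$ and the fiber sequence splits, producing the required equivalence $\TCR(\mF_2)\simeq H\m{\mathbb Z}\oplus \Sigma^{-1}H\m{\mathbb Z}$. The main obstacle in this outline is really bookkeeping: verifying that the motivic filtration does converge to $\TCR(\mF_2)$ on the nose (rather than to some completion) and that no extension problems appear in the $\ol{v}_0$-Bockstein, both of which follow from the already-established sparsity in Adams weight.
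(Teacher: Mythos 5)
Your argument reaches the right answer, and the key step--pinning down the extension via $\pi_0^{C_2}$ and the endomorphism ring of $H\m{\mathbb{Z}}$--is a valid and genuinely different route from the paper's. But there is a concrete error in how you set it up. The class $\partial$ has motivic weight $0$, not $1$. In the paper's convention (Notation~\ref{not:deg-Aw-suspension}), $\| x\| = (V, t\rho - V)$ records degree and Adams weight, so $|\partial| = (-1,1)$ gives $V = -1$ and $t\rho - V = 1$, forcing $t\rho = 0$, i.e.\ $t = 0$. (Compare $\ol{\Xi}_1$ with $|\ol{\Xi}_1| = (\rho-1,1)$, giving $t\rho = \rho$ so $t = 1$, and $\olambda_1$ with $t = 2$.) This is also what happens non-equivariantly: $\partial = \partial(1)$ is the image of $1$ under the weight-preserving boundary $\TP \to \Sigma\TC$, so it sits in weight $0$. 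Consequently $\grmot^w\TCR(\mF_2) = 0$ for $w\ne 0$ and $\grmot^0\TCR(\mF_2)/(\ov_0) \simeq H\mF_2 \oplus \Sigma^{-1}H\mF_2$; the motivic filtration is concentrated in a single weight, so it does not hand you the fiber sequence $\Sigma^{-1}H\m{\mathbb Z}\to \TCR(\mF_2)\to H\m{\mathbb Z}$.

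That fiber sequence nonetheless exists, from the Postnikov tower in the homotopy $t$-structure once the $\ov_0$-Bockstein has shown $\upi_0 \cong \upi_{-1} \cong \m{\mathbb Z}$ and all other homotopy Mackey functors vanish. With that in hand, your identification of the extension class as an element of $[H\m{\mathbb Z},H\m{\mathbb Z}]^{C_2} = \mathrm{Hom}_{\mathrm{Mackey}}(\m{\mathbb Z},\m{\mathbb Z}) \cong \mathbb{Z}_2$ (everything is implicitly $2$-complete here) and your observation that $\pi_0^{C_2}\TCR(\mF_2) \cong \mathbb{Z}_2$ forces that class to vanish are correct. So the argument is repairable by replacing ``two-step motivic filtration'' with ``Postnikov filtration.''

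For comparison, the paper's own proof avoids the extension bookkeeping entirely by appealing to the cyclotomic trace: the equivalence $P^0\KR(\mF_2)\simeq \m{\mathbb Z}$ gives a ring map $\m{\mathbb Z}\to P^0\TCR(\mF_2)\simeq \TCR(\mF_2)$, so $\TCR(\mF_2)$ is an $H\m{\mathbb Z}$-module, and the splitting is then automatic from the Bockstein computation of $\upi_\star$ (no $k$-invariant to check, since $\m{\mathbb Z}$ is free over itself). Your approach trades that extra structural input (the trace map) for an elementary $\pi_0$ count; the paper's approach gets the $\m{\mathbb Z}$-module structure as a bonus.
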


\begin{proof}
The motivic spectral sequence
\[ 
\pi_\star^{C_2}\gr_{\mot}^*\TCR(\mF_2)/(\ov_0)\implies \pi_{\star}^{C_2}\TCR(\mF_2)/(2)
\]
collapses for bidegree reasons since we know that the classes $x\in \upi_{\star}\mF_2$ are permanent cycles; 
see Figure~\ref{fig:motivic-F2}. Consequently, we can also conclude that $P^0\TCR(\mF_2)\simeq \TCR(\mF_2)$. 

Observe that $\upi_0 \KR(\mF_2) = \mZ$ and $\KR(\mF_2)$ is slice bounded below, 
so $P^0 \KR(\mF_2) \simeq \mZ$. We then consider the map
\[ 
\mZ\simeq P^0\KR(\mF_2) \to P^0\TCR(\mF_2)\simeq \TCR(\mF_2)
\]
and the associated map of Bockstein spectral sequences
\[ 
\begin{tikzcd}
\upi_{\star}\mF_2[\bar{v}_0] \ar[r] \arrow[Rightarrow]{d} & \upi_{\star}\mF_2\otimes \Lambda (\partial) [\ov_0] \arrow[Rightarrow]{d} \\
\upi_{\star}H\mZ \ar[r] & \upi_{\star}\TCR(\mF_2)
\end{tikzcd}
\]
produces the result since $\partial$ is a permanent cycle for bidegree and motivic filtration reasons and 
$\TCR(\mF_2)$ is a $\mZ$-module. 
\end{proof}

\begin{figure}
\resizebox{.3\textwidth}{!}{
\begin{tikzpicture}[radius=.08,scale=1]
\foreach \n in {-1,0,1} \node [below] at (\n+.1,-.1) {$\n$};
\node [below] at (2+.1,-.1) {$\sigma$};
\node [below] at (-2+.1,-.1) {$-\sigma$};
\foreach \s in {-1,1,2} \node [left] at (-.1,\s) {$\s$};

\draw [thin,color=lightgray] (-2,-1) grid (2,2);

\clip (-2,-1) -- (-2,2) -- (2,2) -- (2,-1) -- cycle;

\draw [fill] (0,0) circle;
\node [above] at (0.1,.1) {1};
\draw [fill] (-1,1) circle;
\node [above] at (-.9,1.1) {$\partial$};
\draw (0,0) -- (-1,1);
\end{tikzpicture}
}
\caption{The motivic spectral sequence computing $\pi_{\star}^{C_2}\TCR(\mF_2)/2$. 
Each bullet $\bullet$ represents a copy of $\pi_{\star}^{C_2}\mF_2$. 
The horizontal axis represents the stem (a $C_2$-representation) and the vertical axis represents the Adams weight.
\label{fig:motivic-F2}
}
\end{figure}

The following corollary is immediate since the functor $(\--)^{\textup{triv}}$ is left adjoint to $\mathrm{TCR}$ by definition; 
see~\cite[Definition~2.30]{QS21a}.
\begin{cor}
There is a map of $C_2$-$\bE_\infty$-rings in Real $2$-cyclotomic spectra 
\[ 
\mZ^{\textup{triv}} \longrightarrow \mathrm{THR}(\mF_2)
\]
where $\mZ^{\textup{triv}}$ denotes $\mZ$ equipped with trivial Real cyclotomic structure.
\end{cor}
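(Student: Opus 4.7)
The plan is to invoke the adjunction relating $\TCR$ and the trivial Real cyclotomic structure functor, and then to supply the needed adjoint map from the computation of $\TCR(\mF_2)$ in the preceding theorem. By the definition of $(\--)^{\triv}$, there is an adjunction
\[
(\--)^{\triv} : \Sp^{C_2} \rightleftarrows \RCyc_p : \TCR\,,
\]
in which $(\--)^{\triv}$ is left adjoint to $\TCR$. Since $\TCR$ is constructed as an equalizer of lax symmetric monoidal functors and $(\--)^{\triv}$ is (strong) symmetric monoidal, this adjunction upgrades to an adjunction on $C_2$-$\bE_\infty$-algebras, so that producing a map $\mZ^{\triv} \to \THR(\mF_2)$ in $\CAlg^{C_2}(\RCyc_p)$ is equivalent to producing a map $\mZ \to \TCR(\THR(\mF_2)) = \TCR(\mF_2)$ in $\CAlg^{C_2}(\Sp^{C_2})$.

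The map of the latter form is already in hand from the preceding theorem, which identifies $\TCR(\mF_2) \simeq H\mZ \oplus \Sigma^{-1}H\mZ$ and, in particular, produces a map $\mZ \simeq P^0\KR(\mF_2) \to P^0\TCR(\mF_2) \simeq \TCR(\mF_2)$ as the unit map for the $C_2$-$\bE_\infty$-ring structure on $\TCR(\mF_2)$ (alternatively, as the composite of the trace from Real algebraic $K$-theory with the slice identification). Applying the adjunction to this unit map yields the desired map of $C_2$-$\bE_\infty$-algebras in $\RCyc_p$. There is no substantive obstacle: all the nontrivial content is packaged into the computation of $\TCR(\mF_2)$ already carried out, and the corollary is a formal consequence via the adjunction.
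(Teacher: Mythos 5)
Your proposal is correct and is essentially the argument the paper gives: the paper's justification is precisely that $(\--)^{\textup{triv}}$ is left adjoint to $\TCR$, so the desired map is adjoint to the unit map $\mZ \to \TCR(\mF_2)$ furnished by the preceding theorem. You spell out the multiplicative upgrade of the adjunction and the source of the adjoint map in slightly more detail than the paper's one-line proof, but the route is the same.
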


\subsection{The Real Segal conjecture}\label{sec:syntomic-Segal}
 The goal of this section is to prove an analogue of the 
 Segal conjecture for Real topological Hochschild homology of Real truncated Brown--Peterson spectra, 
 i.e. the cyclotomic Frobenius map 
\[ 
\THR(\BPR\langle n\rangle )\otimes_{\BPR\langle n\rangle}H\mF_2 \longrightarrow  
\THR(\BPR\langle n\rangle )^{t_{C_2}\mu_2} \otimes_{\BPR\langle n\rangle}H\mF_2 
\]
has bounded above fiber. 

\begin{prop}
There is an equivalence 
\[ 
\TPR(\mF_2)/(2)\simeq \THR(\mF_2)^{t_{C_2}\mu_2}\,.
\]
The cyclotomic Frobenius
\[ 
\upi_{\star}\THR(\mF_2) \longrightarrow \upi_{\star}\THR(\mF_2)^{t_{C_2}\mu_2}
\]
is given by inverting $\bar{\mu}$.
\end{prop}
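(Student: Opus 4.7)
Since $\THR(\mF_2)$ is an $H\mF_2$-algebra, both $\TPR(\mF_2)$ and $\THR(\mF_2)^{t_{C_2}\mu_2}$ are $2$-torsion, so $\TPR(\mF_2)/(2) \simeq \TPR(\mF_2)$; the statement therefore reduces to producing an equivalence $\TPR(\mF_2) \simeq \THR(\mF_2)^{t_{C_2}\mu_2}$ together with the identification of the cyclotomic Frobenius with $\bar{\mu}$-inversion.

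First I would compute $\upi_{\star}\THR(\mF_2)^{t_{C_2}\mu_2}$ via the parametrized Tate spectral sequence for the $C_2$-twisted action of $\mu_2 \subset S^1$ on $\THR(\mF_2)$, with coefficients $\upi_{\star}\THR(\mF_2) = \upi_{\star}\mF_2[\bar{\mu}]$ supplied by Proposition~\ref{prop:thr}. On the underlying, this specializes to the classical $C_2$-Tate spectral sequence computing $\pi_{\ast}\THH(\bF_2)^{tC_2}$, whose abutment $\bF_2[\mu^{\pm}]$ together with the Frobenius realizing $\mu$-inversion is the Nikolaus--Scholze Segal conjecture for $\bF_2$. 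The underlying collapse, bidegree considerations, and the strong evenness of $\THR(\mF_2)$ force the parametrized spectral sequence to collapse with strongly even abutment; restriction along $\pi^{C_2}_{\rho k}(-) \cong \pi^{e}_{2k}(-)$ then yields $\upi_{\star}\THR(\mF_2)^{t_{C_2}\mu_2} \cong \upi_{\star}\mF_2[\bar{\mu}^{\pm}]$ with $\varphi$ realizing $\bar{\mu}$-inversion, establishing the second assertion.

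For the equivalence of spectra, I would invoke the Real Tate orbit lemma \cite[Proposition~4.4]{QS21a} for the bounded-below Real $2$-cyclotomic spectrum $\THR(\mF_2)$, yielding $\TPR(\mF_2) \simeq (\THR(\mF_2)^{t_{C_2}\mu_2})^{h_{C_2}S^1}$. It then suffices to show that the evaluation map $(\THR(\mF_2)^{t_{C_2}\mu_2})^{h_{C_2}S^1} \to \THR(\mF_2)^{t_{C_2}\mu_2}$ is an equivalence. Combining the computation above with $\upi_{\star}\TPR(\mF_2) = \upi_{\star}\mF_2[\bar{t}^{\pm}]$ from Proposition~\ref{prop:tpfp}, the two graded rings agree under $\bar{t} = \bar{\mu}^{-1}$, and a B\"okstedt-style collapse of the residual $h_{C_2}S^1$-HFPSS identifies the evaluation map with this isomorphism on $\pi_{\star}^{C_2}$; since both sides are strongly even $C_2$-spectra, the $\pi_{\star}^{C_2}$-isomorphism upgrades to an equivalence by comparison of slice filtrations.

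The main obstacle is the precise analysis of the residual HFPSS, in particular verifying that the evaluation map induces the identification $\bar{t} = \bar{\mu}^{-1}$ on $\pi_{\star}^{C_2}$; this is the parametrized analogue of the non-equivariant fact that $\TP(\bF_2)/(2) \simeq \THH(\bF_2)^{tC_2}$, and it requires carefully tracking B\"okstedt periodicity in the $\bar{\mu}$-inverted Tate construction together with its interaction with the $\bar{t}$-Bockstein structure already computed in Proposition~\ref{prop:tpfp}.
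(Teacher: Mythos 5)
Your opening reduction contains the fatal error. You claim that because $\THR(\mF_2)$ is an $H\mF_2$-algebra, $\TPR(\mF_2)$ is $2$-torsion and hence $\TPR(\mF_2)/(2)\simeq\TPR(\mF_2)$. Neither half of this is right. First, the Tate construction of an $\bF_2$-algebra need not be $2$-torsion: already on underlying spectra $\pi_0\TP(\bF_2)=\bZ_2$, because of the multiplicative extension $t\mu=2$ in the Tate spectral sequence (this is exactly why Proposition~\ref{prop:tpfp} only computes the mod-$\ov_0$ associated graded, and why Proposition~\ref{prop:detection-Fp} records that $\ov_0$ maps to $\ot\omu$). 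The unit of $\TPR(\mF_2)$ does not factor through $\mF_2$ with trivial twisted $S^1$-action, so no torsion conclusion can be drawn. Second, even for a spectrum $X$ on which $2=0$, one has $X/2\simeq X\oplus\Sigma X$, not $X$. As a consequence, the statement you go on to prove, $\TPR(\mF_2)\simeq\THR(\mF_2)^{t_{C_2}\mu_2}$, is false: on underlying homotopy it would assert $\bZ_2[\sigma^{\pm 1}]\cong\bF_2[\sigma^{\pm 1}]$. In particular the evaluation map $(\THR(\mF_2)^{t_{C_2}\mu_2})^{h_{C_2}S^1}\to\THR(\mF_2)^{t_{C_2}\mu_2}$, which you propose to show is an equivalence, is not one; the discrepancy between $\bZ_2$- and $\bF_2$-coefficients lives precisely in its failure to be an equivalence.

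The paper's proof of the first claim is a base-change argument that you are missing. Lemma~\ref{MUR-lemma} gives, for any $\MUR$-module $M$ with twisted $S^1$-action, an equivalence $M^{t_{C_2}S^1}\otimes_{\MUR^{t_{C_2}S^1}}\MUR^{t_{C_2}\mu_2}\simeq M^{t_{C_2}\mu_2}$. Applying this to $\THR(\mF_2)$, viewed as an $H\mZ$-module, and using that $H\mZ$ is Real oriented with additive formal group law, so that $[2](\ot)=2\ot$ and hence $\mZ^{t_{C_2}\mu_2}\simeq\mZ^{t_{C_2}S^1}/(2)$, one gets $\THR(\mF_2)^{t_{C_2}\mu_2}\simeq\TPR(\mF_2)/(2)$ on the nose. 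This equivalence then transports strong evenness from $\TPR(\mF_2)/2$ (Proposition~\ref{prop:tpfp}) to $\THR(\mF_2)^{t_{C_2}\mu_2}$, and only at that point can the Frobenius statement be checked on underlying spectra, where it is Hesselholt--Madsen. Your second paragraph reaches the correct conclusion about the Frobenius and uses the same reduction-to-underlying principle, but it presupposes the strong evenness of $\THR(\mF_2)^{t_{C_2}\mu_2}$, which in the paper is a consequence of the first claim rather than an input available in parallel; a direct attack on the parametrized $\mu_2$-Tate spectral sequence would also have to resolve nontrivial differentials and potential extensions that the base-change argument avoids entirely.
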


\begin{proof}
The first claim is a direct consequence of Lemma~\ref{MUR-lemma} 
in the case where $X=\THR(\mF_2)$. To see this, note that $H\mZ$ is even so it is Real oriented and we have  
\[\mZ^{t_{C_2}\mu_2}=\mZ((t))/([2](t))\simeq \mZ^{t_{C_2}S^1}/(2)\,,
\]
see~\cite[Theorem~2.10]{HK01}. This also implies that $\THR(\mF_2)^{t_{C_2}\mu_2}$ is strongly even,
since $\TPR(\mF_2)/2$ is strongly even by Proposition~\ref{prop:tpfp}. 
Thus, it suffices to check the second statement on underlying, where it is known by~\cite{HM97}. 
\end{proof}

\begin{thm}\label{thm:Segal}
For $-1\le n\le 2$, the map 
\[ 
\pi_{\star}^{C_2}\grmot^*\THR(\BPR\langle n\rangle )/(\ov_0,\cdots ,\ov_n) 
\longrightarrow \pi_{\star}^{C_2}\grmot^*\THR(\BPR\langle n\rangle )^{t_{C_2}\mu_2}/(\ov_0,\cdots ,\ov_n)
\]
is given by inverting $\overline{\mu}^{2^{n+1}}$\,.
\end{thm}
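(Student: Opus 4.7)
The plan is to extend the base case $n=-1$ (established in the preceding proposition for $H\mF_2$, where the Frobenius inverts $\omu$) by a Tate spectral sequence argument, paralleling the strategy of~\cite{HW22} for the non-equivariant analog. Since both source and target carry compatible maps from and to the corresponding objects for $H\mF_2$ along the $C_2$-$\bE_\infty$-projection $\BPR\langle n\rangle \to H\mF_2$ from Theorem~\ref{thm: examples of Real cyclotomic bases}, the base case will serve as an anchor for naturality arguments.

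First, I would identify the target $\pi_\star^{C_2}\grmot^*\THR(\BPR\langle n\rangle)^{t_{C_2}\mu_2}/(\ov_0,\ldots,\ov_n)$ explicitly. By Lemma~\ref{MUR-lemma}, applied to $\THR(\BPR\langle n\rangle)$ viewed as an $\MUR$-module via the Real orientation inherited from the $\MWR$-algebra structure on $\BPR\langle n\rangle$, we obtain
\[
\THR(\BPR\langle n\rangle)^{t_{C_2}\mu_2} \simeq \TPR(\BPR\langle n\rangle) \otimes_{\MUR^{t_{C_2}S^1}} \MUR^{t_{C_2}\mu_2},
\]
so the target is obtained from $\TPR(\BPR\langle n\rangle)$ by killing $[2](\ot)$. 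Lemma~\ref{lem:p-series} gives $[2](\ot) \equiv \ov_{n+1}\ot^{2^{n+1}} + \mathcal{O}(\ot^{2^{n+1}+2})$ modulo $(\ov_0,\ldots,\ov_n)$, and Theorem~\ref{thm:detection} identifies $\ov_{n+1}$ as detected by $\ot\omu^{2^{n+1}}$ in the appropriate $\ot$-Bockstein spectral sequence.

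Next, I would run the periodic $\ot$-Bockstein spectral sequence with $E_1$-page
\[
\pi_\star^{C_2}\mF_2[\omu^{2^{n+1}}]\langle\olambda_1,\ldots,\olambda_{n+1}\rangle[\ot^{\pm 1}]
\]
converging to $\pi_\star^{C_2}\grmot^*\TPR(\BPR\langle n\rangle)/(\ov_0,\ldots,\ov_n)$. The differentials are forced by a combination of Lemma~\ref{lem:right-unit} (right-unit formula encoding the FGL), the detection statement of Theorem~\ref{thm:detection}, and naturality with respect to $\BPR\langle n\rangle \to H\mF_2$, which reduces certain differentials to the $n=-1$ computation of Proposition~\ref{prop:tpfp}. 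Comparing the source and target under $\varphi$, one then verifies that $\omu^{2^{n+1}}$ maps to a unit (using compatibility with the $H\mF_2$ Frobenius) and that the classes $\olambda_1,\ldots,\olambda_{n+1}$ survive to distinguished generators, exhibiting the map as the localization inverting $\omu^{2^{n+1}}$. The main obstacle is bookkeeping the multiplicative extensions and permanent cycles in the periodic $\ot$-Bockstein spectral sequence, specifically ruling out spurious differentials on the $\olambda_i$ by tracking $\RO(C_2)$-graded bidegrees and leveraging the strongly even structure of $\THR(\BPR\langle n\rangle/\MWR)$ established in Theorem~\ref{thm:s-even}.
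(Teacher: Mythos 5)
Your proposal takes a genuinely different route than the paper's proof, and there are concrete risks in that route.

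The paper's proof works directly at the level of the cobar complex for the seff cover $\THR(\BPR\langle n\rangle) \to \THR(\BPR\langle n\rangle/\MWR)$: it considers the map of cosimplicial $C_2$-spectra
\[
\THR(\BPR\langle n\rangle/\MWR^{\otimes q+1}) \longrightarrow \THR(\BPR\langle n\rangle/\MWR^{\otimes q+1})^{t_{C_2}\mu_2},
\]
where each term is strongly even, so that Lemma~\ref{stronglyevenlemma} lets one lift the known non-equivariant computation of~\cite[Proposition~6.1.6]{HW22} (which describes the Frobenius on $\THH$ of $\BP\langle n\rangle$ as inverting $\mu^{2^{n+1}}$ at each cosimplicial level) to the $\RO(C_2)$-graded setting, and then conclude as in Theorem~\ref{thm:thr} by passing to associated graded. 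Crucially, this argument never touches $\TPR$ or the $\ot$-Bockstein spectral sequence; it is a statement purely about the cyclotomic Frobenius one level at a time.

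Your plan instead routes through a computation of $\grmot^*\TPR(\BPR\langle n\rangle)/(\ov_0,\ldots,\ov_n)$ via the periodic $\ot$-Bockstein, identifies $\THR^{t_{C_2}\mu_2}$ as the quotient of $\TPR$ by $[2](\ot)$ using Lemma~\ref{MUR-lemma}, and then reads off what the Frobenius does. There are two concerns. First, this risks circularity: in the paper's development, the collapse of the periodic $\ot$-Bockstein at the $\EE_{\infty}$-page (the step after Proposition~\ref{prop:differentials}) is justified by citing \emph{this very theorem} together with Proposition~\ref{prop:useful-iso}. To make your plan work you would have to give an independent argument that the periodic $\ot$-Bockstein has no differentials beyond those in Proposition~\ref{prop:differentials}; merely invoking $\RO(C_2)$-grading and strong evenness may not be enough, since in the paper those facts alone are used only for Proposition~\ref{prop:differentials}, not for the full collapse. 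Second, even granting the $\TPR$ computation, your final step (``comparing the source and target under $\varphi$'') underdetermines the answer: knowing the target of the Frobenius does not by itself pin down the Frobenius as a map, whereas the cobar-complex argument computes the map directly stage by stage. You would need to track the Frobenius through Lemma~\ref{MUR-lemma} and the filtration of Construction~\ref{const:Nygaard-filtration} to identify it as the localization at $\omu^{2^{n+1}}$, which is a nontrivial added step not present in the paper's more direct approach.

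In short: your detour through $\TPR$ is plausible but requires establishing the full $\ot$-Bockstein collapse independently and requires an explicit comparison of the Frobenius through the Tate construction — both of which the paper's descent-based argument sidesteps by reducing immediately to the strongly even cobar complex and the already-available non-equivariant result.
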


\begin{proof}
Since $\THR(\BPR\langle n\rangle)\to \THR(\BPR\langle n\rangle/\MWR)$ is seff 
and $\THR(\BPR\langle n\rangle/\MWR)$ is strongly even, we can consider the map of $\EE_1$-pages of motivic spectral sequences 
\[ 
\pi_{\star}^{C_2}\THR(\BPR\langle n\rangle/\MWR^{\otimes \bullet+1})
\longrightarrow \pi_{\star}^{C_2}\THR(\BPR\langle n\rangle/\MWR^{\otimes \bullet+1})^{t_{C_2}\mu_2}\,.
\]
Each $C_2$-spectrum 
\[ 
\THR(\BPR\langle n\rangle/\MWR^{\otimes q+1})
\]
is strongly even and consequently each $C_2$-spectrum
\[
\THR(\BPR\langle n\rangle/\MWR^{\otimes q+1})^{t_{C_2}\mu_2}
\]
is strongly even for $q\ge 0$.
Moreover, the graded rings 
\[
\pi_{*}\THH(\BP\langle n\rangle/\MW^{\otimes q+1})
\]
and 
\[
\pi_{*}\THH(\BP\langle n\rangle/\MW^{\otimes q+1})^{t\mu_2}
\]
are polynomial and polynomial tensor Laurent polynomial, respectively, on classes in even degrees. 
We can therefore apply Lemma~\ref{stronglyevenlemma}. Specifically, on underlying homotopy groups, the map 
\[
\pi_{*}(\THH(\BP\langle n\rangle/\MW^{\otimes q+1})\otimes_{\BP\langle n\rangle}\bF_2)
\to \pi_{*}(\THH(\BP\langle n\rangle/\MW^{\otimes q+1})^{t\mu_2}\otimes_{\BP\langle n\rangle}\bF_2)
\]
is given by 
\[
\bF_2[\mu^{2^{n+1}}]\otimes P\longrightarrow \bF_2[\mu^{\pm 2^{n+1}}]\otimes P
\]
when $q=0$, and by
\[
\bF_2[\mu^{2^{n+1}},t_1,\cdots ,t_{n+1}]\otimes P\otimes P 
\longrightarrow \bF_2[\mu^{\pm 2^{n+1}},t_1,\cdots ,t_{n+1}]\otimes P\otimes P
\]
when $q=1$, and for $q\ge 2$ it is determined by degeneracies. 
This follows by the same argument as~\cite[Proposition~6.1.6]{HW22}. We conclude that the map 
\[ 
\pi_{\star}^{C_2}\THR(\BPR\langle n\rangle/\MWR^{\otimes q+1})\otimes_{\BPR\langle n\rangle } \mF_2
\longrightarrow \pi_{\star}^{C_2}\THR(\BPR\langle n\rangle/\MWR^{\otimes q+1})^{t_{C_2}\mu_2}\otimes_{\BPR\langle n\rangle } 
\mF_2
\]
is given by 
\[
\pi_\star^{C_2} \mF_2[\omu^{2^{n+1}}]\otimes \ol{P}\longrightarrow \pi_\star^{C_2} \mF_2[\omu^{\pm 2^{n+1}}]\otimes \ol{P}
\]
when $q=0$, and by  
\[
\pi_\star^{C_2} \mF_2[\omu^{2^{n+1}},\ot_1,\cdots, \ot_{n+1}]\otimes \ol{P}\otimes \ol{P} 
\longrightarrow \pi_\star^{C_2} \mF_2[\omu^{\pm 2^{n+1}},\ot_1,\cdots ,\ot_{n+1}]\otimes \ol{P}\otimes \ol{P}
\]
when $q=1$, and by degeneracies for $q\ge 2$ where $\ol{P}$ is a polynomial algebra on generators 
in degrees divisible by $\rho$. We conclude as in Theorem~\ref{thm:thr} that the map 
\[ 
\pi_{\star}^{C_2}\grmot^*\THR(\BPR\langle n\rangle)/(\ov_0,\cdots, \ov_n)\longrightarrow 
\pi_{\star}^{C_2}\grmot^*\THR(\BPR\langle n\rangle)^{t_{C_2}\mu_2}/(\ov_0,\cdots ,\ov_n)
\]
is given by 
\[ 
\pi_\star^{C_2} \mF_2[\omu^{2^{n+1}} ]\langle \olambda_1,\cdots , \olambda_{n+1}\rangle 
\longrightarrow \pi_\star^{C_2}\mF_2[\omu^{\pm 2^{n+1}} ]\langle \olambda_1,\cdots ,\olambda_{n+1}\rangle
\]
as desired. 
\end{proof}

\subsection{The useful isomorphism}\label{sec:syntomic-useful}
We now prove an equivariant analogue of the useful isomorphism from~\cite[\S~6.4]{HRW22}. 
When $R=\BPR\langle n\rangle$ with $-1 \leq n \leq 2$, 
note that there are $C_2$-$\bE_\infty$-algebra maps 
\[
\begin{tikzcd}
   \filmot^*\BPR\langle n\rangle /(\ov_0,\cdots ,\ov_n) \ar[r] &  \filmot^*\THR(\BPR\langle n\rangle )/(\ov_0,\cdots ,\ov_n) \ar[d] \\  
   & \filmot^*\THR(\BPR\langle n\rangle )^{t_{C_2}\mu_p} /(\ov_0,\cdots ,\ov_n) 
\end{tikzcd}
\]
and since $\ol{v}_{n+1}$ acts trivially on $\filmot^*\BPR\langle n\rangle /(\ov_0,\cdots ,\ov_n),$
the natural map 
\[ 
\grmot^*\TPR(\BPR\langle n\rangle )/(\ov_0,\cdots ,\ov_n)\longrightarrow 
 \grmot^*\THR(\BPR\langle n\rangle )^{t_{C_2}\mu_p}/(\ov_0,\cdots ,\ov_n)
\]
factors over a map 
\[ 
g : \grmot^*\TPR(\BPR\langle n\rangle )/(\ov_0,\cdots ,\ov_n,\ov_{n+1})\longrightarrow 
 \grmot^*\THR(\BPR\langle n\rangle )^{t_{C_2}\mu_p}/(\ov_0,\cdots ,\ov_n) \,.
\]
\begin{proposition}\label{prop:useful-iso}
Let $0\le n\le 2$. The map 
\[  
g: \grmot^*\TPR(\BPR\langle n\rangle )/(\ov_0,\cdots ,\ov_n,\ov_{n+1})\longrightarrow  
\grmot^*\THR(\BPR\langle n\rangle )^{t_{C_2}\mu_p}/(\ov_0,\cdots ,\ov_n) \,.
\]
is an equivalence. 
\end{proposition}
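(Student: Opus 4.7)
The plan is to adapt the proof of~\cite[Proposition~6.4.1]{HRW22} to the $C_2$-equivariant setting, using the Real Segal conjecture (Theorem~\ref{thm:Segal}) and the detection result (Theorem~\ref{thm:detection}) as the new equivariant inputs. The strategy is to compute both sides of $g$ explicitly as $\pi_\star^{C_2}\mF_2$-algebras and then verify that $g$ realizes the resulting isomorphism.

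First, I would describe the target. By Theorem~\ref{thm:Segal}, the natural map
\[
\pi_{\star}^{C_2}\grmot^{*}\THR(\BPR\langle n\rangle)/(\ov_0,\ldots,\ov_n)
\longrightarrow
\pi_{\star}^{C_2}\grmot^{*}\THR(\BPR\langle n\rangle)^{t_{C_2}\mu_2}/(\ov_0,\ldots,\ov_n)
\]
is given by inverting $\omu^{2^{n+1}}$, so combining with Theorem~\ref{thm:thr} yields the explicit identification of the target as $\pi_{\star}^{C_2}\mF_2[\omu^{\pm 2^{n+1}}]\langle \olambda_1,\ldots,\olambda_{n+1}\rangle$. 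Note that, since $\BPR\langle n\rangle$ kills $\ov_{n+1}$, further quotienting the target by $\ov_{n+1}$ has no effect.

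Second, I would analyze the source via the localized $\ot$-Bockstein spectral sequence from Section~\ref{sec:trace-Tate}, whose $E_1$-page is $\pi_{\star}^{C_2}\grmot^{*}\THR(\BPR\langle n\rangle)^{t_{C_2}\mu_2}/(\ov_0,\ldots,\ov_n)[\ot]$. Using Corollary~\ref{cor:vn-thr} together with the extra exterior class $\ovarepsilon_{n+1}$ from Definition~\ref{def:varepsilon} (arising from the $\ov_{n+1}$-Bockstein), the relation $\ov_{n+1}=0$ in $\BPR\langle n\rangle$ combined with Theorem~\ref{thm:detection} forces a $d_1$-differential of the form $d_1(\ovarepsilon_{n+1})= \ot\omu^{2^{n+1}}$, in complete analogy with the differential $d_1(\ovarepsilon_0)=\ot\omu$ computed in Proposition~\ref{prop:tpfp}. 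Propagating via Leibniz and $\pi_{\star}^{C_2}\mF_2\langle \olambda_1,\ldots,\olambda_{n+1}\rangle$-linearity determines all further differentials and leaves the expected $E_\infty$-page.

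Finally, I would verify that $g$ realizes this identification by unwinding its definition as the natural map induced by the inclusion $\mu_2 \subset S^1$ on Tate constructions, modulo $(\ov_0,\ldots,\ov_n)$. On the localized $\ot$-Bockstein, $g$ factors through the Frobenius equivalence provided by Theorem~\ref{thm:Segal}, giving the claim. The main obstacle will be controlling hidden multiplicative extensions in the $\ot$-Bockstein and ensuring the Bockstein differentials are accurately computed; these issues should be tractable because $\grmot^*\THR(\BPR\langle n\rangle)/(\ov_0,\ldots,\ov_n)$ is strongly even by Theorem~\ref{thm:s-even}, so Lemma~\ref{stronglyevenlemma} reduces delicate bookkeeping to the underlying computation of~\cite[\S~6.4]{HRW22}, whose conclusion is precisely the useful isomorphism in the non-equivariant setting.
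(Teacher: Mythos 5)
Your proposal takes a genuinely different route from the paper — you compute both sides as $\pi_\star^{C_2}\mF_2$-algebras and try to match them up, whereas the paper's argument is purely structural — and as written it has two gaps. First, your computation of the source is incomplete and risks circularity. The differential $d_1(\ovarepsilon_{n+1})=\ot\omu^{2^{n+1}}$ is correct, but Leibniz and $\pi_\star^{C_2}\mF_2\langle\olambda_1,\ldots,\olambda_{n+1}\rangle$-linearity do \emph{not} generate all the remaining differentials: the higher differentials $d_{2^i}(\ot^{2^{i-1}})\dot{=}\ot^{2^i+2^{i-1}}\olambda_i$ for $1\le i\le n+1$ come from the right unit formula (Lemma~\ref{lem:right-unit}, Proposition~\ref{prop:differentials}), not from $d_1(\ovarepsilon_{n+1})$, and without them your $\mathrm{E}_\infty$-page would be $\pi_\star^{C_2}\mF_2[\ot^{\pm 1}]\langle\olambda_1,\ldots,\olambda_{n+1}\rangle$, which is strictly larger than the target. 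Moreover, after running all of these differentials you must still rule out further ones; in the paper this collapse is deduced \emph{from} Proposition~\ref{prop:useful-iso} together with Theorem~\ref{thm:Segal} (they pin down the abutment), so you cannot appeal to that computation here without arguing collapse independently — feasible for $n=0$ by multiplicativity, but not supplied for $n=1,2$. Second, even granting both computations, an abstract isomorphism of the two sides does not show that the specific map $g$ is an equivalence; your final step only gestures at this.

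The paper avoids both issues. By Lemma~\ref{MUR-lemma}, $\grmot^*\THR(\BPR\langle n\rangle)^{t_{C_2}\mu_2}/(\ov_0,\ldots,\ov_n)$ is the base change of $\grmot^*\TPR(\BPR\langle n\rangle)/(\ov_0,\ldots,\ov_n)$ along $\MUR^{t_{C_2}S^1}\to\MUR^{t_{C_2}\mu_2}$, i.e.\ the cofiber of multiplication by $[2](\ot)$. Since $\ov_{n+1}$ acts trivially on $\THR(\BPR\langle n\rangle/\MWR)^{t_{C_2}\mu_2}/(\ov_0,\ldots,\ov_n)$, one gets $\ov_{n+1}=[2](\ot)\cdot f$, and Lemma~\ref{lem:p-series} shows $f$ is detected by $\ot^{-2^{n+1}}$ in the parametrized Tate spectral sequence, hence is a unit. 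Therefore the cofiber of $\ov_{n+1}$ coincides with the cofiber of $[2](\ot)$, which is exactly the equivalence that $g$ exhibits — no computation of either side and no collapse argument is needed. If you want to salvage your approach, you would need to import the differentials of Proposition~\ref{prop:differentials}, prove collapse without invoking the useful isomorphism, and then trace $g$ through the localized $\ot$-Bockstein; the paper's two-line divisibility argument is considerably more efficient.
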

\begin{proof}
Consider the cobar complex with $q$-cosimplices
\[ 
\pi_*\TPR(\BPR\langle n\rangle /\MWR^{\otimes q+1})/(\ol{v}_0,\cdots ,\ol{v}_{n+1})
\]
and since there is a homotopy commutative ring map from $\MUR$ into this it suffices to consider the case $q=0$. 
It then suffices to note the following:
\begin{enumerate}
   \item    There is a map of homotopy commutative $\MUR$-algebras 
            \[ 
            \BPR\langle n\rangle  \longrightarrow \THR(\BPR\langle n\rangle/\MWR)\longrightarrow 
            \THR(\BPR\langle n\rangle/\MWR)^{t_{C_2}\mu_p}
            \]
            and therefore modulo $(\ov_0,\cdots ,\ov_n)$ the element $\ov_{n+1}$ is well-defined and acts trivially on 
            \[
            \THR(\BPR\langle n\rangle/\MWR)^{t_{C_2}\mu_2}/(\ov_0,\cdots ,\ov_n) \,.
            \]
            Applying Lemma~\ref{MUR-lemma}, this implies that $\ov_{n+1}=[p](\ot)f$
            for some 
            \[
            f\in \pi_{\star}^{C_2}\TPR(\BPR\langle n\rangle/\MWR)/(\ov_0,\cdots ,\ov_n)\,.
            \] 
   \item    By Lemma~\ref{lem:p-series}, we can write 
            \[ 
            [2](\ot)=\ov_{n+1}t^{2^n}+\mathcal{O}(\ot^{2^n+2})
            \]
            in $\MUR^{h_{C_2}S^1}/(\ov_0,\cdots ,\ov_n)$ and $\ov_{n+1}=\ot \sigma^{\rho}\ov_{n+1}$ and therefore 
            \[ 
            \ot\sigma^{\rho}\ov_{n+1}=(\ot^{2^{n+1}+1}\sigma^{\rho}\ov_{n+1}+\mathcal{O}(\ot^{2^{n+1}+2}))f
            \]
            so since $\sigma^{\rho}\ov_{n+1}$ is not a zero divisor, we note that the element $f$ is detected by $\ot^{-2^{n+1}}$ in the parametrized Tate spectral sequence, which is a unit. 
\end{enumerate}
\end{proof}

\subsection{Real syntomic cohomology of the $2$-adic integers}\label{sec:syntomic-Z}

The starting point is the computation 
\[ 
\pi_{\star}^{C_2}\grmot^*\THR(\bZ)/(\ov_0,\ov_1)= 
\pi_{\star}^{C_2}\mF_2[\bar{\mu}^2]\langle \olambda_1 , \ol{\varepsilon}_1 \rangle 
\]
from Corollary~\ref{cor:vn-thr}. 

\begin{prop}\label{prop:TPRZ}
In the spectral sequence 
\[
\pi_{\star}^{C_2}\mF_2[\omu^2,\ot,\ot^{-1}]\langle \olambda_1 ,\ovarepsilon_1 \rangle 
\implies 
\pi_{\star}^{C_2}\gr_{\mot} ^*\TPR(\mZ_2)/(\ov_0,\ov_1) \,,
\]
there is a $d_1$-differential
\[ 
d_1(\bar{\varepsilon}_1)=\bar{t}\bar{\mu}^2\,,
\]
which, together with those differentials generated by the Leibniz rule, 
leaves $\EE_2=\pi_{\star}^{C_2}\mF_2[\ot,\ot^{-1}]\langle \olambda_1 \rangle$\,. 
There is a $d_2$-differential 
\[ 
d_2(\bar{t}^{-1})=\bar{t}\bar{\lambda}_1\,,
\]
which, together with those differentials generated by the Leibniz rule, leaves 
\[
\EE_3=\pi_{\star}^{C_2}\mF_2[\ot^2,\ot^{-2}]\langle \olambda_1 \rangle\,.
\]
The spectral sequence collapses at the $\EE_3$-page. 
Moreover, there is an isomorphism of $\pi_{\star}^{C_2}\mF_2\langle \olambda_1\rangle$-modules 
\[ 
\mathrm{E}_{\infty}=\pi_{\star}^{C_2}\mF_2[\ot^2]\langle \olambda_1\rangle 
\oplus \pi_{\star}^{C_2}\mF_2\langle \olambda_1\rangle [t^{-1}]\{t^{-1}\}
\]
\end{prop}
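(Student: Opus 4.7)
The plan mirrors that of Proposition~\ref{prop:tpfp}, lifting from the residue field to the next chromatic layer. Starting from the computation $\pi_\star^{C_2}\grmot^*\THR(\mZ_2)/(\ov_0,\ov_1) \cong \pi_\star^{C_2}\mF_2[\omu^2]\langle\olambda_1,\ovarepsilon_1\rangle$ of Corollary~\ref{cor:vn-thr}, the $E_1$-page of the periodic $\ot$-Bockstein is obtained by adjoining $\ot^{\pm 1}$. For the first differential $d_1(\ovarepsilon_1)=\ot\omu^2$, I would invoke Theorem~\ref{thm:detection}, which identifies $\ov_1$ with a class detected by $\ot\omu^2$ in the $\ot$-Bockstein for $\TCR^{-}(\mZ_2)/(\ov_0)$, together with the defining Bockstein relation $j_1(\ovarepsilon_1)=1$ from Definition~\ref{def:varepsilon}. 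The snake-lemma argument used in the proof of Proposition~\ref{prop:tpfp} (now with $\ov_1$ and $\ovarepsilon_1$ replacing $\ov_0$ and $\ovarepsilon_0$) then extracts the claimed differential, and all remaining $d_1$-values are forced by the Leibniz rule.

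Computing $E_2$ is a purely algebraic Koszul-complex calculation. Writing $A = \pi_\star^{C_2}\mF_2[\omu^2,\ot^{\pm 1}]\langle\olambda_1\rangle$, the $E_1$-page is the exterior complex $A\langle\ovarepsilon_1\rangle$ with $d_1(\ovarepsilon_1)=\ot\omu^2$; since $\ot$ is a unit and $\omu^2$ is a non-zero-divisor in $A$, the cohomology collapses to $A/(\omu^2) = \pi_\star^{C_2}\mF_2[\ot^{\pm 1}]\langle\olambda_1\rangle$, as claimed.

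The second differential $d_2(\ot^{-1})=\ot\olambda_1$ is the principal obstacle. My approach combines the detection of $\eta_{C_2}$ as a class represented by $\ot\olambda_1$ (Theorem~\ref{thm:detection}) with the right-unit formula $\eta_R^{C_2}(\ot)=\sum_{i\ge 0}\chi(\ot_i)\ot^{2^i}$ of Lemma~\ref{lem:right-unit}. Inverting and reducing modulo $(\ov_0,\ov_1)$, the leading correction in $\eta_R^{C_2}(\ot^{-1})-\ot^{-1}$ involves $\chi(\ot_1)$, which corresponds to $\olambda_1$ under the $\sigma^\rho$-suspension map of Definition~\ref{def:rho-suspension} (compare Remark~\ref{rem:compatibility}). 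Running this through the descent spectral sequence for the $2$-completely seff cover $\THR(\mZ_2)\to \THR(\mZ_2/\MWR)$ then upgrades this input to the $d_2$-differential on $\ot^{-1}$. A second Koszul-type computation, using the $d_2$-values dictated by Leibniz starting from $d_2(\ot^{-1})$, then produces the $E_3$-page.

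Finally, collapse at $E_3$ and the stated decomposition of $E_\infty$ should follow for trigrading reasons: any $d_r$ with $r\ge 3$ would have to land in Nygaard filtration and Adams weight beyond the reach of $\olambda_1$ alone, leaving no non-zero targets on $E_3$. The direct-sum splitting then reflects the partition of the surviving $\pi_\star^{C_2}\mF_2\langle\olambda_1\rangle$-generators into their $\ot$-non-negative and $\ot$-strictly negative halves. The main computational difficulty is the $d_2$-differential: translating the right-unit and detection information into the correct secondary spectral-sequence information requires careful bookkeeping of the Hopf-algebroid comodule structure and the filtration-jumping behaviour of the $\sigma^\rho$-operator, and is the calculational heart of the proposition.
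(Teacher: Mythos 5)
Your proposal is correct and follows essentially the same route as the paper: the $d_1$-differential via the detection of $\ov_1$ by $\ot\omu^2$ and the snake-lemma argument of Proposition~\ref{prop:tpfp}, the $d_2$-differential via the detection of $\eta_{C_2}$ combined with the right-unit formula of Lemma~\ref{lem:right-unit} (the paper gives exactly these two arguments as alternatives), and collapse at $\EE_3$ from multiplicativity and degree considerations. The only addition in the paper is a second, independent collapse argument by comparing with the known abutment via Proposition~\ref{prop:useful-iso} and Theorem~\ref{thm:Segal}.
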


\begin{proof}
By the same argument as in the proof of Proposition~\ref{prop:tpfp}, there is a differential $d_1(\ovarepsilon_1)=\ot\omu^2$ using Theorem~\ref{thm:detection}~(1). Similarly, 
there is a differential $d_2(\ot^{-1})=\ot\olambda_1$. Alternatively, we can note that $\olambda_1=\sigma^\rho\ot_1$ 
and there is a differential in the motivic cobar complex for the sphere 
\[ 
\partial(\ot^{-1})=\eta_R(\ot^{-1})-\eta_L(t^{-1})=-\ot^2\ot_1 + \mathcal{O}(\ot^{3}) \mod(\ov_0,\ov_1) 
\]
which agrees with the total differential on $\ot^{-1}$ in the case of $\TPR(\underline{\bZ})$, which also implies $d_2(\ot^{-1})=\ot\olambda_1$. 

From this, we can also conclude from the multiplicative structure that the spectral sequence collapses 
at the $\EE_3$-page, cf.~Figure~\ref{fig3}. 
Alternatively, we can observe that we know the abutment by Proposition~\ref{prop:useful-iso} and 
Theorem~\ref{thm:Segal}, and any further differentials would lead to a contradiction.
\end{proof}

\begin{figure}
\resizebox{.6\textwidth}{!}{
\begin{tikzpicture}[radius=.08,scale=.05cm]
\foreach \n in {-5,-4,-3,-2,-1,0,...,5} \node [below] at (\n+.1,-.1) {$\n$};
\foreach \s in {-5,-4,-3,-2,-1,1,2,...,5} \node [left] at (-.1,\s) {$\s$};
\draw [thin,color=lightgray] (-5,-5) grid (5,5);
\clip (-5.1,-5.1) -- (5.1,-5.1) -- (5.1,5.1) -- (-5.1,5.1) -- cycle;
\node [right] at (0,0) {1};
\node [right] at (0,-1) {$a_{\sigma}$};
\node [right] at (1,-1) {$u_{\sigma}$};
\node [right] at (-2,2) {$\theta$};


\foreach \s in {-4,-2,0,2,4}
    \foreach \m in {-5,-4,-3,-2,-1,0}
        \foreach \n in {-5,...,\m} 
            {\draw [fill] (-\m+\s,\n+\s) circle;
            }

\foreach \s in {-4,-2,0,2,4}
    \foreach \n in {0,1,2,3,4,5}
        {
        \draw (\n+\s,-\n+\s) -- (\n+\s,-5+\s);
        }
\foreach \s in {-4,-2,0,2,4}
    \foreach \n in {0,1,2,3,4,5}
        {
        \draw (\s,-\n+\s) -- (5-\n+\s,-5+\s);
        }


\foreach \s in {-2,0,2}
    \foreach \n in {2,3,4,5} 
        {\draw [fill] (-2+\s,\n+\s) circle;}
\foreach \s in {-2,1,0,1,2}
    \foreach \n in {3,4,5,6} 
        {\draw [fill] (-3+\s,\n+\s) circle;}
\foreach \s in {-2,1,0,1,2}
    \foreach \n in {4,5} 
        {\draw [fill] (-4+\s,\n+\s) circle;}
\foreach \s in {-2,1,0,1,2}
    \foreach \n in {5} 
        {\draw [fill] (-5+\s,\n+\s) circle;}

\foreach \s in {-2,0,2}
    \foreach \n in {2,...,5}
        {\draw (-2+\s,\n+\s) -- (-8+\n+\s,6+\s);}
\foreach \s in {-2,0,2}
    \foreach \n in {2,3,4} 
        {\draw (-\n+\s,\n+\s) -- (-\n+\s,7+\s);}

\foreach \s in {-4,-2,2,4}
    \node [right] at (-\s,-\s) {$\overline{t}^{\s}$};

\node [right] at (1,2) {$\overline{\lambda}_1$};

\begin{scope}[blue,,dashed,xshift=-1.15cm,yshift=.15cm]

\foreach \s in {-4,-2,0,2,4}
    \foreach \m in {-5,-4,-3,-2,-1,0}
        \foreach \n in {-5,...,\m} 
            {\draw [fill] (-\m+\s,\n+\s) circle;
            }

\foreach \s in {-4,-2,0,2,4}
    \foreach \n in {0,1,2,3,4,5}
        {
        \draw (\n+\s,-\n+\s) -- (\n+\s,-5+\s);
        }
\foreach \s in {-4,-2,0,2,4}
    \foreach \n in {0,1,2,3,4,5}
        {
        \draw (\s,-\n+\s) -- (5-\n+\s,-5+\s);
        }


\foreach \s in {-2,0,2}
    \foreach \n in {2,3,4,5} 
        {\draw [fill] (-2+\s,\n+\s) circle;}
\foreach \s in {-2,1,0,1,2}
    \foreach \n in {3,4,5,6} 
        {\draw [fill] (-3+\s,\n+\s) circle;}
\foreach \s in {-2,1,0,1,2}
    \foreach \n in {4,5} 
        {\draw [fill] (-4+\s,\n+\s) circle;}
\foreach \s in {-2,1,0,1,2}
    \foreach \n in {5} 
        {\draw [fill] (-5+\s,\n+\s) circle;}

\foreach \s in {-2,0,2}
    \foreach \n in {2,...,5}
        {\draw (-2+\s,\n+\s) -- (-8+\n+\s,6+\s);}
\foreach \s in {-2,0,2}
    \foreach \n in {2,3,4} 
        {\draw (-\n+\s,\n+\s) -- (-\n+\s,7+\s);}
\end{scope}
\end{tikzpicture}
}
\caption{
The $\mathrm{E}_3=\mathrm{E}_{\infty}$-page of periodic $\overline{t}$-Bockstein spectral sequence computing 
$\pi_{\star}^{C_2}\grmot^*\TPR(\mZ_2)/(\ov_0,\ov_1)$. The group $E_3^{m+n\sigma,a,b}$ appears in 
bidegree $(m,n)$. Each bullet $\bullet$ represents a copy of $\bF_2$. 
Black bullets indicate classes in motivic filtration $0$ and black lines indicate multiplications in 
motivic filtration $0$. Blue bullets indicate classes in motivic filtration $1$ and blue dashed lines 
indicate multiplications in motivic filtration $1$. 
\label{fig3}
}
\end{figure}

\begin{defin}\label{def:Aij}
Following Section~\ref{sec:trace-Tate}, we have a commutative diagram 
\[ 
\begin{tikzcd}
0\ar[r] & A_{11} \ar[r]\ar[d]  & T\ar[r] \ar[d] &  A_{10}\ar[r] \ar[d] & 0  \\
0\ar[r] & \mathrm{Nyg}^{\ge 1}\ar[r]\ar[d]  & \pi_*\grmot^*\TCR^{-}(\BPR\langle n\rangle)/(\ov_0,\cdots ,\ov_{n+1})\ar[r] \ar[d]  & \mathrm{Nyg}^{=0}\ar[r]\ar[d]  & 0  \\
0\ar[r] & A_{01}\ar[r] & F \ar[r] & A_{00}\ar[r] & 0 
\end{tikzcd}
\]
with exact rows and columns, where 
\begin{align*}
F&:=\mathrm{im} (\can : \pi_*\grmot^*\TCR^{-}(\BPR\langle n\rangle)/(\ov_0,\cdots ,\ov_{n+1})\longrightarrow \pi_*\grmot^*\TPR(\BPR\langle n\rangle)/(\ov_0,\cdots ,\ov_{n+1})) \,, \\ 
T&:=\ker (\pi_*\grmot^*\TCR^{-}(\BPR\langle n\rangle)/(\ov_0,\cdots ,\ov_{n+1})\to F)
\end{align*}
and we define 
\begin{align*}
A_{00}&:=\mathrm{Nyg}^{\ge 1}\cap T\,, \quad & \quad A_{10}&:=T/A_{00} \,, \\
A_{01}&:=\mathrm{Nyg}^{\ge 1}/A_{00}\,, \quad & \quad A_{11}&:=F/A_{01}\,.
\end{align*}
\end{defin}
\begin{prop}\label{prop:TCR-Z}
We can identify the $\EE_\infty$-page of the $\ol{t}$-Bockstein spectral sequence converging to 
$\pi_{\star}^{C_2}\grmot^*\TCR^{-}(\mZ )/(\ov_0,\ov_1)$ with 
\[ 
\EE_{\infty}=\pi_{\star}^{C_2}\mF_2[\ot^2, \omu^2]/(\ot^2\omu^2)\langle \olambda_1\rangle 
\oplus \pi_{\star}^{C_2}\mF_2\{\ot\olambda_1 \}
\]
and there is an isomorphism of $\pi_{\star}^{C_2}\mF_2\langle \olambda_1\rangle$-modules
\[ 
\pi_{\star}^{C_2}\grmot^*\TCR^{-}(\mZ)\cong A_{00}\oplus A_{01}\oplus A_{10}\oplus A_{11}
\]
where $A_{ij}$ (Definition~\ref{def:Aij}) can be identified with 
\begin{align*} A_{00}&=\pi_{\star}^{C_2}\mF_2\langle \olambda_1\rangle \,, \quad & \quad 
A_{10}&=\pi_{\star}^{C_2}\mF_2\langle \olambda_1\rangle[\ot^{2}]\{\ot^2\} \,, \\ 
A_{01}&=\mathbb{F}_p\langle \olambda_1\rangle [\omu^2]\{\omu^2\} \,, \quad & \quad 
A_{11}&=\pi_{\star}^{C_2}\mF_2\{\Xi_1\}
\end{align*}
with $\Xi_1$ detected by $\ot\olambda_1$. 
\end{prop}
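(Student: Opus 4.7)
The plan is to run the $\ot$-Bockstein spectral sequence
\[
\pi_{\star}^{C_2}\mF_{2}[\omu^{2},\ot]\langle\olambda_{1},\ovarepsilon_{1}\rangle
\Longrightarrow
\pi_{\star}^{C_2}\grmot^{\ast}\TCR^{-}(\mZ)/(\ov_{0},\ov_{1})
\]
and import differentials from the periodic Bockstein of Proposition~\ref{prop:TPRZ} via naturality of the canonical map $\can \colon \TCR^{-}\to\TPR$. First I would establish the $d_{1}$-differential $d_{1}(\ovarepsilon_{1})=\ot\omu^{2}$: this is forced on the $\TPR$ side (Proposition~\ref{prop:TPRZ}) and pulls back verbatim along $\can$ since both $\ot$ and $\omu^{2}$ are in the image of $\can$. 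Running $d_{1}$ together with the Leibniz rule kills $\ovarepsilon_{1}\omu^{2k}$ and the products $\ot\omu^{2k}$ for $k\ge 1$, leaving
\[
\mathrm{E}_{2}=\pi_{\star}^{C_2}\mF_{2}[\ot,\omu^{2}]/(\ot\omu^{2})\langle\olambda_{1}\rangle.
\]

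Next I would establish the $d_{2}$-differential $d_{2}(\ot)=\ot^{3}\olambda_{1}$. This is the genuine Tate-vs-homotopy-fixed-point discrepancy: on the $\TPR$ side one has $d_{2}(\ot^{-1})=\ot\olambda_{1}$, and the Leibniz relation applied to $d_{2}(\ot\cdot\ot^{-1})=0$ forces $d_{2}(\ot)=\ot^{3}\olambda_{1}$ modulo the sign, which vanishes in characteristic two. Naturality of $\can$ then gives the same formula in the $\TCR^{-}$-Bockstein. Because $\ot\omu^{2}=0$ on $\mathrm{E}_{2}$ and $d_{2}(\omu^{2k})$ would have to land in a stem with no targets (there is no class of bidegree $2k\rho-1$ in the $\ot$-torsion part outside the image of $d_{2}$), the classes $\omu^{2k}$ and $\omu^{2k}\olambda_{1}$ are $d_{2}$-cycles. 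The classes $\ot^{2k}$ and $\ot^{2k}\olambda_{1}$ for $k\ge 0$ are permanent cycles, while $\ot^{2k+1}\olambda_{1}$ for $k\ge 1$ is killed by $d_{2}(\ot^{2k-1})$. The crucial point, and the feature distinguishing $\TCR^{-}$ from $\TPR$, is that $\ot\olambda_{1}$ survives: in $\TPR$ it was hit by $d_{2}(\ot^{-1})$, but $\ot^{-1}$ is not present in the $\TCR^{-}$-Bockstein. This yields
\[
\mathrm{E}_{\infty}=\pi_{\star}^{C_2}\mF_{2}[\ot^{2},\omu^{2}]/(\ot^{2}\omu^{2})\langle\olambda_{1}\rangle\;\oplus\;\pi_{\star}^{C_2}\mF_{2}\{\ot\olambda_{1}\}.
\]
To rule out higher differentials I would use permanent cycles coming from detection: the unit and $\ovarepsilon_{1}$-type classes from Corollary~\ref{cor:vn-thr}, the image of $\eta_{C_{2}}$ under Theorem~\ref{thm:detection}(1) detecting $\olambda_{1}$, and the fact that the motivic spectral sequence's $\mathrm{E}_{2}$-page is multiplicatively generated by these survivors leaves no room for further $d_{r}$'s.

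Finally I would identify the four pieces via the Nygaard filtration and $\can$. The elements of positive $\ot$-weight—namely $\ot^{2k}$, $\ot^{2k}\olambda_{1}$ (for $k\ge 1$) and $\ot\olambda_{1}$—constitute $\mathrm{Nyg}^{\geq 1}$; the remaining classes $1$, $\olambda_{1}$, $\omu^{2k}$, $\omu^{2k}\olambda_{1}$ are in Nygaard filtration zero. The canonical map $\can$ is injective on the ``$\ot$-part'' $\pi_{\star}^{C_{2}}\mF_{2}[\ot]\langle\olambda_{1}\rangle$ and zero on everything involving $\omu^{2}$ (the analog of Proposition~\ref{prop:tpfp}~(3)), while $\ot\olambda_{1}$ maps to a class which is killed in the $\TPR$ $\mathrm{E}_{\infty}$ by $d_{2}(\ot^{-1})$, hence also lies in $\ker(\can)$. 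Accordingly $F$ consists of $\pi_{\star}^{C_2}\mF_2[\ot^{2}]\langle\olambda_{1}\rangle$ and $T$ of the $\omu^{2}$-part together with $\ot\olambda_{1}$. Defining $\ol{\Xi}_{1}$ to be the class detected by $\ot\olambda_{1}$ yields
\[
A_{00}=\pi_{\star}^{C_{2}}\mF_{2}\langle\olambda_{1}\rangle,\quad A_{10}=\pi_{\star}^{C_{2}}\mF_{2}\langle\olambda_{1}\rangle[\ot^{2}]\{\ot^{2}\},\quad A_{01}=\pi_{\star}^{C_{2}}\mF_{2}\langle\olambda_{1}\rangle[\omu^{2}]\{\omu^{2}\},\quad A_{11}=\pi_{\star}^{C_{2}}\mF_{2}\{\ol{\Xi}_{1}\}.
\]

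The principal obstacle will be verifying $d_{2}(\ot)=\ot^{3}\olambda_{1}$ cleanly on the $\TCR^{-}$-Bockstein without literal access to $\ot^{-1}$, and then checking that there are no hidden Nygaard-filtration extensions between the pieces $A_{ij}$—equivalently that the abutment filtration on $\pi_{\star}^{C_{2}}\grmot^{\ast}\TCR^{-}(\mZ)/(\ov_{0},\ov_{1})$ splits. I would address the first by naturality with $\TPR$ and the Leibniz rule and the second by observing that each summand $A_{ij}$ has a distinct Adams weight and Nygaard filtration signature, leaving no bidegree in which an extension is possible.
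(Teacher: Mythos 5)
Your proposal is correct and follows essentially the same strategy as the paper: determine the $\TCR^{-}$-Bockstein from the periodic one of Proposition~\ref{prop:TPRZ} using naturality of $\can$, noting that the only difference is that $\ot\olambda_{1}$ survives in $\TCR^{-}$ since $\ot^{-1}$ is absent (the paper phrases this as the differentials ``crossing from negative to positive Nygaard filtration''), and then read off the $A_{ij}$ from the Nygaard filtration and the identification of $\can$ as inversion of $\ot^{2}$, with the splitting forced because each summand sits in a distinct bidegree. One small imprecision: you say $\can$ is injective on $\pi_{\star}^{C_2}\mF_{2}[\ot]\langle\olambda_{1}\rangle$, but $\ot\olambda_{1}$ is in the kernel; you should say injective on $\pi_{\star}^{C_2}\mF_{2}[\ot^{2}]\langle\olambda_{1}\rangle$, as you in fact use in the next clause.
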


\begin{proof}
This is determined from the periodic $\ot$-Bockstein spectral sequence. 
It suffices to see what classes are in the image of the differentials that cross from negative Nygaard filtration to 
positive Nygaard filtration, which is exactly $\pi_{\star}^{C_2}\mF_2\{\ot\olambda_1 \}$ as a 
$\pi_{\star}^{C_2}\mF_2\langle \olambda_1\rangle$-module. To see that 
\[ 
\pi_{\star}^{C_2}\grmot^*\TCR^{-}(\mZ)\cong A_{00}\oplus A_{10}\oplus A_{01}\oplus A_{11}
\]
as $\pi_{\star}^{C_2}\mF_2\langle \olambda_1\rangle$-modules, we observe that
\begin{align*}
A_{00} &=\pi_{\star}^{C_2}\mF_2\langle \olambda_1\rangle \,,\\
A_{10} &=\pi_{\star}^{C_2}\mF_2\langle \olambda_1\rangle [\ot^2]\{\ot^2\} \,,\\
A_{01} &=\pi_{\star}^{C_2}\mF_2\langle \olambda_1\rangle[\omu^2]\{\omu^2\}
\end{align*}
as $\pi_{\star}^{C_2}\mF_2\langle \olambda_1\rangle$-modules, and consequently, each of the short exact sequences in Definition~\ref{def:Aij} split. 
Each of these identifications follow from the fact that the canonical map is given by inverting $\ot^2$, along with  
examination of Nygaard filtrations. 
\end{proof}

\begin{proposition}
There is a class 
\[
^{\TCR}\olambda_1\in \pi_{2\rho-1}^{C_2}\gr_{\mot}^*\TCR(\mZ)/(\ov_0)
\]
mapping to the class 
$\olambda_1\in \pi_{2\rho-1}^{C_2}\grmot^*\THR(\mZ)/(\ov_0)$. 
\end{proposition}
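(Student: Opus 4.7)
The plan is to construct $^{\TCR}\olambda_1$ using the equalizer description of $\TCR$ in terms of $\can$ and $\varphi$. Since $\THR(\mZ) \to \THR(\mZ/\MWR)$ is a $2$-completely seff cover with strongly even target (Theorem~\ref{thm:s-even}), Corollary~\ref{thm:can-varphi} provides the identification
\[
\grmot^*\TCR(\mZ)/(\ov_0) \simeq \mathrm{eq}\bigl(\can,\varphi : \grmot^*\TCR^{-}(\mZ)/(\ov_0) \rightrightarrows \grmot^*\TPR(\mZ)/(\ov_0)\bigr).
\]
From the associated long exact sequence, producing $^{\TCR}\olambda_1$ mapping to $\olambda_1$ under the composite $\grmot^*\TCR \to \grmot^*\TCR^{-} \to \grmot^*\THR$ reduces to producing a class $\widetilde{\lambda}_1 \in \pi_{2\rho-1}^{C_2}\grmot^*\TCR^{-}(\mZ)/(\ov_0)$ which (i) lifts $\olambda_1 \in \pi_{2\rho-1}^{C_2}\grmot^*\THR(\mZ)/(\ov_0)$ under the Nygaard-zero quotient, and (ii) satisfies $\can(\widetilde{\lambda}_1) = \varphi(\widetilde{\lambda}_1)$.

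For step (i), I would use the $\ot$-Bockstein spectral sequence with $\EE_1$-page $\pi_\star^{C_2}\grmot^*\THR(\mZ)/(\ov_0)[\ot]$ converging to $\pi_\star^{C_2}\grmot^*\TCR^{-}(\mZ)/(\ov_0)$. The class $\olambda_1$ sits in Nygaard filtration zero on the $\EE_1$-page. Its survival to $\EE_\infty$ follows by a $\ov_1$-Bockstein comparison with Proposition~\ref{prop:TCR-Z}, where $\olambda_1$ is shown to survive mod $(\ov_0,\ov_1)$ (it generates a copy of $\pi_\star^{C_2}\mF_2$ in the decomposition there). Any potential $d_r$-differential on $\olambda_1$ would have target in a bidegree that is divisible by $\ov_1$, and tracking through the $\ov_1$-Bockstein shows no such nonzero target exists.

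For step (ii), I would combine Theorem~\ref{thm:Segal} (identifying the cyclotomic Frobenius on $\grmot^*\THR(\mZ)/(\ov_0)$ with inversion of $\omu^2$) with Proposition~\ref{prop:useful-iso} (identifying $\grmot^*\TPR(\mZ)/(\ov_0,\ov_1) \simeq \grmot^*\THR(\mZ)^{t_{C_2}\mu_2}/(\ov_0)$) and the periodic $\ot$-Bockstein computation of Proposition~\ref{prop:TPRZ}. These determine both $\can(\widetilde{\lambda}_1)$ and $\varphi(\widetilde{\lambda}_1)$ as classes detected by $\olambda_1$ on the $\EE_\infty$-page of the periodic $\ot$-Bockstein spectral sequence for $\TPR$, so they agree at least modulo strictly positive Nygaard filtration.

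The main obstacle is that the equality $\can(\widetilde{\lambda}_1) = \varphi(\widetilde{\lambda}_1)$ may hold only up to a class of strictly positive Nygaard filtration, i.e., a multiple of $\ot^2$ in $\pi_{2\rho-1}^{C_2}\grmot^*\TPR(\mZ)/(\ov_0)$. I would handle this by adjusting the lift: since $\can$ is surjective onto the Nygaard $\ge 1$ part of $\grmot^*\TPR(\mZ)/(\ov_0)$ (the Nygaard filtration on $\TCR^{-}$ is essentially the $\ot$-adic filtration which $\can$ preserves), one can solve $\can(w) = (\can-\varphi)(\widetilde{\lambda}_1)$ for some $w \in \grmot^*\TCR^{-}(\mZ)/(\ov_0)$ in positive Nygaard filtration. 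Iterating this correction (using completeness of the Nygaard filtration) yields a class in the equalizer whose image under the forgetful map to $\grmot^*\THR(\mZ)/(\ov_0)$ is still $\olambda_1$, because all corrections live in positive Nygaard filtration. This class is the desired $^{\TCR}\olambda_1$.
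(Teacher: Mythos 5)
Your proposal takes a genuinely different route from the paper. The paper lifts $\lambda_1$ from the known classical case: it identifies $\lambda_1$ with the explicit cobar cocycle $\sigma^2 t_1 \in \pi_4\TC^-(\bZ/\MW^{\otimes 2})/(2)$, lifts this uniquely to $\sigma^\rho\ot_1 \in \pi_{2\rho}^{C_2}\TCR^-(\mZ/\MWR^{\otimes 2})/(2)$ using strong evenness (so that $\mathrm{res}_e^{C_2}$ is an isomorphism in the relevant bidegree), and then reads off $\can(\sigma^\rho\ot_1) = \varphi(\sigma^\rho\ot_1)$ from the commutativity of $\can$ and $\varphi$ with restriction together with the classical fact that $\lambda_1 \in \pi_3\TC(\bZ)/2$. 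The whole thing is a short bootstrap from the underlying, and it never needs the $\ot$-Bockstein or the equalizer formalism directly. Your approach instead works internally in the equivariant setting: build a lift in $\TCR^-$ via the $\ot$-Bockstein, then iteratively correct it so that it lands in the equalizer. The tradeoff is that the paper's argument is shorter and offloads all the work onto the classical computation, whereas yours makes the Nygaard-filtered structure explicit.

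There is, however, a gap in your final step. Your iterative correction requires that, after replacing $\widetilde\lambda_1$ by $\widetilde\lambda_1 - w$, the new discrepancy $(\can-\varphi)(\widetilde\lambda_1 - w) = \varphi(w)$ has strictly higher Nygaard filtration than the old one (or is zero), and you invoke completeness of the Nygaard filtration to conclude. But $\varphi$ does not a priori preserve or increase Nygaard filtration — nothing you have cited gives that control. What actually makes the iteration terminate (in one step) is the explicit computation in the paper's proof of Theorem~\ref{thm:synz} that $\varphi$ vanishes on the Nygaard-positive part ($\varphi(A_{01}\oplus A_{11}) = 0$), which follows from the map of $\ot$-Bockstein spectral sequences to localized $\ot$-Bockstein spectral sequences being zero in positive Nygaard filtration. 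You should invoke this vanishing directly rather than appealing to an unjustified convergence of the iteration. Separately, your $\ov_1$-Bockstein reduction in step (i) is stated rather loosely — you want a cocycle mod $(\ov_0)$, but Proposition~\ref{prop:TCR-Z} works mod $(\ov_0,\ov_1)$, and passing back through the $\ov_1$-Bockstein requires verifying that $\olambda_1$ is not a $\ov_1$-Bockstein boundary; that check is routine on bidegree grounds but should be stated.
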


\begin{proof}

Note that $\lambda_1\in \pi_{3}\grmot^*\THH(\bZ)/(2)$ is detected by a cocycle 
\[ 
\sigma^2t_1\in \pi_{4} \TC^{-}(\bZ/\MW^{\otimes 2})/(2)
\]
and $\TCR^{-}(\mZ/\MWR^{\otimes 2})/(2)$ is strongly even so we produce a cocycle
\[
\sigma^{\rho}\ot_1\in \pi_{2\rho}^{C_2}\TCR^{-}(\mZ/\MWR^{\otimes 2})/(2)
\] 
mapping to 
$\bar{\lambda}_1\in \pi_{2\rho}^{C_2}\THR(\mZ/\MWR^{\otimes 2})/(2)$. 
A similar argument can be made for $\TPR(\mZ)$, so considering commutativity of the diagrams
\[
\begin{tikzcd}
\pi_{2\rho}^{C_2}\TCR^{-}(\mZ/(\MWR^{\otimes 2})/2 \arrow{r}{f} \arrow{d}{\res}& \pi_{2\rho}^{C_2}\TPR(\mZ/\MWR^{\otimes 2})/2 \arrow{d}{\res}\\ 
\pi_{4}^{C_2}\TC^{-}(\bZ/\MW^{\otimes 2})/2  
\arrow{r}{f} & \pi_{4}^{C_2}\TP(\bZ/\MW^{\otimes 2})/2
\end{tikzcd}
\]
for $f\in \{\can,\varphi\}$ we determine that $\overline{\lambda}_1$ lifts to a 
class in $\pi_{2\rho -1}\grmot^*\TCR(\mZ)/(\ov_0)$. 
\end{proof}

\begin{cor}\label{cor:lambda1}
The class 
$\olambda_1\in \pi_{2\rho-1}\grmot^*\THR(\mZ)/(\ov_0)$ 
lifts to a class 
\[ 
^{\TCR}\olambda_1\in \pi_{2\rho-1}\grmot^*\TCR(\tmf_1(3))/(\ov_0)
\]
and consequently it also lifts to a class  
\[ 
^{\TCR}\olambda_1 \in \pi_{2\rho-1}\grmot^*\TCR(\kr)/(\ov_0) \,.
\]
\end{cor}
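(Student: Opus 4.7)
The plan is to repeat the construction of the preceding proposition, replacing $\mZ$ by $\tmf_1(3)$, and then transport the resulting class along the $C_2$-$\bE_\infty$-ring map $\tmf_1(3) \to \kr$ from Theorem~\ref{thm: examples of Real cyclotomic bases}. Concretely, I would first verify that $\TCR^{-}(\tmf_1(3)/\MWR^{\otimes 2})/(2)$ and $\TPR(\tmf_1(3)/\MWR^{\otimes 2})/(2)$ are strongly even. Theorem~\ref{thm:s-even} handles $\THR(\tmf_1(3)/\MWR)$, and the K\"unneth-based argument in the proof of Theorem~\ref{thm:thr} extends this to $\THR(\tmf_1(3)/\MWR^{\otimes q+1})$ for $q\ge 0$; the parametrized homotopy fixed point and Tate spectral sequences then preserve strong evenness here because the underlying spectra have polynomial (respectively polynomial tensor Laurent polynomial) homotopy in even degrees, so the spectral sequences are concentrated in even total degree.

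Given strong evenness, the restriction map identifies $\pi_{2\rho}^{C_2}\TCR^{-}(\tmf_1(3)/\MWR^{\otimes 2})/(2)$ with $\pi_{4}\TC^{-}(\BP\langle 2\rangle/\MW^{\otimes 2})/(2)$, so the underlying cocycle $\sigma^{2} t_1$ that detects $\lambda_1$ lifts uniquely to a class $\sigma^{\rho}\ot_1 \in \pi_{2\rho}^{C_2}\TCR^{-}(\tmf_1(3)/\MWR^{\otimes 2})/(2)$. Then I would argue as in the preceding proposition, via the commutative square
\[
\begin{tikzcd}
\pi_{2\rho}^{C_2}\TCR^{-}(\tmf_1(3)/\MWR^{\otimes 2})/(2) \arrow{r}{f} \arrow{d}{\res} & \pi_{2\rho}^{C_2}\TPR(\tmf_1(3)/\MWR^{\otimes 2})/(2) \arrow{d}{\res} \\
\pi_4 \TC^{-}(\BP\langle 2\rangle/\MW^{\otimes 2})/(2) \arrow{r}{f} & \pi_4 \TP(\BP\langle 2\rangle/\MW^{\otimes 2})/(2)
\end{tikzcd}
\]
for $f \in \{\can,\varphi\}$, that $\sigma^{\rho}\ot_1$ lands in the equalizer because its restriction does, yielding a cocycle in $\TCR$ and hence a class $^{\TCR}\olambda_1 \in \pi_{2\rho-1}^{C_2}\grmot^*\TCR(\tmf_1(3))/(\ov_0)$. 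Its image in $\grmot^*\THR(\tmf_1(3))/(\ov_0)$ agrees with the class $\olambda_1$ produced by Theorem~\ref{thm:thr}, by construction.

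For the second statement, the $C_2$-$\bE_\infty$-ring map $\tmf_1(3) \to \kr$ from Theorem~\ref{thm: examples of Real cyclotomic bases} induces a map $\TCR(\tmf_1(3)) \to \TCR(\kr)$ of $C_2$-spectra in Real $2$-cyclotomic spectra, which is compatible with the motivic filtrations by naturality of $\filsev$. The image of $^{\TCR}\olambda_1$ under the induced map on $\grmot^*(-)/(\ov_0)$ gives the desired lift, and nontriviality follows from the commutative square comparing the trace maps for $\tmf_1(3)$ and $\kr$ down to $\THR(\tmf_1(3)) \to \THR(\kr)$, since $\olambda_1$ in $\THR(\tmf_1(3))/(\ov_0)$ maps to $\olambda_1$ in $\THR(\kr)/(\ov_0)$ by naturality of the computation in Theorem~\ref{thm:thr}. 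The main potential obstacle is establishing strong evenness of the relative $\TCR^{-}$ and $\TPR$ terms, but once this is in hand the rest of the argument is a faithful repetition of the preceding proposition.
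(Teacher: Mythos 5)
Your proposal follows the same broad outline as the paper's proof — produce equivariant cocycles in $\TCR^-(\tmf_1(3)/\MWR^{\otimes 2})$ and $\TPR(\tmf_1(3)/\MWR^{\otimes 2})$ via strong evenness, then check they are equalized by $\can$ and $\varphi$, then push forward along $\tmf_1(3)\to\kr$ — but it elides the one step where the argument for $\tmf_1(3)$ genuinely differs from the argument for $\mZ$ and requires new input.

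The crux of the step ``$\sigma^\rho\ot_1$ lands in the equalizer because its restriction does'' is that you must know the \emph{underlying} cocycle $\sigma^2 t_1 \in \pi_4\TC^{-}(\BP\langle 2\rangle/\MW^{\otimes 2})/(2)$ is equalized by $\can$ and $\varphi$ at the level of $\BP\langle 2\rangle$, i.e.\ that $\lambda_1$ actually lifts to $\TC$ of $\BP\langle 2\rangle$ non-equivariantly. For $\mZ$, this was a known fact (Rognes's and B\"okstedt--Madsen's computations of $\TC(\bZ)$ at $p=2$), which is why the preceding proposition can get away with citing the commutative diagram alone. For $\BP\langle 2\rangle$, this is \emph{not} a known fact — the paper itself records in a remark that the syntomic cohomology of $\BP\langle 2\rangle$ at $p=2$ has not appeared in the literature. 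Asserting ``its restriction does'' is therefore precisely what needs to be proved. The paper supplies the missing input by invoking B\"okstedt--Madsen and Waldhausen to show that the composite
\[
\TC_{3}(\BP\langle 2\rangle)/(2)\longrightarrow \TC_3(\bZ)/(2)\longrightarrow \TC^-_3(\bZ)/(2) \longrightarrow \THH_{3}(\bZ)/(2)
\]
is surjective, which produces a preimage of $\lambda_1$ in $\TC_3(\BP\langle 2\rangle)/(2)$ and hence a cocycle in $\gr_{\mot}^*\TC^-(\BP\langle 2\rangle)/(2)$ that is equalized by $\can$ and $\varphi$ for structural reasons (it factors through $\TC$). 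Your proposal needs this (or some equivalent underlying detection result) to close the loop; the rest of the argument, including the reduction to $\tmf_1(3)$ and the push-forward to $\kr$, matches the paper and is fine.
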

\begin{proof}
Since there are maps of $C_2$-$\bE_{\infty}$-rings $\tmf_1(3)\to \kr\to H\mZ$, 
it suffices to lift $\overline{\lambda}_1$ to $\pi_{2\rho-1}\grmot^*\TCR(\tmf_1(2))$. 
The starting point is the fact that we can lift $\overline{\lambda}_1\in \pi_{2\rho-1}\grmot^*\THR(\mZ)$ 
to a class in $\pi_{2\rho-1}\grmot^*\TCR(\mZ)$.
We write $\BP\langle 2\rangle$ for the underlying $\bE_{\infty}$-algebra in spectra associated to $\tmf_1(3)$. 
By~\cite{BM94,Wal82}, the composite 
\[ 
\TC_{3}(\BP\langle 2\rangle)/(2)\longrightarrow \TC_3( \bZ)/(2) 
 \longrightarrow \TC^-_3(\bZ)/(2) \longrightarrow \THH_{3}(\bZ)/(2)
\]
is a surjection so we can produce a cocycle $\lambda_1\in \pi_{3}\gr_{\mot}^*\TC^-(\BP\langle 2 \rangle)/(2)$ 
that detects $\lambda_1$. We then note that $\lambda_1$ is detected by a cocycle
\[ 
\sigma^2 t_1 \in \pi_{4} \TC^{-}(\BP\langle 2\rangle/\MW^{\otimes 2})/(2)
\]
and $\TCR^{-}(\tmf_1(3)/\MWR^{\otimes 2})/(2)$ is strongly even so we produce a cocycle
\[
\sigma^{\rho}\ol{t}_1\in \pi_{2\rho}^{C_2}\TCR^{-}(\tmf_1(3)/\MWR^{\otimes 2})/(2)
\] 
mapping to 
$\bar{\lambda}_1\in \pi_{2\rho}^{C_2}\TCR^{-}(\mZ/\MWR^{\otimes 2})/(2)$. 

The same argument provides a class $\bar{\lambda}_1\in \pi_{2\rho}^{C_2}\TPR(\tmf_1(3)/\MWR^{\otimes 2})/(2)$ 
and proves that 
\[
\text{can}(\bar{\lambda}_1)=\bar{\lambda}_1=\varphi(\bar{\lambda}_1)
\] 
so we produce a class 
\[
\sigma^{\rho}\ol{t}_1\in \pi_{2\rho}^{C_2}\TCR(\tmf_1(3)/\MWR^{\otimes 2})/(2)\,.
\]
Since $\sigma^2t_1$ is a cocycle, we can again use strong evenness to prove that 
$\sigma^{\rho}\ol{t}_1$ is a cocycle representative for $\ol{\lambda}_1$. 
\end{proof}

\begin{notation}
From now on we simply write $\olambda_1$ for $^{\TCR}\olambda_1$ 
\end{notation}

\begin{defin}\label{def:lambda-classes}
We define 
\[ 
\bar{\lambda}_{2}=Q^{2\rho}\olambda_1 \in \pi_{3\rho+\sigma} \grmot^*\TCR(\kr)
\]
using the power operation from Lemma~\ref{lem:BW-power}. We define 
\begin{align*}
\bar{\lambda}_{2}&=Q^{2\rho}\bar{\lambda}_{1}\in \pi_{3\rho+\sigma}\grmot^*\TCR(\tmf_1(3)) \,, \\
\bar{\lambda}_{3}&=Q^{4\rho}\bar{\lambda}_{2}\in \pi_{7\rho+\sigma}\grmot^*\TCR(\tmf_1(3))
\end{align*}
using the power operation from Lemma~\ref{lem:BW-power}. 
\end{defin}

\begin{remark}
By construction, the class $\bar{\lambda}_2$ is nonzero by examining the commutative diagram 
\[
\begin{tikzcd}
\pi_{\rho+\sigma}^{C_2}\grmot^*\TCR(\kr)\ar[r,"Q^{2\rho}"] \ar[d]  &  \pi_{3\rho+\sigma}^{C_2}\gr_{\mot}^*\TCR(\kr)  \ar[d] \\
\pi_{\rho+\sigma}^{C_2}\grmot^*\THR(\kr)\ar[r,"Q^{2\rho}"] &  \pi_{3\rho+\sigma}^{C_2}\grmot^*\THR(\kr) 
\end{tikzcd}
\]
and the two potential meanings of the notation $\bar{\lambda}_2$ are compatible. 
Similarly, $\overline{\lambda}_2$ and $\overline{\lambda}_3$ are non-zero and map to classes 
in $\pi_{\star}^{C_2}\grmot^*\THR(\tmf_1(3))$ with the same name. 
\end{remark}


We now conclude with a computation of Real syntomic cohomology of the $2$-adic integers. 

\begin{thm}\label{thm:synz}
We can identify the mod $(\ov_0,\ov_1)$ Real syntomic cohomology of the $2$-adic integers with 
\[
\pi_{\star}^{C_2}\mF_2\langle \olambda_1,\partial \rangle \oplus \pi_{\star}^{C_2}\mF_2\{\ol{\Xi}_1\}   \,.
\]
Moreover, the $\ov_1$-Bockstein spectral sequence collapses and we can identify the mod $(\ov_0)$ 
Real syntomic cohomology of the $2$-adic integers with 
\[
\pi_{\star}^{C_2}\mF_2[\ov_1]\langle \olambda_1,\partial \rangle \oplus \pi_{\star}^{C_2}\mF_2[\ov_1]\{\ol{\Xi}_1\}   \,.
\]
\end{thm}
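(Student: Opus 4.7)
The plan is to compute $\pi_\star^{C_2}\grmot^*\TCR(\mZ)/(\ov_0,\ov_1)$ from the equalizer
\[
\grmot^*\TCR(\mZ)/(\ov_0,\ov_1) \simeq \mathrm{eq}\bigl(\can,\varphi:\grmot^*\TCR^{-}(\mZ)/(\ov_0,\ov_1)\rightrightarrows \grmot^*\TPR(\mZ)/(\ov_0,\ov_1)\bigr),
\]
using Corollary~\ref{thm:can-varphi} to lift the equalizer to the level of motivic filtrations and Propositions~\ref{prop:TCR-Z} and~\ref{prop:TPRZ} for the explicit descriptions of source and target. The canonical map is read off directly from Proposition~\ref{prop:TCR-Z}: its image $F$ exhausts the non-negative Nygaard part of $\TPR$, and its kernel is $A_{10}\oplus A_{11}$.

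Next I would compute the Frobenius $\varphi$ by invoking the useful isomorphism (Proposition~\ref{prop:useful-iso}) identifying $\grmot^*\TPR(\mZ)/(\ov_0,\ov_1) \simeq \grmot^*\THR(\mZ)^{t_{C_2}\mu_2}/(\ov_0)$, combined with the Real Segal conjecture (Theorem~\ref{thm:Segal}) which presents the cyclotomic Frobenius on $\THR(\mZ)$ as $\omu^2$-inversion. Together these determine $\varphi$ on every $A_{ij}$: on $A_{00}$ and $A_{01}$ it agrees with $\can$ (both landing in the non-negative Nygaard part of $\TPR$), on $A_{10}$ it becomes injective into the negative-Nygaard part $\pi_\star^{C_2}\mF_2\langle\olambda_1\rangle[\ot^{-2}]\{\ot^{-2}\}$ of $\TPR$, and it vanishes on $A_{11}$ for bidegree reasons. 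Taking $\ker(\can - \varphi)$ and $\coker(\can-\varphi)$ in each bidegree then yields the summand $\pi_\star^{C_2}\mF_2\langle \olambda_1\rangle$ from $A_{00}$, the class $\ol{\Xi}_1 \in A_{11}$ (which is annihilated by both $\can$ and $\varphi$, hence contributes to the kernel), and a class $\partial$ in bidegree $(-1,1)$ arising from the non-surjectivity of $\can - \varphi$; this last class is identifiable with the $\ov_0$-Bockstein of $1\in\upi_0\TCR(\mZ)=\mZ$ via Lemma~\ref{lem:pi0tcr}.

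For the collapse of the $\ov_1$-Bockstein spectral sequence, I would lift each of the three generators $\olambda_1$, $\partial$, and $\ol{\Xi}_1$ to a class in $\pi_\star^{C_2}\grmot^*\TCR(\mZ)/(\ov_0)$. The class $\olambda_1$ lifts via Corollary~\ref{cor:lambda1}; $\partial$ lifts because it is the $\ov_0$-Bockstein of a class in the integral $\pi_0^{C_2}\TCR(\mZ)$ (Lemma~\ref{lem:pi0tcr}); and $\ol{\Xi}_1$ lifts by a bidegree check showing that any potential $\ov_1$-Bockstein target vanishes in the mod $(\ov_0,\ov_1)$ computation established in the previous step. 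Since the three generators are all permanent cycles for the $\ov_1$-Bockstein, the spectral sequence collapses at $\EE_1$ and the mod $(\ov_0)$ answer is obtained by tensoring with $\pi_\star^{C_2}\mF_2[\ov_1]$.

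The main technical obstacle I anticipate is tracking $\varphi$ precisely on $A_{10}$ and matching it with the Nygaard-negative part of $\TPR$ under the useful isomorphism, since the comparison is only transparent after further killing $\ov_2$, and one must verify that the ambiguity modulo $\ov_2$ does not alter the equalizer in the critical bidegrees where the generators $\partial$ and $\ol{\Xi}_1$ live.
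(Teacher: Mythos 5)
Your overall strategy is the one the paper follows: compute the equalizer of $\can$ and $\varphi$ on $\pi_\star^{C_2}\grmot^*\TCR^{-}(\mZ)/(\ov_0,\ov_1)$ and $\pi_\star^{C_2}\grmot^*\TPR(\mZ)/(\ov_0,\ov_1)$ using Propositions~\ref{prop:TCR-Z} and~\ref{prop:TPRZ}, identify $\varphi$ via the useful isomorphism and the Real Segal conjecture, pin down the generators $\olambda_1$, $\partial$, $\ol{\Xi}_1$, and then observe that the $\ov_1$-Bockstein collapses because all generators are permanent. So the approach is not genuinely different from the paper's.

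Two issues are worth flagging. First, your description of $\varphi$ is imprecise at the key step: you write that ``on $A_{00}$ and $A_{01}$ [the Frobenius] agrees with $\can$,'' but that cannot be literally true on the $\ot$-multiple piece (the paper's $A_{10}$), since $\can$ is injective there (it is $\ot^2$-inversion) while $\varphi$ vanishes on it (the positive-Nygaard part dies in the localized $\ot$-Bockstein for $\THR^{t_{C_2}\mu_2}$, whose $\mathrm{E}_2$-page is concentrated in Nygaard filtration $0$). The correct structure, as in the paper's proof, is that $\can-\varphi$ is injective on both the $\ot$-power summand and the $\omu$-power summand because, in each case, exactly one of $\can,\varphi$ vanishes and the other is injective; that is what leaves only $A_{00}$ plus $A_{11}$ in the kernel. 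As written, your statement that $\varphi$ equals $\can$ on $A_{01}$ would (if taken at face value) wrongly enlarge the kernel and contradict your own final answer. Note also that the $A_{ij}$-labeling in Definition~\ref{def:Aij} is internally inconsistent between the diagram and the explicit formulas, and your labels follow the diagram while the paper's proof of Theorem~\ref{thm:synz} follows the values stated in Proposition~\ref{prop:TCR-Z}; this causes no mathematical problem but is a source of the confusion. Second, your closing worry about an ambiguity modulo $\ov_2$ in the useful isomorphism is unfounded: Proposition~\ref{prop:useful-iso} applied with $n=0$ gives an equivalence $\grmot^*\TPR(\mZ)/(\ov_0,\ov_1)\simeq\grmot^*\THR(\mZ)^{t_{C_2}\mu_2}/(\ov_0)$ directly in the quotient that appears in this theorem, so no further killing of $\ov_2$ is needed and there is no modulo-$\ov_2$ ambiguity to control.
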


\begin{proof}
We first prove the claim in the first sentence of the theorem statement.
Note that 
\[ \pi_*\grmot^*\TCR^{-}(\mZ)/(\ov_0,\ov_1)\cong A_{00}\oplus A_{10}\oplus A_{01}\oplus A_{11}%
\]
by Proposition~\ref{prop:TCR-Z} and we identified
\[
\pi_*\grmot^*\TPR(\mZ)/(\ov_0,\ov_1)\cong A_{00}\oplus A_{10}\oplus A_{01}
\]
in Proposition~\ref{prop:TPRZ}.  
We determine that 
\[ 
\can(A_{00}\oplus A_{10})=A_{00}\oplus A_{10}\,,
\]
\[ 
\can(A_{01}\oplus A_{11})=0
\]
by construction. 
We also determine that 
\[ \varphi(A_{10}\oplus A_{11})=0
\]
by the map of spectral sequences 
\[
\begin{tikzcd}
\pi_*\grmot^*\THR(\mZ)/(\ov_0,\ov_1)[t]\ar[r] \ar[d] & 
\pi_*\grmot^*\THR(\mZ)^{t_{C_2}\mu_2}/(\ov_0,\ov_1)[t]  \ar[d] \\
\pi_*\grmot^*\TCR^{-}(\mZ)/(\ov_0,\ov_1)\ar[r]  & 
\pi_*\grmot^*\TPR(\mZ)/(\ov_0,\ov_1)   
\end{tikzcd}
\]
which is zero in positive Nygaard filtration on $\mathrm{E}_2$-pages and converges to the 
Frobenius map $\varphi$ up to isomorphism. 
From this we also determine that 
\[
\varphi(A_{00}\oplus A_{10})=A_{00}\oplus A_{10} \,.
\]
We also know that  $\olambda_1$ is equalized by $\can$ and $\varphi$ by 
Corollary~\ref{cor:lambda1}. Therefore, 
\[ 
(\can-\varphi)(A_{00}\oplus A_{11})=0\,,
\]
\[
(\can-\varphi)(A_{10}\oplus A_{01})=A_{10}\oplus A_{01}\,.
\]
Consequently, we also determine that 
\[\ker(\can-\varphi)=A_{00}\oplus A_{11}\,,\]  
\[ 
\mathrm{coker}(\can-\varphi)=A_{00} \,.
\]
Since $A_{00}$ is a a free $\pi_{\star}^{C_2}\mF_2\otimes \Lambda( \olambda_1)$-module, the short exact sequence 
\[ 0 \to A_{00} \to \pi_{\star}^{C_2}\grmot^*\TCR(\mZ) \to A_{00}\oplus A_{11} \to 0 \]
is split. Altogether, this proves the first sentence of the theorem statement.

To prove the second sentence of the theorem statement, we use the $\bar{v}_1$-Bockstein spectral sequence 
\[
\EE_1 = \gr^*_{\mot}\TCR(\mZ)/(\ov_0,\ov_1)[v_1] \Rightarrow \gr^*_{\mot}\TCR(\mZ)/(\ov_0).
\]
Since we know that the classes in $\pi_{\star}^{C_2}\mF_2$ are permanent cycles in the 
subsequent motivic spectral sequence
\[
\pi_*\gr^*_{\mot}\TCR(\mZ)/(\ov_0)\Longrightarrow \pi_*\TCR(\mZ)/(2) 
\]
they must also be permanent cycles in the $\ov_1$-Bockstein spectral sequence and they cannot be boundaries. 
We observe that there are no other possible differentials for bidegree reasons (note that the differentials 
in the $\ov_1$-Bockstein spectral sequence are $\ov_1$-linear).
\end{proof}

\begin{defin}\label{filtration-of-TCR}
Let 
\[ 
\mathrm{Fil}_{\mot}^q \pi_\star^{C_2} \TCR(\mZ)/(2):=\im ( \pi_{\star}^{C_2}\fil_{\mot}^q\TCR(\mZ)/(2)\to 
\pi_{\star}^{C_2} \TCR(\mZ)/(2) ),
\]
\[ 
\mathrm{Gr}_{\mot}^q\pi_\star^{C_2} \TCR(\mZ)/(2):= \left( \mathrm{Fil}_{\mot}^q\pi_\star^{C_2} \TCR(\mZ)/(2) \right) / \left( \mathrm{Fil}_{\mot}^{q+1}\pi_\star^{C_2} \TCR(\mZ)/(2) \right) \,.
\]
\end{defin}

\begin{thm}\label{thm:tcrz}
There is an isomorphism of $\pi_{\star}^{C_2}\mF_2$-modules 
\[
\mathrm{Gr}_{\mot}^*\left (\pi_{\star}^{C_2}\TCR(\mZ)/(2)\right )\cong 
\pi_{\star}^{C_2}\mF_2[\ov_1]\langle \olambda_1,\partial \rangle \oplus \pi_{\star}^{C_2}\mF_2[\ov_1]\{\ol{\Xi}_1\}\
\]
with $\eta_{C_2}$-multiplications given in Figure~\ref{fig:motivic-Z2}. 
\end{thm}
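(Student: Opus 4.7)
The plan is to prove that the motivic spectral sequence
\[
E_2^{V,w} = \pi_V^{C_2}\grmot^w\TCR(\mZ)/(2) \implies \pi_V^{C_2}\TCR(\mZ)/(2)
\]
collapses at the $E_2$-page with no hidden additive $\pi_{\star}^{C_2}\mF_2$-module extensions, and then read off the $\eta_{C_2}$-multiplications. The $E_2$-page was computed in the second sentence of Theorem~\ref{thm:synz} to be $\pi_{\star}^{C_2}\mF_2[\ov_1]\langle \olambda_1,\partial\rangle \oplus \pi_{\star}^{C_2}\mF_2[\ov_1]\{\ol{\Xi}_1\}$, which is finite type over $\pi_{\star}^{C_2}\mF_2$ and finitely generated as a $\pi_{\star}^{C_2}\mF_2$-algebra; hence the spectral sequence strongly converges by the remark following the definition of the motivic spectral sequence, and it suffices to check that every algebra generator survives.

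First, I would verify that the indecomposable generators $\ov_1$, $\olambda_1$, $\partial$, and $\ol{\Xi}_1$ are permanent cycles. For $\ov_1$ this follows from the map $\filmot^{\bullet}\TCR(\mZ)/(2)\to \filmot^{\bullet}\TCR(\kr)/(2)$ and Theorem~\ref{thm:detection}: the class $\ov_1$ is detected by $\ot\omu^2$ in the $\ot$-Bockstein spectral sequence, and this is a cycle in a motivic filtration that survives because its $\ov_1$-multiplication is controlled by the Real Novikov descent. For $\olambda_1$ this follows from Corollary~\ref{cor:lambda1}, which produces the lift in $\pi^{C_2}_{2\rho-1}\grmot^*\TCR(\mZ)/(\ov_0)$ by strong evenness of $\TCR^-(\mZ/\MWR^{\otimes 2})/(2)$ and $\TPR(\mZ/\MWR^{\otimes 2})/(2)$. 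For $\partial$ I would use the map $\TCR(\mZ)\to \TCR(\mF_2)$ of Real $2$-cyclotomic spectra together with the collapse of the motivic spectral sequence for $\TCR(\mF_2)$ (Figure~\ref{fig:motivic-F2}), which shows $\partial$ is a permanent cycle there and hence pulls back accordingly. For $\ol{\Xi}_1$, the class sits in $A_{11}\subset \mathrm{Nyg}^{\ge 1}$ (Proposition~\ref{prop:TCR-Z}) and any differential on it would land in a trivial bidegree of the $E_r$-page; alternatively, comparison with the $v_1$-Bockstein computation of $\gr^*_{\mot}\TC(\bZ)/(2)$ via restriction $\res_e^{C_2}$ (together with the strong evenness that pins it down in $\rho$-grading) forces it to survive.

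Next, I would verify collapse at $E_2$. Since the generators above generate $E_2$ as a $\pi_\star^{C_2}\mF_2[\ov_1]$-algebra, and differentials are $\pi_\star^{C_2}\mF_2$-linear, the Leibniz rule promotes the permanent cycle claim to collapse. The absence of hidden additive $\pi_\star^{C_2}\mF_2$-module extensions follows since the $E_\infty$-page is free on $\{1,\olambda_1,\partial,\olambda_1\partial,\ol{\Xi}_1\}$ over $\pi_\star^{C_2}\mF_2[\ov_1]$, and the relevant bidegrees in the $E_\infty$-page form a wedge-shaped region so no extension can occur between distinct motivic filtrations of these generators.

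Finally, I would determine the $\eta_{C_2}$-multiplicative structure displayed in Figure~\ref{fig:motivic-Z2}: the $\eta_{C_2}$-multiplications internal to each $\pi_\star^{C_2}\mF_2\{x\}$-summand are inherited from the $\pi_\star^{C_2}\mF_2$-module structure of the $E_\infty$-page, while the potentially hidden $\eta_{C_2}$-extension between $\ol{\Xi}_1$ and a summand in the $\olambda_1,\partial$ part would be the main obstacle. I would resolve these by the compatibility of the motivic spectral sequence with the $\eta_{C_2}$-action, comparison with the computation of $\pi_{\star}^{C_2}\TCR(\mF_2)$ through the map $\TCR(\mZ)\to \TCR(\mF_2)$, and by using that on underlying homotopy groups $\mathrm{Gr}_{\mot}^*\pi_*\TC(\bZ)/(2)$ is already computed in~\cite{HW22}. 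The main technical obstacle is pinning down the $\eta_{C_2}$-action on $\ol{\Xi}_1$, which will require care since it lives in Nygaard filtration $\geq 1$ and its $\eta_{C_2}$-multiples could \emph{a priori} land in either summand of the direct-sum decomposition.
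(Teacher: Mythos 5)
Your overall architecture (collapse of the motivic spectral sequence, then read off $\eta_{C_2}$-multiplications) matches the paper, and the collapse itself is recoverable from your outline; the paper gets it more cheaply from the facts that the classes in $\pi_\star^{C_2}\mF_2$ are permanent cycles, that differentials raise Adams weight by $r\ge 2$ while all algebra generators sit in Adams weights $[0,2]$, and a degree count (e.g.\ a differential on $\ov_1$ would have to land in degree $\rho-1$ and Adams weight $\ge 2$, where the $\EE_2$-page vanishes). Note, incidentally, that your justification for $\ov_1$ invokes a map $\filmot^{\bullet}\TCR(\mZ)/(2)\to\filmot^{\bullet}\TCR(\kr)/(2)$ that goes the wrong way (the ring map is $\kr\to H\mZ_2$, so the induced map is $\TCR(\kr)\to\TCR(\mZ)$), and Theorem~\ref{thm:detection}(1) is a statement about $\TCR^{-}$, not about permanence in the $\TCR$ motivic spectral sequence; the weight/degree count is what actually does the work. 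Also, since $\mathrm{Gr}_{\mot}^*\pi_\star^{C_2}\TCR(\mZ)/(2)$ is by definition the associated graded of the image filtration, the additive statement is just the $\EE_\infty$-page; your discussion of hidden additive extensions is not needed.

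The genuine gap is in the multiplicative structure. You correctly identify the $\eta_{C_2}$-action on $\ol{\Xi}_1$ as the crux, but none of the tools you list resolves it. Comparison with $\TCR(\mF_2)$ cannot work: the mod $\ov_0$ syntomic cohomology of $\mF_2$ is $\pi_\star^{C_2}\mF_2\langle\partial\rangle$, so both $\ol{\Xi}_1$ and $\partial\olambda_1$ map to zero there and the multiplication is invisible. Comparison with the underlying computation requires injectivity of $\res_e^{C_2}$ in the relevant $RO(C_2)$-degrees, which you do not address. The missing idea is the Hurewicz argument: by Theorem~\ref{thm:detection}, the unit sends a class detecting $\eta_{C_2}$ to $\ot\olambda_1$, i.e.\ the Hurewicz image of $\eta_{C_2}$ is $\ol{\Xi}_1$; hence $\eta_{C_2}\cdot\ol{\Xi}_1=\eta_{C_2}^2\cdot 1$, and the latter is pinned down because on underlying the Hurewicz images of $\eta$ and $\eta^2$ in $\TC(\bZ)/2$ are $\Xi_1$ and $\partial\lambda_1$ (Rognes, following Dennis--Stein). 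Finally, the figure also records $\partial$-multiplications, which you do not treat at all; these come from the fact that $\{\partial,\partial\olambda_1\}$ are the images of $\{1,\olambda_1\}$ under the connecting homomorphism from $\grmot^*\TPR$ to $\grmot^*\TCR$.
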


\begin{proof}
The motivic spectral sequence collapses since the classes in $\pi_{\star}^{C_2}\mF_2$ are permanent cycles 
in the motivic spectral sequence, using multiplicative structure, and using the fact that 
the $\pi_{\star}^{C_2}\mF_2$-algebra generators are in Adams weight $[0,2]$ together with parity considerations; 
see Figure~\ref{fig:motivic-Z2}. 

The $\eta_{C_2}$-action is determined from the fact that the Hurewicz image of $\eta_{C_2}$ is $\Xi_1$ by 
Theorem~\ref{thm:detection} and the fact that on underlying the Hurewicz image of $\eta$ and $\eta^2$ is $\Xi_1$ 
and $\partial\lambda_1$ by~\cite{Rog99,Rog99b} for example (in~\cite[pp.~229]{Rog99b} Rognes attributes this to 
Dennis--Stein~\cite{DS75}). 
The $\partial$ multiplications come from the fact that $\{\partial, \partial \overline{\lambda}_1\}$ are in the image 
of $\{1,\overline{\lambda}_1\}$ via the connecting homomorphism 
\[ 
\partial : \pi_*\grmot^*\TP(\bZ)/(2)\longrightarrow \pi_{*}\grmot^{*}\TC(\bZ)/(2)  
\]
where we suppress grading shifts.
\end{proof}

\begin{remark}
At the underlying level, Theorem~\ref{thm:tcrz} coincides with~\cite[Theorem~0.4]{Rog99}, 
but we should emphasize that op.~cit. is  used as input in our computation. 
To see the relationship between the computations, note that on underlying the group 
$\pi_{i}^e\mathrm{TCR}(\mathbb{Z}_2)/2$ is $\mathbb{Z}/2$ for $i=-1,0$, 
the group has order $4$ for $i=2n$ where $n\ge 2$ and $i=1$, and the group has order $8$ for $i=2n+1$ where $n\ge 1$. 
\end{remark}

\begin{remark}
Note that the fact that the Hurewicz image of $\eta_{C_2}$ is $\Xi_1$ and the 
Hurewicz image of $\eta_{C_2}^2$ is $\partial \overline{\lambda}_1$ is consistent with the computation 
\[ 
\pi_0(\TCR(\mathbb{Z}_2)^{\phi C_2})=\mathbb{Z}/8
\]
from~\cite[Proposition~3.3]{HLN21} and~\cite[Theorem~C]{DMP24} since $\eta_{C_2}^{\Phi C_2}=2$. 
Letting $(\overline{v}_1^8)^{\Phi C_2}=x$, $\overline{\lambda}_1^{\Phi C_2}=e$, and $\partial^{\Phi C_2}=f$ 
we also have a consistent answer, though we have decided to refrain from carrying out the $\overline{v}_0$-Bockstein 
spectral sequence. 
\end{remark}

\begin{figure}
\resizebox{.5\textwidth}{!}{
\begin{tikzpicture}[radius=.08,scale=1.4]
\node [below] at (-.9,-.1) {-1};
\node [below] at (.1,-.1) {0};
\node [below] at (1.1,-.1) {$\rho-1$};
\node [below] at (2.1,-.1) {$2\rho-2$};
\node [below] at (3.1,-.1) {$2\rho-1$};

\foreach \s in {-1,1,2,3} \node [left] at (-.1,\s) {$\s$};

\draw [thin,color=lightgray] (-2,-1) grid (4,3);

\clip (-2,-1) -- (-2,3) -- (4,3) -- (4,-2) -- cycle;


\draw [fill] (0,0) circle;
\node [above] at (.1,.1) {1};
\draw [fill] (-1,1) circle;
\node [above] at (-.9,1.1) {$\partial$};
\draw [fill] (1,1) circle;
\node [above] at (1.1,1.1) {$\ol{\Xi}$};
\draw [fill] (3,1) circle;
\node [above] at (3.1,1.1) {$\olambda_1$};
\draw [fill] (2,2) circle;
\node [above] at (2.1,2.1) {$\partial\olambda_1$};

\draw (0,0)  -- (2,2);
\draw (0,0) -- (-1,1);
\draw (3,1) -- (2,2);
\end{tikzpicture}
}
\caption{The motivic spectral sequence computing $\pi_{\star}^{C_2}\TCR(\mZ_2)/(2)$. 
Each bullet $\bullet$ represents a copy of $\pi_{\star}^{C_2}\mF_2[\ov_1]$. 
The horizontal axis represents the stem (a $C_2$-representation) and the vertical axis represents the Adams weight. 
Lines of slope $1$ indicate multiplication by $\eta_{C_2}$. Lines of slope $-1$ indicate multiplication by $\partial$.
\label{fig:motivic-Z2}
}
\end{figure}

\subsection{Real syntomic cohomology of $\kr$ and $\tmf_1(3)$}\label{sec:syntomic-kr}

As our main application, we compute the Real syntomic cohomology of $\kr$ mod $(\ov_0,\ov_{1},\ov_2)$ and $\tmf_1(3)$
 mod $(\ov_0,\ov_{1},\ov_2,\ov_3)$, proving a Lichtenbaum--Quillen type property for Real syntomic cohomology of $\kr$. 
 Recall that in this section we work in the $2$-complete setting throughout. 

We first recall that by Corollary~\ref{cor:vn-thr}, we have isomorphisms of $\pi_{\star}^{C_2}\mF_2$-algebras
\begin{align}
\pi_{\star}^{C_2}\grmot^{*}\THR(\kr)/(\ov_0,\ov_1,\ov_2) & \cong  \pi_{\star}^{C_2}\mF_2 [\omu^{4} ]\langle \olambda_1,\olambda_2,\ovarepsilon_2\rangle  \,, \\ 
\pi_{\star}^{C_2}\grmot^{*}\THR(\tmf_1(3))/(\ov_0,\ov_1,\ov_2,\ov_3) & \cong  \pi_{\star}^{C_2}\mF_2 [\omu^{8} ]\langle \olambda_1,\olambda_2,\olambda_3,\ovarepsilon_3\rangle  \,.
\end{align}
We will show that $\olambda_1$ and $\olambda_2$ (resp. $\olambda_1$, $\olambda_2$, and $\olambda_3$) lift to classes in 
$\pi_{\star}^{C_2}\grmot^*\TCR(\kr)$ (resp.  $\pi_{\star}^{C_2}\grmot^*\TCR(\tmf_1(3))$). 
We have already accomplished this in the case of $\olambda_1$. 

\begin{prop}\label{prop:differentials}
Let $-1\le n\le 2$. In the spectral sequences \eqref{prismaticSS} and \eqref{TC-SS} with $R = \BPR\langle n \rangle$, 
there are differentials generated by 
\[ 
d_{2^{i}}(\ot^{2^{i-1}})\dot{=}\ot^{2^{i}+2^{i-1}}\bar{\lambda}_{i}
\]
for $1\le i\le n+1$, and the class $\ot^{2^{n+1}}$ is a permanent cycle. 
\end{prop}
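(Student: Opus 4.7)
The plan is to adapt the proof of the analogous non-equivariant result~\cite[Theorem~6.0.1]{HRW22} to the Real setting, proceeding by induction on $i$. The key inputs are the right unit formula (Lemma~\ref{lem:right-unit}), the $p$-series formula (Lemma~\ref{lem:p-series}), the identification of $\bar{\lambda}_i$ with the Real suspension $\sigma^\rho \ot_i$ modulo $(\ov_0,\ldots,\ov_{i-1})$ via Lemma~\ref{HWlemma} and Definition~\ref{def:lambda-classes}, and the detection results of Theorem~\ref{thm:detection}.

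The base case $i=1$ asserts $d_2(\ot) \doteq \ot^3 \bar{\lambda}_1$, which is equivalent (via the Leibniz rule applied to $\ot \cdot \ot^{-1} = 1$) to the differential $d_2(\ot^{-1}) \doteq \ot \bar{\lambda}_1$ already established for $R = \mZ$ in Proposition~\ref{prop:TPRZ}. Via the map of $C_2$-$\bE_\infty$-rings $\BPR\langle n\rangle \to H\mZ$ and the induced map of $\bar t$-Bockstein spectral sequences, this differential lifts to the spectral sequence for $R = \BPR\langle n\rangle$ modulo $(\ov_0)$, with the target nonzero on $\EE_2$ for Nygaard filtration reasons.

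For the inductive step, assume the differentials for $j<i$ have been established. Working modulo $(\ov_0,\ldots,\ov_{i-1})$, the class $\bar{\lambda}_i$ is well-defined and, by Theorem~\ref{thm:detection}, the image of $\ov_i$ in $\pi_\star^{C_2}\grmot^{*}\TCR^{-}(\BPR\langle i-1\rangle)/(\ov_0,\ldots,\ov_{i-1})$ is detected by $\ot\,\omu^{2^{i-1}}$ in the $\bar t$-Bockstein spectral sequence. Since $\ov_i = 0$ in $\BPR\langle i-1\rangle$, the class $\ot\,\omu^{2^{i-1}}$ must be a boundary. After running the prior differentials, the $\EE_{2^i}$-page in the relevant range is controlled; multiplicativity together with the filtration bookkeeping of Section~\ref{sec:trace-Tate} (Adams weight plus Nygaard filtration) forces this boundary to arise from a $d_{2^i}$-differential on $\ot^{-2^{i-1}}\,\omu^{2^{i-1}}$, which by the Leibniz rule gives $d_{2^i}(\ot^{2^{i-1}}) \doteq \ot^{2^i+2^{i-1}} \bar{\lambda}_i$. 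This differential then lifts to $R = \BPR\langle n\rangle$ (for $n\ge i-1$) via the map $\BPR\langle n\rangle \to \BPR\langle i-1\rangle$, and the $\ov_0,\ldots,\ov_{i-1}$-towers are reconstructed by a Bockstein argument using that the prior differentials are already known.

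The class $\ot^{2^{n+1}}$ is a permanent cycle because any nontrivial $d_r$-differential on it would produce a class involving $\bar{\lambda}_j$ for some $j \ge n+2$, but no such classes exist in $\pi_\star^{C_2}\grmot^{*}\THR(\BPR\langle n\rangle)$ by Theorem~\ref{thm:thr}. Equivalently, the useful isomorphism (Proposition~\ref{prop:useful-iso}) combined with the Real Segal conjecture (Theorem~\ref{thm:Segal}) forces the abutment $\grmot^{*}\TPR(\BPR\langle n\rangle)/(\ov_0,\ldots,\ov_{n+1})$ to be $\omu^{2^{n+1}}$-periodic, consistent only with $\ot^{2^{n+1}}$ surviving all differentials. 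The main obstacle is the careful bookkeeping required to match the $d_1$-differential on $\ot$ in the descent spectral sequence (governed by the right unit formula of Lemma~\ref{lem:right-unit}) with the corresponding higher differentials in the $\bar t$-Bockstein spectral sequence, and to ensure that this comparison goes through in the parametrized $C_2$-equivariant setting while handling the sign and unit ambiguities that appear at $p = 2$.
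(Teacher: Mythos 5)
Your approach diverges from the paper's in a way that introduces a genuine gap. The paper derives all the differentials at once by computing the total differential $\partial(\ot)$ (and its powers) in the cobar complex computing $\grmot^*\TCR^{-}(\bS)$ directly from the right unit formula of Lemma~\ref{lem:right-unit}, and then identifies $[\ot_i]$ with $\ot\olambda_i$ using power operations (Section~\ref{sec:toolkit-power}); the differentials then propagate forward to $\BPR\langle n\rangle$ along the unit map $\bS \to \BPR\langle n\rangle$. You only use this mechanism implicitly for the base case via Proposition~\ref{prop:TPRZ}, and instead attempt an inductive step driven by the detection results of Theorem~\ref{thm:detection}.

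The inductive step does not go through as written. First, a bookkeeping slip: Theorem~\ref{thm:detection} says $\ov_i$ is detected by $\ot\omu^{2^i}$ in $\grmot^*\TCR^-(\BPR\langle i-1\rangle)/(\ov_0,\ldots,\ov_{i-1})$, not by $\ot\omu^{2^{i-1}}$; one can check this immediately on stems since $|\ov_i| = \rho(2^i-1) = |\ot\omu^{2^i}|$ while $|\ot\omu^{2^{i-1}}| = \rho(2^{i-1}-1)$. Second, and more substantially, the logical move ``since $\ov_i = 0$ in $\BPR\langle i-1\rangle$, the detecting class must be a boundary'' misreads the detection statement: Theorem~\ref{thm:detection} is precisely asserting that $\ov_i$ is \emph{nonzero} in $\pi_\star^{C_2}\grmot^*\TCR^-(\BPR\langle i-1\rangle)/(\ov_0,\ldots,\ov_{i-1})$ and hence that $\ot\omu^{2^i}$ is a permanent cycle there, not a boundary. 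The class $\ot\omu^{2^{n+1}}$ does become a boundary only after further quotienting by $\ov_{n+1}$, and then it is killed by the $d_1$-differential $d_1(\ovarepsilon_{n+1}) = \ot\omu^{2^{n+1}}$ (the proposition immediately following this one in the paper), which is a separate phenomenon living in a completely different $\mathrm{RO}(C_2)$-stem and Adams weight from $d_{2^i}(\ot^{2^{i-1}}) \doteq \ot^{2^i+2^{i-1}}\olambda_i$. Your proposed reverse-engineering of the $\olambda_i$-differential from the $\omu$-differential via ``a $d_{2^i}$-differential on $\ot^{-2^{i-1}}\omu^{2^{i-1}}$'' cannot be made to match on degrees, and the Leibniz step as stated does not produce the claimed formula. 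To repair this, abandon the inductive route and instead argue as the paper does: compute $\partial(\ot^{2^{j}}) \equiv -\chi(\ot_1)\ot^{2^{j+1}} \bmod (\ov_0,\ov_1,\ot^{2^j+2})$ directly from the right unit, and use compatibility of $\olambda_i = Q^{2^{i-1}\rho}\olambda_{i-1}$ (Definition~\ref{def:lambda-classes}) with the identification $[\ot_1] \mapsto \ot\olambda_1$ to deduce $[\ot_i] \mapsto \ot\olambda_i$ for all $i \le n+1$; this produces the $d_{2^i}$-differentials for the filtered cobar complex of $\bS$ without an induction and without appealing to Theorem~\ref{thm:detection} at all.

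Your two arguments for $\ot^{2^{n+1}}$ being a permanent cycle are both reasonable sketches but differ from the paper's, which is a pure degree-count in Nygaard filtration, motivic filtration, and $\mathrm{RO}(C_2)$-grading (Figure~\ref{fig6}). Your first argument (no $\olambda_j$ with $j \ge n+2$) is essentially equivalent once one observes that any higher differential on $\ot^{2^{n+1}}$ would have to land in a $\pi_\star^{C_2}\mF_2$-module generator not present in $\grmot^*\THR(\BPR\langle n\rangle)$; your second argument using Proposition~\ref{prop:useful-iso} and Theorem~\ref{thm:Segal} is circular as stated since those results are used \emph{after} this proposition to identify the abutment of the spectral sequence once the differentials are known.
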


\begin{proof}
In the cobar complex computing $\grmot^*\TCR^{-}(\bS)$, there is a differential 
\[
\partial(\ot)=-\chi (\ot_1)\ot^{2} \mod (\ov_0,\ov_1,\ot^{4})
\]
by Lemma~\ref{lem:right-unit}. 
We also have differentials 
\begin{equation}\label{total-diff}
\partial(\ot^{2^j})=-\chi (\ot_1)\ot^{2^{j+1}} \mod (\ov_0,\ov_1,\ot^{2^{j}+2})\,.
\end{equation}
Since the spectral sequences \eqref{prismaticSS} and~\eqref{TC-SS} can be viewed as spectral sequences associated to a 
filtered chain complex, the $d_{2^j}$ differential is therefore determined by the formula \eqref{total-diff} producing the 
differential 
\[
d_{2^j}(\ot^{2^{j-1}})\dot{=}\ot^{2^{j+1}+2^j}\olambda_j
\]
for $1\le j\le n+1$ because $[\ot_1]$ maps to $\ot\olambda_1$ and thus, by compatibility with power operations (Section~\ref{sec:toolkit-power}), $[\ot_i]$ maps to $\ot\olambda_i$ for $2\le i\le n+1$. 

This also implies that $d_{r}(\ot^{2^{j-1}})=0$ for $r<j$, which suffices to prove that $\ot^{2^{n+1}}$ is a permanent cycle.
This follows by considering Nygaard filtration, motivic filtration, and $\mathrm{RO}(C_2)$-grading; see Figure~\ref{fig6}. 
\end{proof}

\begin{prop}
The Real prismatic cohomology of $\BPR\langle n\rangle$ mod $(\ov_0,\ov_1,\cdots,\ov_{n+1})$ is 
\[  
\pi_{\star}^{C_2}\grmot^{*}\TPR(\BPR\langle n\rangle)/(\ov_0,\cdots ,\ov_{n+1})\cong 
\pi_{\star}^{C_2}\bF_{2}[\ol{t}^{\pm 2^{n+1}}]\langle \olambda_1,\cdots ,\olambda_{n+1}\rangle \,. 
\]
\end{prop}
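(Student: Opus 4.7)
My plan is to set up and run the periodic $\ol{t}$-Bockstein spectral sequence
\[
\widehat{E}_1^{\star,*,*}=\pi_{\star}^{C_2}\grmot^*\THR(\BPR\langle n\rangle)/(\ov_0,\dots,\ov_{n+1})[\ot^{\pm 1}]\implies \pi_{\star}^{C_2}\grmot^*\TPR(\BPR\langle n\rangle)/(\ov_0,\dots,\ov_{n+1}),
\]
whose $E_1$-page, by Corollary~\ref{cor:vn-thr}, is $\pi_{\star}^{C_2}\bF_2[\omu^{2^{n+1}}][\ot^{\pm 1}]\langle \olambda_1,\dots,\olambda_{n+1},\ovarepsilon_{n+1}\rangle$. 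My goal is to show that only the classes $\ot^{\pm 2^{n+1}}$ and $\olambda_1,\dots,\olambda_{n+1}$ survive, with no multiplicative extensions; the two model cases to generalize are Proposition~\ref{prop:tpfp} ($n=-1$) and Proposition~\ref{prop:TPRZ} ($n=0$).

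The first step is to identify the $d_1$-differential as $d_1(\ovarepsilon_{n+1})=\ot\,\omu^{2^{n+1}}$, exactly as in those two model cases. This follows from Theorem~\ref{thm:detection} (and Proposition~\ref{prop:detection-Fp} in the case $n=-1$), which gives that the image of $\ov_{n+1}$ in $\pi_\star^{C_2}\grmot^*\lim_{\bC P_\bR^1}\THR(\BPR\langle n\rangle)/(\ov_0,\dots,\ov_n)$ is $\ot\,\omu^{2^{n+1}}$, combined with the snake-lemma argument used in Proposition~\ref{prop:tpfp}: the Bockstein relation $\ol{\beta}_{1,n+1}(\ovarepsilon_{n+1})=1$ from Definition~\ref{def:varepsilon} converts this detection into the claimed $d_1$. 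Since $\ot$ is invertible on this page, the differential together with Leibniz kills $\ovarepsilon_{n+1}$ and every $\omu^{2^{n+1}}$-multiple, yielding
\[
\widehat{E}_2^{\star,*,*}=\pi_{\star}^{C_2}\bF_2[\ot^{\pm 1}]\langle\olambda_1,\dots,\olambda_{n+1}\rangle.
\]

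Next I will apply Proposition~\ref{prop:differentials}, which provides the differentials $d_{2^i}(\ot^{2^{i-1}})\doteq \ot^{2^i+2^{i-1}}\olambda_i$ for $1\le i\le n+1$, together with the fact that $\ot^{2^{n+1}}$ is a permanent cycle. By induction on $i$ I will show that $\widehat{E}_{2^i+1}=\pi_\star^{C_2}\bF_2[\ot^{\pm 2^i}]\langle \olambda_1,\dots,\olambda_{n+1}\rangle$. The inductive step is a direct Leibniz calculation: working mod $2$, one computes $d_{2^i}(\ot^{k\cdot 2^{i-1}})=k\,\ot^{(k+2)\cdot 2^{i-1}}\olambda_i$, so for odd $k$ the class $\ot^{k\cdot 2^{i-1}}$ supports a nontrivial differential and the class $\ot^{(k+2)\cdot 2^{i-1}}\olambda_i$ lies in the image; the classes $\olambda_j$ are permanent cycles, so only even powers of $\ot^{2^{i-1}}$ (and not their $\olambda_i$-multiples) remain after $d_{2^i}$. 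Crucially the $\olambda_i$'s themselves survive because $\olambda_i^2=0$ means no $\olambda_i$-free generator of degree $\olambda_i$ is hit, and the image meets only odd $\ot$-powers. After $d_{2^{n+1}}$, only $\ot^{\pm 2^{n+1}}$-multiples of $\langle \olambda_1,\dots,\olambda_{n+1}\rangle$ remain; since $\ot^{2^{n+1}}$ is a permanent cycle, the spectral sequence collapses at $\widehat{E}_{2^{n+1}+1}$.

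Finally, I need to rule out hidden multiplicative extensions. Because the $E_\infty$-page is already free as a $\pi_\star^{C_2}\bF_2$-module on monomials in $\ot^{\pm 2^{n+1}}$ and $\olambda_1,\dots,\olambda_{n+1}$, and since every $\olambda_i$ is concentrated in Adams weight $1$ in the $\ot$-adic grading with $\olambda_i^2=0$ for parity reasons (all $\olambda_i\olambda_j$ live in trigradings where no alternative multiplicative extension exists), there is no room for extensions. This will yield the stated isomorphism
\[
\pi_{\star}^{C_2}\grmot^*\TPR(\BPR\langle n\rangle)/(\ov_0,\dots,\ov_{n+1})\cong \pi_{\star}^{C_2}\bF_2[\ot^{\pm 2^{n+1}}]\langle\olambda_1,\dots,\olambda_{n+1}\rangle.
\]
I expect the main obstacle to be bookkeeping: verifying that the inductive description of $\widehat{E}_{2^i+1}$ really has the claimed form, i.e.\ that the image of $d_{2^i}$ consists precisely of the $\olambda_i$-multiples with odd $\ot^{2^{i-1}}$-exponent and nothing else, and that no alternative contributions appear from cross-terms with $\olambda_j$'s for $j>i$. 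In practice this is clean because each $d_{2^i}$ is $\olambda_j$-linear for $j\ne i$ and the relation $\olambda_i^2=0$ in the exterior algebra trivializes the problematic multiplications.
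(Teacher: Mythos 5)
Your proposal is correct, and it takes a genuinely different route to the conclusion than the paper does. The paper, after establishing the $d_1$-differential and invoking Proposition~\ref{prop:differentials} for the higher $d_{2^i}$'s, finishes by observing that there is \emph{no room} for further differentials or extensions because the abutment is already known: Theorem~\ref{thm:Segal} (the Real Segal conjecture) identifies $\pi_\star^{C_2}\grmot^*\THR(\BPR\langle n\rangle)^{t_{C_2}\mu_2}/(\ov_0,\ldots,\ov_n)$ as the $\omu^{2^{n+1}}$-localization, and Proposition~\ref{prop:useful-iso} (the useful isomorphism) matches this with $\grmot^*\TPR(\BPR\langle n\rangle)/(\ov_0,\ldots,\ov_{n+1})$, while Lemma~\ref{lem:pi0tcr} pins down the Adams-weight-zero part. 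You instead run the periodic $\ot$-Bockstein spectral sequence page-by-page via Leibniz and a Nygaard-filtration parity argument (on $E_{2^i+1}$ everything lives in filtration divisible by $2^i$, so $d_r$ for $2^i<r<2^{i+1}$ has no room), arriving at the $E_\infty$-page without needing Segal or the useful isomorphism. Your version is more self-contained and explicit; the paper's version is shorter because it reduces the computation to a comparison with a known target.

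Two things to tighten in your account. First, you rely on the Leibniz rule in an exterior algebra over $\pi_\star^{C_2}\mF_2$ without addressing why the coefficients $x\in\pi_\star^{C_2}\mF_2$ cannot themselves support differentials; the paper handles this carefully in the proof of Proposition~\ref{prop:tpfp} by using Lemma~\ref{lem:pi0tcr} together with the collapse of the motivic spectral sequence, and an analogous argument is needed here. Second, your extension argument (``no room for extensions since $\olambda_i^2=0$ for parity reasons'') is stated too loosely: the clean way to exclude extensions is again by comparison with the underlying nonequivariant answer together with strong evenness of the relevant cosimplicial levels, or by the paper's method of already knowing the abutment from Segal and the useful isomorphism. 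Neither point is a fatal gap --- both can be filled using arguments already present elsewhere in the paper --- but they are the places where your direct calculation requires extra input that the paper's comparison argument sidesteps.
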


\begin{proof}
We first observe that there is a differential $d_1(\ovarepsilon_{n+1})=\ot\omu^{2^{n+1}}$ because $\ov_{n+1}$ is detected by $\ot\omu^{2^{n+1}}$ and $\ovarepsilon_{n+1}$ is 
chosen as a nullhomotopy of $\ov_{n+1}$. The argument is similar to the one given in the proof of Proposition~\ref{prop:tpfp}. After running this differential and the differentials determined in Proposition~\ref{prop:differentials} there is no room for 
further differentials in light of Lemma~\ref{lem:pi0tcr} as well as Theorem~\ref{thm:Segal} and 
Proposition~\ref{prop:useful-iso}. See Figure~\ref{fig6} in the case of $\kr$. 
The case of $\tmf_1(3)$ is similar.
\end{proof}

\begin{figure}[h!]
\resizebox{.8\textwidth}{!}{
\begin{tikzpicture}[radius=.08,scale=.05cm]
\foreach \n in {-7,-4,-3,-2,-1,0,...,7} \node [below] at (\n+.1,-.1) {$\n$};
\foreach \s in {-7,-4,-3,-2,-1,1,2,...,7} \node [left] at (-.1,\s) {$\s$};
\draw [thin,color=lightgray] (-7,-7) grid (7,7);
\clip (-7.1,-7.1) -- (7.1,-7.1) -- (7.1,7.1) -- (-7.1,7.1) -- cycle;

\node [right] at (0,0) {1};
\node [right] at (0,-1) {$a_{\sigma}$};
\node [right] at (1,-1) {$u_{\sigma}$};
\node [right] at (-2,2) {$\theta$};

\foreach \s in {-4,4}
    \node [right] at (-\s,-\s) {$\ol{t}^{\s}$};

\begin{scope}[black]

\foreach \s in {-8,-4,0,4,8}
    \foreach \m in {-10,-9,-8,-7,-6,-5,-4,-3,-2,-1,0}
        \foreach \n in {-10,...,\m} 
            {\draw [fill] (-\m+\s,\n+\s) circle;
            }

\foreach \s in {-8,-4,0,4,8}
    \foreach \n in {0,1,...,10}
        {
        \draw (\n+\s,-\n+\s) -- (\n+\s,-5+\s);
        }
\foreach \s in {-8,-4,0,4,8}
    \foreach \n in {0,1,...,10}
        {
        \draw (\s,-\n+\s) -- (5-\n+\s,-5+\s);
        }


\foreach \s in {-4,0,4}
    \foreach \n in {2,...,14} 
        {\draw [fill] (-2+\s,\n+\s) circle;}
\foreach \s in {-4,0,4}
    \foreach \n in {3,...,14} 
        {\draw [fill] (-3+\s,\n+\s) circle;}
\foreach \s in {-4,0,4}
    \foreach \n in {4,...,14} 
        {\draw [fill] (-4+\s,\n+\s) circle;}
\foreach \s in {-4,0,4}
    \foreach \n in {5,...,14} 
        {\draw [fill] (-5+\s,\n+\s) circle;}
\foreach \s in {-4,0,4}
    \foreach \n in {6,...,14} 
        {\draw [fill] (-14+\s,\n+\s) circle;}
 \foreach \s in {-4,0,4}
    \foreach \n in {7,...,14} 
        {\draw [fill] (-14+\s,\n+\s) circle;}
 \foreach \s in {-4,0,4}
    \foreach \n in {8,...,14} 
        {\draw [fill] (-14+\s,\n+\s) circle;}
 \foreach \s in {-4,0,4}
    \foreach \n in {9,...,14} 
        {\draw [fill] (-14+\s,\n+\s) circle;}
  \foreach \s in {-4,0,4}
    \foreach \n in {10,...,14} 
        {\draw [fill] (-14+\s,\n+\s) circle;}
\foreach \s in {-4,0,4}
    \foreach \n in {11,...,14} 
        {\draw [fill] (-14+\s,\n+\s) circle;}
\foreach \s in {-4,0,4}
    \foreach \n in {12,13,14} 
        {\draw [fill] (-14+\s,\n+\s) circle;}
\foreach \s in {-4,0,4}
    \foreach \n in {13,14} 
        {\draw [fill] (-14+\s,\n+\s) circle;}
\foreach \s in {-4,0,4}
    \foreach \n in {14} 
        {\draw [fill] (-14+\s,\n+\s) circle;}

\foreach \s in {-4,0,4}
    \foreach \n in {2,...,10}
        {\draw (-2+\s,\n+\s) -- (-14+\n+\s,12+\s);}
\foreach \s in {-4,0,4}
    \foreach \n in {2,...,10} 
        {\draw (-\n+\s,\n+\s) -- (-\n+\s,12+\s);}
\end{scope}

\node [left] at (2.85,4.15) {$\olambda_2$};

\begin{scope}[blue,dotted,xshift=2.85cm,yshift=4.15cm]

\foreach \s in {-8,-4,0,4,8}
    \foreach \m in {-10,-9,-8,-7,-6,-5,-4,-3,-2,-1,0}
        \foreach \n in {-10,...,\m} 
            {\draw [fill] (-\m+\s,\n+\s) circle;
            }

\foreach \s in {-8,-4,0,4,8}
    \foreach \n in {0,1,...,10}
        {
        \draw (\n+\s,-\n+\s) -- (\n+\s,-10+\s);
        }
\foreach \s in {-8,-4,0,4,8}
    \foreach \n in {0,1,...,10}
        {
        \draw (\s,-\n+\s) -- (10-\n+\s,-10+\s);
        }


\foreach \s in {-4,0,4}
    \foreach \n in {2,...,14} 
        {\draw [fill] (-2+\s,\n+\s) circle;}
\foreach \s in {-4,0,4}
    \foreach \n in {3,...,14} 
        {\draw [fill] (-3+\s,\n+\s) circle;}
\foreach \s in {-4,0,4}
    \foreach \n in {4,...,14} 
        {\draw [fill] (-4+\s,\n+\s) circle;}
\foreach \s in {-4,0,4}
    \foreach \n in {5,...,14} 
        {\draw [fill] (-5+\s,\n+\s) circle;}
\foreach \s in {-4,0,4}
    \foreach \n in {6,...,14} 
        {\draw [fill] (-14+\s,\n+\s) circle;}
 \foreach \s in {-4,0,4}
    \foreach \n in {7,...,14} 
        {\draw [fill] (-14+\s,\n+\s) circle;}
 \foreach \s in {-4,0,4}
    \foreach \n in {8,...,14} 
        {\draw [fill] (-14+\s,\n+\s) circle;}
 \foreach \s in {-4,0,4}
    \foreach \n in {9,...,14} 
        {\draw [fill] (-14+\s,\n+\s) circle;}
  \foreach \s in {-4,0,4}
    \foreach \n in {10,...,14} 
        {\draw [fill] (-14+\s,\n+\s) circle;}
\foreach \s in {-4,0,4}
    \foreach \n in {11,...,14} 
        {\draw [fill] (-14+\s,\n+\s) circle;}
\foreach \s in {-4,0,4}
    \foreach \n in {12,13,14} 
        {\draw [fill] (-14+\s,\n+\s) circle;}
\foreach \s in {-4,0,4}
    \foreach \n in {13,14} 
        {\draw [fill] (-14+\s,\n+\s) circle;}
\foreach \s in {-4,0,4}
    \foreach \n in {14} 
        {\draw [fill] (-14+\s,\n+\s) circle;}

\foreach \s in {-4,0,4}
    \foreach \n in {2,...,10}
        {\draw (-2+\s,\n+\s) -- (-14+\n+\s,12+\s);}
\foreach \s in {-4,0,4}
    \foreach \n in {2,...,10} 
        {\draw (-\n+\s,\n+\s) -- (-\n+\s,12+\s);}
\end{scope}

\node [left] at (.85,2.15) {$\olambda_1$};

\begin{scope}[blue,dotted, xshift=.85cm,yshift=2.15cm]

\foreach \s in {-8,-4,0,4,8}
    \foreach \m in {-10,-9,-8,-7,-6,-5,-4,-3,-2,-1,0}
        \foreach \n in {-10,...,\m} 
            {\draw [fill] (-\m+\s,\n+\s) circle;
            }

\foreach \s in {-8,-4,0,4,8}
    \foreach \n in {0,1,...,10}
        {
        \draw [dotted] (\n+\s,-\n+\s) -- (\n+\s,-10+\s);
        }
\foreach \s in {-8,-4,0,4,8}
    \foreach \n in {0,1,...,10}
        {
        \draw [dotted] (\s,-\n+\s) -- (10-\n+\s,-10+\s);
        }


\foreach \s in {-4,0,4}
    \foreach \n in {2,...,14} 
        {\draw [fill] (-2+\s,\n+\s) circle;}
\foreach \s in {-4,0,4}
    \foreach \n in {3,...,14} 
        {\draw [fill] (-3+\s,\n+\s) circle;}
\foreach \s in {-4,0,4}
    \foreach \n in {4,...,14} 
        {\draw [fill] (-4+\s,\n+\s) circle;}
\foreach \s in {-4,0,4}
    \foreach \n in {5,...,14} 
        {\draw [fill] (-5+\s,\n+\s) circle;}
\foreach \s in {-4,0,4}
    \foreach \n in {6,...,14} 
        {\draw [fill] (-14+\s,\n+\s) circle;}
 \foreach \s in {-4,0,4}
    \foreach \n in {7,...,14} 
        {\draw [fill] (-14+\s,\n+\s) circle;}
 \foreach \s in {-4,0,4}
    \foreach \n in {8,...,14} 
        {\draw [fill] (-14+\s,\n+\s) circle;}
 \foreach \s in {-4,0,4}
    \foreach \n in {9,...,14} 
        {\draw [fill] (-14+\s,\n+\s) circle;}
  \foreach \s in {-4,0,4}
    \foreach \n in {10,...,14} 
        {\draw [fill] (-14+\s,\n+\s) circle;}
\foreach \s in {-4,0,4}
    \foreach \n in {11,...,14} 
        {\draw [fill] (-14+\s,\n+\s) circle;}
\foreach \s in {-4,0,4}
    \foreach \n in {12,13,14} 
        {\draw [fill] (-14+\s,\n+\s) circle;}
\foreach \s in {-4,0,4}
    \foreach \n in {13,14} 
        {\draw [fill] (-14+\s,\n+\s) circle;}
\foreach \s in {-4,0,4}
    \foreach \n in {14} 
        {\draw [fill] (-14+\s,\n+\s) circle;}

\foreach \s in {-4,0,4}
    \foreach \n in {2,...,10}
        {\draw [dotted] (-2+\s,\n+\s) -- (-14+\n+\s,12+\s);}
\foreach \s in {-4,0,4}
    \foreach \n in {2,...,10} 
        {\draw [dotted] (-\n+\s,\n+\s) -- (-\n+\s,12+\s);}
\end{scope}

\node [left] at (2.7,6.3) {$\olambda_1\olambda_{2}$};

\begin{scope}[orange,dashed,xshift=2.7cm,yshift=6.3cm]

\foreach \s in {-8,-4,0,4,8}
    \foreach \m in {-10,-9,-8,-7,-6,-5,-4,-3,-2,-1,0}
        \foreach \n in {-10,...,\m} 
            {\draw [fill] (-\m+\s,\n+\s) circle;
            }

\foreach \s in {-8,-4,0,4,8}
    \foreach \n in {0,1,...,10}
        {
        \draw (\n+\s,-\n+\s) -- (\n+\s,-10+\s);
        }
\foreach \s in {-8,-4,0,4,8}
    \foreach \n in {0,1,...,10}
        {
        \draw  (\s,-\n+\s) -- (10-\n+\s,-10+\s);
        }


\foreach \s in {-4,0,4}
    \foreach \n in {2,...,14} 
        {\draw [fill] (-2+\s,\n+\s) circle;}
\foreach \s in {-4,0,4}
    \foreach \n in {3,...,14} 
        {\draw [fill] (-3+\s,\n+\s) circle;}
\foreach \s in {-4,0,4}
    \foreach \n in {4,...,14} 
        {\draw [fill] (-4+\s,\n+\s) circle;}
\foreach \s in {-4,0,4}
    \foreach \n in {5,...,14} 
        {\draw [fill] (-5+\s,\n+\s) circle;}
\foreach \s in {-4,0,4}
    \foreach \n in {6,...,14} 
        {\draw [fill] (-14+\s,\n+\s) circle;}
 \foreach \s in {-4,0,4}
    \foreach \n in {7,...,14} 
        {\draw [fill] (-14+\s,\n+\s) circle;}
 \foreach \s in {-4,0,4}
    \foreach \n in {8,...,14} 
        {\draw [fill] (-14+\s,\n+\s) circle;}
 \foreach \s in {-4,0,4}
    \foreach \n in {9,...,14} 
        {\draw [fill] (-14+\s,\n+\s) circle;}
  \foreach \s in {-4,0,4}
    \foreach \n in {10,...,14} 
        {\draw [fill] (-14+\s,\n+\s) circle;}
\foreach \s in {-4,0,4}
    \foreach \n in {11,...,14} 
        {\draw [fill] (-14+\s,\n+\s) circle;}
\foreach \s in {-4,0,4}
    \foreach \n in {12,13,14} 
        {\draw [fill] (-14+\s,\n+\s) circle;}
\foreach \s in {-4,0,4}
    \foreach \n in {13,14} 
        {\draw [fill] (-14+\s,\n+\s) circle;}
\foreach \s in {-4,0,4}
    \foreach \n in {14} 
        {\draw [fill] (-14+\s,\n+\s) circle;}

\foreach \s in {-4,0,4}
    \foreach \n in {2,...,10}
        {\draw  (-2+\s,\n+\s) -- (-14+\n+\s,12+\s);}
\foreach \s in {-4,0,4}
    \foreach \n in {2,...,10} 
        {\draw  (-\n+\s,\n+\s) -- (-\n+\s,12+\s);}
\end{scope}

\end{tikzpicture}
}
\caption{
The $\mathrm{E}_5=\mathrm{E}_{\infty}$-page of periodic $\ol{t}$-Bockstein spectral sequence computing 
$\pi_{\star}^{C_2}\grmot^*\TPR(\kr)$.
The group $E_2^{m+n\sigma,a,b}$ appears in bidegree $(m,n)$. Each bullet $\bullet$ represents a copy of $\bF_2$. 
Black bullets indicate classes in motivic filtration $0$ and black lines indicate multiplications in filtration $0$, 
blue bullets indicate classes in motivic filtration $1$ and dotted lines indicate multiplications in filtration $1$, 
and orange bullets indicate classes in motivic filtration $2$ and dashed lines indicate multiplications in filtration $2$. 
\label{fig6}
}
\end{figure}

\begin{prop}\label{prop:TCR-kr}
Let $0\le n\le 2$ and let $[i]_{n+1}=i\mod n+1$.
There is an isomorphism 
\[ 
\pi_{\star}^{C_2}\grmot^*\TCR^{-}(\BPR\langle n\rangle)/(\ov_0,\cdots,\ov_{n+1})=
A_{00}\oplus A_{10}\oplus A_{01}\oplus A_{11}
\]
where $A_{ij}$, defined as in Definition~\ref{def:Aij}, can be identified with 
\begin{align*}
A_{00}&=\pi_{\star}^{C_2}\mF_2\langle \olambda_1,\olambda_2\rangle \,, \\
A_{10}&= \pi_{\star}^{C_2}\mF_2\langle \olambda_1,\olambda_2\rangle [\ot^{2^{n+1}}]\{\ot^{2^{n+1}}\} \,, \\
A_{01}&= \pi_{\star}^{C_2}\mF_2\langle \olambda_1,\cdots ,\olambda_{n+1}\rangle [\omu^{2^{n+1}}]\{\omu^{2^{n+1}}\} \,, \\ 
A_{11}&=\bigoplus_{i=1}^{n+1}\pi_{\star}^{C_2}\mF_2\langle\olambda_{s} :s\in \{1,\cdots ,n+1\}\setminus \{i\}\rangle \{\ot^{p^{i-1}}\olambda_i \} \,.
\end{align*}
\end{prop}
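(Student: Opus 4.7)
The plan is to run the $\ot$-Bockstein spectral sequence
\[
\EE_1 = \pi_\star^{C_2}\mF_2[\omu^{2^{n+1}}, \ot]\langle \olambda_1, \ldots, \olambda_{n+1}, \ovarepsilon_{n+1}\rangle
\Longrightarrow \pi_\star^{C_2}\grmot^*\TCR^-(\BPR\langle n\rangle)/(\ov_0, \ldots, \ov_{n+1}),
\]
whose $\EE_1$-page is identified via Corollary~\ref{cor:vn-thr}, and then to compare with the TPR computation from the previous proposition through the map induced by the canonical map $\can$. The first step is to establish the $d_1$-differential $d_1(\ovarepsilon_{n+1}) = \ot\omu^{2^{n+1}}$ via the snake-lemma argument of Proposition~\ref{prop:tpfp}, using the defining property of $\ovarepsilon_{n+1}$ from Definition~\ref{def:varepsilon} together with the fact that $\ov_{n+1}$ is detected by $\ot\omu^{2^{n+1}}$, as in Proposition~\ref{prop:detection-Fp} and Theorem~\ref{thm:detection}.

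Next, apply the higher differentials $d_{2^i}(\ot^{2^{i-1}}) \doteq \ot^{2^i+2^{i-1}}\olambda_i$ for $1 \le i \le n+1$ from Proposition~\ref{prop:differentials}, together with the fact that $\ot^{2^{n+1}}$ is a permanent cycle. Working in characteristic $2$ and using the Leibniz rule, an inductive analysis shows that the surviving classes fall into four families: the Nygaard-zero generators $\pi_\star^{C_2}\mF_2\langle\olambda_1, \ldots, \olambda_{n+1}\rangle$; the positive $\ot^{2^{n+1}}$-multiples $\pi_\star^{C_2}\mF_2\langle \olambda_1, \ldots, \olambda_{n+1}\rangle[\ot^{2^{n+1}}]\{\ot^{2^{n+1}}\}$; the positive $\omu^{2^{n+1}}$-multiples $\pi_\star^{C_2}\mF_2\langle\olambda_1, \ldots, \olambda_{n+1}\rangle[\omu^{2^{n+1}}]\{\omu^{2^{n+1}}\}$; and, for each $1 \le i \le n+1$, the exceptional classes $\pi_\star^{C_2}\mF_2\langle\olambda_s : s \ne i\rangle\{\ot^{2^{i-1}}\olambla_i\}$. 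The exceptional classes survive because $d_{2^i}(\ot^{2^{i-1}}\olambla_i) = \ot^{2^i+2^{i-1}}\olambla_i^2 = 0$ and, for $\ot$-degree reasons, no higher $d_{2^j}$ can either hit them or have them as a source.

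The fact that no additional differentials occur is verified by comparison with the periodic $\ot$-Bockstein spectral sequence for $\TPR$: the canonical map induces a morphism of spectral sequences, and by the useful isomorphism of Proposition~\ref{prop:useful-iso} combined with the known $\TPR$ abutment from the previous proposition, the $\ot$-inverted piece of our candidate $\EE_\infty$-page must coincide with the $\TPR$ answer, forcing collapse. To extract the $A_{ij}$ decomposition of Definition~\ref{def:Aij}, assemble the four families above: the Nygaard-zero classes mapping isomorphically under $\can$ give $A_{00}$; the $\ot^{2^{n+1}}$-multiples, still in the image of $\can$, give $A_{10}$; the $\omu^{2^{n+1}}$-multiples, which lie in the kernel of $\can$, give $A_{01}$; and the exceptional classes assemble into $A_{11}$. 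Since each summand is free as a $\pi_\star^{C_2}\mF_2$-module over its listed generators, the short exact sequences of Definition~\ref{def:Aij} split.

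The main obstacle is the careful bookkeeping of the exceptional classes comprising $A_{11}$: one must verify simultaneously that, for each $1 \le i \le n+1$ and each monomial $y$ in $\{\olambla_s : s \ne i\}$, the class $\ot^{2^{i-1}}\olambla_i \cdot y$ is neither killed as a source of some $d_{2^j}$ nor hit as a target, and that these are the only such ``odd $\ot$-power'' survivors. This argument depends sensitively on how the higher differentials $d_{2^j}$ for $j > i$ interact under the Leibniz rule with factors already involving $\olambla_i$; the bound $n \le 2$ is essential precisely because it keeps the number of interacting differentials manageable.
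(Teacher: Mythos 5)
Your proposal follows essentially the same route as the paper: run the $\ot$-Bockstein spectral sequence with the $d_1$-differential on $\ovarepsilon_{n+1}$ and the higher differentials of Proposition~\ref{prop:differentials}, identify the extra survivors forming $A_{11}$ as the classes that in the periodic spectral sequence are hit only by differentials originating in negative Nygaard filtration, and read off the remaining summands by comparison with $\TPR$ via $\can$ (which inverts $\ot$ on $\mathrm{E}_1$-pages and $\ot^{2^{n+1}}$ on $\mathrm{E}_\infty$-pages). Your version is somewhat more explicit about the Leibniz-rule bookkeeping for the exceptional classes, and correctly writes $\langle\olambda_1,\dots,\olambda_{n+1}\rangle$ throughout, where the paper's displayed formulas for $A_{00}$ and $A_{10}$ appear to have been inadvertently specialized to $n=1$.
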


\begin{proof}
We see that the differentials crossing from Real Nygaard filtration $<0$ to 
Real Nygaard filtration $>0$ hit the classes 
\[ 
\bigoplus_{i=1}^{n+1}\pi_{\star}^{C_2}\mF_2\langle\olambda_{s} :s\in \{1,\cdots ,n+1\}\setminus\{i\}\rangle 
\{\ot^{2^{i-1}}\olambda_i \}\,.
\]
The rest is determined by examination of the map of spectral sequences 
\[ 
\begin{tikzcd}
\pi_{\star}^{C_2}\grmot^*\THR(\BPR\langle n\rangle)/(\ov_0,\cdots ,\ov_{n+1})[\ot]\arrow{r} \arrow[Rightarrow]{d}& \pi_{\star}^{C_2}\grmot^*\THR(\BPR\langle n\rangle)/(\ov_0,\cdots ,\ov_{n+1})[\ot,\ot^{-1}] \arrow[Rightarrow]{d} \\ 
\pi_{\star}^{C_2}\TCR^{-}(\BPR\langle n\rangle)\arrow{r}& \pi_{\star}^{C_2}\TPR(\BPR\langle n\rangle)   
\end{tikzcd}
\]
from \eqref{eq:diagram-of-ss} which is given by inverting $\ot$ on $\mathrm{E}_1$-pages and by inverting $\ot^{2^{n+1}}$ on $\mathrm{E}_{\infty}$-pages. 
\end{proof}

\begin{prop}\label{prop:collapse-fixedpoint}
The spectral sequence 
\[ 
\pi_{\star}^{C_2} \grmot^*\THR(\BPR\langle n\rangle)^{t_{C_2}\mu_2}/(\ov_0,\ov_1,\ov_2)[\ot]
\Longrightarrow \pi_{\star}^{C_2} \gr_{\mot}^*\TPR(\BPR\langle n\rangle)/(\ov_0,\cdots ,\ov_{n+1})
\]
collapses at the $\EE_2$-term with $d_1$-differentials generated by the differential 
\[ 
d_1(\ovarepsilon_{n+1})=\ot\omu^{2^{n+1}}\,.
\]
We can therefore identify 
\[ \pi_{\star}^{C_2}\grmot^*\TPR(\BPR\langle n\rangle)/(\ov_0,\cdots ,\ov_{n+1})\cong A_{00}\oplus A_{10}\oplus A_{01}
\]
and
$\varphi (A_{01}\oplus A_{11})=0$
and $\varphi(A_{10}\oplus A_{00})=A_{10}\oplus A_{00}$. 
\end{prop}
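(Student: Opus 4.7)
The plan is to mirror the proofs of Propositions~\ref{prop:tpfp} and~\ref{prop:TCR-Z}, using Theorem~\ref{thm:Segal} (the Real Segal conjecture) to identify the $E_1$-page and Proposition~\ref{prop:useful-iso} (the useful isomorphism) to pin down the abutment of the localized $\overline{t}$-Bockstein spectral sequence.

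First, by Theorem~\ref{thm:Segal} together with Definition~\ref{def:varepsilon}, the $E_1$-page is
\[
\pi_\star^{C_2}\mF_2[\omu^{\pm 2^{n+1}}]\langle\olambda_1,\ldots,\olambda_{n+1},\ovarepsilon_{n+1}\rangle[\ot].
\]
The $d_1$-differential $d_1(\ovarepsilon_{n+1}) = \ot\omu^{2^{n+1}}$ is obtained from the same snake-lemma argument used in the proof of Proposition~\ref{prop:tpfp}. Indeed, by Definition~\ref{def:varepsilon} the class $\ovarepsilon_{n+1}$ is arranged to be a null-homotopy witness for $\ov_{n+1}$, and $\ov_{n+1}$ is detected by $\ot\omu^{2^{n+1}}$ by Proposition~\ref{prop:detection-Fp} for $n=-1$ and Theorem~\ref{thm:detection} for $0\le n\le 2$; compatibility of this detection with the localized $\ot$-Bockstein is ensured by the comparison diagram~\eqref{eq:diagram-of-ss}.

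Since $\omu^{2^{n+1}}$ is invertible on the $E_1$-page, the differential $d_1(\ovarepsilon_{n+1}) = \ot\omu^{2^{n+1}}$ together with the Leibniz rule kills every $\ot$-multiple, leaving
\[
E_2 = \pi_\star^{C_2}\mF_2[\omu^{\pm 2^{n+1}}]\langle\olambda_1,\ldots,\olambda_{n+1}\rangle.
\]
By Proposition~\ref{prop:useful-iso}, this coincides with the expected abutment $\pi_\star^{C_2}\grmot^*\TPR(\BPR\langle n\rangle)/(\ov_0,\ldots,\ov_{n+1})$, forcing the spectral sequence to collapse at $E_2$ with no room for further differentials or extensions.

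The decomposition $\pi_\star^{C_2}\grmot^*\TPR(\BPR\langle n\rangle)/(\ov_0,\ldots,\ov_{n+1}) \cong A_{00}\oplus A_{10}\oplus A_{01}$ and the Frobenius assertions are then deduced by the same bookkeeping as in Proposition~\ref{prop:TCR-Z}, comparing the localized Bockstein with the $\ot$-Bockstein for $\TCR^{-}$ via diagram~\eqref{eq:diagram-of-ss} and combining with Proposition~\ref{prop:TCR-kr}. The image of $\can$ produces the $A_{00}\oplus A_{10}$ summands, while the new summand $A_{01}$ arises from the negative powers of $\ot$ in the periodic Bockstein, identified via Proposition~\ref{prop:useful-iso} with positive powers of $\omu^{2^{n+1}}$ in $\THR^{t_{C_2}\mu_2}$. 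The vanishing $\varphi(A_{01}\oplus A_{11}) = 0$ and the isomorphism $\varphi(A_{10}\oplus A_{00}) = A_{10}\oplus A_{00}$ then follow from the factorization of $\varphi$ through $\varphi_p\colon \THR \to \THR^{t_{C_2}\mu_p}$ which, by Theorem~\ref{thm:Segal}, is localization at $\omu^{2^{n+1}}$. The main delicacy will be this last Frobenius computation, since transporting $\varphi$ across the useful isomorphism requires careful tracking of the Nygaard decompositions on both sides; however, this is precisely the same kind of bookkeeping already carried out for the case $n=0$ in Proposition~\ref{prop:TCR-Z}, and the structural argument adapts directly.
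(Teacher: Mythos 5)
Your argument for the first half is essentially the paper's: the $d_1$-differential $d_1(\ovarepsilon_{n+1})=\ot\omu^{2^{n+1}}$ comes from the detection of $\ov_{n+1}$ by $\ot\omu^{2^{n+1}}$ together with the defining property of $\ovarepsilon_{n+1}$, exactly as in Proposition~\ref{prop:tpfp}. One small remark on the collapse: your own $\EE_2$-computation already shows the $\EE_2$-page is concentrated in Nygaard filtration zero (since $\omu^{2^{n+1}}$ is invertible, $d_1$ wipes out every positive power of $\ot$), and that alone forces all higher differentials to vanish for filtration reasons; the appeal to Proposition~\ref{prop:useful-iso} to match the abutment is a valid counting argument but is not needed for collapse, only for the subsequent identification of the answer.

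On the Frobenius assertions (which the paper's proof leaves implicit), your mechanism is the right one, but your stated conclusion does not match your argument. The factorization of $\varphi$ through $\varphi_2\colon\THR\to\THR^{t_{C_2}\mu_2}$, compared across the $\ot$-Bockstein for $\TCR^{-}$ and the localized $\ot$-Bockstein for $\TPR$, shows that $\varphi$ annihilates everything detected in \emph{positive} Nygaard filtration (the target $\EE_\infty$-page is concentrated in Nygaard filtration zero) and, being localization at $\omu^{2^{n+1}}$ in filtration zero, is injective on the Nygaard-filtration-zero part. Under the identifications of Proposition~\ref{prop:TCR-kr}, the positive-Nygaard part is $A_{10}\oplus A_{11}$ (the positive powers of $\ot^{2^{n+1}}$ and the classes $\ot^{2^{i-1}}\olambda_i$), and the filtration-zero part is $A_{00}\oplus A_{01}$. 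So your argument proves $\varphi(A_{10}\oplus A_{11})=0$ and $\varphi(A_{00}\oplus A_{01})=A_{00}\oplus A_{01}$, which is what Theorem~\ref{thm:synz} asserts in the case $n=0$ and what the proof of Theorem~\ref{thm:main1} uses (``$\varphi$ is given by inverting $\omu^{2^{n+1}}$''); it is \emph{not} the literal statement $\varphi(A_{01}\oplus A_{11})=0$, $\varphi(A_{10}\oplus A_{00})=A_{10}\oplus A_{00}$, which would have $\varphi$ killing the $\omu$-multiples instead. You should either flag the transposition of $A_{01}$ and $A_{10}$ in the statement or at least make sure the conclusion you write down is the one your spectral-sequence comparison actually yields.
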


\begin{proof}
The differential $d_1(\ovarepsilon_{n+1})=\ot\omu^{2^{n+1}}$ holds because $\ov_{n+1}$ is detected by $\ot\omu^{2^{n+1}}$ and $\ovarepsilon_{n+1}$ is 
chosen as a nullhomotopy of $\ov_{n+1}$. The argument is similar to the one given in the proof of Proposition~\ref{prop:tpfp}. The spectral sequence then collapses because the spectral sequence is concentrated in Nygaard filtration zero.
\end{proof}

We conclude with the following result: 
\begin{thm}\label{thm:main1}
    Let $1\le n\le 2$. The Real syntomic cohomology of $\BPR\langle n\rangle$ mod $(\ov_0,\cdots ,\ov_{n+1})$ is
\begin{equation}\label{eq:syntomic-bprn}
 \pi_{\star}^{C_2}\mF_2\langle \delta ,\bar{\lambda}_1, \cdots ,\bar{\lambda}_{n+1}\rangle\oplus
  \bigoplus_{j=1}^{n+1}  \pi_{\star}^{C_2}\mF_2\langle \bar{\lambda}_{s}:s\in \{1,\cdots ,n+1\}\setminus\{j\}
\rangle \otimes \bF_2\{ \bar{\Xi}_{j} \}.
\end{equation}
where the bidegrees are given by $|\bar{\Xi}_{j}|=((2^j-2^{j-1})\rho-1,1)$, $|\olambda_s|=(2^j\rho-1,1)$,  
$|\partial |=(-1,1)$. 

Moreover, the $\ov_{n+1}$-Bockstein spectral sequence collapses so the mod $(\ov_0,\cdots ,\ov_{n})$ 
Real syntomic cohomology of $\BPR\langle n\rangle$ is
\begin{equation}\label{eq:syntomic-bprn-2}
 \pi_{\star}^{C_2}\mF_2[\ov_{n+1}]\langle \delta ,\bar{\lambda}_1, \cdots ,\bar{\lambda}_{n+1}\rangle\oplus 
 \bigoplus_{j=1}^{n+1}  \pi_{\star}^{C_2}\mF_2[\ov_{n+1}]\langle \bar{\lambda}_{s}:s\in \{1,\cdots ,n+1\}\setminus\{j\}
\rangle \otimes \bF_2\{ \bar{\Xi}_{j} \}.
\end{equation}
\end{thm}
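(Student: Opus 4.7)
The plan is to combine Propositions~\ref{prop:TCR-kr} and~\ref{prop:collapse-fixedpoint} with the fiber sequence
\[
\grmot^*\TCR(\BPR\langle n\rangle)/(\ov_0,\ldots,\ov_{n+1}) \longrightarrow \grmot^*\TCR^-(\BPR\langle n\rangle)/(\ov_0,\ldots,\ov_{n+1}) \xrightarrow{\can-\varphi} \grmot^*\TPR(\BPR\langle n\rangle)/(\ov_0,\ldots,\ov_{n+1})
\]
to read off \eqref{eq:syntomic-bprn} as (kernel) $\oplus$ (shifted cokernel) of $\can-\varphi$, and then to run the $\ov_{n+1}$-Bockstein to obtain \eqref{eq:syntomic-bprn-2}. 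The pieces $A_{00},A_{10},A_{01},A_{11}$ of Proposition~\ref{prop:TCR-kr} exhaust $\grmot^*\TCR^-$; by the definitions of these pieces and by Proposition~\ref{prop:collapse-fixedpoint}, both $\can$ and $\varphi$ are isomorphisms $A_{00}\oplus A_{10}\xrightarrow{\cong}A_{00}\oplus A_{10}$ in $\grmot^*\TPR$ and both vanish on $A_{01}\oplus A_{11}$.

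The first step is to show that $\can$ and $\varphi$ actually agree on $A_{00}$, so that $\can-\varphi$ vanishes there. Since $A_{00}\cong\pi_\star^{C_2}\mF_2\langle\olambda_1,\ldots,\olambda_{n+1}\rangle$ is exterior on the $\olambda_s$ over $\pi_\star^{C_2}\mF_2$, and since $\TCR$ is an $\bE_\infty$-ring, it suffices to lift each generator $\olambda_s$ to a class in $\pi_\star^{C_2}\grmot^*\TCR(\BPR\langle n\rangle)/(\ov_0,\ldots,\ov_{n+1})$. For $s=1$ this is Corollary~\ref{cor:lambda1}, and for $s\ge 2$ the lifts are provided by the power operations in Definition~\ref{def:lambda-classes}, which are natural with respect to the map $\TCR\to\TCR^-$ because they are already defined on $\TCR$. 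Granted this, $\ker(\can-\varphi)=A_{00}\oplus A_{11}$ and $\cok(\can-\varphi)=A_{00}$, because on $A_{10}$ the difference acts as $\can$ (an isomorphism onto $A_{10}$) and on $A_{01}$ as $-\varphi$ (an isomorphism onto $A_{01}$), leaving exactly $A_{00}$ uncovered in $\grmot^*\TPR$.

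The long exact sequence in motivic homotopy then yields short exact sequences
\[
0\longrightarrow \Sigma^{-1,1}A_{00}\longrightarrow \pi_\star^{C_2}\grmot^*\TCR(\BPR\langle n\rangle)/(\ov_0,\ldots,\ov_{n+1})\longrightarrow A_{00}\oplus A_{11}\longrightarrow 0,
\]
where the bidegree shift $(-1,+1)$ reflects the fact that the boundary map preserves the associated-graded weight $w$ but lowers the degree $V$ by one, and so raises the Adams weight $w\rho-V$ by one, as per Notation~\ref{not:deg-Aw-suspension}. Define $\delta:=\partial(1)\in\pi_{-1}^{C_2}\grmot^1\TCR$ and $\ol{\Xi}_j:=\ot^{2^{j-1}}\olambda_j\in A_{11}$, whose bidegree $((2^j-2^{j-1})\rho-1,1)$ matches the statement. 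The sequence splits: $A_{00}$ lifts by the $\olambda_s$ constructed above, while the generators of $A_{11}$ lift because $\can$ and $\varphi$ both vanish on them, with any residual ambiguity in $\Sigma^{-1,1}A_{00}$ absent for bidegree reasons or fixable by choice. This yields \eqref{eq:syntomic-bprn}. For the second sentence, one runs the $\ov_{n+1}$-Bockstein spectral sequence and argues exactly as in the end of the proof of Theorem~\ref{thm:synz}: since each listed generator is a permanent cycle in the subsequent motivic spectral sequence converging to $\pi_\star^{C_2}\TCR(\BPR\langle n\rangle)/(2)$, it must also be a permanent cycle in the $\ov_{n+1}$-Bockstein; by $\ov_{n+1}$-linearity there is then no room for any nontrivial differential, giving \eqref{eq:syntomic-bprn-2}.

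The hard part will be step one, verifying $\can=\varphi$ on all of $A_{00}$. For the generators $\olambda_s$ with $s\ge 2$, the power-operation lifts of Definition~\ref{def:lambda-classes} are constructed directly in $\grmot^*\TCR$ (not merely in $\grmot^*\TCR^-$), so naturality of $Q^{k\rho}$ along the cyclotomic trace settles the claim; the multiplicative structure on $\TCR$ then propagates the agreement to products. A secondary subtlety is checking that the decomposition in Proposition~\ref{prop:TCR-kr} is indexed by $\olambda_1,\ldots,\olambda_{n+1}$ (rather than only $\olambda_1,\olambda_2$) in the range $n=2$; this is immediate by tracking the $\mathrm E_\infty$-page of the $\ot$-Bockstein for $\tmf_1(3)$ under the differentials determined in Proposition~\ref{prop:differentials}.
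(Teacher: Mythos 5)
Your proposal is correct and follows essentially the same route as the paper's proof: lift the classes $\olambda_1,\dots,\olambda_{n+1}$ to $\grmot^*\TCR$ via Corollary~\ref{cor:lambda1} and the power operations of Definition~\ref{def:lambda-classes} so that $\can$ and $\varphi$ agree on $A_{00}$, compute $\ker(\can-\varphi)=A_{00}\oplus A_{11}$ and $\mathrm{coker}(\can-\varphi)=A_{00}$ exactly as in the proof of Theorem~\ref{thm:synz}, split the resulting extension, and collapse the $\ov_{n+1}$-Bockstein using Lemma~\ref{lem:pi0tcr} together with bidegree considerations. The only blemish is that your opening summary of $\varphi$ (``an isomorphism on $A_{00}\oplus A_{10}$, vanishing on $A_{01}\oplus A_{11}$'') contradicts the computation you then carry out, in which $\can-\varphi$ acts on $A_{01}$ as $-\varphi$, an isomorphism onto $A_{01}\subset\grmot^*\TPR$; the statement you actually need (and use) is that $\varphi$ kills the positive Nygaard-filtration pieces $A_{10}\oplus A_{11}$ and maps $A_{00}\oplus A_{01}$ isomorphically onto $A_{00}\oplus A_{01}$, which is also how Proposition~\ref{prop:collapse-fixedpoint} is applied in the paper despite the index slip in its statement.
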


\begin{proof}
By Corollary~\ref{cor:lambda1} and Definition~\ref{def:lambda-classes}, the classes~$\olambda_i$ 
for $1\le i\le n+1$ are classes in mod $2$ Real syntomic cohomology of $\BPR\langle n\rangle$, 
and must therefore be equalized by $\can$ and $\varphi$. 
The map $\can$ is given by inverting $\ol{t}^{2^{n+1}}$ and it is described explicitly in Proposition~\ref{prop:TCR-kr}. 
The map $\varphi$ is given by inverting $\omu^{2^{n+1}}$ (up to multiplication by a unit) by Theorem~\ref{prop:TCR-kr} and 
Theorem~\ref{prop:collapse-fixedpoint}. From this, the computation of Real syntomic cohomology is the same as in the proof of Theorem~\ref{thm:synz}. 


To determine that the final Bockstein spectral sequence collapses it suffices to examine bidegrees of algebra generators carefully and 
note that the classes in $\pi_{\star}^{C_2}\mF_2$ must be permanent cycles because we know they are permanent 
cycles by Lemma~\ref{lem:pi0tcr}. 
\end{proof}

\begin{remark}
On underlying, the computation of syntomic cohomology of $\BP\langle 2\rangle $ has not yet appeared in the literature at the 
prime $2$. The computation at primes $p\ge 7$ can be determined from~\cite{AKACHR25}. 
\end{remark}

\begin{remark}\label{rem:finite-spectra}
Let $v_{(n,m)}$ denote a self-map of a finite $C_2$-spectrum of degree $\rho (2^{n}-1)j$ such that $v_{n,m}^{\Phi e}$ 
is a $v_n$-power self-map and $v_{(n,m)}^{\Phi C_2}$ is a $v_{m}$-power self-map as in~\cite[Definition~1.2]{BGL22}. 
For example, $2$ is a $v_{(0,0)}$-self map. If a finite $C_2$-spectrum 
$\mathbb{S}/(v_{(0,0}^{i_0},v_{(1,1)}^{i_1},\cdots ,v_{(s,s)}^{i_s})$ exists for  each $s\le 3$, then 
Theorem~\ref{thm:main1} proves that 
\[  
\TCR(\BPR\langle n\rangle)/(v_{(0,0}^{i_0},v_{(1,1)}^{i_1},\cdots ,v_{(n+1,n+1)}^{i_s})
\]
is in the thick subcategory generated by $H\mF_2$ for $n=1,2$. 
Work in progress of Burklund--Hausmann--Levy--Meier should shed light on the 
question of whether such finite spectra exist. 
\end{remark}

\begin{remark}\label{rem:less-structure}
We expect that similar methods can be used to prove an analogue of Theorem \ref{thm:main1} for 
arbitrary $\bE_{\sigma}$-$\MUR$-algebra forms of $\BPR\langle n\rangle$, along the lines of~\cite{AKHW24}. 
Once one has an $\mathbb{E}_\sigma$-algebra structure on $A$, one can apply~\cite[Corollary~3.31]{Ste25} to define $\THR(A)$ 
as a Real cyclotomic spectrum in the sense of~\cite{QS21a}. 
However, it is currently not known whether $\BPR\langle n\rangle$ has an $\bE_{\sigma}$-$\MUR$-algebra structure 
for $n\ge 3$ even though it is expected to be the case; see~\cite[Remark~1.0.14]{HW22}. 
As in Remark~\ref{rem:finite-spectra}, if the 
finite spectrum $\bS/(v_{(0,0}^{i_0},v_{(1,1)}^{i_1},\cdots ,v_{(n+1,n+1)}^{i_s})$ exists, 
then we expect that these methods would show that 
\[ 
\TCR(\BPR\langle n\rangle)/(v_{(0,0}^{i_0},v_{(1,1)}^{i_1},\cdots ,v_{(n+1,n+1)}^{i_s})
\]
is in the thick subcategory generated by $H\mF_2$ for arbitrary $n$. 
\end{remark}

\begin{figure}[h!]
\resizebox{.8\textwidth}{!}{
\begin{tikzpicture}[radius=.05,scale=1.2]
\node [below] at (-.9,-.1) {-1};
\node [below] at (.1,-.1) {0};
\node [below] at (1.1,-.1) {$\rho-1$};
\node [below] at (2.1,-.1) {$2\rho-2$};
\node [below] at (3.1,-.1) {$2\rho-1$};
\node [below] at (6.1,-.1) {$4\rho-2$};
\node [below] at (7.1,-.1) {$4\rho-1$};
\node [below] at (8.1,-.1) {$5\rho-2$};
\node [below] at (9.1,-.1) {$6\rho-3$};
\node [below] at (10.1,-.1) {$6\rho-2$};

\foreach \s in {-1,1,2,3,4} \node [left] at (-.1,\s) {$\s$};

\draw [thin,color=lightgray] (-2,-1) grid (11,4);

\clip (-2,-1) -- (-2,4) -- (11,4) -- (11,-1) -- cycle;

\draw [fill] (0,0) circle;
\node [above] at (.1,.1) {1};
\draw [fill] (-1,1) circle;
\node [above] at (-.9,1.1) {$\partial$};
\draw [fill] (1,1) circle;
\node [above] at (1.1,1.1) {$\ol{\Xi}_1$};
\draw [fill] (2,2) circle;
\node [above] at (2.1,2.1) {$\partial\olambda_1$};
\draw [fill] (3,1.1) circle;
\node [above] at (3.1,1.1) {$\ol{\Xi}_2$};
\draw [fill] (3,.9) circle;
\node [below] at (3.1,.9) {$\olambda_1$};
\draw [fill] (6,1.9) circle;
\node [below] at (6.1,1.9) {$\partial\olambda_2$};
\draw [fill] (6,2.1) circle;
\node [above] at (6.1,2.1) {$\olambda_1\ol{\Xi}_2$};
\draw [fill] (7,1) circle;
\node [below] at (7.1,.9) {$\olambda_2$};
\draw [fill] (8,2) circle;
\node [above] at (8.1,2.1) {$\ol{\Xi}_1\olambda_2$};
\draw [fill] (10,2) circle;
\node [above] at (10.1,2.1) {$\olambda_1\olambda_2$};
\draw [fill] (9,3) circle;
\node [above] at (9.1,3.1) {$\partial\olambda_1\olambda_2$};
\end{tikzpicture}
}
\caption{The mod $(\ov_0,\ov_1)$ Real syntomic cohomology of $\kr$. Each bullet $\bullet$ represents a copy of $\pi_{\star}^{C_2}\mF_2[\ov_2]$. The horizontal axis represents the stem (a $C_2$-representation) and the vertical axis represents the Adams weight.
\label{fig:motivic-kr}
}
\end{figure}

\begin{figure}[h!]
\resizebox{\textwidth}{!}{
\begin{tikzpicture}[radius=.05,scale=1.4]
\node [below] at (-.9,-.1) {-1};
\node [below] at (.1,-.1) {0};
\node [below] at (1.1,-.1) {$\rho-1$};
\node [below] at (2.1,-.1) {$2\rho-2$};
\node [below] at (3.1,-.1) {$2\rho-1$};
\node [below] at (6.1,-.1) {$4\rho-2$};
\node [below] at (7.1,-.1) {$4\rho-1$};
\node [below] at (8.1,-.1) {$5\rho-2$};
\node [below] at (9.1,-.1) {$6\rho-3$};
\node [below] at (10.1,-.1) {$6\rho-2$};
\node [below] at (14.1,-.1) {$8\rho-2$};
\node [below] at (15.1,-.1) {$8\rho-1$};
\node [below] at (16.1,-.1) {$9\rho-1$};
\node [below] at (17.1,-.1) {$10\rho-3$};
\node [below] at (18.1,-.1) {$10\rho-2$};
\node [below] at (21.1,-.1) {$12\rho-3$};
\node [below] at (22.1,-.1) {$12\rho-2$};
\node [below] at (23.1,-.1) {$13\rho-3$};
\node [below] at (24.1,-.1) {$14\rho-4$};
\node [below] at (25.1,-.1) {$14\rho-3$};

\foreach \s in {-2,-1,1,2,3,4,5} \node [left] at (-.1,\s) {$\s$};

\draw [thin,color=lightgray] (-2,-2) grid (26,5);

\clip (-2,-3) -- (-2,5) -- (26,5) -- (26,-3) -- cycle;

\node [above] at (.1,.1) {1};
\draw [fill] (0,0) circle;
\node [above] at (-.9,1.1) {$\partial$};
\draw [fill] (-1,1) circle;
\node [above] at (1.1,1.1) {$\ol{\Xi}_1$};
\draw [fill] (1,1) circle;
\node [above] at (2.1,2.1) {$\partial\olambda_1$};
\draw [fill] (2,2) circle;
\node [below] at (3.1,.9) {$\olambda_1$};
\draw [fill] (3,.9) circle;
\node [above] at (3.1,1.1) {$\ol{\Xi}_2$};
\draw [fill] (3,1.1) circle;
\node [below] at (6.1,1.9) {$\partial\olambda_2$};
\draw [fill] (6,1.9) circle;
\node [above] at (6.1,2.1) {$\olambda_1\ol{\Xi}_2$};
\draw [fill] (6,2.1) circle;
\node [below] at (7.1,.9) {$\olambda_2$};
\draw [fill] (7,.9) circle;
\node [above] at (7.1,1.1) {$\ol{\Xi}_3$};
\draw [fill] (7,1.1) circle;
\node [above] at (8.1,2.1) {$\ol{\Xi}_1\olambda_2$};
\draw [fill] (8,2) circle;
\node [above] at (9.1,3.1) {$\partial\olambda_1\olambda_2$};
\draw [fill] (9,3) circle;
\node [above] at (10.1,2.1) {$\olambda_1\ol{\Xi}_3$};
\draw [fill] (10,2.1) circle;
\node [below] at (10.1,1.9) {$\olambda_1\olambda_2$};
\draw [fill] (10,1.9) circle;
\node [below] at (14.1,1.9) {$\partial\olambda_3$};
\draw [fill] (14,1.9) circle;
\node [above] at (14.1,2.1) {$\olambda_2\ol{\Xi}_3$};
\draw [fill] (14,2.1) circle;
\node [above] at (15.1,1.1) {$\olambda_3$};
\draw [fill] (15,1) circle;
\node [above] at (16.1,2.1) {$\ol{\Xi}_1\lambda_3$};
\draw [fill] (16,2) circle;
\node [below] at (17.1,2.9) {$\partial\olambda_1\olambda_3$};
\draw [fill] (17,2.9) circle;
\node [below] at (18.1,1.9) {$\olambda_1\olambda_3$};
\draw [fill] (18,1.9) circle;
\node [above] at (18.1,2.1) {$\ol{\Xi}_2\olambda_3$};
\draw [fill] (18,2.1) circle;
\node [above] at (17.1,3.1) {$\ol{\Xi}_3\olambda_1\olambda_2$};
\draw [fill] (17,3.1) circle;
\node [above] at (21.1,3.1) {$\olambda_1\ol{\Xi}_2\olambda_3$};
\draw [fill] (21,3.1) circle;
\node [below] at (21.1,2.9) {$\partial\olambda_2\olambda_3$};
\draw [fill] (21,2.9) circle;
\node [above] at (22.1,2.1) {$\olambda_2\olambda_3$};
\draw [fill] (22,2) circle;
\node [above] at (23.1,3.1) {$\ol{\Xi}_1\olambda_2\olambda_3$};
\draw [fill] (23,3) circle;
\node [above] at (24.1,4.1) {$\partial\olambda_1\olambda_2\olambda_3$};
\draw [fill] (24,4) circle;
\node [above] at (25.1,3.1) {$\olambda_1\olambda_2\olambda_3$};
\draw [fill] (25,3) circle;

\end{tikzpicture}
}
\caption{The Real syntomic cohomology of  $\tmf_1(3)$ mod $(\ov_0,\ov_1,\ov_2)$. 
Each bullet $\bullet$ represents a copy of $\pi_{\star}^{C_2}\mF_2[\ov_3]$. 
The horizontal axis represents the stem (a $C_2$-representation) and the vertical axis represents the Adams weight.
}
\label{fig:motivic-tmf13}
\end{figure}

\appendix 

\section{Equivariant homotopy theory}\label{sec:equivariant}
In this section, we briefly provide our foundations and notation in the setting of equivariant homotopy theory. We begin by defining equivariant spaces and spectra in Section~\ref{sec:equivariant-spectra}. We then describe our notations for Mackey functors and equivariant homotopy groups in Section~\ref{sec:equivariant-Mackey}. We then describe our foundations for multiplicative equivariant homotopy theory in Section~\ref{sec:equivariant-multiplicative}.

\subsection{Equivariant spaces and spectra}\label{sec:equivariant-spectra}
Let $G$ be a finite group, let $\mathcal{O}_{G}$ denote the orbit $\infty$-category of $G$, and let $\rho_{G}$ denote the 
regular representation of $G$ (we simply write $\rho$ when $G=C_2$ is the cyclic group of order $2$). 
We rely on the framework from \cite{QS21b} and~\cite{BH21}  at various points in this paper. 
We therefore introduce the necessary terminology here.

\begin{defin}
A \emph{$G$-$\infty$-category~$\cC$} (resp. \emph{$G$-space~$X$}) is a cocartesian fibration~$\cC\rightarrow \cO_{G}^{\op}$ 
(resp. left fibration~$X\rightarrow \cO_{G}^{\op}$). 
A $G$-functor~$\cC\rightarrow \cD$ is a morphism over $\cO_{G}^{\op}$ that preserves cocartesian edges. 
We write $\Fun_{G}(\cC,\cD)$ for the $\infty$-category of $G$-functors. 
\end{defin}

By straightening--unstraightening~\cite{Lur09}, a $G$-space corresponds to a functor $X : \cO_{G}^{\op} \longrightarrow \Top$. 
By Elmendorf's theorem~\cite{Elm83}, the $\infty$-category of such functors is equivalent to the $\infty$-category of 
$G$-spaces. This was the original motivation for this definition of $G$-$\infty$-category.  	

\begin{defin} 
We define 
$$\Top^{G}:=\Fun(\cO_{G}^{\op},\Top)$$
to be the \emph{$\infty$-category of $G$-spaces} and let $\Top^{G}_{*} := \Fun(\cO^{\op}_G, \Top_*)$ denote the 
\emph{$\infty$-category of pointed $G$-spaces}. 
The restriction functors $\Top^H_* \to \Top^K_*$ for $K \leq H \leq G$ give rise to the $G$-$\infty$-category of $G$-spaces 
$\underline{\Top}^{G}_*$, where
\[
(\m{\Top}^{G}_*)_{G/H} =\Top_*^{H}
\,.
\] 
\end{defin}

To define $G$-spectra, recall that in~\cite[Section~9]{BH21}, Bachmann and Hoyois define a functor $\mathcal{SH}$ which 
associates a presentable, stable, symmetric monoidal $\infty$-category $\mathcal{SH}(X)$ to each profinite groupoid $X$. 

\begin{defin}
We define 
\[
\Sp^{G}:= \mathcal{SH}(BG)
\]
to be the \emph{$\infty$-category of $G$-spectra}, where $BG$ is the classifying space of $G$.
The restriction functors $\Sp^H \to \Sp^K$ for $K \leq H \leq G$ give rise to the $G$-$\infty$-category of $G$-spectra 
$\m{\Sp}^{G}$, where 
\[
(\m{\Sp}^{G})_{G/H} =\Sp^{H}\,.
\]
\end{defin}

\begin{remark}
The Guillou--May theorem~\cite{GM11} (cf. \cite[Example 9.12]{BH21} and~\cite[Example B]{Bar17}) implies that $\Sp^{G}$ 
is equivalent to (the underlying $\infty$-category of) orthogonal $G$-spectra. 
Moreover, by the remark before~\cite[Lemma 9.5]{BH21}, this is also equivalent to the notion of $G$-spectra 
as in~\cite[Definition II.2.3]{NS18}. Moreover, as in~\cite[Remark 2.2]{QS21b}, 
there is an equivalence between $\Sp^{G}$ as defined here and the colimit
\[ 
\Top_{*}^{G}\overset{\Sigma^{\rho}}{\longrightarrow} \Top_{*}^{G} 
\overset{\Sigma^{\rho}}{\longrightarrow} \Top_{*}^{G} \overset{\Sigma^{\rho}}{\longrightarrow} \dots  
\]
in $\mathrm{Pr}^{L}$, the $\infty$-category of presentable $\infty$-categories and left adjoints. 
\end{remark}

\subsection{Equivariant homotopy groups and Mackey functors}\label{sec:equivariant-Mackey}
In this section, we recall the essential structure and facts concerning the homotopy groups and homotopy 
Mackey functors of $G$-spectra. We restrict to the case $G = C_2$ throughout. 

We begin with notation for representations and their associated spheres. 

\begin{notation}
We fix the following notations. 
\begin{itemize}
    \item Let $\sigma$ denote the sign representation of $C_2$. 
    \item Let $\rho = \sigma \oplus 1$ denote the regular representation of $C_2$. 
    \item Let $RO(C_2) \cong \mathbb{Z}\{1,\sigma\}$ denote the group completion (with respect to direct sum) 
    of the ring of isomorphism classes of real orthogonal representations of $C_2$. Given $V \in RO(C_2)$, 
    let $|V|$ denote its virtual dimension. 
    \item Let $n \in RO(C_2)$ denote a trivial representation of virtual dimension $n$. 
    \item Given a real orthogonal representation $V$ of $C_2$, we write $S^V \in \Top^G_*$ for its one-point 
    compactification and $S(V) \in \Top^G$ for the unit sphere in $V$. 
\end{itemize}
\end{notation}

Using these notations, we can define equivariant homotopy groups: 

\begin{defin}
Let $X \in \Sp^{C_2}$ and let $T$ be a finite $C_2$-set. \
\begin{itemize}
    \item Let $\m{\pi}_V(X)(T) := [S^V \wedge T_+, X]^C_2$ be the abelian group of $C_2$-equivariant maps 
    from $S^V \wedge T_+$ to $X$. 
    \item Let $\pi^{C_2}_V(X) := \m{\pi}_V(X)(C_2/C_2)$ and let $\pi^e_{|V|}(X) := \m{\pi}_V(X)(C_2/e)$. 
    \item Let $\tr_e^{C_2}: \pi^e_{|V|}(X) \to \pi^{C_2}_V(X)$ and $\res^{C_2}_e: \pi^{C_2}_V(X) \to \pi^e_{|V|}(X)$ 
    denote the \emph{transfer} and \emph{restriction} defined in~\cite[Definition~2.2]{HHR14}. 
\end{itemize}
\end{defin}

The transfer and restriction maps above equip $\m{\pi}_\star(X)$ with the structure of an 
\emph{$RO(C_2)$-graded Mackey functor.} Recall that a \emph{$C_2$-semi-Mackey functor} $\m{M}$ 
consists of a pair of commutative monoids $\m{M}(C_2/C_2)$ and $\m{M}(C_2/e)$, a restriction map $\m{M}(C_2/C_2) \to \m{M}(C_2/e)$, 
a transfer map $\m{M}(C_2/e) \to \m{M}(C_2/C_2)$, and a conjugation map $\m{M}(C_2/e) \to \m{M}(C_2/e)$ satisfying 
several axioms (cf.~\cite[Def. 2.1]{Nak12}). A $C_2$-semi-Mackey functor is a \emph{Mackey functor} if $\m{M}(C_2/C_2)$ 
and $\m{M}(C_2/e)$ are abelian groups and the restriction and transfer are group homomorphisms. 
An \emph{$RO(C_2)$-graded Mackey functor} is a certain graded analogue of a 
Mackey functor; we refer the reader to \cite[Section~3.1]{HHR14} for a precise definition and additional background. 

\begin{defin}
    \
    \begin{itemize}
    \item The \emph{Burnside Mackey functor} $\m{A}$ is defined by letting $\m{A}(C_2/H) := A(H)$ 
    be the Burnside ring of $H$. 
    \item If $B$ is an abelian group, then we write $\m{B}$ for the \emph{constant Mackey functor on $B$} 
    with $\m{B}(C_2/H) = B$ for all $H \subseteq C_2$ and restrictions given by identity homomorphisms. A Mackey functor is 
    \emph{constant} if it is isomorphic to a constant Mackey functor. 
    \end{itemize}
\end{defin}

Let $\Sp_{\ge 0}$ and $\Sp_{\le 0}$ denote the connective and coconnective parts of the 
standard $t$-structure on spectra. 

\begin{prop}[{\cite[Proposition~6.1]{BGS20}}]\label{prop:homotopy-t-structure}
There is a $t$-structure on $\Sp^{G}$, which we call the \emph{homotopy $t$-structure}, with the following properties: 
\begin{enumerate}
\item $X\in \Sp^{G}_{\ge 0}$ if and only if $X^{H}\in \Sp_{\ge 0}$ for all subgroups $H$ of $G$
\item $X\in \Sp^{G}_{\le 0}$ if and only if $X^{H}\in \Sp_{\ge 0}$. 
\item $X\in \Sp^{G}_{\le 0}\cap \Sp^{G}_{\ge 0}$ if and only if $X=\upi_{0}X$. 
\end{enumerate}
\end{prop}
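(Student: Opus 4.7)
The plan is to identify the proposition as the familiar Bredon (or homotopy) $t$-structure on $\Sp^G$ and verify it directly using the generators $\Sigma^\infty_+(G/H)$ together with Lurie's criterion \cite[HA.1.4.4.11]{Lur17}; the reference \cite{BGS20} essentially does this, but I would reprove it as follows.

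First I would define $\Sp^G_{\geq 0}$ to be the smallest full subcategory of $\Sp^G$ that contains the set of objects $\{\Sigma^\infty_+(G/H) : H \leq G\}$ and is closed under colimits and extensions; stability of $\Sp^G$ then allows me to set $\Sp^G_{\leq -1} := (\Sp^G_{\geq 0})^{\perp}$ and invoke Lurie's criterion to obtain a $t$-structure. The core technical work is to identify these two subcategories with the ones described in (1) and (2). For the implication $X \in \Sp^G_{\geq 0} \Rightarrow X^H \in \Sp_{\geq 0}$ I would note that categorical fixed points $(-)^H \colon \Sp^G \to \Sp$ send the generators $\Sigma^\infty_+(G/K)$ to connective spectra (by the tom Dieck splitting) and preserve colimits and extensions when applied to objects already known to be connective (the relevant geometric realizations and extensions of connective spectra remain connective). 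For the converse I would use the computation
\[
  [\Sigma^\infty_+(G/H)[n], X] \;\cong\; \pi_n(X^H),
\]
which shows that if $X^H \in \Sp_{\leq -1}$ for all $H$, then $X \in (\Sp^G_{\geq 0})^{\perp} = \Sp^G_{\leq -1}$, and dually that $X^H \in \Sp_{\geq 0}$ for all $H$ implies $[\,Y, X\,] = 0$ for all $Y \in \Sp^G_{\leq -1}$, hence $X \in \Sp^G_{\geq 0}$. The analogous computation with $n \geq 1$ yields (2).

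For part (3), the heart consists of $X$ with $\pi_n(X^H) = 0$ for all $n \neq 0$ and all $H$. I would then invoke the Guillou--May equivalence $\Sp^G \simeq \Fun^{\times}(\Span(\mathrm{Fin}_G),\Sp)$ \cite{GM11} (cf.~\cite[Example 9.12]{BH21}), under which the $t$-structure above corresponds to the pointwise standard $t$-structure on $\Fun^{\times}(\Span(\mathrm{Fin}_G),\Sp)$. The heart of the latter is $\Fun^{\times}(\Span(\mathrm{Fin}_G),\Ab)$, which is precisely the category of $G$-Mackey functors, and the identification sends $X$ to $\underline{\pi}_0 X$; conversely, the Eilenberg--MacLane functor $H\underline{M}$ gives an inverse.

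The main obstacle is the preservation of colimits and extensions by $(-)^H$ restricted to connective objects: $(-)^H$ does not preserve arbitrary colimits of $G$-spectra, so one must argue carefully that the closure under colimits inside $\Sp^G_{\geq 0}$ is compatible with connectivity on fixed points. The cleanest way is to first prove that the subcategory defined by the conditions in (1) is itself closed under colimits and extensions and contains the generators, and deduce that it contains $\Sp^G_{\geq 0}$ generated in the abstract sense; the reverse containment uses the Hom computation above. Identifying the heart via spectral Mackey functors is essentially formal once the equivalence with $\Fun^{\times}(\Span(\mathrm{Fin}_G),\Sp)$ is in hand.
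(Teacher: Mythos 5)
The paper offers no proof of this proposition --- it is quoted verbatim from \cite[Proposition~6.1]{BGS20} --- so your reconstruction is measured against the standard argument there. Your architecture (generate $\Sp^{G}_{\ge 0}$ from the orbits under colimits and extensions, invoke Lurie's criterion, identify both halves via $[\Sigma^\infty_+(G/H)[n],X]\cong \pi_n(X^H)$, and read off the heart through Guillou--May) is exactly that standard route, and the construction of the $t$-structure, the characterization of $\Sp^{G}_{\le -1}$, and the ``only if'' direction of (1) are fine. One side remark: the obstacle you flag is not one --- genuine fixed points are exact, preserve coproducts, and preserve filtered colimits (the orbits are compact generators), hence preserve all colimits --- though your workaround of checking directly that $\{X : X^H\in \Sp_{\ge 0}\ \text{for all } H\}$ is closed under colimits and extensions is valid regardless.

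The genuine gap is the ``if'' direction of (1). You assert that $X^H\in\Sp_{\ge 0}$ for all $H$ implies $[Y,X]=0$ for all $Y\in\Sp^{G}_{\le -1}$, and conclude $X\in\Sp^{G}_{\ge 0}$. Both halves fail. For a $t$-structure the connective part is the \emph{left} orthogonal of $\mathcal{C}_{\le -1}$: membership in $\Sp^{G}_{\ge 0}$ requires $[X,Y]=0$ for coconnective $Y$, which cannot be tested on homotopy groups of $X$ the way $[Y,X]$ can when $Y$ is a shifted orbit. Moreover the asserted vanishing is false already nonequivariantly: $H\mathbb{F}_2[-1]\in\Sp_{\le -1}$ and $H\mathbb{F}_2\in\Sp_{\ge 0}$, yet $[H\mathbb{F}_2[-1],H\mathbb{F}_2]=[H\mathbb{F}_2,\Sigma H\mathbb{F}_2]$ contains $\mathrm{Sq}^1\neq 0$. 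The correct argument goes through the truncation: form $\tau_{\ge 0}X\to X\to \tau_{\le -1}X$; by the two directions you have already established, $(\tau_{\ge 0}X)^H$ is connective and $(\tau_{\le -1}X)^H\in\Sp_{\le -1}$, so the hypothesis on $X^H$ and the long exact sequence force $\pi_n((\tau_{\le -1}X)^H)=0$ for all $n$ and all $H$; since the family $\{(-)^H\}_{H\le G}$ is jointly conservative (equivalently, $\upi_{\star}$ detects equivalences of $G$-spectra), $\tau_{\le -1}X=0$ and $X\simeq\tau_{\ge 0}X$. With that repair, and granting the routine check that pointwise truncation preserves product-preserving spectral Mackey functors, your identification of the heart in (3) goes through.
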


\subsection{Multiplicative equivariant homotopy theory}\label{sec:equivariant-multiplicative}
Let $G$ be a finite group. 
Let $\mathcal{O}_{G}^{\op}$ be the corresponding orbit category and let $\mathbb{F}_{G}$ be the category of finite $G$-sets; 
i.e. the finite coproduct completion of $\mathcal{O}_{G}^{\op}$. 
We let 
\[ 
\bF_{G}^{\slash S}:=\mathrm{Ar}(\bF_{G})\times_{\bF_{G}}\{S\}
\]
where the pullback is taken along the target map $t:\mathrm{Ar}(\bF_{G})\to \bF_{G}$ and the inclusion $\{S\}\to \bF_{G}$ 
of the discrete category with unique object $S$. 
The association  
\[ 
G/H\mapsto \mathbb{F}_{G}^{\slash (G/H)}
\]
defines a $G$-$\infty$-category $\m{\bF}_{G}$. 

\begin{defin}
We write 
\[ 
\CAlg^{G} \subset \Fun_{/\m{\bF}_{G},\mathcal{O}_G^{\op}}(\m{\bF}_{G},\m{\Sp}^{G,\otimes}) 
\]
for the \emph{$\infty$-category of $G$-$\bE_\infty$-algebras} in the sense of~\cite[ Definition~2.2.1]{NS22}. More generally, if $\mathcal{C}$ is a $G$-symmetric monoidal $G$-$\infty$-category with value $\cC_{G/H}$ at the object $G/H\in \cO_{G}^{\op}$, we write 
\[ 
\CAlg^{G}(\cC)\subset \Fun_{/\m{\bF}_{G},\mathcal{O}_G^{\op}}(\m{\bF}_{G},\cC^{\otimes}) 
\]
for the \emph{$\infty$-category of $G$-$\bE_\infty$-algebras in $\cC$} . 
We also write  $\CAlg^{G}(\cC_{G/G})$ for this infinity category when it simplifies notation. We further write $\m{\CAlg}(\cC)$ or $\m{\CAlg}(\cC_{G/G})$ for the $G$-$\infty$-category of $G$-$\bE_\infty$-algebras in $\cC$ and simply  $\underline{\CAlg}:=\underline{\CAlg}(\Sp^{G})$. 
\end{defin}

\begin{remark}\label{rem:Erhoinfty-vs-C2-Einfty}
Recent work of Lenz--Linskens--P\"utchuck~\cite[Theorem 7.27]{LLP25} has closed a gap in the literature by proving that strictly commutative monoids in $G$-spectra, which the authors call ultra-commutative $G$-ring spectra, are equivalent to normed $G$-algebras and $G$-$\bE_\infty$-algebras; see \cite[Remark~7.19]{LLP25}. 
\end{remark}

\begin{defin}
    If $G$ is a finite group, $H \subseteq G$ a subgroup, and $p: BH \to BG$ the induced map of classifying spaces. We define the \emph{Hill--Hopkins--Ravenel norm} by $N_H^G := p_\otimes: \Sp^H \to \Sp^G$ as in the paragraph before \cite[Remark~9.9]{BH21}. More generally, given a $G$-$\bE_\infty$-ring, we can define a relative norm $_{A}N_H^G : \Mod(\Sp^{H};A) \to \Mod(\Sp^G;A)$ as in~\cite{ABGHLM18}; see~\cite[Construction~A.3.4]{CHLL24} for a construction that adheres to our choice of foundations.
\end{defin}

\subsection{The parametrized Tate construction}\label{sec:equivariant-Tate}
In this section, we recall the parametrized Tate construction introduced in~\cite{GM95}. 
We follow the treatment in the setting of parametrized $\infty$-categories from~\cite{QS21b}. 

Let $\psi = [ K\to \widehat{G}\to G]$
be an extension of groups with $G$ finite. Then there is an associated fibration $B\widehat{G}\to BG$ which corresponds to a 
functor $BG\to \Cat_{\infty}$ sending the unique $0$-simplex to $BK$. 
The right Kan extension of this functor along the canonical inclusion $BG\to \cO_{G}^{\op}$ produces a functor 
\[ 
B_{G}^\psi K \in \Fun(\cO_{G}^{\op},\Top) \,,
\] 
i.e.,  $G$-space. When $\psi$ is understood, we omit it from the notation and simply write $B_G K$. 

\begin{exm}\label{exm:parametrized-CPinfty}
Let $O(2)\overset{\textup{det}}{\longrightarrow} C_2$ denote the determinant map. 
The extension of groups 
\[ 
1\longrightarrow S^1 \longrightarrow O(2) \overset{\textup{det}}{\longrightarrow} C_2 \longrightarrow 1
\]
gives rise to the $C_2$-space $B_{C_2}S^1$, which we denote by $\mathbb{C} P_{C_2}^{\infty}$. 
\end{exm}

\begin{exm}
The extension of groups 
\[ 
1\to \mu_{p^n} \to D_{2p^n}\to C_2 \to 1 \,,
\]
where $\mu_{p^n}\subset S^1$ denotes the cyclic group of order $p^n$ sitting inside $S^1$ as the $p^n$-th roots of unity, 
gives rise to the $C_2$-space $B_{C_2}\mu_{p^n}$. 
Similarly, the extension of groups
\[ 
1\to \mu_{p^\infty} \to D_{2p^{\infty}}\to C_2 \to 1\,,
\]
where $\mu_{p^{\infty}}=\colim_n\mu_{p^n}\subset S^1$, gives rise to the $C_2$-space $B_{C_2}\mu_{p^{\infty}}$.
\end{exm} 

\begin{defin}\label{Def:Twisted}
Let $\psi = [K \to \widehat{G} \to G]$ be a group extension and $B_G^\psi K$ the associated $G$-space. We define
\[
\Sp^{h_GK} := \Fun_{G}(B_G^\psi K, \m{\Sp}^G)
\]
to be the \emph{$\infty$-category of $G$-spectra with $\psi$-twisted $K$-action}. 
Since the extension $\psi$ will always be one of the extensions in the previous two examples, 
we omit $\psi$ from the notation. 
In the case where $G=C_2$, we refer to $\Sp^{h_{C_2}K}$ simply as the $\infty$-category $C_2$-spectra with twisted $K$-action. 

Given an object $X \in \Sp^{h_GK}$, we define the parametrized homotopy orbits and parametrized homotopy fixed points by
\[
X_{h_GK} := \colim_{B_G K} X \quad \text{ and } \quad  X^{h_GK} := \lim_{B_GK} X \, 
\]
respectively. We define the parametrized Tate construction by
\[
X^{t_GK} := \cof(\Nm: X_{h_GK} \to X^{h_GK})
\]
if $K$ is finite and write
\[
X^{t_{C_2}S^1} := \cof(\Nm: \mathbb{S}^\sigma \otimes X_{h_{C_2}S^1} \to X^{h_{C_2}S^1}) \,,
\]
where $\Nm$ is the parametrized assembly map of~\cite[Theorem~D]{QS21b} and $\sigma$ is the sign representation of $C_2$. 
Above, $\colim$ and $\lim$ denote the $G$-colimit and $G$-limit, so $X_{h_GK}$, $X^{h_{G}K}$, 
and $X^{t_GK}$ are all $G$-spectra.
\end{defin}

\begin{remark}
\cite[Theorem~E]{QS21b} implies that the parametrized Tate construction~$(-)^{t_GK}$ and the natural transformation~$(-)^{h_GK} \to (-)^{t_GK}$ uniquely admit the structure of a lax $G$-symmetric monoidal functor and morphism thereof. 
In particular, if $X \in \CAlg_G(\Sp^{h_GK})$, then $X^{t_GK} \in \CAlg_G$ and the natural map~$X^{h_GK} \to X^{t_GK}$ is a 
map of $G$-$\bE_\infty$-rings. 
\end{remark}

\bibliographystyle{alpha}
\bibliography{rs}

\end{document}